\numberwithin{equation}{section}
\theoremstyle{plain}
\newtheorem{theorem}{Theorem}[section]
\newtheorem{corollary}[theorem]{Corollary}
\newtheorem{lemma}[theorem]{Lemma}
\newtheorem{proposition}[theorem]{Proposition}
\def\E{\mathbb{E}}
\def\bt{{\bf{t}}}
\def\bx{{\bf{x}}}
\def\bnu{{\boldsymbol{\nu}}}
\def\bmu{{\boldsymbol{\mu}}}
\def\by{{\bf{y}}}
\def\FF{\mathcal F}
\def\WW{\mathcal W}
\def\HH{\mathcal H}
\def\EE{\mathcal E}
\def\XX{\mathcal X}
\def\GG{\mathcal G}
\def\RR{\mathcal R}
\def\KK{\mathcal K}
\def\NN{\mathcal N}
\def\R{\mathbb R}
\def\E{\mathbb E}
\def\Z{\mathbb Z}
\def\N{\mathbb N}
\def\P{\mathbb P}
\begin{document}

\begin{frontmatter}
\title{Minimax Optimal Rates of Estimation in Functional ANOVA Models With Derivatives}
\runtitle{Functional ANOVA Estimation With Derivatives}

\begin{aug}
\author{\fnms{Xiaowu} \snm{Dai}\thanksref{t1,t2}\ead[label=e1]{xdai26@wisc.edu}}
\and
\author{\fnms{Peter} \snm{Chien}\thanksref{t2}
\ead[label=e3]{peter.qian@wisc.edu}}

\thankstext{t1}{Supported in part by NSF Grant DMS-1308877.}
\thankstext{t2}{Supported in part by NSF Grant DMS-1564376.}
\runauthor{X. Dai and P. Chien}

\affiliation{University of Wisconsin-Madison}

\address{Department of Statistics\\
University of Wisconsin-Madison\\
1300 University Avenue\\
Madison, Wisconsin 53706\\
USA\\
\printead{e1}\\
\phantom{E-mail:\ }\printead*{e3}}
\end{aug}

\begin{abstract}
We establish minimax optimal rates of convergence for nonparametric estimation in functional ANOVA models when data from first-order partial derivatives are available. 
Our results reveal that
partial derivatives can improve convergence rates for function estimation with deterministic or random designs. 
In particular, for full $d$-interaction models, the optimal rates with first-order partial derivatives on 
$p$ covariates
are identical to those for $(d-p)$-interaction models without partial derivatives.  
For additive models, the rates by using all first-order partial derivatives are root-$n$ to achieve the ``parametric rate''.
We also investigate the minimax optimal rates for first-order partial derivative estimations when derivative data are available. Those rates coincide with the optimal rate for estimating the first-order derivative of a univariate function.
\end{abstract}

\begin{keyword}[class=MSC]
\kwd[Primary ]{62G08, 62H12}
\kwd[; secondary ]{62G05, 62P20.}
\end{keyword}

\begin{keyword}
\kwd{Nonparametric regression}
\kwd{smoothing spline ANOVA}
\kwd{partial derivative data}
\kwd{method of regularization}
\kwd{minimax rate}
\end{keyword}

\end{frontmatter}


\section{Introduction}
\label{sec:Intro}

Derivative observations for complex systems are available in many applications. In dynamic systems and traffic engineerings, real-time motion sensors can record velocity,  acceleration in addition to positions \cite{murray2002nonlinear, rasmussen2006gaussian, solak2003derivative}. In economics, it has a long tradition to study costs and demands where the factor demand function is the partial derivative of the cost function by the Shephard's Lemma \cite{shepherd2015theory, jorgenson1984econometric, hall2007nonparametric, hall2010nonparametric}. In actuarial science,  mortality force data can be obtained from demography, which together with samples for the survival distribution can yield derivatives for the survival distribution function \cite{frees1998understanding}. In computer experiments, partial derivatives are available by using differentiation mechanisms at little additional cost \cite{hansen2003global, griewank2008evaluating, golub2014scientific}.  Derivative data are commonly collected in geodetics engineering \cite{schwarz1979geodetic, plessix2006review}. 
In meteorology, the wind speed and direction as functions of the gradient of barometric pressure are measured over broad geographic regions while the pressure will also be recorded \cite{breckling2012analysis}.
Moreover, an evolving system is often modeled as a constrained optimization problem or a set of partial differential equations, which give data on the first order condition or partial derivatives as well as the objective function itself \cite{forrester2008engineering,ramsay2007parameter}.

Let $\partial f(\bt)/\partial t_j$ denote the $j$th first-order partial derivative of a scalar function $f(\bt)$ of $d$ variables  $\bt=(t_1,\ldots,t_d)$. Consider the following multivariate regression model 
 \begin{equation}
\label{modelequation}
\begin{cases}
Y^{e_0}  = f_0(\bt^{e_0}) + \epsilon^{e_0},\\
Y^{e_j}  =\partial f_0/\partial t_j(\bt^{e_j})+\epsilon^{e_j},   \quad 1\leq j\leq p.
\end{cases}
\end{equation}
Here, $e_j$ is a $d$-dimensional vector with $j$th entry one and others zero and $e_0$ is a zero vector.
The response $Y^{e_0}$ is the function observation and $Y^{e_j}$ is the observation of the first-order partial derivative on the $j$th covariate.
Assume that the design points $\bt^{e_0}$ and $\bt^{e_j}$s are in a compact product space $\XX_1^d$, where $\XX_1=[0,1]$. 
The random errors $\epsilon^{e_0}$ and $\epsilon^{e_j}$s are assumed to be independent centered noises with variances $\sigma_0^2$ and $\sigma_j^2$s, respectively. Let $p\in\{1,\ldots,d\}$ denote the number of the different types of first-order partial derivatives being observed. 
Without loss of generality, we focus on
the first $p$ components for notational convenience. Let $\{(\bt_i^{e_j},y_i^{e_j}): i=1,\ldots, n\}$ be independent copies of $(\bt^{e_j},Y^{e_j})$ for $j=1,\ldots,p$, and $\{(\bt_i^{e_0},y_i^{e_0}): i=1,\ldots, n\}$ be independent copies of $(\bt^{e_0},Y^{e_0})$. 

We now discuss two popular approaches for modeling the  $d$-dimensional nonparametric unknown function $f_0(\cdot)$. The first uses a multivariate function with smoothness assumption on all $d$ dimensions. The second uses a function with tensor product structure and smoothness properties on lower dimensions. The latter approach is represented by the smoothing spline analysis of variance (SS-ANOVA). See, for example,  \cite{wahba1990, wahba1995smoothing, lin2000tensor, gu2013smoothing} and references therein. 
As a general framework for nonparametric multivariate estimation, SS-ANOVA  can adaptively control the complexity of the model with interpretable estimates. The SS-ANOVA model for a function $f(\bt)$ is
\begin{equation}
\label{eqn:anovadecompfti}
f(\bt) = \mbox{constant} + \sum_{k=1}^d f_{k}(t_k) + \sum_{k<j}f_{kj}(t_k,t_j) + \cdots, 
\end{equation}
where the $f_{k}$s are the main effects, the $f_{kj}$s are the two-way interactions, and so on. Components on the right hand side satisfy side conditions to assure identifiability. 
The series is truncated to some order $r$ of interactions  to enhance interpretability, where $1\leq r\leq d$. This model generalizes the popular additive model where $r=1$ and fitted with smoothing splines (see, e.g., \cite{buja1989linear, hastie1990generalized}).

We assume that the true function $f_0(\cdot)$ is a SS-ANOVA model and reside in a certain reproducing kernel Hilbert space (RKHS) $\HH$ on $\XX_1^d$. Let $\HH^{(k)}$ be an RKHS of functions of $t_k$ on $\XX_1$ with $\int_{\XX_1}f_k(t_k)dt_k =0$ for $f_k(t_k)\in\HH^{(k)}$ and $[1^{(k)}]$ be the one-dimensional space of constant functions on $\RR_1$. Construct $\HH$ as
\begin{equation}
\label{eqn:anovadechi}
\begin{aligned}
\HH & = \prod_{k=1}^d \left(\left\{[1^{(k)}]\right\}\oplus \left\{\HH^{(k)}\right\}\right) \\
& = [1] \oplus \sum_{k=1}^d \HH^{(k)} \oplus\sum_{k<j} [\HH^{(k)}\otimes \HH^{(j)}]\oplus \cdots,
\end{aligned}
\end{equation}
where $[1]$ denotes the constant functions on $\XX_1^d$. 
The components of the SS-ANOVA decomposition (\ref{eqn:anovadecompfti}) are now in mutually orthogonal subspaces of $\HH$ in  (\ref{eqn:anovadechi}). 
We further assume that all component functions come from a common RKHS $(\HH_1,\|\cdot\|_{\HH_1})$, that is $\HH^{(k)} \equiv\HH_1$ for  $k=1,\ldots,d$. 
 Let $K:\XX_1\times\XX_1\mapsto\R$ be a Mercer kernel generating the RKHS $\HH_1$ and write 
$K_d\left((t_1,\ldots,t_d)^\top,(t_1',\ldots,t_d')^\top\right) = K(t_1,t_1') \cdots  K(t_d,t_d').$
Then $K_d$ is the reproducing kernel of RKHS $(\HH,\|\cdot\|_{\HH})$ (see, e.g., \cite{aronszajn1950theory}).

\subsection{Deterministic designs}
We are interested in the minimax optimal convergence rates for estimating $f_0(\cdot)$ and its partial derivatives $\partial f_0/\partial t_j(\cdot)$. We begin by considering regular lattices, also known as tensor product designs \cite{bates1996experimental, riccomagno1997lattice}. 
Suppose that the eigenvalues of the $K$ decay polynomially with the $\nu$th largest eigenvalue of the order $\nu^{-2m}$. We show that the minimax rate for estimating $f_0\in\HH$ for  full $d$-interaction SS-ANOVA model is
\begin{equation}
\label{eqn:minimaxraternnrd}
\begin{aligned}
& \inf_{\tilde{f}}\sup_{f_0\in\HH}\E\int_{\XX_1^d}\left[\tilde{f}(\bt)-f_0(\bt)\right]^2d\bt \\
& \quad\quad\quad\quad =
\begin{cases}
\left[n(\log n)^{1+p-d}\right]^{-2m/(2m+1)}  & \mbox{ if } 0\leq p< d,\\
n^{-1}(\log n)^{d-1} + n^{-2md/[(2m+1)d-2]}& \mbox{ if } p=d,
\end{cases}
\end{aligned}
\end{equation}
up to a constant scaling factor. If $0\leq p<d$, the above rate is the minimax optimal rate for estimating a $(d-p)$ dimensional full interaction SS-ANOVA model with only function observations; see, for example, \cite{gu2013smoothing,lin2000tensor}. If $p=d$ and $d\geq 3$, the minimax optimal rate in (\ref{eqn:minimaxraternnrd}) becomes
\begin{equation}
\label{eqn:rrnpequald}
\inf_{\tilde{f}}\sup_{f_0\in\HH}\E\int_{\XX_1^d}\left[\tilde{f}(\bt)-f_0(\bt)\right]^2d\bt\asymp n^{-2md/[(2m+1)d-2]}.
\end{equation}
For two positive sequences $a_{n}$ and $b_{n}$, we write  $a_{n}\asymp b_{n}$ if $a_{n}/b_{n}$ is bounded away from zero and infinity. The rate given by (\ref{eqn:rrnpequald}) converges \textit{faster} than the well known optimal rate $n^{-2m/(2m+1)}$ for additive models given in  \cite{hastie1990generalized,stone1985additive}. If $p=d$ and $d=2$, the minimax optimal rate in (\ref{eqn:minimaxraternnrd}) is $n^{-1}\log n$. If $p=d$ and $d=1$, the root-$n$ consistency is achieved in (\ref{eqn:minimaxraternnrd}) and this specific phenomenon has been observed earlier (see, e.g., \cite{cox1988approximation,hall2007nonparametric}). 

We are the first to systematically investigate the estimation of general $d$-dimensional SS-ANOVA models with derivatives.
 Other convergence rate results for truncated SS-ANOVA models ($r<d$) will be given in Section \ref{sec:minmaxriskregularlat}. In particular, for the additive model $r=1$ and $p=d$, the minimax optimal rate is
$n^{-1}$,
which coincides with the \textit{parametric} convergence rate.

\subsection{Random designs}
We are interested in obtaining sharp results for random designs. Suppose that design points $\bt^{e_0}$ and $\bt^{e_j}$ are independently drawn from distributions $\Pi^{e_0}$ and $\Pi^{e_j}$s, where they are supported on $\XX_1^d$. We show that
the minimax optimal rate for estimating the full $d$-interaction SS-ANOVA model is 
\begin{equation}
\label{eqn:lowboundrandomdsgn}
\begin{aligned}
\inf_{\tilde{f}}\sup_{f_0\in\HH}\P  & \left\{  \int_{\XX_1^d}\left[\tilde{f}(\bt)-f_0(\bt)\right]^2d\bt \geq C_1\left(\left[n(\log n)^{1+p-d}\right]^{-2m/(2m+1)} \mathbbm{1}_{0\leq p<d} \right.\right.\\
& \left.\left. \quad\quad\quad\quad+ \left[n^{-1}(\log n)^{d-1} + n^{-2md/[(2m+1)d-2]}\right] \mathbbm{1}_{p=d}\vphantom{\left[n(\log n)^{1+p-d}\right]^{-2m/(2m+1)}}\right)\vphantom{\int_{\XX_1^d}\left\{\tilde{f}(\bt)-f_0(\bt)\right\}^2d\bt}\right\}=0,
\end{aligned}
\end{equation}
where $C_1$ is a constant scalar not depending on $n$. The minimax optimal rates are also obtained for estimating $\partial f_0/\partial t_j(\cdot)$ for any
$j\in\{1,\ldots,p\}$ and both full and truncated SS-ANOVA models with $r\leq d$, which are
\begin{equation}
\label{eqn:rrej2m1}
\inf_{\tilde{f}}\sup_{f_0\in\HH}\P \left\{\int_{\XX_1^d}\left[\tilde{f}(\bt)-\partial f_0/\partial t_j(\bt)\right]^2d\bt \geq C_2n^{-2(m-1)/(2m-1)} \right\}>0,
\end{equation}
where $C_2$ does not depend on $n$.
This result holds regardless of the value of $d$, $r$ and $p$. In particular, the rate is the same as the optimal rate for estimating $\partial f_0/\partial t_j(\cdot)$ if $f_0$ actually comes from a univariate function space $\HH_1$ instead of the $d$-variate function space $\HH$. See, for example, \cite{stone1980optimal, stone1982optimal}.

We achieve the minimax rates under deterministic designs (\ref{eqn:minimaxraternnrd}) and  random designs   (\ref{eqn:lowboundrandomdsgn}) by using the method of regularization in the framework of RKHS. Unlike the regularization method,  alternative methods  for modeling derivative data typically assume that data have no random noises.  See, for example, \cite{carr2001reconstruction, mitchell1994asymptotically, morris1993,  solak2003derivative} among others.
Despite these existing works, theoretical understandings of observed first-order partial derivatives is limited. Our work fills some gap in this direction. It is worth pointing out the differences between this work and \cite{hall2007nonparametric}. The estimator provided in \cite{hall2007nonparametric} relies on observing the complete set of $2^s$ types of mixed  derivatives on $s$ variables of a $d$-dimensional function with $s\leq d$. Their requirement could be infeasible for some problems in practice while our setting fits for any observed first-order partial derivatives. Moreover, \cite{hall2007nonparametric} does not provide the minimax risk analysis and considers the estimation of $d$-dimensional functions without the tensor product structure. Thus, \cite{hall2007nonparametric} concludes that adding more than one type first-order partial derivative data \textit{does not} further improve the convergence rate of their estimator. These results are different from our work in, for exmaple, (\ref{eqn:minimaxraternnrd}),  (\ref{eqn:lowboundrandomdsgn}) and (\ref{eqn:rrej2m1}) for functional ANOVA models.  

The rest of the article is organized as follows. We give the main results on estimating functions with deterministic designs in Section \ref{sec:minmaxriskregularlat}, where (\ref{eqn:minimaxraternnrd}) and (\ref{eqn:rrnpequald}) are included.  We present the main results with random designs in Section \ref{sec:randomdesign} including (\ref{eqn:lowboundrandomdsgn}). 
We  consider the optimal rates of estimating first-order partial derivatives in Section \ref{sec:minmaxriskpartder}, where (\ref{eqn:rrej2m1}) is elaborated.
Proofs of the results with random designs are given in Section \ref{sec:proofsofall}. Proofs of other results and auxiliary technical lemmas are relegated to the supplementary material.

\section{Minimax risks with regular lattices}
\label{sec:minmaxriskregularlat}

This section provides the minimax optimal rates of estimating $f_0(\cdot)$ with model (\ref{modelequation}) and regular lattices. A regular lattice of size $n=l_1\times \cdots \times l_d$ on $\XX_1^d$ is a collection of design points
\begin{equation}
\label{eqn:reglatticeofti}
\{\bt_1,\ldots,\bt_n\}=\{(t_{i_1,1}, t_{i_2,2}, \ldots, t_{i_d,d}) | i_k = 1,\ldots, l_k, k=1,\ldots, d\},
\end{equation}
where $t_{j,k} = j/l_k$, $j=1,\ldots, l_k, k=1,\ldots, d$. This design is often used in the statistical literature when the true function $f_0$ is a functional ANOVA model. This design is $D-$optimal in the sense of Kiefer and Wolfowitz \cite{kiefer1959optimum}. Readers are referred to \cite{bates1996experimental, riccomagno1997lattice} for further details.  Under the regular lattice design, it is reasonable to assume $f_0: \XX_1^d \mapsto \R$ to have a periodic boundary condition. This is because any finite-length sequence $\{f(\bt_1),\ldots, f(\bt_n)\}$ can be associated with a periodic sequence
\begin{equation*}
\begin{aligned}
&  f^{\text{per}} \left(\frac{i_1}{l_1}, \cdots, \frac{i_d}{l_d}\right)  \\
& \quad = \sum_{q_1=-\infty}^{\infty}\cdots\sum_{q_d=-\infty}^{\infty}f\left(\frac{i_1}{l_1}-q_1, \cdots, \frac{i_d}{l_d}-q_d\right),\quad \forall (i_1,\ldots,i_d)\in \Z^d
\end{aligned}
\end{equation*}
by letting $f(\cdot)\equiv 0$ outside $\XX_1^d$ and at the unobserved boundaries of $\XX_1^d$. On the other hand, any finite-length sequence $\{f(\bt_1),\ldots, f(\bt_n)\}$ can be recovered from  the periodic sequence $f^{\text{per}}(\cdot)$.

Recall that $K$ is the reproducing kernel for component RKHS $\HH_1$, which is a symmetric positive semi-definite, square integrable function on $\XX_1\times\XX_1$. 
In our setting, we require an additional differentiability condition on kernel $K$, which is given by
\begin{equation}
\label{eqn:partial2tt'kcx1x1}
\frac{\partial^2 }{\partial t\partial t'} K(t,t')\in C(\XX_1\times \XX_1).
\end{equation}
An straightforward explanation on this condition is as follows.
Denote by $\langle\cdot,\cdot\rangle_\HH$ the inner product of  RKHS $\HH$ in (\ref{eqn:anovadechi}).
Then, for any $g\in\HH$,  we have
\begin{equation}
\label{eqn:partialgttpartialtjhh}
\frac{\partial g(\bt)}{\partial t_j} = \frac{\partial \langle g, K_d(\bt,\cdot)\rangle_\HH}{\partial t_j} =  \left\langle g, \frac{ \partial K_d(\bt,\cdot)}{\partial t_j}\right\rangle_\HH,
\end{equation}
where the last step is by the continuity of $\langle\cdot,\cdot\rangle_\HH$.
This implies that the composite functional of evaluation and partial differentiation, $\partial g/\partial t_j(\bt)$,  is a bounded linear functional in $\HH$ and has a representer $\partial K_d(\bt,\cdot)/\partial t_j$ in $\HH$. 

From Mercer's theorem  \cite{riesz1955}, $K$ admits a spectral decomposition
\begin{equation}
\label{eqn:mercerdecaykttgamma}
K(t,t') = \sum_{\nu=1}^\infty \lambda_\nu\psi_\nu(t)\psi_\nu(t'),
\end{equation}
where $\lambda_{1}\geq \lambda_2\geq \cdots\geq 0$ are its eigenvalues and $\{\psi_{\nu}:\nu\geq 1\}$ are the corresponding eigenfunctions.  A canonical example of $\HH_1$ is the $m$th order Sobolev space $\WW_2^m(\XX_1)$ whose eigenvalues satisfy $\lambda_\nu\asymp \nu^{-2m}$. See, for example, Wahba \cite{wahba1990} for further examples. Here, (\ref{eqn:partialgttpartialtjhh}) implies that $\partial g/\partial t_j(\bt)$ is a continuous function. Thus, if $\HH_1=\WW_2^m(\XX_1)$, we shall require $m>3/2$ by Sobolev embedding theorem.

We are now in the position to present our main results. We first state a minimax lower bound under regular lattices.

\begin{theorem} 
\label{theorem:lowerbdfNlambdareg}
Assume that $\lambda_\nu \asymp \nu^{-2m}$ for some $m>3/2$, and design points $\bt^{e_0}$ and $\bt^{e_j}, j=1,\ldots,d$, are from the regular lattice (\ref{eqn:reglatticeofti}). Suppose that $f_0\in\HH$ has periodic boundaries on $\XX_1^d$ and is truncated up to $r$ interactions in (\ref{eqn:anovadecompfti}). Then, as $n\rightarrow \infty$,
\begin{equation*}
\begin{aligned}
& \inf_{\tilde{f}}\sup_{f_0\in\HH}\E\int_{\XX_1^d}\left[\tilde{f}(\bt)-f_0(\bt)\right]^2d\bt \\
& \quad\quad\quad\quad = 
\begin{cases}
\left[n(\log n)^{1-(d-p)\wedge r}\right]^{-2m/(2m+1)},  & \mbox{ if } 0\leq p< d\\
n^{-1}(\log n)^{r-1} + n^{-2mr/[(2m+1)r-2]}, & \mbox{ if } p=d
\end{cases}
\end{aligned}
\end{equation*}
up to a constant factor which only depends on bounded values $\sigma_0^2$, $\sigma_j^2$s, $m$, $r$, $p$,  and $d$.
\end{theorem}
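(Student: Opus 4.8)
Both bounds rest on one reduction. Under periodic boundaries and the regular lattice (\ref{eqn:reglatticeofti}) with $l_k\asymp n^{1/d}$, expand $f_0(\bt)=\sum_{\boldsymbol\omega\in\Z^d}\theta_{\boldsymbol\omega}\,e^{2\pi i\langle\boldsymbol\omega,\bt\rangle}$. Truncating (\ref{eqn:anovadecompfti}) to $r$ interactions forces $\theta_{\boldsymbol\omega}=0$ whenever $\boldsymbol\omega$ has more than $r$ nonzero coordinates, while $\lambda_\nu\asymp\nu^{-2m}$ together with the tensor-product construction (\ref{eqn:anovadechi}) makes the ball $\{f_0\in\HH:\norm{f_0}_\HH\le\text{const}\}$ equivalent, up to constants, to the anisotropic ellipsoid $\sum_{\boldsymbol\omega}\rho_{\boldsymbol\omega}\,|\theta_{\boldsymbol\omega}|^2\le\text{const}$ with $\rho_{\boldsymbol\omega}=\prod_{k:\,\omega_k\neq0}|\omega_k|^{2m}$. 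The $j$th derivative observation in (\ref{modelequation}) sees the function whose $\boldsymbol\omega$-th Fourier coefficient is $2\pi i\,\omega_j\,\theta_{\boldsymbol\omega}$; taking empirical Fourier coefficients of all $p+1$ samples at the lattice points and combining them by inverse-variance weighting therefore yields a heteroscedastic (sub-)Gaussian sequence model $z_{\boldsymbol\omega}=\theta_{\boldsymbol\omega}+\xi_{\boldsymbol\omega}$ with $\mathrm{Var}(\xi_{\boldsymbol\omega})\asymp[\,n(\sigma_0^{-2}+\sum_{j=1}^p\omega_j^2\sigma_j^{-2})\,]^{-1}$ for $\norm{\boldsymbol\omega}_\infty<\min_k l_k/2$, the aliasing remainder being of smaller order. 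Thus the theorem amounts to the minimax $L^2$ risk over this ellipsoid in this sequence model, and the sole effect of the derivatives is the extra factor $\sum_{j\le p}\omega_j^2$ in the per-frequency precision.

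For the upper bound I will analyze the regularization (penalized least squares) estimator, which in the Fourier domain is the diagonal shrinkage $\hat\theta_{\boldsymbol\omega}=z_{\boldsymbol\omega}/(1+\lambda\,\rho_{\boldsymbol\omega}\,\mathrm{Var}(\xi_{\boldsymbol\omega}))$. Its integrated squared error splits into a squared-bias term bounded by a multiple of $\lambda$ over the ellipsoid and a variance term $\sum_{\boldsymbol\omega}(1+\lambda\rho_{\boldsymbol\omega}\mathrm{Var}(\xi_{\boldsymbol\omega}))^{-2}\mathrm{Var}(\xi_{\boldsymbol\omega})$. Evaluating the variance sum is the combinatorial heart: the weights $\omega_j^2$, $j\le p$, make the differentiated coordinates cheap, so only interactions among the $d-p$ remaining coordinates affect the polynomial order, and the count of integer points in a hyperbolic cross, $\#\{\boldsymbol\omega:\mathrm{supp}(\boldsymbol\omega)\subseteq S,\ \prod_{k\in S}|\omega_k|\le N\}\asymp N(\log N)^{|S|-1}$ with $|S|$ capped at $(d-p)\wedge r$, produces the logarithmic powers. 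Optimizing $\lambda$ to balance the two terms reproduces $[n(\log n)^{1-(d-p)\wedge r}]^{-2m/(2m+1)}$ when $0\le p<d$. When $p=d$ there are no cheap coordinates and the variance sum has two regimes: an $O(n)$-sized bulk of low frequencies each of precision of order $n$, contributing $n^{-1}(\log n)^{r-1}$, and a high-frequency shell inside the $r$-way interaction, contributing $n^{-2mr/[(2m+1)r-2]}$ after the Pinsker-type balance, which gives the second branch.

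For the lower bound I will use Fano's lemma (Assouad's for the hypercube part). Fix a coordinate set $S$ and a resolution $N$, let $B=\{\boldsymbol\omega:\mathrm{supp}(\boldsymbol\omega)\subseteq S,\ \prod_{k\in S}|\omega_k|\le N\}$, and perturb $f_0$ by $g_{\boldsymbol\varepsilon}=\sum_{\boldsymbol\omega\in B}\varepsilon_{\boldsymbol\omega}\,\delta_{\boldsymbol\omega}\,e^{2\pi i\langle\boldsymbol\omega,\cdot\rangle}$, $\boldsymbol\varepsilon\in\{\pm1\}^{B}$, with $\delta_{\boldsymbol\omega}$ on the ellipsoid boundary. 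Three checks are needed: (i) every $g_{\boldsymbol\varepsilon}$ lies in the $r$-truncated space $\HH$ with bounded norm, which holds once $|S|\le r$; (ii) by the Varshamov--Gilbert bound the $2^{|B|}$ hypotheses contain an exponentially large subset that is pairwise separated in $L^2$ by a constant multiple of the claimed rate; (iii) the Kullback--Leibler divergence between the observation laws under any two hypotheses, $\tfrac12\sum_{\boldsymbol\omega}n(\sigma_0^{-2}+\sum_{j\le p}\omega_j^2\sigma_j^{-2})(\delta_{\boldsymbol\omega}(\varepsilon_{\boldsymbol\omega}-\varepsilon'_{\boldsymbol\omega}))^2$, is at most a small multiple of $|B|$. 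For $0\le p<d$ I take $S$ among the un-differentiated coordinates with $|S|=(d-p)\wedge r$ and $N\asymp[n(\log n)^{1-(d-p)\wedge r}]^{1/(2m+1)}$; for $p=d$ I use two families, a bulk of low-frequency cells of the $r$-way interaction for the $n^{-1}(\log n)^{r-1}$ term and a single high-frequency shell for the $n^{-2mr/[(2m+1)r-2]}$ term. Matching the cutoffs to those of the upper bound closes the gap.

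The main obstacle is the reduction step combined with the weighted lattice-point counting: one must show that the discrete Fourier transform truly diagonalizes, two-sidedly and with constants uniform in $\sigma_0^2,\sigma_j^2,m,r,p,d$, all three ingredients at once --- the design (so the reduced noise is per-frequency with precision of order $n(\sigma_0^{-2}+\sum_{j\le p}\omega_j^2\sigma_j^{-2})$), the ANOVA penalty (which, owing to the side conditions and the sum-of-tensor-products structure of $\HH$ in (\ref{eqn:anovadechi}), does not split as cleanly as a single Sobolev penalty), and the $L^2$ loss --- and then carry out the counting of integer points in anisotropic hyperbolic-cross regions carrying the extra $\prod_{j\le p}\omega_j^2$ weight while tracking the exact power of $\log n$. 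The residual delicate points are controlling the aliasing error of the finite lattice and, when $p=d$, deciding which of the two competing contributions, $n^{-1}(\log n)^{r-1}$ or $n^{-2mr/[(2m+1)r-2]}$, dominates; the remaining pieces are a standard regularized-estimator risk bound and a standard Fano argument.
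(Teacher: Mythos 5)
Your reduction to a heteroscedastic Gaussian sequence model via the lattice Fourier transform, with per-frequency precision $n\bigl(\sigma_0^{-2}+\sum_{j\le p}\omega_j^2\sigma_j^{-2}\bigr)$, and your upper-bound analysis (diagonal shrinkage, bias--variance balance, hyperbolic-cross counting carrying the extra $\sum_{j\le p}\omega_j^2$ weights) coincide with the paper's treatment; the upper bound is split off there as a separate theorem, whose proof is essentially your computation. Where you genuinely diverge is the lower bound: for the lattice design the paper does not use Fano or Assouad. It invokes the Donoho--Liu--MacGibbon theorem that, in the Gaussian sequence model over a quadratically convex set (here the ellipsoid induced by $\|f_0\|_{\HH}$), the minimax risk is at least $0.8$ times the minimax linear risk of the hardest rectangular subproblem, and then computes that linear risk exactly by a Lagrange-multiplier (Pinsker-type) optimization, evaluating the resulting weighted sums over the truncated index set by integral approximations in the three regimes $0\le p\le d-r$, $d-r<p<d$, $p=d$. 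That route avoids packing constructions altogether and yields all regimes from a single optimization; your Fano/Varshamov--Gilbert route mirrors the paper's own proof for random designs and is equally viable, but it requires exactly the weighted lattice-point counting you flag, both for the $L^2$ separation and for the Kullback--Leibler bound with the $(1+\sum_j\omega_j^2)$ factors, and it must keep the perturbation frequencies below the lattice resolution so that the diagonalization is exact. One correction for the $p=d$ branch: with derivative-boosted precisions your ``bulk of low-frequency cells'' cannot deliver $n^{-1}(\log n)^{r-1}$, since the separation it produces is $n^{-1}\sum_{\prod_k\omega_k\le N}\bigl(1+\sum_j\omega_j^2\bigr)^{-1}\asymp n^{-1}N^{(r-2)/r}$ for $r\ge3$ (and $n^{-1}\log N$, $n^{-1}$ for $r=2,1$); this does not damage the result, because in each regime the term your bulk family misses is dominated by the one your high-frequency family delivers, so the stated rate still follows once that family is analyzed with the $\bigl(1+\sum_j\omega_j^2\bigr)^{-1}$ weights, exactly as in the paper's Fano argument for random designs.
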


We relegate the proof to Section \ref{sec:prooflowbdreg} in the supplementary material. Next, we show the lower bounds of convergence rates in Theorem \ref{theorem:lowerbdfNlambdareg} are obtainable. In particular,
we consider the method of regularization by  simultaneously minimize the empirical losses of function observations and partial derivative observations with a single penalty: 
\begin{equation}
\label{scheme1}
\begin{aligned}
\widehat{f}_{n\lambda} = \underset{f\in \HH}{\arg\min}& \left\{\frac{1}{n(p+1)}\left[\frac{1}{\sigma_0^2}\sum_{i=1}^{n}\left\{y_i^{e_0}- f(\bt_i^{e_0})\right\}^2\right.\right.\\
& \quad\left.\left.+\sum_{j=1}^p\frac{1}{\sigma_j^2}\sum_{i=1}^{n}\left\{y_i^{e_j}-\partial f/\partial t_j(\bt_i^{e_j})\right\}^2\right]+\lambda J(f)\right\},
\end{aligned}
\end{equation}
where the weighted squared error loss may be replaced by other convex losses, and $J(\cdot)$ is a quadratic penalty associated with RKHS $\HH$, and $\lambda\geq 0$ is a tuning parameter. 
The following theorem shows $\widehat{f}_{n\lambda}$ in (\ref{scheme1}) is indeed minimax rate optimal.

\begin{theorem}
\label{thm;deterdesgnupperbdgeneralpdreg}
Under the  conditions of Theorem \ref{theorem:lowerbdfNlambdareg}, $\widehat{f}_{n\lambda}$ given by (\ref{scheme1}) satisfies
\begin{equation*}
\begin{aligned}
& \E\int_{\XX_1^d}\left[\widehat{f}_{n\lambda}(\bt)-f_0(\bt)\right]^2d\bt\\
& \quad\quad\quad\quad = 
\begin{cases}
\left[n(\log n)^{1-(d-p)\wedge r}\right]^{-2m/(2m+1)}  & \mbox{ if } 0\leq p< d,\\
n^{-1}(\log n)^{r-1} + n^{-2mr/[(2m+1)r-2]}& \mbox{ if } p=d,
\end{cases}
\end{aligned}
\end{equation*}
up to a constant factor which only depends on bounded values $\sigma_0^2$, $\sigma_j^2$s, $m$, $r$, $p$, and $d$, if tuning parameter $\lambda$ is chosen by $\lambda\asymp \left[n(\log n)^{1-(d-p)\wedge r}\right]^{-2m/(2m+1)} $ when $0\leq p<d$, and $\lambda\asymp n^{-(2mr-2)/[(2m+1)r-2]}$ when $p=d, r\geq 3$, and $\lambda\asymp (n\log n)^{-(2m-1)/2m}$ when $p=d, r= 2$, and $\lambda\lesssim n^{-(m-1)/m}$ when $p=d$, $r=1$.
\end{theorem}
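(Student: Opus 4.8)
Since the minimax lower bound is already established in Theorem~\ref{theorem:lowerbdfNlambdareg}, it suffices to bound the integrated risk of $\widehat f_{n\lambda}$ from (\ref{scheme1}) above by the same rate. The plan is to use the periodic-boundary and regular-lattice structure to diagonalize the problem in the tensor-product trigonometric (eigenfunction) basis, write $\widehat f_{n\lambda}$ as an explicit coefficientwise linear shrinkage estimator, and split the resulting bias and variance into multivariate spectral sums over truncated tensor indices.

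I would first set up the diagonalization. Expand $f=\sum_\bnu \theta_\bnu\phi_\bnu$ with $\phi_\bnu(\bt)=\prod_{k=1}^d\psi_{\nu_k}(t_k)$, $\{\psi_\nu\}$ the periodic eigenfunctions of $K$ in (\ref{eqn:mercerdecaykttgamma}); then $\lambda_\bnu=\prod_k\lambda_{\nu_k}\asymp\prod_{k:\nu_k\ge1}\nu_k^{-2m}$, the $r$-truncation of (\ref{eqn:anovadecompfti}) restricts $\bnu$ to at most $r$ coordinates with $\nu_k\ge1$, and $J(f)=\sum_\bnu\theta_\bnu^2/\lambda_\bnu$. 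On the lattice (\ref{eqn:reglatticeofti}) the $\phi_\bnu$ are orthonormal for the empirical inner product up to aliasing, which under $\lambda_\nu\asymp\nu^{-2m}$, $m>3/2$ and periodicity only contributes a remainder of order $O(l^{-2m})$ per mode ($l\asymp n^{1/d}$), uniformly negligible over the $\HH$-ball; and by (\ref{eqn:partialgttpartialtjhh}) together with the condition $\partial^2K/\partial t\partial t'\in C(\XX_1\times\XX_1)$, $\partial\phi_\bnu/\partial t_j$ is a multiple $\asymp\nu_j$ of another basis function (with the same frequency vector). The normal equations of (\ref{scheme1}) then decouple coefficientwise into
\[
\widehat\theta_\bnu \;=\; \frac{P_\bnu}{S_\bnu + c\,\lambda/\lambda_\bnu},\qquad S_\bnu \;\asymp\; 1+\sum_{j=1}^{p}\nu_j^2\,\I[\nu_j\ge1],
\]
where $P_\bnu$ is a weighted combination of the $n(p+1)$ observations with $\E P_\bnu=S_\bnu\theta_{0,\bnu}$ and $\mathrm{Var}\,P_\bnu\asymp S_\bnu/n$, and $c$ depends only on $p$ and the $\sigma$'s; the crucial point is that each derivative-observed, active coordinate inflates the effective information weight $S_\bnu$ by $\nu_j^2$.

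Next I would write the risk as $B_n+V_n+R_n$ (squared bias, variance, aliasing remainder); since only second moments of the noise enter, Gaussianity is not needed. Over the $\HH$-unit ball, using $\sum_\bnu\theta_{0,\bnu}^2/\lambda_\bnu\le1$,
\[
B_n \;\le\; \sup_\bnu \lambda_\bnu\Big(\tfrac{c\,\lambda/\lambda_\bnu}{S_\bnu+c\,\lambda/\lambda_\bnu}\Big)^2,\qquad V_n \;\lesssim\; \frac1n\sum_\bnu\frac{S_\bnu}{(S_\bnu+c\,\lambda/\lambda_\bnu)^2}\;\le\;\frac1n\sum_\bnu\frac{1}{S_\bnu+c\,\lambda/\lambda_\bnu}.
\]
The heart of the proof is to evaluate these sums, which I would do by splitting over the active set $A=A_1\sqcup A_2$ of $\bnu$, with $A_1\subseteq\{1,\dots,p\}$ the derivative-observed active coordinates and $A_2$ the rest, $|A|\le r$. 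Coordinates in $A_2$ produce the classical one-dimensional count, each worth a factor $\asymp\lambda^{-1/(2m)}$ plus an extra $(\log 1/\lambda)$ for every further such coordinate; coordinates in $A_1$ are ``cheap'': the $\nu_j^2$ weight renders their contribution bounded or merely logarithmic, never changing the leading power --- retaining the polynomial penalty term $\lambda\prod_{k\in A}\nu_k^{2m}$ is essential here, since the naive sums $\sum(1+\sum_{j\in A_1}\nu_j^2)^{-1}$ diverge once $|A_1|\ge2$. Hence the dominant $A$ has $|A_2|=(d-p)\wedge r$. For $0\le p<d$ this gives $B_n=O(\lambda)$ and $V_n=O(n^{-1}\lambda^{-1/(2m)}(\log 1/\lambda)^{(d-p)\wedge r-1})$, which balance at $\lambda\asymp[n(\log n)^{1-(d-p)\wedge r}]^{-2m/(2m+1)}$, reproducing the rate. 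For $p=d$ there is no expensive coordinate: the bias supremum is attained at low frequencies and equals a strictly smaller power of $\lambda$, its exponent fixed by the product $\prod_{k\in A}\nu_k^{2m}$ and hence by whether $r\ge3$, $r=2$, or $r=1$; the variance sum splits into an $n^{-1}(\log n)^{r-1}$ part (the low-frequency bulk over the $\binom{d}{\le r}$ active sets) and an $n^{-2mr/[(2m+1)r-2]}$ part (the penalty-dominated tail), and the three prescribed $\lambda$ are precisely those calibrating $B_n+V_n$ to the stated order.

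Assembling, with $\lambda$ as in the statement, $B_n+V_n+R_n$ is bounded by the claimed rate, which together with Theorem~\ref{theorem:lowerbdfNlambdareg} is exact. The main obstacle is the spectral bookkeeping above: pinning down the exact powers of $\log n$, and in the $p=d$ case the competition between the two polynomial-in-$n$ terms, when some coordinates carry the $\nu_j^2$ weight and the index is truncated to $r$ interactions --- this needs careful AM--GM and layer-cake estimates for $\sum_\bnu(1+\sum_{j\in A_1}\nu_j^2+\lambda\prod_{k\in A}\nu_k^{2m})^{-1}$ over lattice points. A secondary point is the uniform control of the empirical-versus-$L^2$ norm equivalence and of the aliasing remainder $R_n$, where periodicity, $m>3/2$ and continuity of $\partial^2K/\partial t\partial t'$ enter.
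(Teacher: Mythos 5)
Your proposal is correct and follows essentially the same route as the paper's proof: diagonalize on the regular lattice in the periodic trigonometric eigenbasis (via the transformed data (\ref{eqn:distransdataz})), obtain the explicit coefficientwise shrinkage estimator with information weight $\sigma_0^{-2}+\sum_j\sigma_j^{-2}\nu_j^2$, bound the bias by $\lambda^2 J(f_0)\sup_{\vec{\bnu}}B_\lambda(\vec{\bnu})$ plus a negligible truncation/aliasing remainder, and bound the variance by $n^{-1}$ times the spectral sum that the paper evaluates in Lemma \ref{lemma:varthetamultivariatetensorrank}, with the same calibration of $\lambda$. The counting you describe (derivative-observed coordinates are ``cheap'' because of the $\nu_j^2$ weight, so the dominant configuration has $(d-p)\wedge r$ expensive coordinates, and for $p=d$ the $r\ge 3$, $r=2$, $r=1$ cases separate) is exactly the content of that lemma, so no genuinely new ingredient is introduced or missing.
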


The proof of this theorem is presented in Section \ref{sec:proofupperbdreg} in the supplementary material. Theorems \ref{theorem:lowerbdfNlambdareg} and \ref{thm;deterdesgnupperbdgeneralpdreg} together immediately imply that
with model  (\ref{modelequation}) and regular lattices, the minimax optimal rate for estimating $f_0\in\HH$ is
\begin{equation}
\label{eqn:minaxhatftt}
\begin{aligned}
& \E\int_{\XX_1^d}\left[\widehat{f}(\bt)-f_0(\bt)\right]^2d\bt\\
& \quad\quad\quad\quad = 
\begin{cases}
\left[n(\log n)^{1-(d-p)\wedge r}\right]^{-2m/(2m+1)},  & \mbox{ if } 0\leq p< d,\\
n^{-1}(\log n)^{r-1} + n^{-2mr/[(2m+1)r-2]}, & \mbox{ if } p=d,
\end{cases}
\end{aligned}
\end{equation}
and the method of regularization achieves (\ref{eqn:minaxhatftt}).
We make several remarks on this result. First, suppose there is no derivative data, for example, $p=0$ and  $r=d$. Then, (\ref{eqn:minaxhatftt}) recovers $[n(\log n)^{1-d}]^{-2m/(2m+1)}$ and this rate is known in literature (see, e.g., \cite{gu2013smoothing}). For a large $n$, the exponential term $(\log n)^{d-1}$ makes the full $d$-interaction SS-ANOVA model impractical for large $d$. On the contrary, suppose partial derivatives data are available, for example, $p=d-1$ and $r=d$. Then, (\ref{eqn:minaxhatftt})  gives $n^{-2m/(2m+1)}$ for any $d\geq 1$, which coincides with the classical optimal rate for additive models \cite{hastie1990generalized, stone1985additive} and is not affected by the dimension $d$.

Second, if partial derivative observations are available on all covariates with  $p=d$, then the optimal rate can be much improved. Besides (\ref{eqn:rrnpequald}) for $r=d$ and $d\geq 3$, we point out 
some other interesting cases. For the additive model with $r=1$ and $d\geq 1$, (\ref{eqn:minaxhatftt}) provides the minimax rate $n^{-1}$. For the pairwise interaction model with $r=2$ and $d\geq 1$, (\ref{eqn:minaxhatftt}) provides the minimax rate $n^{-1}\log n$, which is different from $n^{-1}$ only by a $\log n$ multiplier.

Third, we remark on an ``interaction reduction" phenomenon. That is to say,
the optimal rate for estimating an unknown SS-ANOVA model by incorporating partial derivative data is the same as the optimal rate for estimating a \textit{reduced interaction} SS-ANOVA  without derivative data. For example, with $r=d$ and $p= 1$,  (\ref{eqn:minaxhatftt}) gives $[n(\log n)^{1-(d-1)}]^{-2m/(2m+1)}$, which is the same rate as  $r=d-1$ and $p=0$ involving no derivative observations but a lower degree of interactions.  And, with $r=d$ and $p=2$, (\ref{eqn:minaxhatftt}) gives $[n(\log n)^{1-(d-2)}]^{-2m/(2m+1)}$, which is the same rate as $r=d-2$ and $p=0$ involving no derivative observations but two lower degrees of interactions. Similarly, we can extend the same discussion to $p=3,\ldots, d-1$.

Fourth, by reviewing the proof for Theorem \ref{theorem:lowerbdfNlambdareg} and \ref{thm;deterdesgnupperbdgeneralpdreg},  we  find  that when $p=d$, both the squared bias and variance  are smaller in magnitude than $p<d$, and when $d-r<p<d$, only the variance is smaller in magnitude than $0\leq p\leq d-r$.

Finally, let $n_0$ denote the sample size on $(\bt^{e_0},Y^{e_0})$ and $n_j$ denote the sample sizes on $(\bt^{e_j},Y^{e_j})$, where $1\leq j\leq p$. If $n_0$ and $n_j$s are not all identical to $n$, we can show that $n$ in (\ref{eqn:minaxhatftt}) can be replaced by $\min_{1\leq j\leq p}n_j$.

\section{Minimax risks with random designs}
\label{sec:randomdesign}

We now turn to random designs for the minimax optimal rates of estimating $f_0(\cdot)$ with the regression model (\ref{modelequation}). Parallel to Theorem \ref{theorem:lowerbdfNlambdareg}, we have the following minimax lower bound of estimation under random designs.

\begin{theorem} 
\label{theorem:lowerbdfNlambdaregrandom}
Assume that $\lambda_\nu \asymp \nu^{-2m}$ for some $m>3/2$, and design points $\bt^{e_0}$ and $\bt^{e_j}, j=1,\ldots,d$, are independently drawn from $\Pi^{e_0}$ and $\Pi^{e_j}$s, respectively. Suppose that $\Pi^{e_0}$ and $\Pi^{e_j}$s have densities bounded away from zero and infinity, and $f_0\in\HH$ is truncated up to $r$ interactions in (\ref{eqn:anovadecompfti}). Then, as $n\rightarrow \infty$,
\begin{align*}
\inf_{\tilde{f}}\sup_{f_0\in\HH}\P& \left\{\int_{\XX_1^d}\left[\tilde{f}(\bt)-f_0(\bt)\right]^2d\bt \geq C_1\left(\left[n(\log n)^{1-(d-p)\wedge r}\right]^{-2m/(2m+1)} \mathbbm{1}_{0\leq p<d} \right.\right.\\
& \left.\left. \quad\quad\quad\quad\quad + \left[n^{-1}(\log n)^{r-1} + n^{-2mr/[(2m+1)r-2]}\right] \mathbbm{1}_{p=d}\vphantom{\int_{\XX_1^d}\left\{\tilde{f}(\bt)-f_0(\bt)\right\}^2d\bt}\right)\right\}>0
\end{align*}
where the constant $C_1$ only depends on bounded values $\sigma_0^2$, $\sigma_j^2$s, $m$,  $r$, $p$, and $d$.
\end{theorem}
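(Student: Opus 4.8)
The plan is to establish the lower bound by reducing to a multiple hypothesis testing problem, using Fano's inequality and an Assouad-type coordinate-wise bound (and Le Cam's two-point lemma in the parametric sub-case), all in their ``in probability'' forms, with a hypercube of perturbations built inside the $r$-truncated subspace of $\HH$ from the tensor eigenbasis $\{\prod_k\psi_{\nu_k}(t_k)\}$. Relative to Theorem \ref{theorem:lowerbdfNlambdareg}, the single structural change is that every design-averaged quantity --- the $L^2$ separation of the hypotheses and the Kullback--Leibler divergences between the induced laws of the observations --- is now computed against $\Pi^{e_0}$ and the $\Pi^{e_j}$; since these have densities bounded away from $0$ and $\infty$, each such quantity stays comparable, up to fixed constants, to its Lebesgue counterpart, so the combinatorial content of the lattice proof transfers. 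I will take the noises Gaussian, which is legitimate for a lower bound and makes the KL divergence a weighted sum of squared $L^2$ distances.

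For a coordinate set $S\subseteq\{1,\dots,d\}$ with $|S|\le r$ and an index set $\Lambda\subseteq\N^{S}$, I will consider $f_\omega(\bt)=\sum_{\bnu\in\Lambda}\omega_\bnu\,\delta_\bnu\,\prod_{k\in S}\psi_{\nu_k}(t_k)$, $\omega\in\{0,1\}^{\Lambda}$, with amplitudes $\delta_\bnu>0$ that may depend on $\bnu$. By $L^2(\XX_1)$-orthonormality of the $\psi_\nu$ the separation is $\int_{\XX_1^d}[f_\omega-f_{\omega'}]^2=\sum_{\bnu}(\omega_\bnu-\omega'_\bnu)^2\delta_\bnu^2$; membership of $f_\omega$ in a fixed ball of the $r$-truncated $\HH$ reduces to $\sum_{\bnu\in\Lambda}\delta_\bnu^2\prod_{k\in S}\nu_k^{2m}\lesssim1$, using $\|\psi_\nu\|_{\HH_1}^2=\lambda_\nu^{-1}\asymp\nu^{2m}$; and the KL cost of flipping a single $\omega_\bnu$ equals, up to the fixed constants $\sigma_0^2,\sigma_j^2$, the quantity $n\,\delta_\bnu^2\bigl(1+\sum_{j\in S,\,j\le p}\nu_j^2\bigr)$, since $\partial_j\prod_{k\in S}\psi_{\nu_k}$ vanishes for $j\notin S$ while $\|\partial_t\psi_\nu\|_{L^2(\XX_1)}^2\asymp\nu^2$ (an interpolation estimate following from $\lambda_\nu\asymp\nu^{-2m}$ and (\ref{eqn:partial2tt'kcx1x1}); this estimate is also the mechanism behind ``interaction reduction''). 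When $0\le p<d$ I will take $S\subseteq\{p+1,\dots,d\}$ with $|S|=(d-p)\wedge r$, so that $Y^{e_1},\dots,Y^{e_p}$ carry no information about the perturbation and the problem collapses to full-interaction SS-ANOVA estimation in $|S|$ coordinates from $n$ function observations; with $\Lambda$ the hyperbolic cross $\{\prod_{k\in S}\nu_k\le T\}$ and uniform amplitude $\delta$, one has $|\Lambda|\asymp T(\log T)^{|S|-1}$ and $\sum_{\bnu\in\Lambda}\prod\nu_k^{2m}\asymp T^{2m+1}(\log T)^{|S|-1}$, and balancing $\delta^2\asymp n^{-1}$ against $T^{2m+1}(\log T)^{|S|-1}\asymp n$ returns the separation $[n(\log n)^{1-(d-p)\wedge r}]^{-2m/(2m+1)}$.

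When $p=d$ every coordinate carries derivative data and a separate calibration is needed for each of $r\ge3$, $r=2$, $r=1$. For $r\ge3$ I will take $|S|=r$, $\Lambda$ the box $\{1,\dots,N\}^{S}$ and uniform $\delta$; then the worst per-flip KL is $\asymp n\delta^2N^2$ and the ball constraint is $\delta^2N^{(2m+1)r}\lesssim1$, and balancing $N^{r-2}/n\asymp N^{-2mr}$ gives the separation $n^{-2mr/[(2m+1)r-2]}$, which for $r\ge3$ dominates $n^{-1}(\log n)^{r-1}$. For $r=1$ a two-point comparison of $f_0\equiv0$ with a single low-frequency main effect $\delta\,\psi_1(t_{j})$ has KL $\asymp n\delta^2$ and separation $\delta^2$, giving $n^{-1}$. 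The subtle case is $r=2$: a box yields only $n^{-1}$, so instead I will run the Assouad-type bound with the $\bnu$-dependent amplitudes $\delta_\bnu^2\asymp\bigl[n\max(\nu_{j_1},\nu_{j_2})^2\bigr]^{-1}$ over the ``critical'' region $\Lambda=\{(\nu_{j_1}\nu_{j_2})^{2m}\le B\max(\nu_{j_1},\nu_{j_2})^2\}$, i.e.\ the modes that the penalized estimator can just resolve; every per-flip KL is then of constant order, the ball constraint reads $\sum_{\bnu\in\Lambda}(\nu_{j_1}\nu_{j_2})^{2m}\max(\nu_{j_1},\nu_{j_2})^{-2}\asymp n$ (pinning $B\asymp n^{(2m-2)/(2m-1)}$), and the lower bound $\sum_{\bnu\in\Lambda}\delta_\bnu^2\asymp n^{-1}\sum_{\bnu\in\Lambda}\max(\nu_{j_1},\nu_{j_2})^{-2}\asymp n^{-1}\log B\asymp n^{-1}\log n$ picks up its extra logarithm from a harmonic sum over the thin part of $\Lambda$ near the coordinate axes.

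To finish I will, in the uniform-amplitude cases, first sparsify the hypercube by the Varshamov--Gilbert bound and then invoke Fano's inequality (respectively Le Cam's lemma for $r=1$, the coordinate-wise bound for $r=2$) to conclude that the $L^2$ risk of any estimator exceeds a constant multiple of the displayed rate with probability bounded away from zero, uniformly over $f_0$; taking the better of the regimes $p<d$ and $p=d$, and the maximum with $n^{-1}(\log n)^{r-1}$, then gives exactly the stated bound, with $C_1$ absorbing $\sigma_0^2$, the $\sigma_j^2$, $m$, $r$, $p$, $d$. I expect the main obstacle to be the $p=d$ analysis, and within it the $r=2$ case: capturing the exact logarithmic power forces the variable-amplitude construction over the non-standard critical region and a careful evaluation of the resulting harmonic-type sum, and one must still verify that it degenerates correctly to the box construction for $r\ge3$ and to the two-point bound for $r=1$, and that the density bounds on $\Pi^{e_0},\Pi^{e_j}$ really do make all these design-averaged quantities equivalent to their Lebesgue counterparts so that the lattice combinatorics carry over.
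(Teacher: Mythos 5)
Your proposal is correct in substance and, for the only case the paper actually writes out in detail ($p=d$, $r\geq 3$), it is essentially the paper's own argument: reduce to Gaussian noise and uniform designs (the densities bounded away from $0$ and $\infty$ make the design-averaged KL quantities comparable to their Lebesgue versions, exactly as in the paper's conditional Fano step (\ref{eqn:fanolemhatthetap})--(\ref{eqn:inftildeffoinhh4})), build a hypercube of perturbations from the tensor trigonometric eigenbasis inside the $r$-truncated space, sparsify by Varshamov--Gilbert, and apply Fano. Your plain box $\{1,\dots,N\}^r$ with uniform amplitude gives the same rate $n^{-2mr/[(2m+1)r-2]}$ as the paper's hyperbolic cross with $(1+\sum_j \nu_j^2)^{-1/2}$-weighted amplitudes, and your choice $S\subseteq\{p+1,\dots,d\}$ with $|S|=(d-p)\wedge r$ for $p<d$ reproduces precisely the ``derivative-free'' configuration that the paper's hardest-rectangle computation (Section \ref{sec:prooflowbdreg}) identifies as extremal; the calibration $[n(\log n)^{1-(d-p)\wedge r}]^{-2m/(2m+1)}$ checks out. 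Where you genuinely diverge is $p=d$ with $r=2$ (and $r=1$), which the paper dispatches as ``similar arguments'' by pointing to its deterministic-design Donoho--Liu--MacGibbon linear-minimax computation: your Assouad-type construction with variable amplitudes $\delta_{\vec{\bnu}}^2\asymp[n\max(\nu_1,\nu_2)^2]^{-1}$ on the critical region is a legitimate substitute, and your calibration ($B\asymp n^{(2m-2)/(2m-1)}$, harmonic sum yielding $n^{-1}\log n$) is correct.

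The one step you assert but do not supply is the ``in probability'' form of the Assouad bound. Assouad natively controls the expected risk, and you cannot repair this by running Varshamov--Gilbert plus Fano on your variable-amplitude cube: after sparsification the pairwise separation is only guaranteed to be of the order of the $|\Lambda|/8$ smallest $\delta_{\vec{\bnu}}^2$, which is far below $n^{-1}\log n$ since the amplitudes range over several orders of magnitude on $\Lambda$. You therefore need a short additional argument to get the stated $\inf_{\tilde f}\sup_{f_0}\P\{\cdot\geq C_1 n^{-1}\log n\}>0$; for instance, note that the cube-projected loss $Z=\sum_{\vec{\bnu}}\delta_{\vec{\bnu}}^2\mathbbm{1}\{\text{coordinate }\vec{\bnu}\text{ missed}\}$ satisfies $Z\leq\sum_{\vec{\bnu}}\delta_{\vec{\bnu}}^2$ deterministically while the coordinate-wise two-point testing bound gives $\E Z\gtrsim\sum_{\vec{\bnu}}\delta_{\vec{\bnu}}^2$ under the uniform prior, so a reverse-Markov (or Paley--Zygmund) step converts this into a constant-probability lower bound. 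With that supplied (and the routine verification that your hypotheses lie in a fixed ball of the $r$-truncated $\HH$, which you already sketch), your plan yields the theorem.
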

The lower bound is  established via Fano's lemma; see, for example, \cite{tsybakovintroduction, cover2012elements}. The proof is deferred to Section \ref{subsubsec:prooflowerrandom}. Next, we show the lower bounds of convergence rates in Theorem \ref{theorem:lowerbdfNlambdaregrandom} can be achieved by using the regularized estimator in (\ref{scheme1}). 
\begin{theorem}
\label{thm:mainupperrateestf0}
Under the conditions of Theorem \ref{theorem:lowerbdfNlambdaregrandom}, we assume that $\Pi^{e_0}$ and $\Pi^{e_j}$s are known, and  $m>2$. Then, $\widehat{f}_{n\lambda}$ in (\ref{scheme1}) satisfies 
\begin{align*}
\lim_{D_1\rightarrow\infty}\underset{n\rightarrow\infty}{\lim\sup}& \sup_{f_0\in\HH} \P \left\{\int_{\XX_1^d}\left[\widehat{f}_{n\lambda}(\bt)-f_0(\bt)\right]^2d\bt > D_1\left(\left[n(\log n)^{1-(d-p)\wedge r}\right]^{-2m/(2m+1)}  \right.\right.\\
& \left.\left. \cdot \mathbbm{1}_{0\leq p<d} + \left[n^{-1}(\log n)^{r-1} + n^{-2mr/[(2m+1)r-2]}\right] \mathbbm{1}_{p=d}\vphantom{\left[n(\log n)^{1-(d-p)\wedge r}\right]^{-2m/(2m+1)}}\right)\vphantom{\int_{\XX_1^d}\left\{\tilde{f}(\bt)-f_0(\bt)\right\}^2d\bt}\right\}=0
\end{align*}
if the tuning parameter $\lambda$ is chosen by $\lambda\asymp \left[n(\log n)^{1-(d-p)\wedge r}\right]^{-2m/(2m+1)} $ when $0\leq p<d$, and $\lambda\asymp n^{-(2mr-2)/[(2m+1)r-2]}$ when $p=d, r\geq 3$, and $\lambda\asymp (n\log n)^{-(2m-1)/2m}$ when $p=d, r= 2$, and $\lambda\lesssim n^{-(m-1)/m}$ when $p=d$, $r=1$. In other words, $\widehat{f}_{n\lambda}$ is rate optimal.
\end{theorem}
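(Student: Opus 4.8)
The plan is to establish the upper bound by an oracle-type argument that mirrors the deterministic-design proof of Theorem \ref{thm;deterdesgnupperbdgeneralpdreg}, but with the empirical norm replacing the lattice-averaged norm and a concentration step bridging the two. First I would set up the population quantities: define the combined bilinear form $\langle f,g\rangle_* = \frac{1}{p+1}\bigl(\sigma_0^{-2}\int f g\, d\Pi^{e_0} + \sum_{j=1}^p \sigma_j^{-2}\int (\partial f/\partial t_j)(\partial g/\partial t_j)\, d\Pi^{e_j}\bigr)$ and the associated operator on $\HH$. Because the densities of $\Pi^{e_0},\Pi^{e_j}$ are bounded above and below, $\langle\cdot,\cdot\rangle_*$ is equivalent to the $L^2(d\bt)$ inner product on the relevant subspaces, and the key spectral input is that the eigenvalues of the penalty $J$ relative to $\langle\cdot,\cdot\rangle_*$ inherit the polynomial decay $\nu^{-2m}$ per tensor factor, with the "derivative directions" contributing decay $\nu^{-2(m-1)}$ (one derivative lost). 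Counting eigenvalues of the $d$-fold tensor product that fall below a threshold $\eta$ is exactly the combinatorial computation that produces the $(\log n)$ powers in \eqref{eqn:minaxhatftt}: for $0\le p<d$ the effective dimension is $N(\eta)\asymp \eta^{-1/(2m)}(\log(1/\eta))^{(d-p)\wedge r - 1}$, while for $p=d$ the derivative-improved decay removes the log factors and yields the mixed rate.

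The next step is the standard bias–variance decomposition for the regularized estimator in the population norm. Writing $\bar f_{n\lambda}$ for the deterministic minimizer of the population version of \eqref{scheme1}, the approximation (bias) term is controlled by $\lambda J(f_0) + $ higher-order, giving order $\lambda$; the stochastic (variance) term is controlled via the trace of the smoothed operator, $\mathrm{tr}\bigl((I+\lambda^{-1}\Sigma_*)^{-1}\bigr)/n$ where $\Sigma_*$ is the covariance operator, which evaluates to $N(\lambda)/n$ up to constants using the eigenvalue count above. Optimizing $\lambda + N(\lambda)/n$ over $\lambda$ reproduces exactly the stated choices of $\lambda$ in each regime ($0\le p<d$; $p=d,r\ge 3$; $p=d,r=2$; $p=d,r=1$), and gives the claimed rate for $\|\bar f_{n\lambda}-f_0\|_{L^2}^2$. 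I would package this so that it is essentially a quotation of the corresponding bound already proved for the lattice design, since the eigenvalue bookkeeping is identical.

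The main obstacle — and the reason the theorem needs the extra hypotheses $m>2$ and known $\Pi^{e_j}$ — is passing from the empirical criterion \eqref{scheme1} to its population counterpart, i.e. showing $\|\widehat f_{n\lambda}-\bar f_{n\lambda}\|_{L^2}^2$ is of smaller or equal order. This requires a uniform concentration inequality for the empirical process indexed by the unit ball of $\HH$ in the norm $\|f\|_*^2 + \lambda J(f)$, and crucially an analogous statement for the \emph{derivative} empirical process $f\mapsto \frac1n\sum_i \epsilon_i^{e_j}\,\partial f/\partial t_j(\bt_i^{e_j})$. The derivative functionals $\partial K_d(\bt,\cdot)/\partial t_j$ are bounded linear functionals on $\HH$ (as noted around \eqref{eqn:partialgttpartialtjhh}), but their RKHS norms involve $\partial^2 K/\partial t\partial t'$, so controlling the metric entropy of this class and the resulting modulus of continuity forces a stronger smoothness requirement; tracking constants shows $m>2$ suffices for the relevant empirical-norm/population-norm equivalence (a "basic inequality" argument à la van de Geer), localized at the rate $\delta_n^2$ equal to the target rate. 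Knowledge of $\Pi^{e_0},\Pi^{e_j}$ is used so that the penalty $J$ can be taken as the exact population quadratic form, avoiding an additional estimation error in the operator. Once this equivalence is in hand on an event of probability tending to one, the deterministic bias–variance bound transfers to $\widehat f_{n\lambda}$, and letting $D_1\to\infty$ absorbs the remaining constants, completing the proof.
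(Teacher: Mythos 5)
Your proposal fails at its central step: the bilinear form $\langle f,g\rangle_*$ (the paper's $\langle\cdot,\cdot\rangle_0$) is \emph{not} equivalent to the $L_2(d\bt)$ inner product, because it contains the first-order partial derivatives in the $p$ observed directions and is therefore a strictly stronger, Sobolev-type form. This is not a technicality; it is the heart of the problem. The entire improvement of the $L_2$ rate from derivative data comes from the gap between the two norms: in the simultaneously diagonalizing trigonometric basis one has $\|\phi_{\vec{\bnu}}\|_0=1$ while $\|\phi_{\vec{\bnu}}\|_{L_2}^2\asymp (1+\sum_{j\leq p}\nu_j^2)^{-1}$, and it is exactly this reweighting that turns the variance sum into the paper's $N_a(\lambda)$ and produces $[n(\log n)^{1-(d-p)\wedge r}]^{-2m/(2m+1)}$ and the $p=d$ rates. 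If you genuinely measured the error in a norm equivalent to $\|\cdot\|_*$, the trace/effective-dimension computation inside your own framework is dominated by the derivative directions (effective exponent $2m-2$) and gives a variance of order $n^{-1}\lambda^{-1/(2m-2)}$, i.e.\ the rate $n^{-2(m-1)/(2m-1)}$ that the paper obtains for the $\|\cdot\|_0$-error and for estimating $\partial f_0/\partial t_j$ --- not the faster $L_2$ rates claimed in the theorem; your quoted effective dimension $\eta^{-1/(2m)}(\log(1/\eta))^{(d-p)\wedge r-1}$ is the count weighted by $\|\phi_{\vec{\bnu}}\|_{L_2}^2$ and is inconsistent with the equivalence you assert. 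This is precisely the difficulty the paper flags: the $L_2$ loss, the expected empirical loss, and $J$ are three quadratic forms that cannot in general be simultaneously diagonalized, and the paper's proof circumvents it through the interpolation norms $\|\cdot\|_{L_2(a)}$ together with the Cox--O'Sullivan linearization ($\bar f_{\infty\lambda}$, $\tilde f^*=\bar f-\tfrac12 G_\lambda^{-1}Dl_{n\lambda}(\bar f)$, and a separate product-expansion bound for $\widehat f-\tilde f^*$), not through an equivalence-of-norms/basic-inequality argument.

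Two further gaps. First, the squared bias is not of order $\lambda$ when $p=d$: the paper proves $\|\bar f-f_0\|_{L_2}^2=O\{\lambda^{mr/(mr-1)}J(f_0)\}$ in that regime, and this improvement is indispensable --- with the stated choice $\lambda\asymp n^{-(2mr-2)/[(2m+1)r-2]}$ a bias of order $\lambda$ already exceeds the target $n^{-2mr/[(2m+1)r-2]}$, so your tradeoff cannot reproduce the $p=d$ rates (the $r=1,2$ cases fail similarly). Second, the known designs are not used to make $J$ a ``population'' quadratic form ($J$ never involves $\Pi^{e_j}$); they are used, via the inverse-transform reduction to uniform designs and periodization, to ensure the trigonometric basis simultaneously diagonalizes $\|\cdot\|_0$ and $\|\cdot\|_R$, a structure your argument never secures; and in the paper $m>2$ arises from explicit conditions such as $n^{-1}\lambda^{-(2a+3/2m)}[\log(1/\lambda)]^{r-1}\rightarrow 0$ with $a>1/2m$ in bounding $\widehat f-\tilde f^*$, whereas your appeal to entropy bookkeeping is left entirely unexecuted. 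As written, the proposal would at best yield the $\|\cdot\|_0$-rate, not the statement of the theorem.
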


We use the linearization method in \cite{cox1990asymptotic} to prove Theorem \ref{thm:mainupperrateestf0}.
The key ingredient of this method is to chose a suitable basis such that  the expected loss of the regularization  and the quadratic penalty $J(\cdot)$  can be simultaneously diagonalized. 
For applications where these two functionals are positive semi-definite,  the existence of such a basis is guaranteed by the classical operator theory (see, e.g., \cite{weinberger1974variational}). These are done in \cite{lin2000tensor, yuan2010reproducing, gu2013smoothing}. 
Our situation is different in the sense that the loss function in (\ref{scheme1}) is the sum of squared error losses for both the function and partial derivatives but we are only interested in estimating the function itself in Theorem \ref{thm:mainupperrateestf0}. This induces a third positive semi-definite functional, which is the squared error loss of function estimation. But  three functionals  are not guaranteed to be simultaneously diagonized, making the direct application of the linearization method infeasible.  We present a detailed proof in Section \ref{subsubsec:prooflowerrandom}.

Theorems \ref{theorem:lowerbdfNlambdaregrandom} and \ref{thm:mainupperrateestf0} together demonstrate  the fundamental limit rate of the squared error loss  for estimating $f_0\in\HH$ with model (\ref{modelequation}) and random designs is
\begin{equation}
\label{eqn:minaxhatfttprob}
\begin{aligned}
& \left[n(\log n)^{1-(d-p)\wedge r}\right]^{-2m/(2m+1)} \mathbbm{1}_{0\leq p<d}\\
& \quad\quad\quad\quad+ \left[n^{-1}(\log n)^{r-1} + n^{-2mr/[(2m+1)r-2]}\right] \mathbbm{1}_{p=d}
\end{aligned}
\end{equation}
in a probabilistic sense, and the regularized estimator achieves (\ref{eqn:minaxhatfttprob}). The minimax rate is the same as that with the regular lattice. We make several remarks on (\ref{eqn:minaxhatfttprob}). First, all five remarks following (\ref{eqn:minaxhatftt}) for the mean squared situation hold for (\ref{eqn:minaxhatfttprob}) in a  probabilistic sense.  

Second, for the special case when $p=0$, (\ref{eqn:minaxhatfttprob}) recovers the minimax optimal rate of convergence $O_\P\left\{[n(\log n)^{1-r}]^{-2m/(2m+1)}\right\}$ for SS-ANOVA models, which is known in \cite{lin2000tensor}. 

Third, the squared error loss in Theorems \ref{theorem:lowerbdfNlambdaregrandom} and \ref{thm:mainupperrateestf0} can be replaced by squared prediction error $\int\{\widehat{f}_{n\lambda}(\bt) - f_0(\bt)\}^2d\Pi^{e_0}(\bt)$ and it achieves the same minimax optimal rate as (\ref{eqn:minaxhatfttprob}). 

Fourth, although (\ref{eqn:minaxhatfttprob}) is established by assuming design points are drawn independently, it also holds for designs of function and derivatives can be grouped to some sets, where within the sets the design points are drawn identically and across the sets the design points are drawn independently. For example, when $p=2$, (\ref{eqn:minaxhatfttprob}) still holds if the designs can be  grouped to $\{\bt^{e_0}\mbox{ are drawn from } \Pi^{e_0}\}$ and $\{\bt^{e_1}\equiv \bt^{e_2} \mbox{ are drawn from } \Pi^{e_1}\}$ and these two sets are drawn independently.

As a byproduct of Theorem \ref{thm:mainupperrateestf0}, we show the following result of estimating the mixed partial derivatives $\frac{\partial^d f_0}{\partial t_1\cdots \partial t_d}(\bt)$ by its natural estimator $\frac{\partial^d \widehat{f}_{n\lambda}}{\partial t_1\cdots \partial t_d}(\bt)$. 

\begin{corollary}
\label{theorem:estboundderiva}
Under the conditions of Theorem \ref{thm:mainupperrateestf0} and $m>3$, we have
\begin{align*}
 \lim_{D_1'\rightarrow\infty}\underset{n\rightarrow\infty}{\lim\sup}\sup_{f_0\in\HH}\P & \left\{\int_{\XX_1^d}\left[\frac{\partial^{d}\widehat{f}_{n\lambda}(\bt)}{\partial t_1\cdots\partial t_d}-\frac{\partial^d f_0(\bt)}{\partial t_1\cdots \partial t_d}\right]^2d\bt \right.\\
&   \quad\quad> D_1'\left(\left[n(\log n)^{1-(d-p)\wedge r}\right]^{-2(m-1)/(2m+1)} \mathbbm{1}_{0\leq p<d} \right.\nonumber \\
&\quad\quad\quad\quad \left.\left. + \left[n^{-2(m-1)r/[(2m+1)r-2]}\right] \mathbbm{1}_{p=d}\vphantom{\left[n(\log n)^{1-(d-p)\wedge r}\right]^{-2m/(2m+1)}}\right)\vphantom{\int_{\XX_1^d}\left\{\tilde{f}(\bt)-f_0(\bt)\right\}^2d\bt}\right\}=0,
\end{align*}
if the tuning parameter $\lambda$ is chosen by $\lambda\asymp \left[n(\log n)^{1-(d-p)\wedge r}\right]^{-2m/(2m+1)} $ when $0\leq p<d$, and $\lambda\asymp n^{-(2mr-2)/[(2m+1)r-2]}$ when $p=d$.
\end{corollary}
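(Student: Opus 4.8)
The plan is to push the corollary through the linearization analysis already built for Theorem~\ref{thm:mainupperrateestf0}. First a reduction: the mixed partial derivative operator $D:=\partial^d/\partial t_1\cdots\partial t_d$ annihilates every SS-ANOVA function truncated below the full $d$-way interaction, so when $r<d$ one has $Df_0\equiv 0$ and $D\widehat{f}_{n\lambda}\equiv 0$ and the bound is trivial; hence we may assume $r=d$, so $(d-p)\wedge r=d-p$. Recall that the proof of Theorem~\ref{thm:mainupperrateestf0} works in a basis $\{\phi_\bnu\}_{\bnu\in\N^d}$ indexed by multi-indices $\bnu=(\nu_1,\dots,\nu_d)$ that simultaneously diagonalizes the penalty $J$ and the relevant $L^2$ losses, with $J(\phi_\bnu)\asymp\lambda_\bnu^{-1}\asymp\prod_k\nu_k^{2m}$, and represents $\widehat{f}_{n\lambda}=\sum_\bnu\widehat\beta_\bnu\phi_\bnu$ with $\widehat\beta_\bnu$ a shrunken, noisy version of $\beta_{0,\bnu}=\langle f_0,\phi_\bnu\rangle$ up to a remainder $R_n$; in particular that proof gives coefficient-wise control of $\widehat\beta_\bnu-\beta_{0,\bnu}$, hence control of the spectral-weighted losses $\sum_\bnu\rho_\bnu^2(\widehat\beta_\bnu-\beta_{0,\bnu})^2$ for admissible weights $\{\rho_\bnu\}$. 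Since $m>3$, condition \eqref{eqn:partial2tt'kcx1x1} is strong enough that $D$ acts coordinatewise on $\phi_\bnu$ with $\|D\phi_\bnu\|_{L^2(\XX_1^d)}^2\asymp\prod_{k=1}^d\nu_k^2\asymp\lambda_\bnu^{-1/m}$ and $Df_0\in L^2(\XX_1^d)$. Up to constants fixed by the density bounds and up to lower-order cross terms, $\int_{\XX_1^d}[D\widehat{f}_{n\lambda}-Df_0]^2$ is then controlled by $\sum_\bnu\prod_k\nu_k^2(\widehat\beta_\bnu-\beta_{0,\bnu})^2$, which we split into a weighted squared-bias term, a weighted variance term, and $\|DR_n\|_{L^2}^2$.

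For the bias, write the shrinkage factor as $1-s_\bnu$, where $1-s_\bnu\le\min\{1,\,c\lambda/(a_\bnu\lambda_\bnu)\}$ and $a_\bnu\asymp 1+\sum_{j=1}^p\nu_j^2$ is the effective information at mode $\bnu$ coming from the $p$ observed derivatives (so $a_\bnu\asymp 1$ when $p=0$). Since $f_0\in\HH$ gives $\sum_\bnu\beta_{0,\bnu}^2/\lambda_\bnu\le\|f_0\|_\HH^2<\infty$, the weighted squared bias is at most $\|f_0\|_\HH^2\sup_\bnu\lambda_\bnu\prod_k\nu_k^2(1-s_\bnu)^2$, and since $\lambda_\bnu\prod_k\nu_k^2\asymp\prod_k\nu_k^{-2(m-1)}$ this supremum is the same multi-index optimization carried out in the proof of Theorem~\ref{thm:mainupperrateestf0}, only with $2m$ replaced by $2(m-1)$; it evaluates to the claimed rate. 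For the variance, the differentiated stochastic part has variance of order $n^{-1}\sum_\bnu\prod_k\nu_k^2\,s_\bnu^2/a_\bnu$, and summing it with the same accounting used in Theorems~\ref{theorem:lowerbdfNlambdaregrandom}--\ref{thm:mainupperrateestf0} — the multi-index count $\#\{\bnu:\prod_k\nu_k\asymp N\}\asymp(\log N)^{d-p-1}$ responsible for the $\log$-powers when $0\le p<d$, and the enhanced-information bookkeeping at modes with all of $\nu_1,\dots,\nu_d$ large when $p=d$ — and then substituting the prescribed $\lambda$ reproduces the claimed rate. The effect in both terms is that the weight $\prod_k\nu_k^2\asymp\lambda_\bnu^{-1/m}$ consumes one order of smoothness, turning each spectral balance that yielded exponent $2m$ in Theorem~\ref{thm:mainupperrateestf0} into one with $2(m-1)$.

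The one genuinely new estimate is $\|DR_n\|_{L^2}^2=o_\P(\text{claimed rate})$, and this is the \emph{main obstacle}. Applying $D$ amplifies the linearization remainder by a factor governed by the top of the active frequency band, so the $o_\P(\text{function rate})$ control of $R_n$ that was enough for Theorem~\ref{thm:mainupperrateestf0} under $m>2$ no longer suffices; one must rerun the approximation-theoretic and empirical-process bounds of that proof carrying the extra weights $\prod_k\nu_k^2$, and it is exactly this amplification that forces the stronger hypothesis $m>3$ (which also guarantees $\partial^d K_d/\partial t_1\cdots\partial t_d$ is regular enough for $\partial^d\widehat{f}_{n\lambda}/\partial t_1\cdots\partial t_d$ to be a well-defined $L^2$ function). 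Granting this bound, each of the three pieces is $O_\P$ of the claimed rate uniformly in $f_0\in\HH$, and letting $D_1'\to\infty$ after $n\to\infty$ yields the corollary.
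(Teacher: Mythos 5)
Your overall route coincides with the paper's: reduce the mixed-derivative loss to the spectrally weighted norm with weights $\prod_{k=1}^d\nu_k^2$, i.e.\ to $\|\widehat f_{n\lambda}-f_0\|_{L_2(1/m)}^2$ in the paper's notation, and then feed this into the deterministic/stochastic decomposition underlying Theorem~\ref{thm:mainupperrateestf0}. Your reduction to $r=d$ (the mixed derivative annihilates every truncated component) is a nice observation the paper leaves implicit, and your bias and variance accounting, and your diagnosis that $m>3$ enters through the linearization remainder, all match what the paper's machinery delivers at the weight $c=1/m$.

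As written, though, your proposal has a genuine gap at exactly its decisive point: the bound you call the ``main obstacle,'' $\|DR_n\|_{L_2}^2=o_\P(\text{claimed rate})$, is never established — you only ``grant'' it and assert that the empirical-process bounds must be rerun with the extra weights $\prod_k\nu_k^2$. In fact no new analysis is needed: Lemmas~\ref{lem:barff0l2ar}, \ref{lem:tilfbarfl2a} and \ref{lem:boundhattildef} were proved for the entire family of weighted norms $\|\cdot\|_{L_2(c)}$ with $0\le c\le a+1/m$, and are assembled into Proposition~\ref{thm:convergenceforhatfnlambda}; the paper's proof of the corollary is simply the observation that $\int_{\XX_1^d}[\partial^d\widehat f_{n\lambda}/\partial t_1\cdots\partial t_d-\partial^d f_0/\partial t_1\cdots\partial t_d]^2\,d\bt\lesssim\|\widehat f_{n\lambda}-f_0\|_{L_2(1/m)}^2$, followed by an application of that proposition with $c=a=1/m$ and the prescribed $\lambda$. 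The only thing left to verify is the side condition $n^{-1}\lambda^{-(2a+3/2m)}[\log(1/\lambda)]^{r-1}\to0$, which with $a=1/m$ and $\lambda\asymp[n(\log n)^{1+p-d}]^{-2m/(2m+1)}$ (resp.\ the stated $p=d$ choices) reduces to $-1+7/(2m+1)<0$, i.e.\ $m>3$ — exactly the amplification you anticipated, but already resolved by the existing lemmas rather than requiring a fresh argument. A minor additional point: your claim $\|D\phi_{\vec{\bnu}}\|_{L_2}^2\asymp\prod_k\nu_k^2$ fails for multi-indices with some $\nu_k=1$ (there $D\phi_{\vec{\bnu}}=0$), but only the one-sided bound $\lesssim$ is needed for the upper bound, so this does not affect the argument.
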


\section{Minimax risk for estimating partial derivatives}
\label{sec:minmaxriskpartder}

If one observes noisy data on the function and some partial derivatives in (\ref{modelequation}), it is natural to  ask what is the optimal rate for estimating first-order partial derivatives by using all observed data.  
For brevity, we only consider random designs although similar results can be derived for regular lattices by using techniques in Section \ref{sec:minmaxriskregularlat}. The following theorem  gives the minimax lower bound for estimating $\partial f_0/\partial t_j$, $1\leq j\leq p$.

\begin{theorem} 
\label{theorem:lowerbndlimDderi}
Assume that $\lambda_\nu \asymp \nu^{-2m}$ for some $m>2$ and design points $\bt^{e_0}$ and $\bt^{e_j}, j=1,\ldots,d$, are independently drawn from $\Pi^{e_0}$ and $\Pi^{e_j}$s, respectively. Suppose that $\Pi^{e_0}$ and $\Pi^{e_j}$s have densities bounded away from zero and infinity, and $f_0\in\HH$ is truncated up to $r$ interactions in (\ref{eqn:anovadecompfti}). Then, for any $j\in\{1,\ldots,p\}$ and $1\leq r\leq d$, as $n\rightarrow\infty$,
\begin{align*}
\inf_{\tilde{f}}\sup_{f_0\in\HH}\P& \left\{\int_{\XX_1^d}\left[\tilde{f}(\bt)-\frac{\partial f_0(\bt)}{\partial t_j}\right]^2d\bt \geq C_2n^{-2(m-1)/(2m-1)} \right\}>0,
\end{align*}
where $C_2$ only depends on bounded values $\sigma_0^2$, $\sigma_j^2$s, $m$, $r$, $p$, and $d$.
\end{theorem}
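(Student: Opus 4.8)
\emph{Proof idea.} The plan is to reduce the $d$-variate problem to a univariate one — estimating the derivative of a Sobolev function from noisy samples of both the function and the derivative — and then to apply a standard multiple-hypothesis lower bound.

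Fix $j\in\{1,\dots,p\}$. I would first restrict the supremum over $f_0$ to the one-dimensional slices $f_0(\bt)=\phi(t_j)$, with $\phi$ ranging over a fixed ball of $\{\phi\in\WW_2^m(\XX_1):\int_{\XX_1}\phi=0\}$; each such $f_0$ belongs to the main-effect component $\HH^{(j)}$ in (\ref{eqn:anovadechi}), hence is a legitimate member of the model (\ref{eqn:anovadecompfti}) truncated at any order $r\ge1$. For these $f_0$ one has $\partial f_0/\partial t_j(\bt)=\phi'(t_j)$ and $\partial f_0/\partial t_k\equiv0$ for $k\neq j$, so in (\ref{modelequation}) the observations $Y^{e_k}$ with $1\le k\le p$, $k\neq j$, are pure noise and may be dropped, while the $Y^{e_j}$ are noisy evaluations of $g:=\phi'$ and the $Y^{e_0}$ are noisy evaluations of the antiderivative $\Phi_g(t):=\int_0^t g$ (up to an additive constant). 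Since the joint design densities are bounded away from $0$ and $\infty$, so are the $j$th marginals of $\Pi^{e_0}$ and $\Pi^{e_j}$ on $\XX_1$. Finally, given any estimator $\tilde f$ of $\partial f_0/\partial t_j$, the function $\hat g(s):=\int_{\XX_1^{d-1}}\tilde f(\bt)\prod_{k\neq j}dt_k$ estimates $g$, and Jensen's inequality gives $\int_{\XX_1^d}[\tilde f(\bt)-\phi'(t_j)]^2\,d\bt\ge\int_{\XX_1}[\hat g-g]^2$. As $\phi\in\WW_2^m(\XX_1)$ iff $g\in\WW_2^{m-1}(\XX_1)$, it therefore suffices to show that the minimax risk of estimating $g$ over a ball of $\WW_2^{m-1}(\XX_1)$, from $n$ direct noisy samples of $g$ together with $n$ noisy samples of $\Phi_g$, is bounded below by a constant times $n^{-2(m-1)/(2m-1)}$.

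For this univariate problem I would use the classical bump construction. Let $\psi\in C^\infty$ be nonzero and supported in $(0,1)$; set $h\asymp n^{-1/(2m-1)}$, $\delta\asymp n^{-(m-1)/(2m-1)}\asymp h^{m-1}$, $M=\lfloor 1/h\rfloor$, and for $\beta\in\{0,1\}^M$ let $g_\beta(t)=\delta\sum_{k=1}^M\beta_k\,\psi\bigl((t-(k-1)h)/h\bigr)$, a sum of bumps with disjoint supports. Routine estimates give, uniformly in $\beta$, $\|g_\beta\|_{\WW_2^{m-1}(\XX_1)}\lesssim1$ and $\|\Phi_{g_\beta}\|_{\WW_2^{m}(\XX_1)}\lesssim1$ — so after multiplying by a small constant each $g_\beta$ is admissible and the corresponding $f_0$ lies in the relevant ball of $\HH$ — together with $\|g_\beta-g_{\beta'}\|_{L^2}^2\asymp\delta^2 h\,\rho_H(\beta,\beta')$, $\rho_H$ the Hamming distance. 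For $\beta,\beta'$ differing in a single coordinate, the Kullback--Leibler divergence between the laws of the full data (for definiteness, Gaussian errors; the general case being similar via the Hellinger distance) is
\[
\frac{n}{2\sigma_j^2}\,\|g_\beta-g_{\beta'}\|^2
+\frac{n}{2\sigma_0^2}\,\|\Phi_{g_\beta}-\Phi_{g_{\beta'}}\|^2
\asymp\frac{n\delta^2 h}{\sigma_j^2}+\frac{n\delta^2 h^{2}}{\sigma_0^2}
\asymp\frac{1}{\sigma_j^2}+O(h),
\]
where the norms are in $L^2(\XX_1)$ (equivalent to the design norms since the densities are bounded above and below) and I used $n\delta^2 h\asymp1$ together with the fact that integrating a single bump of height $\delta$ produces a ramp of height $\asymp\delta h$, shrinking its squared $L^2$-size by a factor $\asymp h$ (even more if $\psi$ is chosen with $\int\psi=0$). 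Consequently the per-coordinate divergence is bounded by a constant depending only on $\sigma_j^2$, and can be made as small as we wish by replacing $\delta$ with $c\,\delta$ for a small $c=c(\sigma_j^2)$; Assouad's lemma (equivalently a Fano-type argument as in the proof of Theorem \ref{theorem:lowerbdfNlambdaregrandom}, using a Varshamov--Gilbert packing) then yields $\inf_{\hat g}\sup_\beta\P_\beta\{\|\hat g-g_\beta\|_{L^2}^2\ge c'M\delta^2h\}>0$ with $M\delta^2h\asymp\delta^2\asymp n^{-2(m-1)/(2m-1)}$ and $c'$ depending only on $\sigma_0^2,\sigma_j^2,m,r,p,d$. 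Combining this with the reduction above proves the theorem with $C_2$ a suitable such constant.

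The one genuinely delicate point is that the function observations $Y^{e_0}$ cannot simply be discarded — extra data can only decrease the minimax risk — so one must verify that they do not improve the rate. Quantitatively, this is the requirement that their contribution to the Kullback--Leibler budget be of no larger order than that of the direct derivative observations $Y^{e_j}$, i.e. the smoothing-by-integration estimate $n\delta^2h^2=O(h)$ above. Everything else — membership of the slices in the ANOVA model, the Sobolev-norm bounds on $g_\beta$ and $\Phi_{g_\beta}$, and the Varshamov--Gilbert/Assouad bookkeeping — is routine.
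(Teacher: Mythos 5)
Your proposal is correct, but it takes a genuinely different route from the paper. You reduce to the univariate main-effect subfamily $f_0(\bt)=\phi(t_j)$ (so the observations $Y^{e_k}$, $k\neq j$, are distribution-free noise and drop out of the likelihood ratio), build the classical disjoint-bump hypotheses for $g=\phi'$ at scale $h\asymp n^{-1/(2m-1)}$, $\delta\asymp h^{m-1}$, and the key quantitative point — that the function observations $Y^{e_0}$ cannot improve the rate — is your estimate $n\delta^2h^2=O(h)\to 0$ for the antiderivative's contribution to the per-coordinate Kullback--Leibler budget, after which Assouad (or Varshamov--Gilbert plus Fano, with the expected conditional KL handling the random design exactly as in the paper's Theorem \ref{theorem:lowerbdfNlambdaregrandom}) gives the rate $M\delta^2h\asymp n^{-2(m-1)/(2m-1)}$. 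The paper instead works directly in the tensor-product trigonometric eigenbasis: it takes the roughly $N^{m/(m-1)}$ multi-indices with $\nu_1^{(m-1)/m}\nu_2\cdots\nu_r\leq N$, forms hypotheses $h_b$ with binary coefficients and weights $(1+\nu_1^2+\cdots+\nu_r^2)^{-1/2}[\nu_1^{(m-1)/m}\nu_2\cdots\nu_r+N]^{-m}$, verifies the $\HH$-norm, the $L_2$ separation of $\partial h_b/\partial t_1$, and the joint bound on $\|h_b\|_{L_2}^2+\sum_j\|\partial h_b/\partial t_j\|_{L_2}^2$ via its integral lemmas, and then applies Varshamov--Gilbert and Fano with $N\asymp n^{(m-1)/(2m^2-m)}$. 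What each buys: your argument is more elementary, shows the lower bound already holds within the additive (main-effect) subfamily, and makes transparent why the rate is free of $d$, $r$ and $p$; the paper's construction stays inside the eigenbasis of $K_d$, so membership in $\HH$ and all norm computations are immediate without appealing to equivalence of the RKHS ball with a Sobolev ball (your bump construction needs that equivalence, or the choice $\int\psi=0$ to stay compatible with the periodic-kernel reduction, though this is at the same level of rigor as the paper's own reduction to periodic kernels and uniform designs), and it is uniform in style with the machinery used for the paper's other lower bounds.
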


We will prove this theorem in Section \ref{subsubsec:lowerbdestpartder} in the supplementary material. As a natural estimator for $\partial f_0/\partial t_j$, $\partial \widehat{f}_{n\lambda}/\partial t_j$ achieves the lower bound of convergence rates in Theorem \ref{theorem:lowerbndlimDderi}.

\begin{theorem}
\label{theorem:upperbndlimDderi}
Under the conditions of Theorem \ref{theorem:lowerbndlimDderi}, $\widehat{f}_{n\lambda}$ given by (\ref{scheme1}) satisfies that for any $j\in\{1,\ldots,p\}$ and $1\leq r\leq d$,
\begin{equation*}
\lim_{D_2\rightarrow\infty}\underset{n\rightarrow\infty}{\lim\sup}\sup_{f_0\in \HH} \P\left\{\int_{\XX_1^d}\left[ \frac{\partial \widehat{f}_{n\lambda}(\bt)}{\partial t_j} -\frac{\partial f_0(\bt)}{\partial t_j}\right]^2d\bt > D_2n^{-2(m-1)/(2m-1)}\right\} = 0,
\end{equation*}
if the tuning parameter $\lambda$ is chosen by $\lambda \asymp n^{-2(m-1)/(2m-1)}$.
\end{theorem}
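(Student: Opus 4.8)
The plan is to establish the upper bound for $\partial\widehat{f}_{n\lambda}/\partial t_j$ by adapting the linearization machinery already used for Theorem \ref{thm:mainupperrateestf0}, tracking how differentiation in the $t_j$-direction affects the bias--variance decomposition. First I would set up the eigen-expansion framework: since $\HH_1$ has kernel $K$ with eigenvalues $\lambda_\nu\asymp\nu^{-2m}$ and eigenfunctions $\psi_\nu$, the tensor-product space $\HH$ (truncated to $r$ interactions) has a product eigenbasis, and I would expand both $f_0$ and $\widehat{f}_{n\lambda}$ in this basis. The regularized estimator satisfies a ridge-type fixed-point equation; the key point inherited from the proof of Theorem \ref{thm:mainupperrateestf0} is that, up to negligible remainder terms, the coefficients of $\widehat{f}_{n\lambda}-f_0$ in the relevant basis behave like those of a linear estimator with shrinkage factors of the form $\gamma_\nu/(\gamma_\nu+\lambda)$ against an error term of variance $O(1/n)$ per coordinate, where $\gamma_\nu$ are the effective eigenvalues of the combined loss operator.

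Next I would pass to the derivative. Because $\partial/\partial t_j$ is a bounded linear functional on $\HH$ with representer $\partial K_d(\bt,\cdot)/\partial t_j$ (by \eqref{eqn:partialgttpartialtjhh} and the differentiability condition \eqref{eqn:partial2tt'kcx1x1}), applying it to the basis expansion multiplies the $\nu$th one-dimensional component essentially by $\psi_\nu'$, and on the Sobolev scale $\|\psi_\nu'\|_{L^2}^2\asymp\nu^2\asymp\lambda_\nu^{-1/m}$. Thus the $L^2$-norm of $\partial(\widehat{f}_{n\lambda}-f_0)/\partial t_j$ decomposes into a squared bias term $\sum_\nu \nu^2\lambda^2 a_\nu^2/(\gamma_\nu+\lambda)^2$ (where $a_\nu$ are the true coefficients, bounded via $f_0\in\HH$) and a variance term $n^{-1}\sum_\nu \nu^2\gamma_\nu^2/(\gamma_\nu+\lambda)^2$. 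With $\gamma_\nu\asymp\nu^{-2m}$ in the one-dimensional-dominated directions (which is what governs the rate here, exactly as the derivative-estimation rate $n^{-2(m-1)/(2m-1)}$ is the univariate rate), balancing the extra $\nu^2$ factor gives: bias$^2\asymp\lambda^{(2m-2)/(2m)}\cdot$, variance $\asymp n^{-1}\lambda^{-1/(2m)}\cdot$, up to constants and lower-order multi-index combinatorial factors. Setting $\lambda\asymp n^{-2(m-1)/(2m-1)}$ equates the two and yields $n^{-2(m-1)/(2m-1)}$; crucially this choice of $\lambda$ is consistent with the ``$\lambda\lesssim$'' admissible ranges in Theorem \ref{thm:mainupperrateestf0}, so the linearization remainder bounds there remain valid (this is where $m>2$ enters, ensuring $\partial K_d/\partial t_j$ has enough integrability and the remainder is $o$ of the main term).

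The main obstacle, as with Theorem \ref{thm:mainupperrateestf0}, is that the empirical loss operator, the penalty $J(\cdot)$, and now the \emph{derivative} evaluation functional $\partial/\partial t_j$ need not be simultaneously diagonalizable, so I cannot literally reduce to independent scalar problems. I would handle this the same way the authors handle the three-functional issue for estimating $f_0$: work with the two simultaneously-diagonalizable functionals (combined loss and penalty), obtain sharp control of $\widehat{f}_{n\lambda}-f_0$ in the $\HH$-norm and in the weighted norms adapted to the loss, and then bound the derivative-$L^2$ norm by a comparison/interpolation inequality $\|\partial g/\partial t_j\|_{L^2}^2\lesssim \|g\|_{L^2}^{1-1/m}\|g\|_{\HH}^{1/m}$-type estimate (or its eigen-expansion analogue), feeding in the already-established rates for $\|\widehat{f}_{n\lambda}-f_0\|_{L^2}$ and $\|\widehat{f}_{n\lambda}-f_0\|_{\HH}$. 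A secondary technical point is controlling the stochastic remainder in the derivative norm: the relevant empirical process is indexed by $\{\partial g/\partial t_j : \|g\|_\HH\le 1\}$, whose entropy is governed by the decay of $\nu^2\lambda_\nu\asymp\nu^{2-2m}$, and $m>2$ guarantees this is summable enough for the remainder to be negligible against $n^{-2(m-1)/(2m-1)}$. Once these pieces are assembled, the $\lim_{D_2\to\infty}\limsup_n$ statement follows by a routine Markov/Chebyshev argument exactly as in the proof of Theorem \ref{thm:mainupperrateestf0}.
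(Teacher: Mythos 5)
There is a genuine gap, and it sits exactly at the quantitative heart of the theorem. Your bias--variance bookkeeping is internally inconsistent: with your stated orders, bias$^2\asymp\lambda^{(m-1)/m}$ and variance $\asymp n^{-1}\lambda^{-1/(2m)}$, the choice $\lambda\asymp n^{-2(m-1)/(2m-1)}$ does \emph{not} equate the two, and neither term is of order $n^{-2(m-1)/(2m-1)}$ (the bias term alone would be $n^{-2(m-1)^2/[m(2m-1)]}$, which is slower). The paper's proof instead bounds $\int[\partial\widehat{f}_{n\lambda}/\partial t_j-\partial f_0/\partial t_j]^2$ by the full loss norm $\|\widehat{f}_{n\lambda}-f_0\|_0^2$ of (\ref{eqn:|f|02def}) (legitimate precisely because $j\le p$, i.e.\ derivative data in that direction are observed), and then runs the three-term decomposition in that norm: bias $O\{\lambda J(f_0)\}$ (Lemma \ref{lem:determerrorderiv}) and variance $O_\P\{n^{-1}\lambda^{-1/(2m-2)}\}$ (Lemmas \ref{lem:stocherrortilde-barderiv} and \ref{lem:defMalambdaandbounds}), which balance at $\lambda\asymp n^{-2(m-1)/(2m-1)}$. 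The exponent $1/(2m-2)$ comes from the key spectral fact you are missing: the eigenvalues of the penalty relative to $\|\cdot\|_0$ behave like $(1+\sum_{j\le p}\nu_j^2)^{-1}\prod_k\nu_k^{2m}$, i.e.\ effective smoothness $m-1$ in the observed-derivative directions; the paper establishes this via Weinberger's simultaneous diagonalization of $\langle\cdot,\cdot\rangle_0$ and $\langle\cdot,\cdot\rangle_R$ together with a Courant--Fischer min--max argument (Lemma \ref{lem:eigendecayrhonu}), which also frees the proof from the known-design/periodic-boundary reductions used for Theorem \ref{thm:mainupperrateestf0} — assumptions your explicit $\psi_\nu'$ computation silently relies on but which Theorem \ref{theorem:upperbndlimDderi} does not grant.

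Your fallback strategy cannot repair this. Interpolating $\|\partial g/\partial t_j\|_{L_2}^2\lesssim\|g\|_{L_2}^{2(1-1/m)}\|g\|_{\HH}^{2/m}$ and feeding in the function-estimation rates yields at best $n^{-2(m-1)/(2m+1)}$ (the Corollary \ref{theorem:estboundderiva}-type rate), which is strictly slower than the claimed $n^{-2(m-1)/(2m-1)}$; the improvement in Theorem \ref{theorem:upperbndlimDderi} comes precisely from the direct observations of $\partial f_0/\partial t_j$, information this route throws away. Moreover, that interpolation needs control of $\|\widehat{f}_{n\lambda}-f_0\|_{\HH}$, while the machinery behind Theorem \ref{thm:mainupperrateestf0} (Proposition \ref{thm:convergenceforhatfnlambda}) only controls $\|\cdot\|_{L_2(c)}$ for $c\le a+1/m<1$, so no $\HH$-norm (i.e.\ $c=1$) bound is available. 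Finally, the tuning $\lambda\asymp n^{-2(m-1)/(2m-1)}$ is not one of the admissible choices in Theorem \ref{thm:mainupperrateestf0}, so you cannot simply import its remainder bounds; the paper reproves the remainder negligibility in the $\|\cdot\|_0$ norm (Lemma \ref{lem:boundhattildefM}) under $m>2$ for this new $\lambda$.
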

The proof of this theorem is given in Section \ref{subsubsec:upperbdestpartder} in the supplementary material.
When $r=1$, this result coincides with Corollary \ref{theorem:estboundderiva}.
Different from Theorem \ref{thm:mainupperrateestf0} and Corollary \ref{theorem:estboundderiva},  the distributions $\Pi^{e_0}$ and $\Pi^{e_j}$s are not assumed to be known.

Theorems \ref{theorem:lowerbndlimDderi} and \ref{theorem:upperbndlimDderi} together give the minimax optimal rate for estimating $\partial f_0/\partial t_j$, which is given in (\ref{eqn:rrej2m1}). To the best of our knowledge, there are few existing results in literature about estimating first-order partial derivatives.
Since the optimal rate in (\ref{eqn:rrej2m1}) holds regardless of the value of $p\geq 1$, first-order partial derivative data on different covariates do not improve the optimal rates for estimating each other. For example, given noisy data on $f_0(\cdot)$ and $\partial f_0/\partial t_j(\cdot)$, the data on $\partial f_0/\partial t_k(\cdot)$ does not improve the minimax optimal rate for estimating $\partial f_0/\partial t_j(\cdot)$ if $1\leq k\neq j\leq p$.


\section{Proofs for Section \ref{sec:randomdesign}: random designs}
\label{sec:proofsofall}

Before proving the main results, we give some preliminary background on the RKHS $\HH$.  Since the SS-ANOVA model (\ref{eqn:anovadecompfti}) truncates a sequence up to $r$ interactions, 
without loss of generality, we still denote the corresponding function space in (\ref{eqn:anovadechi}) by $\HH$, which is the direct sum of some set of the orthogonal subspaces in the decomposition $\otimes_{j=1}^d\HH_1$. Define $\|\cdot\|_{\otimes_{j=1}^d\HH_1}$ as the norm on $\otimes_{j=1}^d\HH_1$ induced by component norms $\|\cdot\|_{\HH_1}$, and define $\|\cdot\|_\HH$ as the norm on $\HH$ by restricting $\|\cdot\|_{\otimes_{j=1}^d\HH_1}$ to $\HH$. Then $\HH$ is a RKHS equipped with $\|\cdot\|_\HH$.
The quadratic penalty $J(\cdot)$ in (\ref{scheme1}) is defined as a squared semi-norm on $\HH$ induced by a univariate penalty in $\HH_1$. For example, $\HH_1 = \WW_2^m(\XX_1)$, it is common to chose $J(\cdot)$ for penalizing only the smooth components of a function and an explicit form is given in Wahba \cite{wahba1990}.

Now we introduce some notations used in the proof. We define a family of the multi-index $\vec{\bnu}$ by
\begin{equation}
\label{def:Vbnu}
\begin{aligned}
V&  = \{{\vec{\bnu} = (\nu_1,\ldots,\nu_d)^\top \in\N^d},\\
& \quad\quad\quad\quad\quad \mbox{ where at most $r\geq 1$ of $\nu_k$s are not equal to } 1\}.
\end{aligned}
\end{equation}
which will be referred later since $f_0$ in the model (\ref{eqn:anovadecompfti}) is truncated up to $r$ interactions.
We write for two nonnegative sequences $\{a_n\}$ and $\{b_n\}$ as  $a_n\lesssim b_n$ (or $a_n\gtrsim b_n$) if there exists constant $c>0$ (or $c'>0$) which are independent of the problem parameters, such that $a_n\leq cb_n$ (or $a_n\geq c'b_n$) for all $n$. Let the maximizer of two scalars $\{a,b\}$ is denoted by $a\vee b$ and their the minimizer is denoted by $a\wedge b$.

\subsection{Proof of the minimax lower bound: Theorem \ref{theorem:lowerbdfNlambdaregrandom}}
\label{subsubsec:prooflowerrandom}

We establish the lower bound for the random design via Fano's lemma. 
It suffices to consider a special case where noises $\epsilon^{e_0}$ and $\epsilon^{e_j}$s are Gaussian with $\sigma_0=1$ and $\sigma_j=1$, and $\Pi^{e_0}$ and $\Pi^{e_j}$s are uniform, and $\HH_1$ is generated by periodic kernels. 

Let $N$ be a natural number whose value will be clear later.  We first derive the eigenvalue decay rate for kernel $K_d$ which generates the RKHS $\HH$.
For a given $\tau>0$, the number of multi-indices $\vec{\bnu}=(\nu_1,\ldots,\nu_r)\in\N^r$ satisfying $\nu_1^{-2m}\cdots\nu_r^{-2m}\geq \tau$ is the same as the number of multi-indices such that $\nu_1\cdots\nu_r\leq \tau^{-1/(2m)}$, which amounts to
\begin{equation}
\label{eqn:tau12mlogr1}
\begin{aligned}
\sum_{\nu_2\cdots\nu_r\leq \tau^{-1/(2m)}}\tau^{-1/(2m)}/(\nu_2\cdots\nu_r) & = \tau^{-1/(2m)}\left(\sum_{\nu\leq \tau^{-1/(2m)}}1/\nu\right)^{r-1} \\
& \asymp \tau^{-1/(2m)}(\log 1/\tau)^{r-1}.
\end{aligned}
\end{equation} 
Denote by $\lambda_N(K_d)$ the $N$th eigenvalues of $K_d$. 
By inverting (\ref{eqn:tau12mlogr1}), we obtain 
\begin{equation*}
\lambda_N(K_d)\asymp\left[N(\log N)^{1-r}\right]^{-2m}.
\end{equation*}
Hence, the multi-indices $\vec{\bnu}=(\nu_1,\ldots,\nu_r)\in\N^r$ satisfying $\nu_1\cdots \nu_r\leq N$ 
correspond to the first 
\begin{equation*}
c_0N(\log N)^{r-1}
\end{equation*} 
eigenvalues of $K_d$ for some constant $c_0$.
Let $b=\{b_{\vec{\bnu}}: \nu_1\cdots\nu_r\leq N\}\in\{0,1\}^{c_0N(\log N)^{r-1}}$ be a length-$\{c_0N(\log N)^{r-1}\}$ binary sequence, and $\{\tilde{\lambda}_{\vec{\bnu}}:\nu_1\cdots\nu_r\leq  N\}$ be the first $c_0N(\log N)^{r-1}$ eigenvalues of $K_d$. Denote by $\{\tilde{\lambda}_{\vec{\bnu}+c_0N(\log N)^{r-1}}:\nu_1\cdots\nu_r\leq  N\}$ the $\{c_0N(\log N)^{r-1}+1\}$th, $\{c_0N(\log N)^{r-1}+2\}$th,\ldots, $\{2c_0N(\log N)^{r-1}\}$th eigenvalues of $K_d$. 

For brevity, we only prove for the case $p=d$ and $r\geq 3$ while the other cases $p=d$, $r\leq 2$ and $0\leq p<d$ follow similar arguments. We deal with the differences among these cases for deterministic designs in Section \ref{sec:prooflowbdreg} of the supplementary material.
Write
\begin{align}
 & f_b(t_1,\ldots,t_r)= N^{-1/2+1/r}\sum_{\nu_1\cdots\nu_r\leq  N}b_{\vec{\bnu}}\left(1+\nu_1^2+\cdots+\nu_r^2\right)^{-1/2} \nonumber\\
 & \quad\quad\quad\quad\quad\quad\quad\quad\quad\quad\quad\quad\times\tilde{\lambda}_{\vec{\bnu}+c_0N(\log N)^{r-1}}^{1/2}\psi_{\vec{\bnu}+c_0N(\log N)^{r-1}}(t_1,\ldots,t_r),\nonumber
\end{align}
where $\psi_{\vec{\bnu}+c_0N(\log N)^{r-1}}(t_1,\ldots,t_r)$ are the corresponding eigenfunctions of $\tilde{\lambda}_{\vec{\bnu}+c_0N(\log N)^{r-1}}$ of $K_d$. Note that
\begin{align}
\|f_b\|_\HH^2 & = N^{-1+2/r}\sum_{\nu_1\cdots\nu_r\leq  N}b_{\vec{\bnu}}^2(1+\nu_1^2+\cdots+\nu_r^2)^{-1}\nonumber\\
& \leq N^{-1+2/r}\sum_{\nu_1\cdots\nu_r\leq  N}(1+\nu_1^2+\cdots+\nu_r^2)^{-1}\asymp 1,\nonumber
\end{align}
where the last step by Lemma \ref{lemma:intx1xrzxk1z2} in the supplementary material, and this implies $f_b(\cdot)\in\HH$.

By the Varshamov-Gilbert bound (see, e.g., \cite{tsybakovintroduction}), there exists a collection of binary sequences $\{b^{(1)},\ldots,b^{(M)}\}\subset\{0,1\}^{c_0N(\log N)^{r-1}}$ such that $M\geq 2^{c_0N(\log N)^{r-1}/8}$ and 
\begin{equation*}
H(b^{(l)},b^{(q)})\geq c_0N(\log N)^{r-1}/8,\quad \forall 1\leq l<q\leq M,
\end{equation*}
where $H(\cdot,\cdot)$ is the Hamming distance.
Then, for $b^{(l)},b^{(q)}\in\{0,1\}^{c_0N(\log N)^{r-1}}$, we have
\begin{align}
& \|f_{b^{(l)}}-f_{b^{(q)}}\|_{L_2}^2\nonumber\\
& \quad \geq N^{-1+2/r}(2N)^{-2m}\sum_{\nu_1\cdots\nu_r\leq  N}(1+\nu_1^2+\cdots+\nu_r^2)^{-1}\left[b^{(l)}_{\vec{\bnu}} - b^{(q)}_{\vec{\bnu}}\right]^2\nonumber\\
&\quad \geq N^{-1+2/r}(2N)^{-2m}\sum_{c_17N/8\leq \nu_1\cdots\nu_r\leq N}(1+\nu_1^2+\cdots+\nu_r^2)^{-1}\nonumber\\
& \quad = c_2 N^{-2m}\nonumber
\end{align}
for some constants $c_1$ and $c_2$, where the last step is by Lemma \ref{lemma:intx1xrzxk1z2} in the supplementary material.

On the other hand, for any $b^{(l)}\in\{b^{(1)},\ldots,b^{(M)}\}$ and by Lemma \ref{lemma:intx1xrzxk1z2},
\begin{align}
& \|f_{b^{(l)}}\|_{L_2}^2+ \sum_{j=1}^p\| \partial f_{b^{(l)}}/\partial t_j\|_{L_2}^2\nonumber\\
& \quad \leq N^{-1+2/r}\sum_{\nu_1\cdots\nu_r\leq  N}\nu_1^{-2m}\cdots \nu_r^{-2m}\left[b^{(l)}_{\vec{\bnu}} \right]^2\nonumber\\
& \quad \leq N^{-1+2/r}\sum_{\nu_1\cdots\nu_r\leq  N}\nu_1^{-2m}\cdots \nu_r^{-2m}\nonumber\\
& \quad = c_3 N^{-2m+2/r}(\log N)^{r-1}\nonumber
\end{align}
for some constant $c_3$.

A standard argument gives that the lower bound can be reduced to the error probability in a multi-way hypothesis test \cite{tsybakovintroduction}. Specifically, let $\Theta$ be a random variable uniformly distributed on $\{1,\ldots,M\}$. Note that
\begin{equation}
\label{eqn:inftildeffoinhh1}
\begin{aligned}
& \inf_{\tilde{f}}\sup_{f_0\in\HH}\P\left\{\|\tilde{f} - f_0\|_{L_2}^2\geq \frac{1}{4}\min_{b^{(l)}\neq b^{(q)}}\|f_{b^{(l)}} - f_{b^{(q)}}\|^2_{L_2}\right\}\\
& \quad\quad\quad\quad\quad\quad\quad\quad\quad\quad\quad\quad\quad\quad\quad\quad \geq \inf_{\widehat{\Theta}}\P\{\widehat{\Theta}\neq \Theta\},
\end{aligned}
\end{equation}
where the infimum on RHS is taken over all decision rules that are measurable functions of the data. By Fano's lemma, 
\begin{equation}
\label{eqn:fanolemhatthetap}
\begin{split}
& \P\left\{\widehat{\Theta}\neq \Theta|\bt_1^{e_0},\ldots,\bt_n^{e_0};\ldots;\bt_1^{e_p},\ldots,\bt_n^{e_p} \right\}\geq 1-\frac{1}{\log M}\\
& \quad \times\left[\mathbbm{1}_{\bt_1^{e_0},\ldots,\bt_n^{e_0};\ldots;\bt_1^{e_p},\ldots,\bt_n^{e_p}}(y_1^{e_0},\ldots,y_n^{e_0},\ldots,y_1^{e_p},\ldots,y_n^{e_p};\Theta)+\log 2\right],
\end{split}
\end{equation}
where $\mathbbm{1}_{\bt_1^{e_0},\ldots,\bt_n^{e_0};\ldots;\bt_1^{e_p},\ldots,\bt_n^{e_p}}(y_1^{e_0},\ldots,y_n^{e_0},\ldots,y_1^{e_p},\ldots,y_n^{e_p})$ is the mutual information between $\Theta$ and $\{y_1^{e_0},\ldots,y_n^{e_0},\ldots,y_1^{e_p},\ldots,y_n^{e_p}\}$ with the design points $\{\bt_1^{e_0},\ldots,\bt_n^{e_0};\ldots;\bt_1^{e_p},\ldots,\bt_n^{e_p}\}$ being fixed. We can derive that
\begin{equation}
\label{eqn:inftildeffoinhh3}
\begin{aligned}
& \E_{\bt_1^{e_0},\ldots,\bt_n^{e_0};\ldots;\bt_1^{e_p},\ldots,\bt_n^{e_p}}\\
& \quad\quad\quad\quad\quad \cdot\left[\mathbbm{1}_{\bt_1^{e_0},\ldots,\bt_n^{e_0};\ldots;\bt_1^{e_p},\ldots,\bt_n^{e_p}}\left(y_1^{e_0},\ldots,y_n^{e_0},\ldots,y_1^{e_p},\ldots,y_n^{e_p};\Theta\right)\right]\\
& \quad \leq \binom M2^{-1} \sum_{b^{(l)}\neq b^{(q)}} \E_{\bt_1^{e_0},\ldots,\bt_n^{e_0};\ldots;\bt_1^{e_p},\ldots,\bt_n^{e_p}} \mathcal{K}\left(\mathbf{P}_{f_{b^{(l)}}}| \mathbf{P}_{f_{b^{(q)}}}\right)\\
& \quad \leq \frac{n(p+1)}{2}\binom M2^{-1} \sum_{b^{(l)}\neq b^{(q)}} \E_{\bt_1^{e_0},\ldots,\bt_n^{e_0};\ldots;\bt_1^{e_p},\ldots,\bt_n^{e_p}}\|f_{b^{(l)}} - f_{b^{(q)}}\|_{*n}^2, 
\end{aligned}
\end{equation}
where $\mathcal{K}(\cdot|\cdot)$ is the Kullback-Leibler distance, $\mathbf{P}_{f}$ is conditional distribution of $y_i^{e_0}$ and $y_i^{e_j}$s given $\{\bt_1^{e_0},\ldots,\bt_n^{e_0};\ldots;\bt_1^{e_p},\ldots,\bt_n^{e_p}\}$, and the norm $\|\cdot\|_*$ is defined as
\begin{equation*}
\|f\|_{*n}^2 = \frac{1}{n(p+1)}\sum_{i=1}^n\left\{[f(\bt_i^{e_0})]^2+\sum_{j=1}^p[\partial f(\bt_i^{e_j})/\partial t_j]^2\right\},\quad\forall f:\XX_1^r\mapsto\R.
\end{equation*}
Thus,
\begin{equation}
\label{eqn:inftildeffoinhh4}
\begin{aligned}
& \E_{\bt_1^{e_0},\ldots,\bt_n^{e_0};\ldots;\bt_1^{e_p},\ldots,\bt_n^{e_p}}\\
& \quad\quad\quad \cdot \left[\mathbbm{1}_{\bt_1^{e_0},\ldots,\bt_n^{e_0};\ldots;\bt_1^{e_p},\ldots,\bt_n^{e_p}}(y_1^{e_0},\ldots,y_n^{e_0},\ldots,y_1^{e_p},\ldots,y_n^{e_p};\Theta)\right]\\
& \quad\leq \frac{n(p+1)}{2}\binom M2^{-1} \left.\sum_{b^{(l)}\neq b^{(q)}} \right\{\|f_{b^{(l)}} - f_{b^{(q)}}\|_{L_2}^2 \\
& \quad\quad\quad \quad\quad\quad \quad\quad\quad \quad\quad\quad \left. + \sum_{j=1}^p\|\partial f_{b^{(l)}}/\partial t_j - \partial f_{b^{(q)}}/\partial t_j\|_{L_2}^2\right\}\\
& \quad \leq \left.\frac{n(p+1)}{2} \max_{b^{(l)}\neq b^{(q)}} \vphantom{\sum_{j=1}^p}\right\{\|f_{b^{(l)}} - f_{b^{(q)}}\|_{L_2}^2 \\
& \quad\quad\quad \quad\quad\quad \quad\quad\quad \quad\quad\quad \left.+\sum_{j=1}^p\|\partial f_{b^{(l)}}/\partial t_j - \partial f_{b^{(q)}}/\partial t_j\|_{L_2}^2\right\}\\
& \quad \leq 2n(p+1)\max_{b^{(l)}\in\{b^{(1)},\ldots,b^{(M)}\}} \left\{\|f_{b^{(l)}}\|_{L_2}^2 + \sum_{j=1}^p\| \partial f_{b^{(l)}}/\partial t_j\|_{L_2}^2\right\}\\
& \quad \leq 2c_3n(p+1)N^{-2m+2/r}(\log N)^{r-1}.
\end{aligned}
\end{equation}
Now, (\ref{eqn:fanolemhatthetap}) yields
\begin{align}
& \quad\inf_{\tilde{f}}\sup_{f_0\in\HH}\P\left\{\|\tilde{f} - f_0\|_{L_2}^2\geq \frac{1}{4}c_2 N^{-2m}\right\}\nonumber\\
& \quad \geq\inf_{\widehat{\Theta}}\P\{\widehat{\Theta}\neq \Theta\}\nonumber\\
& \quad \geq 1-\frac{1}{\log M}\left[\E\mathbbm{1}_{\bt_1^{e_0},\ldots,\bt_n^{e_0};\ldots;\bt_1^{e_p},\ldots,\bt_n^{e_p}}(y_1^{e_0},\ldots,y_n^{e_0},\ldots,y_1^{e_p},\ldots,y_n^{e_p};\Theta)+\log 2\right]\nonumber\\
& \quad \geq 1-\frac{2c_3n(p+1)N^{-2m+2/r}(\log N)^{r-1} + \log 2}{c_0(\log 2)N(\log N)^{r-1}/8}.\nonumber
\end{align}
Taking $N=c_4n^{r/(2mr+r-2)}$ with an appropriate choice of $c_4$, we have
\begin{equation*}
\underset{n\rightarrow \infty}{\lim\sup}\inf_{\tilde{f}} \sup_{f_0\in\HH}\P\left\{\|\tilde{f} - f_0\|_{L_2}^2\geq C_1n^{-2mr/(2mr+r-2)}\right\}>0,
\end{equation*}
where $C_1$ only depends on $\sigma_0^2$, $\sigma_j^2$s, $m$, $r, p$, and $d$.
This completes the proof.


\subsection{Proof of the minimax upper bound: Theorem \ref{thm:mainupperrateestf0}}
\label{subsubsec:proofupperrrandom}

\paragraph{\textbf{Preliminaries for the proof}}

Denote by $\pi^{e_j}$ the density of the distribution $\Pi^{e_j}$, which by assumption is bounded away from 0 and infinity, $j=0,1,\ldots,p$. First we introduce a norm on $\HH$ for any $f\in\HH$,
\begin{equation}
\label{def:normrp+1}
\begin{aligned}
\|f\|_R^2 & = \frac{1}{p+1}\left[\frac{1}{\sigma_0^2}\int f^2(\bt)\pi^{e_0}(\bt) \right.\\
& \quad\quad\quad \quad\quad\quad \left.+ \sum_{j=1}^p \frac{1}{\sigma_j^2}\int\left\{\frac{\partial f(\bt)}{\partial t_j}\right\}^2\pi^{e_j}(\bt) \right] + J(f).
\end{aligned}
\end{equation}
Note that $\|\cdot\|_R$ is a norm since it is a quadratic form and is equal to zero if and only if $f=0$. Let $\langle\cdot,\cdot\rangle_R$ be the inner product associated with $\|\cdot\|_R$. The following lemma shows that $\|\cdot\|_R$ is well defined in $\HH$ and is equivalent to the RKHS norm $\|\cdot\|_\HH$. In particular, $\|g\|_R<\infty$ if and only if $\|g\|_\HH<\infty$. The proof of this lemma is given in Section \ref{subsec:proofoflemmanormequiv} in the supplementary material.
\begin{lemma}
\label{lemmanormrequiv}
The norm $\|\cdot\|_R$ is equivalent to $\|\cdot\|_{\HH}$ in $\HH$.
\end{lemma}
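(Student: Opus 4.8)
The plan is to establish a two-sided bound $c\|f\|_\HH^2 \le \|f\|_R^2 \le C\|f\|_\HH^2$ for all $f\in\HH$, with constants depending only on the kernel $K$, the bounds on the densities $\pi^{e_j}$, the variances $\sigma_j^2$, and the dimension $d$. The upper bound is the easy direction. Since the densities $\pi^{e_j}$ are bounded above, $\int f^2\pi^{e_0} \lesssim \|f\|_{L_2}^2$ and $\int (\partial f/\partial t_j)^2\pi^{e_j} \lesssim \|\partial f/\partial t_j\|_{L_2}^2$. By the reproducing property together with the differentiability condition (\ref{eqn:partial2tt'kcx1x1}), the functional $f\mapsto \partial f/\partial t_j(\bt)$ has representer $\partial K_d(\bt,\cdot)/\partial t_j$ as in (\ref{eqn:partialgttpartialtjhh}), so $|\partial f/\partial t_j(\bt)|^2 \le \|\partial K_d(\bt,\cdot)/\partial t_j\|_\HH^2\,\|f\|_\HH^2$, and $\sup_\bt\|\partial K_d(\bt,\cdot)/\partial t_j\|_\HH^2 = \sup_\bt \partial^2 K_d/\partial t_j\partial t_j'(\bt,\bt)$ is finite by continuity on the compact set $\XX_1^d$. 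Likewise $\|f\|_{L_2}^2 \le \sup_\bt K_d(\bt,\bt)\,\|f\|_\HH^2$. Finally $J(f)$ is a squared seminorm dominated by $\|f\|_\HH^2$ by construction. Summing over $j$ and dividing by $p+1$ gives $\|f\|_R^2 \lesssim \|f\|_\HH^2$.

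For the lower bound the key point is that $\|f\|_R^2 = 0$ forces $f=0$, and then a compactness/spectral argument upgrades this to a quantitative bound. One clean route is to diagonalize. Expand $f = \sum_{\vec{\bnu}\in V} a_{\vec{\bnu}}\,\tilde\psi_{\vec{\bnu}}$ in the tensor-product eigenbasis of $K_d$ restricted to the truncated space $\HH$ indexed by $V$ from (\ref{def:Vbnu}), so that $\|f\|_\HH^2 = \sum_{\vec{\bnu}} a_{\vec{\bnu}}^2/\lambda_{\vec{\bnu}}(K_d)$ with $\lambda_{\vec{\bnu}}(K_d) = \prod_k \lambda_{\nu_k}$. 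The term $\frac{1}{\sigma_0^2(p+1)}\int f^2\pi^{e_0}$ is bounded below by $(c_\pi/\sigma_0^2(p+1))\|f\|_{L_2}^2 = (c_\pi/\sigma_0^2(p+1))\sum a_{\vec{\bnu}}^2$ using the lower bound $c_\pi$ on the density, since the eigenfunctions are $L_2$-orthonormal. Thus $\|f\|_R^2 \ge c'\big(\sum_{\vec{\bnu}} a_{\vec{\bnu}}^2 + J(f)\big)$. It remains to show $\sum a_{\vec{\bnu}}^2 + J(f) \gtrsim \sum a_{\vec{\bnu}}^2/\lambda_{\vec{\bnu}}(K_d)$. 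Because $\lambda_{\vec{\bnu}}(K_d)\to 0$, there is only a finite index set $A$ with $\lambda_{\vec{\bnu}}(K_d)\ge \eta$; on the complement $\sum_{\vec{\bnu}\notin A} a_{\vec{\bnu}}^2 \ge \eta \sum_{\vec{\bnu}\notin A} a_{\vec{\bnu}}^2/\lambda_{\vec{\bnu}}(K_d)$, which handles the high-frequency part, and on the finite-dimensional part $A$ one needs the combination $\sum_{\vec{\bnu}\in A} a_{\vec{\bnu}}^2 + J(f)$ to control $\sum_{\vec{\bnu}\in A} a_{\vec{\bnu}}^2/\lambda_{\vec{\bnu}}(K_d)$; since $A$ is finite and the $\lambda_{\vec{\bnu}}(K_d)$ are bounded below on $A$, the first summand alone already suffices there. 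Combining the two ranges gives $\|f\|_\HH^2 \lesssim \|f\|_R^2$.

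The main obstacle is the lower bound, and specifically verifying that the $L_2$ part of $\|\cdot\|_R$ — not the derivative parts — is what provides coercivity; the derivative terms are nonnegative but could in principle vanish on nontrivial directions (e.g. functions constant in $t_j$), so the argument must lean on the $\sigma_0^{-2}\int f^2\pi^{e_0}$ term together with the density lower bound, and on the fact that the penalty $J$ together with the $L_2$ mass controls the finitely many directions where $\lambda_{\vec{\bnu}}(K_d)$ is not small. An alternative, perhaps slicker, phrasing avoids explicit eigenexpansions: note $\|f\|_R^2 \ge c\,\Phi(f)$ where $\Phi(f) := \|f\|_{L_2}^2 + J(f)$, and $\Phi$ is a well-known equivalent norm on $\HH$ for smoothing-spline ANOVA spaces (this is the standard fact underlying the classical theory, e.g. \cite{gu2013smoothing, lin2000tensor}); one then only needs to check $\Phi$ itself is equivalent to $\|\cdot\|_\HH$, which is exactly the cited classical result. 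I would present the eigenexpansion version for self-containedness but cite the classical equivalence as the conceptual reason it works.
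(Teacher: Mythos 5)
Your upper bound is fine and coincides with the paper's argument (densities bounded above, the representer bounds $\sup_{\bt}|g(\bt)|\lesssim\|g\|_\HH$ and $\sup_{\bt}|\partial g/\partial t_j(\bt)|\lesssim\|g\|_\HH$, and $J(g)\le\|g\|_\HH^2$), and your reduction of the lower bound to showing $\|f\|_{L_2}^2+J(f)\gtrsim\|f\|_\HH^2$ is also the right move. But the self-contained eigenexpansion argument you give for that inequality fails exactly where it matters. On the high-frequency set $\{\vec{\bnu}:\lambda_{\vec{\bnu}}(K_d)<\eta\}$ you claim $\sum_{\vec{\bnu}\notin A}a_{\vec{\bnu}}^2\ \ge\ \eta\sum_{\vec{\bnu}\notin A}a_{\vec{\bnu}}^2/\lambda_{\vec{\bnu}}(K_d)$; the inequality goes the other way, since there $a_{\vec{\bnu}}^2=\lambda_{\vec{\bnu}}(K_d)\cdot\bigl(a_{\vec{\bnu}}^2/\lambda_{\vec{\bnu}}(K_d)\bigr)<\eta\,a_{\vec{\bnu}}^2/\lambda_{\vec{\bnu}}(K_d)$. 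As written, both ranges are handled by the $L_2$ mass alone and $J$ is never actually used, so your argument would prove $\|f\|_{L_2}\gtrsim\|f\|_\HH$ on all of $\HH$, which is false: $L_2$-normalized eigenfunctions with $\lambda_{\vec{\bnu}}\to0$ have bounded $L_2$ norm but $\HH$-norm of order $\lambda_{\vec{\bnu}}^{-1/2}\to\infty$. It is precisely the high-frequency directions that must be absorbed by $J$, and only the finitely many low-frequency directions by the $L_2$ term.

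Even after swapping the roles of the two ranges, the eigenbasis route needs more care, because $J$ is not diagonal in the $K_d$-eigenbasis: its null space consists of (tensor products of) low-order polynomials, which are not eigenfunctions, so you cannot simply assert $J(f)\gtrsim\sum_{\vec{\bnu}\notin A}a_{\vec{\bnu}}^2/\lambda_{\vec{\bnu}}(K_d)$ without dealing with cross terms. The paper avoids this by decomposing $g=g^0+g^1$ with $g^0$ in the finite-dimensional null space of $J$ and $g^1$ in its $\HH$-orthogonal complement, where $J(g^1)=\|g^1\|_\HH^2$; it uses equivalence of the $L_2$ and $\HH$ norms on the finite-dimensional null space and controls the cross term $\int g^0g^1$ by Cauchy--Schwarz together with $\|g^1\|_{L_2}\le\|g^1\|_\HH$. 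Your alternative phrasing --- simply invoking the standard equivalence of $\|f\|_{L_2}^2+J(f)$ with $\|f\|_\HH^2$ for these spaces --- would close the gap (and is essentially what the paper proves directly), but the eigenexpansion version you intended to present as the self-contained proof does not.
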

We introduce another norm $\|\cdot\|_0$ as follows: 
\begin{equation}
\label{eqn:|f|02def}
\|f\|_0^2  = \frac{1}{p+1}\left[\frac{1}{\sigma_0^2}\int f^2(\bt)\pi^{e_0}(\bt) + \sum_{j=1}^p \frac{1}{\sigma_j^2}\int\left\{\frac{\partial f(\bt)}{\partial t_j}\right\}^2\pi^{e_j}(\bt) \right].
\end{equation}
Based on (\ref{eqn:|f|02def}), we define a function space $F_0$ to be the direct sum of some set of the orthogonal subspaces in the decomposition of $\otimes_{j=1}^dL_2(\XX_1)$ as in (\ref{eqn:anovadechi}) and  equipped with the norm $\|\cdot\|_0$. Let $\langle\cdot,\cdot\rangle_0$ be the inner product associated with $\|\cdot\|_0$ in $F_0$.

With the above two norms, we introduce one additional notation. Denote the loss function in (\ref{scheme1}) by
\begin{equation*}
l_n(f) = \frac{1}{n(p+1)}\left[\frac{1}{\sigma_0^2}\sum_{i=1}^n\{f(\bt_i^{e_0})-y_i^{e_0}\}^2+\sum_{j=1}^p\frac{1}{\sigma_j^2}\sum_{i=1}^n\left\{\frac{\partial f(\bt_i^{e_j})}{\partial t_j}-y_i^{e_j}\right\}^2\right],
\end{equation*}
and write $l_{n\lambda}(f) = l_n(f)+\lambda J(f)$. Then the regularized estimator $\widehat{f}_{n\lambda} = \arg\min_{f\in\HH}{l_{n\lambda}(f)}$.
Denote the expected loss by $l_\infty(f)  = \E l_n(f) = \|f-f_0\|_0^2+1$,
and write $l_{\infty\lambda}(f) = l_\infty(f)+\lambda J(f)$. Note that $l_{\infty\lambda}(f)$ a positive quadratic form in $f\in\HH$ and hence it has a unique minimizer in $\HH$, 
\begin{equation*}
\bar{f}_{\infty\lambda} = \underset{f\in\HH}{\arg\min } l_{\infty\lambda}(f).
\end{equation*}
Thus, we decompose 
\begin{equation*}
\widehat{f}_{n\lambda} - f_0 = (\widehat{f}_{n\lambda} - \bar{f}_{\infty\lambda}) + (\bar{f}_{\infty\lambda} - f_0),
\end{equation*}
where $(\widehat{f}_{n\lambda} - \bar{f}_{\infty\lambda})$ is referred to the stochastic error and $(\bar{f}_{\infty\lambda} - f_0)$ is referred to the deterministic error. 
If data $Y^{e_0}$ and $Y^{e_j}$s  in (\ref{modelequation}) are observed without random noises as in deterministic computer experiments, then the total error is only the deterministic error with $\widehat{f}_{n\lambda} - f_0 = \bar{f}_{\infty\lambda} - f_0$.  For brevity, we omit the subscripts of $\bar{f}_{\infty\lambda}$ and $\widehat{f}_{n\lambda}$ hereafter if no confusion occurs.

\paragraph{\textbf{Outline of the proof}}

Before proceeding to the proof, we make two remarks on the setup of Theorem \ref{thm:mainupperrateestf0}. First, since the distributions $\Pi^{e_0}$ and $\Pi^{e_j}$s are known, by the inverse transform sampling, it suffices to consider uniform distributions. A detailed discussion on this inverse transform is given in Lemma \ref{lem:lineartransformuniform} in the supplementary material. Second, it suffices to consider $f_0$ having a periodic boundary on $\XX_1^d$ in the proof of this theorem. This is because $f_0$ is a tensor product function and each component function space is supported in a compact domain, thus we can smoothly extend $f_0$ to a larger compact support domain and achieve periodicity on the new boundary, for example, uniformly zero on the new boundary.  These two simplifications can make the proof easier to understand.

Recall the trigonometrical basis on $L_2(\XX_1)$ is $\psi_1(t)=1$, $\psi_{2\nu}(t) = \sqrt{2}\cos 2\pi\nu t$ and $\psi_{2\nu+1}(t) = \sqrt{2}\sin 2\pi\nu t$ for $\nu\geq 1$. Write 
\begin{equation}
\label{eqn:phivecbnudef}
\phi_{\vec{\bnu}}(t_1,\ldots,t_d) =\frac{\psi_{\nu_1}(t_1)\cdots\psi_{\nu_d}(t_d)}{\|\psi_{\nu_1}(t_1)\cdots\psi_{\nu_d}(t_d)\|_0}.
\end{equation} 
Since $f_0$ has a periodic boundary on $\XX_1^d$ and $\pi^{e_j}\equiv1$, we know $\{\phi_{\vec{\bnu}}(\bt):{\vec{\bnu}}\in V\}$, where $V$ in (\ref{def:Vbnu}) forms an orthogonal basis for $\HH$ in $\langle\cdot,\cdot\rangle_R$; an orthogonal system for $L_2(\XX_1^d)$; and an orthonormal basis for $F_0$ in $\langle\cdot,\cdot\rangle_0$, that is $\langle \phi_{\vec{\bnu}}(\bt),\phi_{\vec{\bmu}}(\bt)\rangle_0 = \delta_{\vec{\bnu}\vec{\bmu}}$, where $\delta_{\vec{\bnu}\vec{\bmu}}$ is Kronecker's delta.  Hence, any $f\in\HH$ has the decomposition 
\begin{equation}
\label{eqn:fsumnuthetanuphinvb}
f(t_1,\ldots,t_d) = \sum_{\vec{\bnu}\in V}f_{\vec{\bnu}}\phi_{\vec{\bnu}}(t_1,\ldots,t_d),\quad  \mbox{ where } f_{\vec{\bnu}} = \langle f(\bt),\phi_{\vec{\bnu}}(\bt)\rangle_0.
\end{equation} 
We denote a positive scalar series $\{\rho_{\vec{\bnu}}\}_{\bnu\in V}$ such that $\langle \phi_{\vec{\bnu}},\phi_{\vec{\bmu}}\rangle_R = (1+\rho_{\vec{\bnu}})\delta_{\vec{\bnu}\vec{\bmu}}$. Then,
\begin{equation}
\label{eqn:JgggR0}
J(f) = \langle f,f\rangle_R - \langle f,f\rangle_0 = \sum_{\vec{\bnu}\in V} \rho_{\vec{\bnu}}f_{\vec{\bnu}}^2.
\end{equation}

First, we analyze the deterministic error $(\bar{f}-f_0)$. By (\ref{eqn:fsumnuthetanuphinvb}), we write $f_0(\bt) = \sum_{\vec{\bnu}\in V}f_{\vec{\bnu}}^0\phi_{\vec{\bnu}}(\bt)$ and $\bar{f}(\bt) = \sum_{\vec{\bnu}\in V}\bar{f}_{\vec{\bnu}}\phi_{\vec{\bnu}}(\bt)$. Then, $l_\infty(f) = \sum_{\vec{\bnu}\in V} (f_{\vec{\bnu}} - f_{\vec{\bnu}}^0)^2 + 1$, and 
\begin{equation}
\label{eqn:barthetavebnubias}
\bar{f}_{\vec{\bnu}} = \frac{f_{\vec{\bnu}}^0}{1+\lambda\rho_{\vec{\bnu}}},\quad \vec{\bnu}\in V.
\end{equation} 
An upper bound of the deterministic error will be given in Lemma \ref{lem:barff0l2ar}.

Second, we analyze the stochastic error $(\widehat{f} - \bar{f})$. The existence the following Fr\'echet derivatives, for any $g,h\in\HH$, is guaranteed by Lemma \ref{lem:DlNfgDl2} in the supplementary material:
\begin{equation}
\label{eqn:frechetdlNfg}
\begin{aligned}
Dl_n(f)g & =  \frac{2}{n(p+1)}\left[\frac{1}{\sigma_0^2}\sum_{i=1}^n\{f(\bt_i^{e_0}) - y_i^{e_0}\}g(\bt_i^{e_0}) \right.\\
&\quad\quad\quad\quad\quad\quad \left.+ \sum_{j=1}^p\frac{1}{\sigma_j^2}\sum_{i=1}^n\left\{\frac{\partial f(\bt_i^{e_j})}{\partial t_j} - y_i^{e_j}\right\}\frac{\partial g(\bt_i^{e_j})}{\partial t_j}\right], 
\end{aligned}
\end{equation}
\begin{equation}
\label{eqn:dlinftyfg2p1}
\begin{aligned}
Dl_\infty(f)g & = \frac{2}{p+1}\left[\frac{1}{\sigma_0^2}\int\left\{f(\bt) - f_0(\bt)\right\}\frac{\partial g(\bt)}{\partial t_j}\pi^{e_j}(\bt)\right.\\
& \quad\quad\quad\quad\left. +\sum_{j=1}^p\frac{1}{\sigma_j^2}\int\left\{\frac{\partial f(\bt)}{\partial t_j} - \frac{\partial f_0(\bt)}{\partial t_j}\right\}\frac{\partial g(\bt)}{\partial t_j}\pi^{e_j}(\bt)\right],
\end{aligned}
\end{equation}
\begin{equation}
\label{eqn:d2lnfgh}
\begin{aligned}
D^2l_n(f)gh & =  \frac{2}{n(p+1)}\left[\frac{1}{\sigma_0^2}\sum_{i=1}^ng(\bt_i^{e_0})h(\bt_i^{e_0})\right.\\
& \quad\quad\quad\quad\quad\quad\quad\quad\quad\quad\left.+\sum_{j=1}^p\frac{1}{\sigma_j^2}\sum_{i=1}^n\frac{\partial g(\bt_i^{e_j})}{\partial t_j}\frac{\partial h(\bt^{e_j}_i)}{\partial t_j}\right],
\end{aligned}
\end{equation}
\begin{equation}
\label{eqn:d2linftyfgh}
\begin{aligned}
D^2l_\infty(f)gh & = \frac{2}{p+1}\left[\frac{1}{\sigma_0^2}\int g(\bt) h(\bt)\pi^{e_0}(\bt)\right.\\
& \quad\quad\quad\quad\quad\left. +\sum_{j=1}^p\frac{1}{\sigma_j^2}\int \frac{\partial g(\bt)}{\partial t_j}\frac{\partial h(\bt)}{\partial t_j}\pi^{e_j}(\bt)\right] =2\langle g,h\rangle_0,
\end{aligned}
\end{equation}
where $Dl_n(f)$, $Dl_\infty(f)$, $D^2l_n(f)g$, and $D^2l_\infty(f)g$ are bounded linear operators on $\HH$. By Riesz representation theorem, with slight abuse of notation, write 
\begin{align}
Dl_n(f)g & = \langle Dl_n(f),g\rangle_R, \quad Dl_\infty(f)g = \langle Dl_\infty(f),g\rangle_R,\nonumber\\
D^2l_n(f)gh & = \langle D^2l_n(f)g, h\rangle_R, \quad D^2l_\infty(f)gh = \langle D^2l_\infty(f)g, h\rangle_R.\nonumber
\end{align}
From \cite{oden2012introduction, weinberger1974variational}, there exists a bounded linear operator $U: F_0\mapsto \HH$ such that $U\phi_{\vec{\bnu}} = (1+\rho_{\vec{\bnu}})^{-1}\phi_{\vec{\bnu}}$ and $\langle f, Ug\rangle_R = \langle f,g\rangle_0$ for any $f\in \HH$ and $g\in F_0$, and the restriction of $U$ to $\HH$ is self-adjoint and positive definite. 
By (\ref{eqn:d2linftyfgh}), we further derive
\begin{align}
D^2l_{\infty\lambda}(f)\phi_{\vec{\bnu}}(\bt)  = 2(U+\lambda(I-U))\phi_{\vec{\bnu}}(\bt) = 2(1+\rho_{\vec{\bnu}})^{-1}(1+\lambda\rho_{\vec{\bnu}})\phi_{\vec{\bnu}}(\bt).\nonumber
\end{align}
Define that $G_\lambda\phi_{\vec{\bnu}}=\frac{1}{2}D^2l_{\infty\lambda}(\bar{f})\phi_{\vec{\bnu}}$. By the Lax-Milgram theorem, $G_\lambda: \HH\mapsto\HH$ has a bounded inverse $G_\lambda^{-1}$ on $\HH$, and 
\begin{equation}
\label{def:ginv}
G_\lambda^{-1}\phi_{\vec{\bnu}} = (1+\rho_{\vec{\bnu}})(1+\lambda\rho_{\vec{\bnu}})^{-1}\phi_{\vec{\bnu}}.
\end{equation}
Define 
\begin{equation*}
\tilde{f}^* = \bar{f} - \frac{1}{2}G_\lambda^{-1}Dl_{n\lambda}(\bar{f}).
\end{equation*} 
Then the stochastic error can be decomposed as
\begin{equation*}
\widehat{f} - \bar{f}= (\tilde{f}^* - \bar{f}) + (\widehat{f}- \tilde{f}^*).
\end{equation*}
The two terms on RHS will be studied separately and their upper bounds will be given in Lemma \ref{lem:tilfbarfl2a} and Lemma \ref{lem:boundhattildef}, respectively.

Finally, we define the following norm which is important in our later analysis, for $f\in\HH$
\begin{equation}
\label{eqn:fa2v1rho1}
\|f\|_{L_2(a)}^2=\sum_{{\vec{\bnu}}\in V} \left(1+\frac{\rho_{\vec{\bnu}}}{\|\phi_{\vec{\bnu}}\|_{L_2}^2}\right)^a f_{\vec{\bnu}}^2\|\phi_{\vec{\bnu}}\|_{L_2}^2, \quad \mbox{ for } 0\leq a\leq 1,
\end{equation}
where $f_{\vec{\bnu}} = \langle f, \phi_{\vec{\bnu}}\rangle_0$. By direct calculations, note that when $a=0$ this norm coincides with $\|\cdot\|_{L_2}$ on $F_0$, and when $a=1$ this norm is equivalent to $\|\cdot\|_R$ on $\HH$.

\paragraph{\textbf{Details of the proof}} Now we give the details following the outline above.
First, we present an upper bound of the deterministic error $(\bar{f} - f_0)$.
\begin{lemma} 
\label{lem:barff0l2ar}
For any $0\leq a\leq 1$, the deterministic error  satisfies
\begin{equation*}
\|\bar{f} - f_0\|_{L_2(a)}^2 = 
\begin{cases}
O\left\{\lambda^{1-a}J(f_0)\right\} \quad & \mbox{ when } 0\leq p< d,\\
O\left\{\lambda^{\frac{(1-a)mr}{mr-1}}J(f_0)\right\} \quad & \mbox{ when } p=d.
\end{cases}
\end{equation*}
\end{lemma}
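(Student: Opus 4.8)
The plan is to pass to the coefficient representation in the basis $\{\phi_{\vec{\bnu}}\}_{\vec{\bnu}\in V}$ and reduce the claim to a mode-by-mode comparison between the squared bias and the penalty. By the bias formula (\ref{eqn:barthetavebnubias}), $\bar{f}_{\vec{\bnu}}-f_{\vec{\bnu}}^0=-\lambda\rho_{\vec{\bnu}}(1+\lambda\rho_{\vec{\bnu}})^{-1}f_{\vec{\bnu}}^0$, so substituting into the definition (\ref{eqn:fa2v1rho1}) of $\|\cdot\|_{L_2(a)}$ gives
\begin{equation*}
\|\bar{f}-f_0\|_{L_2(a)}^2=\sum_{\vec{\bnu}\in V}\left(1+\frac{\rho_{\vec{\bnu}}}{\|\phi_{\vec{\bnu}}\|_{L_2}^2}\right)^a\frac{\lambda^2\rho_{\vec{\bnu}}^2}{(1+\lambda\rho_{\vec{\bnu}})^2}(f_{\vec{\bnu}}^0)^2\|\phi_{\vec{\bnu}}\|_{L_2}^2.
\end{equation*}
Since $J(f_0)=\sum_{\vec{\bnu}\in V}\rho_{\vec{\bnu}}(f_{\vec{\bnu}}^0)^2$ by (\ref{eqn:JgggR0}), it suffices to bound, uniformly over $\vec{\bnu}\in V$ with $\rho_{\vec{\bnu}}>0$, the per-mode ratio $\Lambda_{\vec{\bnu}}:=\rho_{\vec{\bnu}}^{-1}(1+\rho_{\vec{\bnu}}/\|\phi_{\vec{\bnu}}\|_{L_2}^2)^a\lambda^2\rho_{\vec{\bnu}}^2(1+\lambda\rho_{\vec{\bnu}})^{-2}\|\phi_{\vec{\bnu}}\|_{L_2}^2$ by $O(\lambda^{1-a})$ when $0\le p<d$ and by $O(\lambda^{(1-a)mr/(mr-1)})$ when $p=d$; the all-ones mode has $\rho_{\vec{\bnu}}=0$ and contributes no bias, so it may be discarded.

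Next I would make the two mode quantities explicit for a $\vec{\bnu}$ whose set of non-unit coordinates is $S=\{j:\nu_j\neq 1\}$, $|S|\le r$. Writing $q_{\vec{\bnu}}:=\|\phi_{\vec{\bnu}}\|_{L_2}^{-2}=\|\psi_{\nu_1}\cdots\psi_{\nu_d}\|_0^2$ and using the trigonometric identities $\|\psi_\nu'\|_{L_2}^2\asymp\nu^2$ for $\nu\ge 2$ and $\psi_1'\equiv 0$, one gets $q_{\vec{\bnu}}\asymp 1+\sum_{j\in S,\,j\le p}\nu_j^2$; using $\lambda_\nu\asymp\nu^{-2m}$ together with the tensor-product structure of $K_d$ and of the penalty $J$ one gets $J(\psi_{\nu_1}\cdots\psi_{\nu_d})\asymp\prod_{j\in S}\nu_j^{2m}$, hence $\rho_{\vec{\bnu}}=J(\psi_{\nu_1}\cdots\psi_{\nu_d})/\|\psi_{\nu_1}\cdots\psi_{\nu_d}\|_0^2$ and in particular $P_{\vec{\bnu}}:=\rho_{\vec{\bnu}}q_{\vec{\bnu}}\asymp\prod_{j\in S}\nu_j^{2m}$. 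Substituting these into $\Lambda_{\vec{\bnu}}$ yields $\Lambda_{\vec{\bnu}}\asymp(1+P_{\vec{\bnu}})^a\,\lambda^2P_{\vec{\bnu}}\,(q_{\vec{\bnu}}+\lambda P_{\vec{\bnu}})^{-2}$.

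Then comes the optimization. When $0\le p<d$ I only need $q_{\vec{\bnu}}\gtrsim 1$: using $1+P_{\vec{\bnu}}\lesssim(1+\rho_{\vec{\bnu}})q_{\vec{\bnu}}$ and $q_{\vec{\bnu}}+\lambda P_{\vec{\bnu}}=q_{\vec{\bnu}}(1+\lambda\rho_{\vec{\bnu}})$ one reduces $\Lambda_{\vec{\bnu}}$ to $(1+\rho_{\vec{\bnu}})^a\lambda^2\rho_{\vec{\bnu}}(1+\lambda\rho_{\vec{\bnu}})^{-2}q_{\vec{\bnu}}^{a-1}$, and since $q_{\vec{\bnu}}^{a-1}\lesssim 1$ and the elementary bound $\sup_{\rho\ge 0}(1+\rho)^a\lambda^2\rho(1+\lambda\rho)^{-2}\asymp\lambda^{1-a}$ holds (the supremum being attained near $\rho\asymp\lambda^{-1}$), the first case follows. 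When $p=d$ every coordinate carries a derivative, so $q_{\vec{\bnu}}\asymp 1+\max_{j\in S}\nu_j^2$, and since $P_{\vec{\bnu}}=\prod_{j\in S}\nu_j^{2m}$ involves at most $r$ factors each at most $\max_{j\in S}\nu_j$, we have $P_{\vec{\bnu}}\lesssim q_{\vec{\bnu}}^{mr}$, i.e.\ $q_{\vec{\bnu}}\gtrsim P_{\vec{\bnu}}^{1/(mr)}$; because $\Lambda_{\vec{\bnu}}\asymp(1+P_{\vec{\bnu}})^a\lambda^2P_{\vec{\bnu}}(q_{\vec{\bnu}}+\lambda P_{\vec{\bnu}})^{-2}$ is decreasing in $q_{\vec{\bnu}}$ for $P_{\vec{\bnu}}$ fixed, it suffices to bound $\sup_{P\gtrsim 1}(1+P)^a\lambda^2P(P^{1/(mr)}+\lambda P)^{-2}$. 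A calculus check, splitting at the value $P\asymp\lambda^{-mr/(mr-1)}$ where $P^{1/(mr)}$ and $\lambda P$ balance, shows this supremum is $\asymp\lambda^{(1-a)mr/(mr-1)}$, where the standing assumption $m>2$ is what controls the boundary modes $P\asymp 1$ (and already $mr>3$ suffices when $r\ge 2$).

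I expect the main obstacle to be the second step: establishing the mode asymptotics $q_{\vec{\bnu}}\asymp 1+\sum_{j\in S,\,j\le p}\nu_j^2$ and $\rho_{\vec{\bnu}}q_{\vec{\bnu}}\asymp\prod_{j\in S}\nu_j^{2m}$ with the correct dependence on the non-unit frequencies, since this is the only place where the interaction order $r$, the eigenvalue decay $\lambda_\nu\asymp\nu^{-2m}$, the tensor-product structure, and the distinction between coordinates on which derivatives are or are not observed all have to be combined. Once the resulting constraint $q_{\vec{\bnu}}\gtrsim P_{\vec{\bnu}}^{1/(mr)}$ (for $p=d$) is in hand, the remaining optimization is routine, with only the sub-cases $|S|<r$ versus $|S|=r$ and small versus large $P_{\vec{\bnu}}$ left to check.
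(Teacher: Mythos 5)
Your proposal is correct and follows essentially the same route as the paper's proof: expand the bias coefficientwise via (\ref{eqn:barthetavebnubias}), pull out $J(f_0)=\sum_{\vec{\bnu}}\rho_{\vec{\bnu}}(f_{\vec{\bnu}}^0)^2$, and bound the remaining per-mode supremum using $\|\phi_{\vec{\bnu}}\|_{L_2}^2\asymp(1+\sum_{j\le p}\nu_j^2)^{-1}$ and $\rho_{\vec{\bnu}}\asymp(1+\sum_{j\le p}\nu_j^2)^{-1}\prod_k\nu_k^{2m}$, optimizing at $\rho\asymp\lambda^{-1}$ when $p<d$ and at the balance point $\prod_k\nu_k^{2m}\asymp\lambda^{-mr/(mr-1)}$ when $p=d$. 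Your $q_{\vec{\bnu}}\gtrsim P_{\vec{\bnu}}^{1/(mr)}$ constraint is just the paper's substitution $z^{-1/(mr)}$ in its $B_\lambda(\vec{\bnu})$ supremum in different notation, so the two arguments coincide up to bookkeeping.
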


\begin{proof}
For any $0\leq a\leq1$, by (\ref{eqn:JgggR0}) and (\ref{eqn:barthetavebnubias}), we have
\begin{equation}
\label{eqn:lambda2jf0supnu}
\begin{aligned}
\|\bar{f} - f_0\|_{L_2(a)}^2 &  = \sum_{{\vec{\bnu}}\in V} \left(1+\frac{\rho_{\vec{\bnu}}}{\|\phi_{\vec{\bnu}}\|_{L_2}^2}\right)^a\left(\frac{\lambda\rho_{\vec{\bnu}}}{1+\lambda\rho_{\vec{\bnu}}}\right)^2(f_{\vec{\bnu}}^0)^2\|\phi_{\vec{\bnu}}\|_{L_2}^2\\
& \leq \lambda^2\sup_{{\vec{\bnu}}\in V}\frac{(1+\rho_{\vec{\bnu}}/\|\phi_{\vec{\bnu}}\|_{L_2}^2)^a\rho_{\vec{\bnu}}\|\phi_{\vec{\bnu}}\|_{L_2}^2}{(1+\lambda\rho_{\vec{\bnu}})^2}\sum_{{\vec{\bnu}}\in V}\rho_{\vec{\bnu}}(f_{\vec{\bnu}}^0)^2\\
&\lesssim \lambda^2J(f_0)\sup_{{\vec{\bnu}}\in V}\frac{(\prod_{k=1}^d\nu_k^{2m})^{1+a}}{(1+\sum_{j=1}^p\nu_j^2+\lambda\prod_{k=1}^d\nu_k^{2m})^2}. 
\end{aligned}
\end{equation}
Write
\begin{equation*}
B_{\lambda}({\vec{\bnu}}) = \frac{(\prod_{k=1}^d\nu_k^{2m})^{1+a}}{(1+\sum_{j=1}^p\nu_j^2+\lambda\prod_{k=1}^d\nu_k^{2m})^2},\quad {\vec{\bnu}}\in V.
\end{equation*}
We discuss $B_{\lambda}({\vec{\bnu}})$ for $0\leq p\leq d-1$ and $p=d$ separately.

For $0\leq p\leq d-1$, since ${\vec{\bnu}}\in V$, there are at most $r$ of $\nu_1,\ldots,\nu_d$ not equal to 1. Suppose for any $x=\prod_{k=1}^d\nu_k^{-2m}>0$ fixed. Then $B_\lambda({\vec{\bnu}})$ is maximized by letting $\sum_{j=1}^p\nu_j^2$ be as small as possible, which implying $\nu_1=\nu_2=\cdots=\nu_p=1$. Then
\begin{equation}
\label{eqn:supnublambdanu-a}
\begin{aligned}
\underset{{\vec{\bnu}}\in V}{\sup}B_{\lambda}({\vec{\bnu}}) & \asymp \sup_{(\nu_{p+1},\ldots,\nu_{(p+r)\wedge d})^\top\in\N^{r\wedge (d-p)}}\frac{\prod_{k=p+1}^{(p+r)\wedge d}\nu_k^{2m(1+a)}}{(1+\lambda\prod_{k=p+1}^{(p+r)\wedge d}\nu_k^{2m})^2}\\
& \asymp \sup_{x>0}\frac{x^{-(1+a)}}{(1+\lambda x^{-1})^2} \asymp \lambda^{-(a+1)}, 
\end{aligned}
\end{equation}
where the last step is achieved when $x \asymp \lambda$.

For $p=d$, since ${\vec{\bnu}}\in V$ and by the symmetry of coordinates $v_1,\ldots,v_d$, assume 
that all indices except $v_1,\ldots,v_r$ being 1. Letting $z=\prod_{j=1}^r\nu_j^{-2m}>0$, then
\begin{align}
\underset{{\vec{\bnu}}\in V}{\sup}B_{\lambda}({\vec{\bnu}})  \asymp \sup_{z>0}\frac{z^{-(1+a)}}{(z^{-1/mr}+\lambda z^{-1})^2} \asymp \lambda^{\frac{2-(1+a)mr}{mr-1}},\label{eqn:lamdba2a1amr}
\end{align}
where the last step is achieved when $z\asymp \lambda^{mr/(mr-1)}$. Combining (\ref{eqn:lambda2jf0supnu}), (\ref{eqn:supnublambdanu-a}) and (\ref{eqn:lamdba2a1amr}), we complete the proof.
\end{proof}


Second, we show an upper bound of $(\tilde{f}^*-\bar{f})$, which is a part of the stochastic error.
\begin{lemma}
\label{lem:tilfbarfl2a}
When $0\leq p<d$, we have for any $0\leq a< 1 - 1/2m$,
\begin{equation*}
\|\tilde{f}^* - \bar{f}\|_{L_2(a)}^2 =
O_\P\left\{n^{-1}\lambda^{-(a+1/2m)}[\log(1/\lambda)]^{(d-p)\wedge r-1}\right\}.
\end{equation*}
When $p=d$, we have for any $0\leq a\leq 1$,
\begin{align}
& \|\tilde{f}^* - \bar{f}\|_{L_2(a)}^2 \nonumber\\
= &
\begin{cases}
O_\P\left\{n^{-1}\lambda^{\frac{mr}{1-mr}\left(a+\frac{r-2}{2mr}\right)}\right\},   \mbox{ if } r\geq 3;\\
O_\P\left\{n^{-1}\log(1/\lambda)\right\},  \mbox{ if } r=2, a=0;\quad O_\P\left\{n^{-1}\right\}, \mbox{ if } r=2, 0<a\leq 1;\\
O_\P\left\{n^{-1}\right\},  \mbox{ if } r=1,a<\frac{1}{2m};  \quad O_\P\left\{n^{-1}\log(1/\lambda)\right\},  \mbox{ if } r=1,a=\frac{1}{2m}; \\
O_\P\left\{n^{-1}\lambda^{\frac{1-2ma}{2m-2}}\right\},  \mbox{ if } r=1, a>\frac{1}{2m}.
\end{cases}\nonumber
\end{align}
\end{lemma}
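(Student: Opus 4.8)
The plan is to write the error $\tilde{f}^{*}-\bar{f}$ explicitly in the orthogonal basis $\{\phi_{\vec{\bnu}}\}_{\vec{\bnu}\in V}$, thereby turning $\|\tilde{f}^{*}-\bar{f}\|_{L_{2}(a)}^{2}$ into a weighted sum of squared centered averages, to bound the second moment of each summand by a quantity of order $n^{-1}$, and then to evaluate the resulting deterministic series separately in the cases $0\le p<d$ and $p=d$. For the first step, recall that $\tilde{f}^{*}-\bar{f}=-\tfrac12 G_{\lambda}^{-1}Dl_{n\lambda}(\bar{f})$ and that the first-order condition $Dl_{\infty\lambda}(\bar{f})=0$ holds because $\bar{f}$ minimizes $l_{\infty\lambda}$ over $\HH$; subtracting this identity cancels the two penalty gradients and gives $Dl_{n\lambda}(\bar{f})=Dl_{n}(\bar{f})-Dl_{\infty}(\bar{f})$, a centered random element of $\HH$. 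Putting $Z_{\vec{\bnu}}=\tfrac12\bigl(Dl_{n}(\bar{f})-Dl_{\infty}(\bar{f})\bigr)\phi_{\vec{\bnu}}$ and combining (\ref{def:ginv}) with $\langle\phi_{\vec{\bnu}},\phi_{\vec{\bmu}}\rangle_{R}=(1+\rho_{\vec{\bnu}})\delta_{\vec{\bnu}\vec{\bmu}}$ yields $\tilde{f}^{*}-\bar{f}=-\sum_{\vec{\bnu}\in V}(1+\lambda\rho_{\vec{\bnu}})^{-1}Z_{\vec{\bnu}}\phi_{\vec{\bnu}}$; since $\{\phi_{\vec{\bnu}}\}$ is orthonormal for $\langle\cdot,\cdot\rangle_{0}$, definition (\ref{eqn:fa2v1rho1}) then gives
\[
\|\tilde{f}^{*}-\bar{f}\|_{L_{2}(a)}^{2}=\sum_{\vec{\bnu}\in V}\Bigl(1+\frac{\rho_{\vec{\bnu}}}{\|\phi_{\vec{\bnu}}\|_{L_{2}}^{2}}\Bigr)^{a}\frac{Z_{\vec{\bnu}}^{2}}{(1+\lambda\rho_{\vec{\bnu}})^{2}}\,\|\phi_{\vec{\bnu}}\|_{L_{2}}^{2}.
\]

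Next I would control $\E Z_{\vec{\bnu}}^{2}$. From the Fr\'echet-derivative formulas (\ref{eqn:frechetdlNfg}) and (\ref{eqn:dlinftyfg2p1}), within each of the $p+1$ design groups $Z_{\vec{\bnu}}$ is an average of $n$ i.i.d.\ mean-zero summands built from $\phi_{\vec{\bnu}}(\bt^{e_{0}})$ and $\partial\phi_{\vec{\bnu}}/\partial t_{j}(\bt^{e_{j}})$, the noises $\epsilon^{e_{j}}$, and the deterministic residual $\bar{f}-f_{0}$ together with its partial derivatives. Estimating the variance of each summand gives $\E Z_{\vec{\bnu}}^{2}\lesssim n^{-1}\bigl(\sigma_{0}^{-2}\|\phi_{\vec{\bnu}}\|_{L_{2}}^{2}+\sum_{j=1}^{p}\sigma_{j}^{-2}\|\partial\phi_{\vec{\bnu}}/\partial t_{j}\|_{L_{2}}^{2}\bigr)+n^{-1}\bigl(\|\bar{f}-f_{0}\|_{\infty}^{2}\|\phi_{\vec{\bnu}}\|_{L_{2}}^{2}+\sum_{j=1}^{p}\|\partial(\bar{f}-f_{0})/\partial t_{j}\|_{\infty}^{2}\|\partial\phi_{\vec{\bnu}}/\partial t_{j}\|_{L_{2}}^{2}\bigr)$. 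By the normalization of $\phi_{\vec{\bnu}}$ in (\ref{eqn:phivecbnudef}) the first parenthesis equals $(p+1)\|\phi_{\vec{\bnu}}\|_{0}^{2}=p+1$, while in the second $\|\bar{f}-f_{0}\|_{\infty}$ and $\|\partial(\bar{f}-f_{0})/\partial t_{j}\|_{\infty}$ are dominated by $\|\bar{f}-f_{0}\|_{\HH}$ via Lemma \ref{lemmanormrequiv}, the representation (\ref{eqn:partialgttpartialtjhh}) and the differentiability assumption (\ref{eqn:partial2tt'kcx1x1}), and this in turn tends to $0$ by Lemma \ref{lem:barff0l2ar} with $a=1$. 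Hence $\E Z_{\vec{\bnu}}^{2}\le C/n$ uniformly over $\vec{\bnu}\in V$.

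Feeding this back into the display and inserting the eigenstructure $\rho_{\vec{\bnu}}/\|\phi_{\vec{\bnu}}\|_{L_{2}}^{2}\asymp\prod_{k=1}^{d}\nu_{k}^{2m}$, $\|\phi_{\vec{\bnu}}\|_{L_{2}}^{2}\asymp(1+\sum_{j=1}^{p}\nu_{j}^{2})^{-1}$ and $\rho_{\vec{\bnu}}\asymp\prod_{k=1}^{d}\nu_{k}^{2m}/(1+\sum_{j=1}^{p}\nu_{j}^{2})$, which follow from $\lambda_{\nu}\asymp\nu^{-2m}$ and the trigonometric construction, reduces the problem to bounding
\[
\frac{1}{n}\sum_{\vec{\bnu}\in V}\frac{\bigl(1+\prod_{k=1}^{d}\nu_{k}^{2m}\bigr)^{a}\bigl(1+\sum_{j=1}^{p}\nu_{j}^{2}\bigr)}{\bigl(1+\sum_{j=1}^{p}\nu_{j}^{2}+\lambda\prod_{k=1}^{d}\nu_{k}^{2m}\bigr)^{2}}.
\]
When $0\le p<d$, the summand is largest when $\nu_{1}=\dots=\nu_{p}=1$ with at most $(d-p)\wedge r$ of the remaining indices differing from $1$; approximating the sum by an integral against the lattice-counting measure $\#\{\prod\nu_{k}\le X\}\asymp X(\log X)^{(d-p)\wedge r-1}$, exactly as in (\ref{eqn:tau12mlogr1}), and using $a<1-1/2m$ to see that the integrand saturates at $\prod_{k>p}\nu_{k}^{2m}\asymp\lambda^{-1}$, produces the factor $\lambda^{-(a+1/2m)}[\log(1/\lambda)]^{(d-p)\wedge r-1}$. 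When $p=d$, one keeps only the $r$ possibly-nontrivial coordinates, splits the series according to whether $\lambda\prod\nu_{k}^{2m}$ is smaller or larger than $1+\sum\nu_{j}^{2}$, uses the inequality $\sum_{j\le r}\nu_{j}^{2}\gtrsim(\prod_{j\le r}\nu_{j}^{2m})^{1/mr}$, and tracks the saturation level $\prod\nu_{k}^{2m}\asymp\lambda^{-mr/(mr-1)}$; the split into the regimes $r\ge 3$, $r=2$, $r=1$ and the thresholds in $a$ arise from which region dominates and from whether the associated power of $\prod\nu_{k}^{2m}$ is summable. A final application of Markov's inequality converts each $\E$-bound into the stated $O_{\P}$-bound. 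The step I expect to be the main obstacle is this last one: pinning down the exact logarithmic powers and the critical values of $a$ requires simultaneously keeping track of the lattice counting measure (with its $(\log X)^{q-1}$ factors) and the ``box'' constraints coming from $\sum_{j}\nu_{j}^{2}$ in both numerator and denominator, whereas the diagonalization, the uniform $n^{-1}$ variance bound, and the passage to probability statements are routine once Lemma \ref{lemmanormrequiv} and Lemma \ref{lem:barff0l2ar} are in hand.
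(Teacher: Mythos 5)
Your proposal is correct and follows essentially the same route as the paper's proof: use the first-order condition for $\bar{f}$ to center $Dl_{n\lambda}(\bar{f})$, establish a uniform $O(n^{-1})$ bound on $\E\bigl[\tfrac12 Dl_{n\lambda}(\bar{f})\phi_{\vec{\bnu}}\bigr]^2$ via the sup-norm control of $\bar{f}-f_0$ (note that Lemma \ref{lem:barff0l2ar} with $a=1$ gives boundedness of $\|\bar{f}-f_0\|_R$, not decay to zero, but boundedness is all that is needed), and then diagonalize through $G_\lambda^{-1}$ in the $\phi_{\vec{\bnu}}$ basis to reduce the problem to $n^{-1}N_a(\lambda)$. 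Your evaluation of that deterministic series, including the saturation levels $\lambda^{-1}$ and $\lambda^{-mr/(mr-1)}$ and the case splits in $r$ and $a$, matches the paper's Lemma \ref{lemma:varthetamultivariatetensorrank}, which the paper likewise handles as a separate auxiliary computation.
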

\begin{proof}
Notice that $Dl_{n,\lambda}(\bar{f}) = Dl_{n,\lambda}(\bar{f}) - Dl_{\infty,\lambda}(\bar{f}) = Dl_n(\bar{f}) - Dl_\infty(\bar{f})$. Hence, for any $g\in\HH$, 
\begin{equation}
\label{eqn:dlnlambdang0}
\begin{aligned}
& \E\left[\frac{1}{2}Dl_{n,\lambda}(\bar{f})g\right]^2 = \E\left[\frac{1}{2}Dl_n(\bar{f})g - \frac{1}{2}Dl_\infty(\bar{f})g\right]^2\\
& \quad = \frac{1}{n(p+1)^2}\text{Var}\left[\frac{1}{\sigma_0^2}\left\{\bar{f}(\bt^{e_0}) - Y^{e_0}\right\}g(\bt^{e_0})+\sum_{j=1}^p\frac{1}{\sigma_{j}^2}\left\{\frac{\partial\bar{f}(\bt^{e_j})}{\partial t_j} - Y^{e_j}\right\}\frac{\partial g(\bt^{e_j})}{\partial t_j}\right]\\
& \quad\leq \frac{1}{n(p+1)}\left[\frac{1}{\sigma_0^4}\E\left\{\bar{f}(\bt^{e_0})-f_0(\bt^{e_0})\right\}^2\{g(\bt^{e_0})\}^2+\frac{1}{\sigma_{0}^2}\E\{g(\bt^{e_0})\}^2\right.\\
& \quad\quad\quad \left.+\sum_{j=1}^p\frac{1}{\sigma_j^4}\E\left\{\frac{\partial \bar{f}(\bt^{e_j})}{\partial t_j}-\frac{\partial f_0(\bt^{e_j})}{\partial t_j}\right\}^2\left\{\frac{\partial g(\bt^{e_j})}{\partial t_j}\right\}^2+\sum_{j=1}^p\frac{1}{\sigma_{j}^2}\E\left\{\frac{\partial g(\bt^{e_j})}{\partial t_j}\right\}^2\right]\\
& \quad \leq \frac{1}{n(p+1)}\left[ \frac{1}{\sigma_0^4}c_{K}^{2d}\|\bar{f}-f_0\|_R^2\E\left\{g(\bt^{e_0})\right\}^2+\frac{1}{\sigma_{0}^2}\E\left\{g(\bt^{e_0})\right\}^2\right.\\
& \quad\quad\quad \left.+\sum_{j=1}^p\frac{1}{\sigma_j^4}c_{K}^{2d}\|\bar{f}-f_0\|_R^2\E\left\{\frac{\partial g(\bt^{e_j})}{\partial t_j}\right\}^2+\sum_{j=0}^p\frac{1}{\sigma_{j}^2}\E\left\{\frac{\partial g(\bt^{e_j})}{\partial t_j}\right\}^2\right]\\
& \quad  \lesssim n^{-1}\|g\|_0^2,
\end{aligned}
\end{equation}
where the third step is by Lemma \ref{lemmanormrequiv} and Lemma \ref{lem:bounddejft} in the supplementary material, and the last step is by Lemma \ref{lem:barff0l2ar} and the definition of the norm $\|\cdot\|_0$.
From the definition of $G_\lambda^{-1}$ in (\ref{def:ginv}), we have that $\forall g\in\HH$, 
\begin{equation*}
\left\|G_\lambda^{-1}g\right\|_{L_2(a)}^2 = \sum_{{\vec{\bnu}}\in V}\left(1+\frac{\rho_{\vec{\bnu}}}{\|\phi_{\vec{\bnu}}\|_{L_2}^2}\right)^a\left(1+\lambda\rho_{\vec{\bnu}}\right)^{-2}\|\phi_{\vec{\bnu}}\|^2_{L_2}\langle g,\phi_{\vec{\bnu}}\rangle_R^2.
\end{equation*} 
Then by the definition of $\tilde{f}^*$, we have
\begin{align}
& \E\|\tilde{f}^* - \bar{f}\|_{L_2(a)}^2  = \E \left\|\frac{1}{2}G_\lambda^{-1}Dl_{n\lambda}(\bar{f})\right\|_{L_2(a)}^2\nonumber\\
& = \frac{1}{4}\E\left[\sum_{{\vec{\bnu}}\in V}\left(1+\frac{\rho_{\vec{\bnu}}}{\|\phi_{\vec{\bnu}}\|_{L_2}^2}\right)^a(1+\lambda \rho_{\vec{\bnu}})^{-2}\|\phi_{\vec{\bnu}}\|^2_{L_2}\langle Dl_{n\lambda}(\bar{f}), \phi_{\vec{\bnu}}\rangle_R^2\right]\nonumber\\
& \leq \sum_{{\vec{\bnu}}\in V}\left(1+\frac{\rho_{\vec{\bnu}}}{\|\phi_{\vec{\bnu}}\|_{L_2}^2}\right)^a(1+\lambda\rho_{\vec{\bnu}})^{-2}\|\phi_{\vec{\bnu}}\|^2_{L_2}\E\left[\frac{1}{2}Dl_{n\lambda}(\bar{f})\phi_{\vec{\bnu}}\right]^2\nonumber\\
& \lesssim n^{-1}\sum_{{\vec{\bnu}}\in V}\left(1+\frac{\rho_{\vec{\bnu}}}{\|\phi_{\vec{\bnu}}\|_{L_2}^2}\right)^a\left(1+\lambda\rho_{\vec{\bnu}}\right)^{-2}\|\phi_{\vec{\bnu}}\|^2_{L_2}\|\phi_{\vec{\bnu}}\|_{0}^2\nonumber\\
& \asymp n^{-1}N_a(\lambda), \nonumber
\end{align}
where the fourth step is by (\ref{eqn:dlnlambdang0}) and the last step is because of $\|\phi_{\vec{\bnu}}\|_0=1$, $\|\phi_{\vec{\bnu}}\|^2_{L_2} \asymp (1+\sum_{j=1}^p\nu_j^2)^{-1}$, $\rho_{\vec{\bnu}}\asymp (1+\sum_{j=1}^p\nu_j^2)^{-1}\prod_{k=1}^d\nu_k^{2m}$, and $N_a(\lambda)$ is defined in Lemma \ref{lemma:varthetamultivariatetensorrank} in the supplementary material. Hence, by Lemma \ref{lemma:varthetamultivariatetensorrank}, we complete the proof.
\end{proof}

Then, we give an upper bound of $(\widehat{f} - \tilde{f}^*)$, which is another part of the stochastic error.
Since $l_{n\lambda}(f)$ is a quadratic form of $f$, the Taylor expansion of $Dl_{n\lambda}(\widehat{f}) = 0$ at $\bar{f}$ gives
\begin{equation*}
Dl_{n\lambda}(\bar{f}) + D^2l_{n\lambda}(\bar{f})(\widehat{f}-\bar{f}) = 0,
\end{equation*}
and by the definition of $\tilde{f}^*$ and $G_\lambda$, we have
\begin{equation*}
Dl_{n\lambda}(\bar{f}) + D^2l_{\infty\lambda}(\bar{f})(\tilde{f}^* - \bar{f}) = 0.
\end{equation*}
Thus, $G_\lambda(\widehat{f} - \tilde{f}^*) = \frac{1}{2}D^2l_\infty(\bar{f})(\widehat{f} - \bar{f}) - \frac{1}{2}D^2l_{n}(\bar{f})(\widehat{f}-\bar{f})$, and
\begin{equation}
\label{eqn:hatfnlambdatildef}
\widehat{f} - \tilde{f}^* = G_\lambda^{-1}\left[\frac{1}{2}D^2l_\infty(\bar{f})(\widehat{f}- \bar{f}) - \frac{1}{2}D^2l_{n}(\bar{f})(\widehat{f} - \bar{f})\right].
\end{equation}

\begin{lemma} 
\label{lem:boundhattildef}
If $n^{-1}\lambda^{-(2a+3/2m)}[\log(1/\lambda)]^{r-1}\rightarrow 0$ and $1/2m< a<(2m-3)/4m$, we have for any $0\leq c\leq a+1/m$,
\begin{equation*} 
\|\widehat{f} - \tilde{f}^*\|_{L_2(c)}^2 = o_\P\left\{\|\tilde{f}^* - \bar{f}\|_{L_2(c)}^2\right\}.
\end{equation*} 
\end{lemma}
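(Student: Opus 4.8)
The plan is to use the linearization argument of Cox \cite{cox1990asymptotic}, bounding the gap $\widehat f-\tilde f^*$ by a self-referential (Neumann-series) inequality. Since $l_{n\lambda}$ and $l_{\infty\lambda}$ are quadratic, the Hessians $D^2l_n$ and $D^2l_\infty$ are independent of the base point; let $R_n$ be the random self-adjoint operator on $(\HH,\langle\cdot,\cdot\rangle_R)$ defined via Riesz representation by $\langle R_ng,h\rangle_R=\langle g,h\rangle_0-\langle g,h\rangle_{0,n}$, where $\langle g,h\rangle_{0,n}:=\tfrac12 D^2l_n(\bar f)gh$ is the empirical analogue of $\langle g,h\rangle_0=\tfrac12 D^2l_\infty(\bar f)gh$, so that $\E\langle R_ng,h\rangle_R=0$. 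Then (\ref{eqn:hatfnlambdatildef}) reads $\widehat f-\tilde f^*=-G_\lambda^{-1}R_n(\widehat f-\bar f)$; substituting $\widehat f-\bar f=(\widehat f-\tilde f^*)+(\tilde f^*-\bar f)$ and writing $S_n:=G_\lambda^{-1}R_n$ gives the identity $(I+S_n)(\widehat f-\tilde f^*)=-S_n(\tilde f^*-\bar f)$. As $c\le a+1/m<1$ and $\|\cdot\|_{L_2(1)}$ is equivalent to $\|\cdot\|_R$, hence (by Lemma \ref{lemmanormrequiv}) to $\|\cdot\|_\HH$, both $\widehat f$ and $\tilde f^*$ belong to $L_2(c)$ and the identity holds there; so it suffices to show $\|S_n\|_{L_2(c)\to L_2(c)}=o_\P(1)$. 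Granting this, on an event of probability tending to one $I+S_n$ is boundedly invertible on $L_2(c)$ with $\|(I+S_n)^{-1}\|\le2$, whence $\|\widehat f-\tilde f^*\|_{L_2(c)}\le 2\|S_n\|\,\|\tilde f^*-\bar f\|_{L_2(c)}=o_\P(1)\cdot\|\tilde f^*-\bar f\|_{L_2(c)}$, which is the claim.

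It remains to estimate $\|S_n\|$ on $L_2(c)$. In the basis $\{\phi_{\vec{\bnu}}:\vec{\bnu}\in V\}$, using $\langle\phi_{\vec{\bnu}},\phi_{\vec{\bmu}}\rangle_R=(1+\rho_{\vec{\bnu}})\delta_{\vec{\bnu}\vec{\bmu}}$ and $G_\lambda^{-1}\phi_{\vec{\bmu}}=(1+\rho_{\vec{\bmu}})(1+\lambda\rho_{\vec{\bmu}})^{-1}\phi_{\vec{\bmu}}$ from (\ref{def:ginv}), one gets $S_n\phi_{\vec{\bnu}}=-\sum_{\vec{\bmu}\in V}(1+\lambda\rho_{\vec{\bmu}})^{-1}\xi_{\vec{\bnu}\vec{\bmu}}\phi_{\vec{\bmu}}$, where $\xi_{\vec{\bnu}\vec{\bmu}}=\langle\phi_{\vec{\bnu}},\phi_{\vec{\bmu}}\rangle_{0,n}-\delta_{\vec{\bnu}\vec{\bmu}}$ is a centered empirical average of products of the normalized trigonometric eigenfunctions $\phi_{\vec{\bnu}},\phi_{\vec{\bmu}}$ and their $j$th partial derivatives ($j=1,\dots,p$) at the design points. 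The plan is to split $V$ at the effective resolution of the problem into $\{\vec{\bnu}:\rho_{\vec{\bnu}}\lesssim\lambda^{-1}\}$ and its complement, the former having cardinality $\asymp\lambda^{-1/2m}[\log(1/\lambda)]^{r-1}$. On the complement, the damping $(1+\lambda\rho_{\vec{\bmu}})^{-1}$ together with the decay of the $L_2(c)$ weights $w_{\vec{\bnu}}=(1+\rho_{\vec{\bnu}}/\|\phi_{\vec{\bnu}}\|_{L_2}^2)^c\|\phi_{\vec{\bnu}}\|_{L_2}^2$ of (\ref{eqn:fa2v1rho1}) renders the contribution negligible via an elementary moment bound. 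On the low-frequency part one uses $\|\phi_{\vec{\bnu}}\|_\infty\lesssim\|\phi_{\vec{\bnu}}\|_{L_2}$, $\|\partial\phi_{\vec{\bnu}}/\partial t_j\|_\infty\lesssim\nu_j\|\phi_{\vec{\bnu}}\|_{L_2}$, $\|\phi_{\vec{\bnu}}\|_{L_2}^2\asymp(1+\sum_{j=1}^p\nu_j^2)^{-1}$, and $\rho_{\vec{\bnu}}\asymp(1+\sum_{j=1}^p\nu_j^2)^{-1}\prod_{k}\nu_k^{2m}$ to obtain $\E\xi_{\vec{\bnu}\vec{\bmu}}^2\lesssim n^{-1}$ up to explicit polynomial factors in the indices, and then a second-moment-plus-union-bound argument over the finitely many relevant $(\vec{\bnu},\vec{\bmu})$, weighted by $w_{\vec{\bmu}}/w_{\vec{\bnu}}$ and $(1+\lambda\rho_{\vec{\bmu}})^{-2}$, yields $\|S_n\|_{\mathrm{op}}^2=O_\P\big(n^{-1}\lambda^{-(2a+3/2m)}[\log(1/\lambda)]^{r-1}\big)$, which is $o_\P(1)$ under the hypothesis. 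The constraint $a>1/2m$ is what makes the relevant weighted series over $\vec{\bnu}$ converge, and $a<(2m-3)/4m$ keeps the exponent $2a+3/2m$ small enough to be compatible with the admissible choices of $\lambda$.

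The main obstacle is this last concentration estimate, with its sharp power of $\lambda$ and $\log(1/\lambda)$. It requires precise second-moment (and, for the union bound, higher-moment) control of the empirical inner products $\langle\phi_{\vec{\bnu}},\phi_{\vec{\bmu}}\rangle_{0,n}$ — in particular careful tracking of the factor $\nu_j$ that differentiation in $t_j$ adds to the sup-norm of a basis function — together with a delicate accounting of how the $L_2(c)$ weights, the $G_\lambda^{-1}$-damping $(1+\lambda\rho)^{-1}$, and the cardinality of the effective index set combine, which is precisely where the side condition $n^{-1}\lambda^{-(2a+3/2m)}[\log(1/\lambda)]^{r-1}\to0$ and the window $1/2m<a<(2m-3)/4m$ are forced. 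Once this bound is in hand the fixed-point identity and Neumann inversion are routine, and everything is uniform over $f_0\in\HH$ since no constant depends on $f_0$.
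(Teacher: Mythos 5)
Your reduction to the fixed-point identity $(I-S_n)(\widehat f-\tilde f^*)=S_n(\tilde f^*-\bar f)$ is fine (up to a sign), but the step you build everything on --- $\|S_n\|_{L_2(c)\to L_2(c)}=o_\P(1)$ for every $0\le c\le a+1/m$ --- has a genuine gap, and for small $c$ it is simply not true. The bilinear forms defining $R_n$ (both $\langle\cdot,\cdot\rangle_0$ and its empirical version) involve first-order partial derivatives, and these are not controlled by $\|\cdot\|_{L_2(c)}$ unless $c\ge 1/m$: with the weights of (\ref{eqn:fa2v1rho1}), a unit vector $\phi_{\vec{\bnu}}/\|\phi_{\vec{\bnu}}\|_{L_2(c)}$ at high frequency has $\|\cdot\|_0$-norm of order $\nu_j^{1-mc}$, so for $c<1/m$ (in particular $c=0$, which the lemma covers) $S_n$ is not even a bounded operator on $L_2(c)$, let alone of small norm; your truncation to the $\asymp\lambda^{-1/2m}[\log(1/\lambda)]^{r-1}$ ``effective'' indices is not justified on the input side, because the entries $\xi_{\vec{\bnu}\vec{\bmu}}$ do not decay in $\vec{\bnu}$ after this rescaling. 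Relatedly, an entrywise second-moment-plus-union-bound does not control an operator norm (a supremum over infinitely many input directions), and a Hilbert--Schmidt-type sum over $\vec{\bnu}$ diverges; the uniformity over the unit ball is exactly the hard point, and your proposal defers it to an unproven ``concentration estimate'' whose claimed rate is, moreover, independent of $c$, which already signals that the norms have been misplaced.

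The paper's proof circumvents precisely this obstruction by proving an \emph{asymmetric} bound, (\ref{eqn:anoformhatftildefc}): the output is measured in $L_2(c)$ but the input $\widehat f-\bar f$ is measured in the stronger norm $L_2(a+1/m)$, whose index exceeds $1/m$ so that derivatives are controlled. Technically, starting from (\ref{eqn:hatfnlambdatildef}) it expands the product functions $\partial_j(\widehat f-\bar f)\,\partial_j\phi_{\vec{\bnu}}$ in the full tensor basis, applies Cauchy--Schwarz, and uses the deterministic tensor-product multiplier inequality of Lemma \ref{lemma:inequanund1rhonua} (this is where $a>1/2m$ enters) to pull out $\|\widehat f-\bar f\|_{L_2(a+1/m)}^2$, leaving a purely empirical factor whose expectation is $O(n^{-1})$; summing the $G_\lambda^{-1}$-damped weights gives $n^{-1}N_{a+c}(\lambda)$. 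The self-reference is then closed only at the fixed point $c=a+1/m$ (where the hypothesis $n^{-1}\lambda^{-(2a+3/2m)}[\log(1/\lambda)]^{r-1}\to0$ makes the factor $o_\P(1)$), the triangle inequality converts $\|\widehat f-\bar f\|_{L_2(a+1/m)}$ into $\|\tilde f^*-\bar f\|_{L_2(a+1/m)}$, and the statement for general $c$ follows by feeding this back into (\ref{eqn:anoformhatftildefc}) and comparing with the rates of Lemma \ref{lem:tilfbarfl2a}. If you want to keep your operator-theoretic formulation, you would have to bound $S_n$ as a map from $L_2(a+1/m)$ into $L_2(c)$ (and supply a genuine uniform-over-the-unit-ball argument, e.g.\ via the multiplier inequality), not as a map from $L_2(c)$ to itself.
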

\begin{proof}
A  sufficient condition for this lemma is that for any $1/(2m)< a<(2m-3)/(4m)$ and $0\leq c\leq a+1/m$,
\begin{equation}
 \label{eqn:anoformhatftildefc}
\begin{aligned}
& \|\widehat{f} - \tilde{f}^*\|_{L_2(c)}^2\\
&  = 
\begin{cases}
O_\P\left\{n^{-1}\lambda^{-(c+a+1/2m)}[\log(1/\lambda)]^{r\wedge (d-p)-1}\right\}\|\widehat{f} - \bar{f}\|_{L_2(a+1/m)}^2, &\mbox{if } 0\leq p<d,\\
O_\P\left\{n^{-1}\lambda^{\frac{mr}{1-mr}\left(a+c+\frac{r-2}{2mr}\right)}\right\}\|\widehat{f} - \bar{f}\|_{L_2(a+1/m)}^2,  & \mbox{if } p=d, r\geq 3,\\
O_\P\left\{n^{-1}\right\}\|\widehat{f} - \bar{f}\|_{L_2(a+1/m)}, &\mbox{if } p=d, r=2,\\
O_\P\left\{n^{-1}\lambda^{\frac{1-2m(a+c)}{2m-2}}\right\}\|\widehat{f} - \bar{f}\|_{L_2(a+1/m)},   &\mbox{if } p=d, r=1.
 \end{cases}
\end{aligned}
\end{equation}
This is because once (\ref{eqn:anoformhatftildefc}) established, by letting $c=a+1/m$ and under the assumption $n^{-1}\lambda^{-(2a+3/2m)}[\log(1/\lambda)]^{r-1}\rightarrow 0$, we have
\begin{equation*}
\|\widehat{f} - \tilde{f}^*\|_{L_2(a+1/m)}^2 = o_\P(1)\|\widehat{f} - \bar{f}\|_{L_2(a+1/m)}^2.
\end{equation*}
By the triangle inequality, we have $\|\tilde{f}^* - \bar{f}\|_{L_2(a+1/m)}  \geq \|\widehat{f} - \bar{f}\|_{L_2(a+1/m)} - \|\widehat{f} - \tilde{f}^*\|_{L_2(a+1/m)} = [1-o_\P(1)]\|\widehat{f} - \bar{f}\|_{L_2(a+1/m)}$, which implies $\|\widehat{f}  - \bar{f}\|_{L_2(a+1/m)}^2 = O_\P\{\|\tilde{f}^* - \bar{f}\|_{L_2(a+1/m)}^2\}$. Thus by  (\ref{eqn:anoformhatftildefc}) and Lemma \ref{lem:tilfbarfl2a}, we complete the proof.

We now are in the position to prove (\ref{eqn:anoformhatftildefc}). 
For any $0\leq c\leq a+1/m$, by (\ref{eqn:hatfnlambdatildef}), we have
\allowdisplaybreaks
\begin{align}
& \|\widehat{f} - \tilde{f}^*\|_{L_2(c)}^2  \nonumber\\
& = \sum_{{\vec{\bnu}}\in V}\left(1+\frac{\rho_{\vec{\bnu}}}{\|\phi_{\vec{\bnu}}\|_{L_2}^2}\right)^c(1+\lambda\rho_{\vec{\bnu}})^{-2}\|\phi_{\vec{\bnu}}\|_{L_2}^2\nonumber\\
&   \quad\quad\quad\quad\quad\quad\times\left[\frac{1}{2}D^2l_\infty(\bar{f})(\widehat{f} - \bar{f})\phi_{\vec{\bnu}} - \frac{1}{2}D^2l_{n}(\bar{f})(\widehat{f} - \bar{f})\phi_{\vec{\bnu}}\right]^2\nonumber\\
& \quad\leq \sum_{{\vec{\bnu}}\in V}\left(1+\frac{\rho_{\vec{\bnu}}}{\|\phi_{\vec{\bnu}}\|_{L_2}^2}\right)^c(1+\lambda\rho_{\vec{\bnu}})^{-2}\|\phi_{\vec{\bnu}}\|_{L_2}^2\nonumber\\
& \quad \times\frac{1}{p+1}\left\{\left[\frac{\sum_{i=1}^n(\widehat{f}-\bar{f})(\bt_i^{e_0}) \phi_{\vec{\bnu}}(\bt_i^{e_0})}{n\sigma_0^2} - \frac{\int(\widehat{f}-\bar{f})(\bt)\phi_{\vec{\bnu}}(\bt)}{\sigma_0^2}\right]^2 \right.\label{eqn:1p1ndejf}\\
&  \quad\quad\quad\left.+\sum_{j=1}^p\left[\frac{\sum_{i=1}^n\frac{\partial (\widehat{f}-\bar{f})}{\partial t_j}(\bt_i^{e_j})\frac{\partial \phi_{\vec{\bnu}}}{\partial t_j}(\bt_i^{e_j})}{n\sigma_j^2}- \frac{\int \frac{\partial (\widehat{f} - \bar{f})(\bt)}{\partial t_j}\frac{\partial \phi_{\vec{\bnu}}(\bt)}{\partial t_j}}{\sigma_j^2}\right]^2\right\}.\nonumber
\end{align}
Denote $g_{j}(\bt) = \frac{1}{\sigma_j^2}\frac{\partial (\widehat{f} - \bar{f})}{\partial t_j} \frac{\partial \phi_{\vec{\bnu}}}{\partial t_j}$ and $g_0(\bt) = \frac{1}{\sigma_0^2}(\widehat{f} - \bar{f})\phi_{\vec{\bnu}}$.
Hence, we can do the expansion on the basis $\{\phi_{\vec{\bmu}}\}_{{\vec{\bmu}}\in \N^d}$, 
\begin{equation}
\label{eqn:gjbtsumbmund}
g_{j}(\bt)=\sum_{{\vec{\bmu}}\in \N^d} Q^{j}_{{\vec{\bmu}}}\phi_{\vec{\bmu}}(\bt),\quad  \mbox{ where } Q^{j}_{\vec{\bmu}} = \langle g_j(\bt),\phi_{\vec{\bmu}}(\bt)\rangle_0.
\end{equation}
Unlike (\ref{eqn:fsumnuthetanuphinvb}) with the multi-index $\vec{\bnu}\in V$, we require ${\vec{\bmu}}\in \N^d$ in (\ref{eqn:gjbtsumbmund}) since now $g_{j}(\bt)$ is a product function.
By Cauchy-Schwarz inequality,
\begin{align}
& \left[\frac{1}{n\sigma_j^2}\sum_{i=1}^n\frac{\partial (\widehat{f} - \bar{f})}{\partial t_j}(\bt_i^{e_j})\frac{\partial \phi_{\vec{\bnu}}}{\partial t_j}(\bt_i^{e_j})  - \frac{1}{\sigma_j^2}\int \frac{\partial (\widehat{f} - \bar{f})(\bt)}{\partial t_j}\frac{\partial \phi_{\vec{\bnu}}(\bt)}{\partial t_j}\right]^2\nonumber\\
& \quad =  \left[\sum_{{\vec{\bmu}}\in \N^d} Q^{j}_{{\vec{\bmu}}}\left(\frac{1}{n}\sum_{i=1}^n\phi_{\vec{\bmu}}(\bt^{e_j}_i)-\int\phi_{\vec{\bmu}}(\bt)\right)\right]^2\nonumber\\
& \quad \leq \left[\sum_{{\vec{\bmu}}\in \N^d} (Q^{j}_{{\vec{\bmu}}})^2\left(1+\frac{\rho_{\vec{\bmu}}}{\|\phi_{\vec{\bmu}}\|_{L_2}^2}\right)^{a}\|\phi_{\vec{\bmu}}\|_{L_2}^2\right]\nonumber\\
& \quad\quad\times
\left[\sum_{{\vec{\bmu}}\in \N^d}\left(1+\frac{\rho_{\vec{\bmu}}}{\|\phi_{\vec{\bmu}}\|_{L_2}^2}\right)^{-a}\|\phi_{\vec{\bmu}}\|_{L_2}^{-2}\left(\frac{1}{n}\sum_{i=1}^n\phi_{\vec{\bmu}}(\bt^{e_j}_i) - \int\phi_{\vec{\bmu}}(\bt)\right)^2\right].\label{eqn:cgammnua1mf35}
\end{align}
For brevity, we write $f(\bt) = \partial f/\partial t_0$.
By Lemma \ref{lemma:inequanund1rhonua} in the supplementary material we have that if $a> 1/2m$, the sum of the first part in (\ref{eqn:cgammnua1mf35}) over $j=0,\ldots,p$ is bounded by
\begin{equation}
\label{eqn:boundgjl2a1m}
\begin{aligned}
&  \sum_{j=0}^p\sum_{{\vec{\bmu}}\in \N^d} \left(1+\frac{\rho_{\vec{\bmu}}}{\|\phi_{\vec{\bmu}}\|_{L_2}^2}\right)^a\|\phi_{\vec{\bmu}}\|_{L_2}^2\left\langle\frac{\partial (\widehat{f} - \bar{f})}{\partial t_j}\frac{\partial \phi_{\vec{\bnu}}}{\partial t_j},\phi_{\vec{\bmu}}\right\rangle_0^2\\
&  \lesssim  \|\widehat{f} - \bar{f}\|_{L_2(a+1/m)}^2\sum_{j=0}^p\sum_{{\vec{\bmu}}\in \N^d}\left(1+\frac{\rho_{\vec{\bmu}}}{\|\phi_{\vec{\bmu}}\|_{L_2}^2}\right)^a\|\phi_{\vec{\bmu}}\|_{L_2}^2\left\langle \frac{\partial \phi_{\vec{\bnu}}}{\partial t_j},\phi_{\vec{\bmu}}\right\rangle_0^2\\
& \lesssim \|\widehat{f} - \bar{f}\|_{L_2(a+1/m)}^2\left(1+\frac{\rho_{\vec{\bnu}}}{\|\phi_{\vec{\bnu}}\|_{L_2}^2}\right)^a\|\phi_{\vec{\bnu}}\|_{L_2}^2\left(1+\sum_{j=1}^p\nu_j^2\right)\\
& \asymp \|\widehat{f} - \bar{f}\|_{L_2(a+1/m)}^2 \left(1+\frac{\rho_{\vec{\bnu}}}{\|\phi_{\vec{\bnu}}\|_{L_2}^2}\right)^a.
\end{aligned}
\end{equation}
The second part of (\ref{eqn:cgammnua1mf35}) can be bounded by
\begin{equation}
\label{eqn:enuganmmacnf}
\begin{aligned}
& \E\left[\sum_{{\vec{\bmu}}\in \N^d}\left(1+\frac{\rho_{\vec{\bmu}}}{\|\phi_{\vec{\bmu}}\|_{L_2}^2}\right)^{-a}\|\phi_{\vec{\bmu}}\|_{L_2}^{-2}\left(\frac{1}{n}\sum_{i=1}^n\phi_{\vec{\bmu}}(\bt_i^{e_j}) - \int\phi_{\vec{\bmu}}(\bt)\right)^2\right]\\
&\quad  \leq \sum_{{\vec{\bmu}}\in \N^d}\left(1+\frac{\rho_{\vec{\bmu}}}{\|\phi_{\vec{\bmu}}\|_{L_2}^2}\right)^{-a}\|\phi_{\vec{\bmu}}\|_{L_2}^{-2}\left(\frac{1}{n}\int\phi_{\vec{\bmu}}^2(\bt)\right)\\
& \quad \asymp n^{-1}\sum_{{\vec{\bmu}}\in \N^d}\left(1+\frac{\rho_{\vec{\bmu}}}{\|\phi_{\vec{\bmu}}\|_{L_2}^2}\right)^{-a} \lesssim n^{-1}\sum_{{\vec{\bmu}}\in \N^d}\mu_1^{-2ma}\cdots\mu_d^{-2ma}\\
& \quad \leq n^{-1}\left(\sum_{\mu_1=1}^\infty\mu_1^{-2ma}\right)^d\asymp n^{-1},
\end{aligned}
\end{equation}
where the third step uses $\rho_{\vec{\bmu}}/\|\phi_{\vec{\bmu}}\|_{L_2}^2\asymp \mu_1^{2m}\cdots\mu_d^{2m}$, and the fourth step holds for $a> 1/2m$. Combing  (\ref{eqn:cgammnua1mf35}), (\ref{eqn:boundgjl2a1m}) and (\ref{eqn:enuganmmacnf}), we have for 
$a>1/2m$,
\begin{align}
& \E\left\{\left[\frac{1}{n\sigma_0^2}\sum_{i=1}^n(\widehat{f}-\bar{f})(\bt_i^{e_0}) \phi_{\vec{\bnu}}(\bt_i^{e_0}) - \frac{1}{\sigma_0^2}\int(\widehat{f}-\bar{f})(\bt)\phi_{\vec{\bnu}}(\bt)\right]^2 \right.\nonumber\\
& \left.+\sum_{j=1}^p\left[\frac{1}{n\sigma_j^2}\sum_{i=1}^n\frac{\partial (\widehat{f}-\bar{f})}{\partial t_j}(\bt_i^{e_j})\frac{\partial \phi_{\vec{\bnu}}}{\partial t_j}(\bt_i^{e_j})- \sum_{j=1}^p\frac{1}{\sigma_j^2}\int \frac{\partial (\widehat{f} - \bar{f})(\bt)}{\partial t_j}\frac{\partial \phi_{\vec{\bnu}}(\bt)}{\partial t_j}\right]^2\right\}\nonumber\\
&\quad \lesssim \frac{1}{n} \|\widehat{f} - \bar{f}\|_{L_2(a+1/m)}^2\left(1+\frac{\rho_{\vec{\bnu}}}{\|\phi_{\vec{\bnu}}\|_{L_2}^2}\right)^a.
\label{eqn:e1p1n-1hatfbarfl2a1m}
\end{align}
Therefore, if $1/2m< a<(2m-3)/4m$ and $0\leq c\leq a+1/m$,  (\ref{eqn:1p1ndejf})  and (\ref{eqn:e1p1n-1hatfbarfl2a1m}) imply that 
\begin{align}
 \E\|\widehat{f} -\tilde{f}^*\|_{L_2(c)}^2  \lesssim n^{-1}\|\widehat{f} - \bar{f}\|_{L_2(a+1/m)}^2N_{a+c}(\lambda).\nonumber
 \end{align}
 By Lemma \ref{lemma:varthetamultivariatetensorrank} in the supplementary material, we complete the proof for  (\ref{eqn:anoformhatftildefc}) and this lemma.
\end{proof}

Last, we combine Lemma \ref{lem:barff0l2ar}, Lemma \ref{lem:tilfbarfl2a} and Lemma \ref{lem:boundhattildef} and get the following proposition.

\begin{proposition}
\label{thm:convergenceforhatfnlambda}
Under the conditions of Theorem \ref{theorem:lowerbdfNlambdaregrandom} and assuming the distributions $\Pi^{e_0}$ and $\Pi^{e_j}$s are known.
If $1/2m< a<(2m-3)/4m$, $m>2$, and $n^{-1}\lambda^{-(2a+3/2m)}[\log(1/\lambda)]^{r-1}\rightarrow 0$, then for any $c\in[0,a+1/m]$, the $\widehat{f}$ given by (\ref{scheme1}) satisfies, when  $0\leq p<d$,
\begin{align}
 \|\widehat{f}- f_0\|_{L_2(c)}^2  = O\{\lambda^{1-c}J(f_0)\}  + O_\P\left\{n^{-1}\lambda^{-(c+1/2m)}[\log(1/\lambda)]^{r\wedge (d-p)-1}\right\},\nonumber
\end{align}
and when $p=d$,
\begin{align}
& \|\widehat{f} - f_0\|_{L_2(c)}^2 \nonumber\\
& = 
\begin{cases}
O\left\{\lambda^{\frac{(1-c)mr}{mr-1}}J(f_0)\right\}  + O_\P\left\{n^{-1}\lambda^{\frac{mr}{1-mr}\left(c+\frac{r-2}{2mr}\right)}\right\}\quad   \mbox{ if } r\geq 3,\\
O\left\{\lambda^{\frac{2m}{2m-1}}J(f_0)\right\}  + O_\P\left\{n^{-1}\log(1/\lambda)\right\}  \mbox{ if } r=2, c=0,\\
O\left\{\lambda^{\frac{2(1-c)m}{2m-1}}J(f_0)\right\}  + O_\P\left\{n^{-1}\lambda^{\frac{2mc}{1-2m}}\right\}  \mbox{ if } r=2, c>0,\\
O\left\{\lambda^{\frac{(1-c)m}{m-1}}J(f_0)\right\}  + O_\P\left\{n^{-1}\right\}  \mbox{ if } r=1, c<\frac{1}{2m},\\
O\left\{\lambda^{\frac{2m-1}{2(m-1)}}J(f_0)\right\}  + O_\P\left\{n^{-1}\log(1/\lambda)\right\}  \mbox{ if } r=1, c=\frac{1}{2m},\\
O\left\{\lambda^{\frac{(1-c)m}{m-1}}J(f_0)\right\}  + O_\P\left\{n^{-1}\lambda^{\frac{1-2mc}{2m-2}}\right\}  \mbox{ if } r=1, c>\frac{1}{2m}.
 \end{cases}\nonumber
\end{align}
\end{proposition}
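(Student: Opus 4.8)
The plan is to derive the bound from Lemmas \ref{lem:barff0l2ar}, \ref{lem:tilfbarfl2a} and \ref{lem:boundhattildef} by a triangle-inequality argument on the decomposition
\begin{equation*}
\widehat{f} - f_0 = (\widehat{f} - \tilde{f}^*) + (\tilde{f}^* - \bar{f}) + (\bar{f} - f_0)
\end{equation*}
set up in the outline. Squaring and using $(x+y+z)^2\le 3(x^2+y^2+z^2)$ in the norm $\|\cdot\|_{L_2(c)}$, it suffices to control the three summands at norm index $c$ (which plays the role of the parameter called $a$ in Lemmas \ref{lem:barff0l2ar} and \ref{lem:tilfbarfl2a}). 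The deterministic error $\|\bar{f}-f_0\|_{L_2(c)}^2$ is bounded by Lemma \ref{lem:barff0l2ar} at index $c$; the first part of the stochastic error $\|\tilde{f}^*-\bar{f}\|_{L_2(c)}^2$ by Lemma \ref{lem:tilfbarfl2a} at index $c$; and the second part by Lemma \ref{lem:boundhattildef}, which at index $c$ gives $\|\widehat{f}-\tilde{f}^*\|_{L_2(c)}^2 = o_\P\{\|\tilde{f}^*-\bar{f}\|_{L_2(c)}^2\}$. Hence $\|\widehat{f}-\tilde{f}^*\|_{L_2(c)}^2 + \|\tilde{f}^*-\bar{f}\|_{L_2(c)}^2 = O_\P\{\|\tilde{f}^*-\bar{f}\|_{L_2(c)}^2\}$, so $\|\widehat{f}-f_0\|_{L_2(c)}^2$ is at most the deterministic bound of Lemma \ref{lem:barff0l2ar} plus, in the $O_\P$ sense, the stochastic bound of Lemma \ref{lem:tilfbarfl2a}.

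The next step is to check that the hypotheses of the proposition license each of these applications, and then to read off the cases. Lemma \ref{lem:boundhattildef} requires precisely $1/2m<a<(2m-3)/4m$, the rate condition $n^{-1}\lambda^{-(2a+3/2m)}[\log(1/\lambda)]^{r-1}\to 0$, and $0\le c\le a+1/m$, all of which are assumed here. For Lemma \ref{lem:tilfbarfl2a} at index $c$ in the regime $0\le p<d$ one needs $c<1-1/2m$; this follows since $c\le a+1/m<(2m-3)/4m+1/m=(2m+1)/4m$ and $(2m+1)/4m<(2m-1)/2m=1-1/2m$ when $m>3/2$, which holds because $m>2$. In the regime $p=d$, Lemma \ref{lem:tilfbarfl2a} and Lemma \ref{lem:barff0l2ar} are valid for every $c\in[0,1]$, and $c<(2m+1)/4m<1$. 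Reading off the bounds: for $0\le p<d$, the deterministic contribution is $O\{\lambda^{1-c}J(f_0)\}$ and the stochastic contribution is $O_\P\{n^{-1}\lambda^{-(c+1/2m)}[\log(1/\lambda)]^{r\wedge(d-p)-1}\}$, which gives the first displayed conclusion; for $p=d$, the deterministic contribution $O\{\lambda^{(1-c)mr/(mr-1)}J(f_0)\}$ from Lemma \ref{lem:barff0l2ar} specializes, upon substituting $r$, to the $J(f_0)$-terms in the six sub-cases, and the companion stochastic terms are those from Lemma \ref{lem:tilfbarfl2a} (in the borderline sub-case $r=2$ with $c>0$, where Lemma \ref{lem:tilfbarfl2a} yields $O_\P\{n^{-1}\}$, this may be replaced by the larger, still valid, expression $O_\P\{n^{-1}\lambda^{2mc/(1-2m)}\}$ stated in the proposition).

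The only point that is not pure bookkeeping is ensuring that Lemma \ref{lem:boundhattildef}'s $o_\P$ conclusion is genuinely in force: the constraints $1/2m<a<(2m-3)/4m$, $0\le c\le a+1/m$, $m>2$, together with $n^{-1}\lambda^{-(2a+3/2m)}[\log(1/\lambda)]^{r-1}\to 0$, must be mutually consistent and must force $\|\widehat{f}-\tilde{f}^*\|_{L_2(c)}^2$ to be negligible against $\|\tilde{f}^*-\bar{f}\|_{L_2(c)}^2$, which is exactly what allows the second stochastic term to be absorbed into the first. Granting that, the proposition follows by substituting the three lemma bounds into the triangle inequality and matching the cases on $p$ and $r$.
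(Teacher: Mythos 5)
Your proposal is correct and follows essentially the same route as the paper: the paper's proof of this proposition is precisely "combine Lemma \ref{lem:barff0l2ar}, Lemma \ref{lem:tilfbarfl2a} and Lemma \ref{lem:boundhattildef}" via the decomposition $\widehat{f}-f_0=(\widehat{f}-\tilde{f}^*)+(\tilde{f}^*-\bar{f})+(\bar{f}-f_0)$, with the stochastic remainder absorbed through the $o_\P$ statement of Lemma \ref{lem:boundhattildef}. Your hypothesis checks (in particular $c\le a+1/m<(2m+1)/4m<1-1/2m$ for $0\le p<d$, and the observation that in the case $p=d$, $r=2$, $c>0$ the stated bound $n^{-1}\lambda^{2mc/(1-2m)}$ is a valid, weaker replacement of the lemma's $n^{-1}$) are exactly the bookkeeping the paper leaves implicit.
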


Many results on the regularized estimator $\widehat{f}$ can be derived from Proposition \ref{thm:convergenceforhatfnlambda} including Theorem \ref{thm:mainupperrateestf0}. In fact, consider for $p=d$ and $r\geq 3$, by letting $\lambda \asymp n^{-\frac{2mr-2}{(2m+1)r-2}}$, $a=1/2m+\epsilon$ for some $\epsilon>0$ and $c=0$, then the condition
$ n^{-1}\lambda^{-(2a+3/2m)}[\log(1/\lambda)]^{r-1}\rightarrow 0$
is equivalent to 
\begin{equation}
\label{eqn:-1frac5mr1}
-1 + \frac{5(mr-1)}{2m^2r+mr-2m}<0, 
\end{equation}
and $m>2$ is sufficient for (\ref{eqn:-1frac5mr1}).
Thus the conditions for Proposition \ref{thm:convergenceforhatfnlambda} are satisfied. Similarly, we can verify that when $p=d$ and $r=2$,  $\lambda\asymp [n(\log n)]^{-(2m-1)/2m}$ satisfies the conditions for Proposition \ref{thm:convergenceforhatfnlambda}. When $p=d$ and $r=1$, $\lambda\lesssim n^{-(m-1)/m}$ satisfies the conditions for the above Proposition. When $0\leq p\leq d-r$, $\lambda\asymp [n(\log n)^{1-r}]^{-2m/(2m+1)}$  satisfies the conditions for the above Proposition,  as well as
when $d-r< p< d$ by letting $\lambda\asymp [n(\log n)^{1+p-d}]^{-2m/(2m+1)}$. This completes the proof for Theorem \ref{thm:mainupperrateestf0}.

\subsection{Proof of Corollary \ref{theorem:estboundderiva}}
This corollary can be directly derived from Proposition \ref{thm:convergenceforhatfnlambda}. 
Observe that
\begin{equation*}
\int_{\XX_1^d}\left[\frac{\partial^{d}\widehat{f}_{n\lambda}(\bt)}{\partial t_1\cdots\partial t_d}-\frac{\partial^d f_0(\bt)}{\partial t_1\cdots \partial t_d}\right]^2d\bt\asymp \|\widehat{f}_{n\lambda} - f_0\|_{L_2(1/m)}.
\end{equation*}
If $d-r<p<d$, we let $c=a=1/m$ and $\lambda\asymp [n(\log n)^{1+p-d}]^{-2m/(2m+1)}$  in Proposition \ref{thm:convergenceforhatfnlambda}, then the condition
$n^{-1}\lambda^{-(2a+3/2m)}[\log(1/\lambda)]^{r-1}\rightarrow 0$
is equivalent to 
\begin{equation}
\label{eqn:1frac12m1less0}
-1 + 7/(2m+1)<0,
\end{equation}
and $m>3$ is sufficient for (\ref{eqn:1frac12m1less0}). Thus the condition for Proposition \ref{thm:convergenceforhatfnlambda} are satisfied, and Proposition \ref{thm:convergenceforhatfnlambda} yields the rate of convergence for $ \|\widehat{f}_{n\lambda} - f_0\|_{L_2(1/m)}$ is
\begin{equation*}
O_\P\left([n(\log n)^{1+p-d}]^{-2(m-1)/(2m+1)}\right).
\end{equation*}
Similarly, if $0\leq p\leq d-r$, we let $\lambda\asymp [n(\log n)^{1-r}]^{-2m/(2m+1)}$; if $p=d$ and $r\geq 3$, let $\lambda\asymp n^{-2(mr-1)/(2mr+r-2)}$; if $p=d$ and $r=2$, let $\lambda\asymp n^{-(2m-1)/2m}$; if $p=d$ and $r=1$, let $\lambda\asymp n^{-(2m-2)/(2m-1)}$, then the conditions for Proposition \ref{thm:convergenceforhatfnlambda} will be satisfied. This completes the proof for Corollary \ref{theorem:estboundderiva}.


\section{Discussion}
This paper is the first to study the  minimax optimal rates for nonparametric estimation when data from first-order partial derivatives are available. We study the function estimation and partial derivative estimations with functional ANOVA models while there are few existing results in literature concerning the partial derivative estimations.

In Theorem \ref{theorem:lowerbdfNlambdareg}, Theorem \ref{thm;deterdesgnupperbdgeneralpdreg}, Theorem \ref{theorem:lowerbdfNlambdaregrandom} and Theorem \ref{thm:mainupperrateestf0}, we assume that all component functions are from a common RKHS $\HH_1$.  We also assume the eigenvalues decay at the polynomial rate, which is true for Sobolev kernels and other widely used kernels. More general settings are also interesting, for example, component RKHS are different, and the eigenvalues decay at different polynomial rates or  even exponentially, and the method of regularization in (\ref{scheme1}) uses other goodness of fit measures.
It would of course be of great interest to extend our results to a broad class of bounded linear functionals and to multivariate function spaces without tensor product structure. We leave these open for future studies.

\appendix

\section*{Acknowledgements}
X. Dai would like to thank Yuhua Zhu and Cuize Han for helpful discussions. We thank Grace Wahba for very helpful comments on an early version of the manuscript. 

\newpage


\appendix
\section{Proofs of technical results}

This appendix consists of five parts. In Section \ref{sec:frechetderiv}, we give a brief review on Fr\'echet derivative which is used in (\ref{eqn:frechetdlNfg}), (\ref{eqn:dlinftyfg2p1}), (\ref{eqn:d2lnfgh}) and (\ref{eqn:d2linftyfgh}) in the main text. 
 In Section \ref{subsec:reglattproof}, we give the proofs for results with deterministic designs in Section \ref{sec:minmaxriskregularlat}.
In Section \ref{sec:proofthmupperlimdderiv}, we prove the results of estimating partial derivatives in Section \ref{sec:minmaxriskpartder}.
We present some key lemmas used for the proofs in Section \ref{subsec:keylemmaforproofs}. All auxiliary technical lemmas are deferred to Section \ref{subsec:auxiliarytechlem}.


\subsection{Fr\'echet derivative of an operator}
\label{sec:frechetderiv}

Let $X$ and $Y$ be the normed linear spaces.  The Fr\'echet derivative of an operator $F: X\mapsto Y$ is a bounded linear operator $DF(a):X\mapsto Y$ with 
\begin{equation*}
\lim_{h\rightarrow 0,h\in X}\frac{\|F(a+h)-F(a)-DF(a)h\|_Y}{\|h\|_X} = 0.
\end{equation*}
For illustration, if $F(a+h) - F(a) = Lh+R(a,h)$ with a linear operator $L$ and $\|R(a,h)\|_Y/\|h\|_X\rightarrow0$ as $h\rightarrow 0$, then by the above definition, $L=DF(a)$ is the Fr\'echet derivative of $F(\cdot)$. The reader is referred to elementary functional analysis  textbooks such as Cartan \cite{cartan1971differential}  for a thorough investigation on Fr\'echet derivative.

\begin{lemma}
\label{lem:DlNfgDl2}
With the norm $\|\cdot\|_R$ in (\ref{def:normrp+1}), the first order Fr\'echet derivative of the functional $l_n(\cdot)$ for any $f,g\in\HH$ is
\begin{align}
Dl_n(f)g & =  \frac{2}{n(p+1)}\left[\frac{1}{\sigma_0^2}\sum_{i=1}^n\{f(\bt_i^{e_0}) - y_i^{e_0}\}g(\bt_i^{e_0}) \right.\nonumber\\
&\quad\quad\quad\quad\quad\quad\quad\quad \left.+ \sum_{j=1}^p\frac{1}{\sigma_j^2}\sum_{i=1}^n\left\{\frac{\partial f(\bt_i^{e_j})}{\partial t_j} - y_i^{e_j}\right\}\frac{\partial g(\bt_i^{e_j})}{\partial t_j}\right].\nonumber
\end{align}
The second order Fr\'echet derivative of $l_n(\cdot)$ for any $f,g,h\in\HH$ is
\begin{equation*}
\begin{aligned}
D^2l_n(f)gh  & =  \frac{2}{n(p+1)}\left[\frac{1}{\sigma_0^2}\sum_{i=1}^ng(\bt_i^{e_0})h(\bt_i^{e_0})\right.\\
& \quad\quad\quad\quad\quad\quad\quad\quad\left.+\sum_{j=1}^p\frac{1}{\sigma_j^2}\sum_{i=1}^n\frac{\partial g(\bt_i^{e_j})}{\partial t_j}\frac{\partial h(\bt^{e_j}_i)}{\partial t_j}\right].
\end{aligned}
\end{equation*}
\end{lemma}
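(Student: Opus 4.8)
The plan is to exploit that $l_n$ is an honest quadratic functional on $\HH$, so its Fr\'echet derivatives can be read off from a finite Taylor expansion with no remainder term. The only genuine input needed is that pointwise evaluation $f\mapsto f(\bt_i^{e_0})$ and pointwise partial-derivative evaluation $f\mapsto \partial f/\partial t_j(\bt_i^{e_j})$ are \emph{bounded} linear functionals on $\HH$: the former is the defining property of the RKHS $\HH$ with kernel $K_d$, and the latter is exactly what was established in (\ref{eqn:partialgttpartialtjhh}) under the differentiability hypothesis (\ref{eqn:partial2tt'kcx1x1}), with representer $\partial K_d(\bt_i^{e_j},\cdot)/\partial t_j$. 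Write $\ell_{0,i}(f)=f(\bt_i^{e_0})$ and $\ell_{j,i}(f)=\partial f/\partial t_j(\bt_i^{e_j})$; each is continuous on $\HH$, hence $|\ell_{0,i}(h)|\lesssim\|h\|_\HH$ and $|\ell_{j,i}(h)|\lesssim\|h\|_\HH$ with constants depending only on $K_d$ and the design points.

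First I would substitute $f+h$ for $f$ and expand each squared term via $(a+b)^2=a^2+2ab+b^2$. Since $l_n(f)=\frac{1}{n(p+1)}[\sigma_0^{-2}\sum_{i=1}^n\{\ell_{0,i}(f)-y_i^{e_0}\}^2+\sum_{j=1}^p\sigma_j^{-2}\sum_{i=1}^n\{\ell_{j,i}(f)-y_i^{e_j}\}^2]$, this gives exactly $l_n(f+h)=l_n(f)+A_f(h)+Q(h)$, where $A_f(h)$ collects the cross terms and $Q(h)$ the terms purely quadratic in $h$. By boundedness of the $\ell$'s and Cauchy-Schwarz, $h\mapsto A_f(h)$ is a bounded linear functional on $\HH$ and $|Q(h)|\lesssim\|h\|_\HH^2$, so $|l_n(f+h)-l_n(f)-A_f(h)|/\|h\|_\HH=|Q(h)|/\|h\|_\HH\to 0$ as $h\to 0$. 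By the characterization of the Fr\'echet derivative recalled at the start of Section \ref{sec:frechetderiv}, $Dl_n(f)=A_f$, and reading off the cross terms yields the claimed formula $Dl_n(f)g=\frac{2}{n(p+1)}[\sigma_0^{-2}\sum_{i=1}^n\{f(\bt_i^{e_0})-y_i^{e_0}\}g(\bt_i^{e_0})+\sum_{j=1}^p\sigma_j^{-2}\sum_{i=1}^n\{\partial f(\bt_i^{e_j})/\partial t_j-y_i^{e_j}\}\partial g(\bt_i^{e_j})/\partial t_j]$.

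For the second derivative I would run the same argument one level up: the map $f\mapsto Dl_n(f)$ is affine in $f$, since $A_{f+g}(h)-A_f(h)=\frac{2}{n(p+1)}[\sigma_0^{-2}\sum_{i=1}^n\ell_{0,i}(g)\ell_{0,i}(h)+\sum_{j=1}^p\sigma_j^{-2}\sum_{i=1}^n\ell_{j,i}(g)\ell_{j,i}(h)]$, which is linear in $g$ with \emph{zero} remainder. Hence the Fr\'echet derivative of $f\mapsto Dl_n(f)$ at any $f$ is this bilinear form, which is precisely the stated expression for $D^2l_n(f)gh$; the same estimates show it is jointly bounded on $\HH\times\HH$, and all higher Fr\'echet derivatives vanish.

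The main obstacle is not the bookkeeping but the justification that differentiation commutes with evaluation, i.e.\ that $\ell_{j,i}$ really is a bounded linear functional on $\HH$ with representer $\partial K_d(\bt_i^{e_j},\cdot)/\partial t_j$; this is where the kernel-smoothness hypothesis (\ref{eqn:partial2tt'kcx1x1}) (equivalently $m>3/2$ for a Sobolev $\HH_1$) is used, and it has already been supplied by (\ref{eqn:partialgttpartialtjhh}) in the main text. Once that point is granted, the remainder is a finite quadratic expansion.
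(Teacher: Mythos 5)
Your proposal is correct and follows essentially the same route as the paper: expand the quadratic functional $l_n(f+g)$ directly, identify the cross terms as the (bounded, by the RKHS evaluation and derivative-evaluation functionals) linear part giving $Dl_n(f)g$, and observe that $Dl_n(f+h)g-Dl_n(f)g$ is exactly the stated bilinear form with zero remainder, giving $D^2l_n(f)gh$. If anything, your deterministic bound $|Q(h)|\lesssim\|h\|_\HH^2$ for the quadratic remainder at fixed $n$ is cleaner than the paper's phrasing of the remainder as $\|g\|_0^2+O(n^{-1/2})$ with the condition $n^{1/2}\|g\|_R\to\infty$, which is unnecessary for a fixed-$n$ Fr\'echet derivative.
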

\begin{proof}
By direct calculations, we have
\begin{align}
& l_{n}(f+g) - l_n(f) = \frac{2}{n(p+1)}\left[\frac{1}{\sigma_0^2}\sum_{i=1}^n\{f(\bt_i^{e_0}) - y_i^{e_0}\}g(\bt_i^{e_0}) \right.\nonumber\\
&\quad\quad\quad\quad\quad\quad\quad \left.+ \sum_{j=1}^p\frac{1}{\sigma_j^2}\sum_{i=1}^n\left\{\frac{\partial f(\bt_i^{e_j})}{\partial t_j} - y_i^{e_j}\right\}\frac{\partial g(\bt_i^{e_j})}{\partial t_j}\right] + R_n(f,g),\nonumber
\end{align}
where
\begin{equation*}
\begin{aligned}
R_n(f,g) & = \frac{1}{n(p+1)}\left[\frac{1}{\sigma_0^2}\sum_{i=1}^n g^2(\bt_i^{e_0}) + \sum_{j=1}^p\frac{1}{\sigma_j^2}\sum_{i=1}^n \left\{\frac{\partial g(\bt_i^{e_j})}{\partial t_j}\right\}^2\right] \\
& = \|g\|_0^2 + O(n^{-1/2}).
\end{aligned}
\end{equation*}
Note that $|R_n(f,g)|/\|g\|_R \rightarrow 0$ as $\|g\|_R\rightarrow 0$ and $n^{1/2}\|g\|_R\rightarrow \infty$. This proves the form of $Dl_n(f)g$ in the lemma.
For the second order Fr\'echet derivative, note that
\begin{align}
& Dl_n(f+h)g- Dl_n(f)g \nonumber\\
& \quad\quad =\frac{2}{n(p+1)}\left[\frac{1}{\sigma_0^2}\sum_{i=1}^ng(\bt_i^{e_0})h(\bt_i^{e_0})+\sum_{j=1}^p\frac{1}{\sigma_j^2}\sum_{i=1}^n\frac{\partial g(\bt_i^{e_j})}{\partial t_j}\frac{\partial h(\bt^{e_j}_i)}{\partial t_j}\right],\nonumber
\end{align}
which is linear in $h$. By definition of  Fr\'echet derivatives, we conclude the form of $D^2l_n(f)gh$ in the lemma.
\end{proof} 
We remark that following a similar derivation in the above proof, we can obtain the first and the second order Fr\'echet derivatives of the functional $l_\infty(\cdot)$ in (\ref{eqn:dlinftyfg2p1}) and (\ref{eqn:d2linftyfgh}), respectively.

\subsection{Proofs for Section \ref{sec:minmaxriskregularlat}: regular lattices}
\label{subsec:reglattproof}
For brevity,  we shall assume the regular lattice (\ref{eqn:reglatticeofti}) is $l_1=\cdots =l_d = l$ and $n=l^d$. The more general case can be showed similarly.
Write 
\begin{equation}
\label{eqn:trigbasisexp}
\psi_1(t)=1, \quad \psi_{2\nu}(t) = \sqrt{2}\cos 2\pi\nu t, \quad \psi_{2\nu+1}(t) = \sqrt{2}\sin 2\pi\nu t,
\end{equation}
for $\nu\geq 1$. Since $f_0$ has periodic boundaries on $\XX_1^d$, $\{\psi_\nu(t)\}_{\nu\geq 1}$ forms an orthonormal system in $L_2(\XX_1)$ and an eigenfunction system for $K$.
For a $d$-dimensional vector ${\vec{\bnu}}=(\nu_1,\ldots,\nu_d)\in\N^d$, write 
\begin{equation}
\label{eqn:gammabnuprodgamma}
\psi_{{\vec{\bnu}}}(\bt) = \psi_{\nu_1}(t_1)\cdots\psi_{\nu_d}(t_d) \quad \mbox{ and }\quad
\lambda_{\vec{\bnu}} = \lambda_{\nu_1}\lambda_{\nu_2}\cdots\lambda_{\nu_d},
\end{equation}
where $\lambda_{\nu_k}$s and $\psi_{\nu_k}(t_k)$s are defined in (\ref{eqn:mercerdecaykttgamma}) with $k=1,\ldots,d$.  
Then, any function $f(\cdot)$ in $\HH$ admits the Fourier expansion $f(\bt) = \sum_{{\vec{\bnu}}\in\N^d}\theta_{\vec{\bnu}} \psi_{\vec{\bnu}}(\bt)$, where $\theta_{\vec{\bnu}} = \langle f(\bt),\psi_{\vec{\bnu}}(\bt)\rangle_{L_2}$,
and $J(f) = \sum_{\vec{\bnu}\in\N^d}\lambda_{\vec{\bnu}}^{-1}\theta_{\vec{\bnu}}^2$. 
We also write $f_0(\bt) = \sum_{{\vec{\bnu}}\in\N^d}\theta_{\vec{\bnu}}^0\psi_{\vec{\bnu}}(\bt)$. 

By Page 23 of Wahba \cite{wahba1990}, it is known that 
\begin{equation*}
l^{-1}\sum_{i=1}^{l} \psi_{\mu}(i/l)\psi_\nu(i/l) = 
\begin{cases}
1,\quad & \mbox{if } \mu=\nu= 1,\ldots,l,\\
0,\quad & \mbox{if } \mu\neq \nu, \mu, \nu= 1,\ldots,l.
\end{cases}
\end{equation*}
Define
\begin{equation*}
\vec{\psi}_{{\vec{\bnu}}} = (\psi_{{\vec{\bnu}}}(\bt_1), \ldots, \psi_{{\vec{\bnu}}}(\bt_n))^\top,
\end{equation*} 
where $\{\bt_1,\ldots,\bt_n\}$ are design points in (\ref{eqn:reglatticeofti}). Thus, we have
\begin{equation*}
\langle \vec{\psi}_{{\vec{\bnu}}}, \vec{\psi}_{{\vec{\bmu}}} \rangle_n = 
\begin{cases}
1, \quad &\mbox{ if } \nu_k=\mu_k=1,\ldots, l; k=1,\ldots, d,\\
0, \quad &\mbox{ if there exists some } k \mbox{ such that } \nu_{k}\neq \mu_{k},
\end{cases}
\end{equation*}
where $\langle\cdot,\cdot\rangle_n$ is the empirical inner product in $\R^n$. 
This implies that $\{\vec{\psi}_{{\vec{\bnu}}}| \nu_k=1,\ldots, l; k=1,\ldots, d\}$ form an orthogonal basis in $\R^n$ with respect to the empirical norm $\|\cdot\|_n$. 
Denote the observed data vectors by  $\by^{e_0} = (y_1^{e_0},\ldots,y_n^{e_0})^\top$ and $\by^{e_j} = (y_1^{e_j},\ldots,y_n^{e_j})^\top$, and write
\begin{equation}
\label{eqn:distransdataz}
\begin{cases}
z_{{\vec{\bnu}}}^{e_0} & = \langle\by^{e_0}, \vec{\psi}_{{\vec{\bnu}}}\rangle_n,\\
z_{\nu_1,\ldots, 2\nu_k-1,\ldots,\nu_d}^{e_j} & =  (2\pi)^{-1}\langle \by^{e_j},  \vec{\psi}_{\nu_1,\ldots, 2\nu_k,\ldots, \nu_d}\rangle_n,\\
z_{\nu_1,\ldots, 2\nu_k,\ldots,\nu_d}^{e_j} & = - (2\pi)^{-1}\langle \by^{e_j}, \vec{\psi}_{\nu_1,\ldots, 2\nu_k-1,\ldots, \nu_d}\rangle_n,
\end{cases}
\end{equation}
for $\nu_k=1,\ldots, l$ and $k=1,\ldots,d$. Then $z_{{\vec{\bnu}}}^{e_0} = \tilde{\theta}_{\vec{\bnu}}^0 + \delta^{e_0}_{\vec{\bnu}}$ and $z_{{\vec{\bnu}}}^{e_j} = \nu_j\tilde{\theta}_{\vec{\bnu}}^0 + \delta^{e_j}_{\vec{\bnu}}$, where $\tilde{\theta}_{\vec{\bnu}}^0 = \theta_{\vec{\bnu}}^0 + \sum_{\mu_k\geq l, k=1,\ldots,d}\theta_{\vec{\bmu}}^0\langle \vec{\psi}_{\vec{\bnu}}, \vec{\psi}_{\vec{\bmu}}\rangle_n$, and 
$\delta^{e_0}_{\vec{\bnu}}$, $\delta^{e_j}_{\vec{\bnu}}$ are all independent with mean 0 and variance $\sigma_0^2/n$ and $\sigma_j^2/n$, respectively.

\subsubsection{Proof of minimax lower bound: Theorem \ref{theorem:lowerbdfNlambdareg}}
\label{sec:prooflowbdreg}

We now prove the lower bound for estimating functions under the regular lattice. By the data transformation (\ref{eqn:distransdataz}), it suffices to show the optimal rate in a special case
\begin{equation}
\label{eqn:ynuejwhitenoise}
\begin{cases}
z_{\vec{\bnu}}^{e_0} & = \theta^0_{\vec{\bnu}} + \delta^{e_0}_{\vec{\bnu}}, \\
z_{\vec{\bnu}}^{e_j} & = \nu_j\theta^0_{\vec{\bnu}} + \delta^{e_j}_{\vec{\bnu}},\quad \mbox{ for } 1\leq j\leq p,
\end{cases}
\end{equation}
where $\delta^{e_j}_{\vec{\bnu}}\sim \NN(0,\sigma_{j}^2/n)$ are independent. For any $\vec{\bnu}\in\N^d$, if we have the prior that $| \tilde{\theta}^0_{\vec{\bnu}} |\leq \pi_{\vec{\bnu}}$, then the minimax linear estimator is 
\begin{equation*}
\widehat{\theta}_{\vec{\bnu}}^L = \frac{\sigma_0^{-2}z^{e_0}_{\vec{\bnu}}+\sum_{j=1}^p\sigma_j^{-2}\nu_jz^{e_j}_{\vec{\bnu}}}{n^{-1}\pi_{\vec{\bnu}}^{-2} + \sigma_0^{-2}+\sum_{j=1}^p\sigma_j^{-2}\nu_j^2},
\end{equation*}
and the minimax linear risk is
\begin{equation*}
n^{-1}\left[n^{-1}\pi_{\vec{\bnu}}^{-2} + \sigma_0^{-2}+\sum_{j=1}^p\sigma_j^{-2}\nu_j^2\right]^{-1}.
\end{equation*}
By Lemma 6 and Theorem 7 in Donoho, Liu and MacGibbon \cite{donoho1990minimax}, if $\sigma_j^2$s are known, 
the minimax risk of estimating $\theta_{\vec{\bnu}}^0$ under the model (\ref{eqn:ynuejwhitenoise}) is larger than $80\%$ of the minimax linear risk of the hardest rectangle subproblem, and the latter linear risk is
\begin{equation}
\label{eqn:linearriskRLsumvecv}
R^{L} = n^{-1}\max_{\sum_{\vec{\bnu}\in V}(1+\lambda_{\vec{\bnu}})\pi_{\vec{\bnu}}^2=1} \sum_{\vec{\bnu}\in V} \left[n^{-1}\pi_{\vec{\bnu}}^{-2} + \sigma_0^{-2}+\sum_{j=1}^p\sigma_j^{-2}\nu_j^2\right]^{-1},
\end{equation}
where $\lambda_{\vec{\bnu}}$ is the product of eigenvalues in (\ref{eqn:gammabnuprodgamma}) and recall that the set  $V$ is defined in (\ref{def:Vbnu}).

We use the Lagrange multiplier method to find $\pi_{\vec{\bnu}}^2$ for solving (\ref{eqn:linearriskRLsumvecv}). Let $a$ be the scalar multiplier and  define
\begin{equation*}
L(\pi_{\vec{\bnu}}^2,a) = \sum_{\vec{\bnu}\in V} \left[n^{-1}\pi_{\vec{\bnu}}^{-2} + \sigma_0^{-2}+\sum_{j=1}^p\sigma_j^{-2}\nu_j^2\right]^{-1}  - a (1+\lambda_{\vec{\bnu}})\pi_{\vec{\bnu}}^2.
\end{equation*}
Taking partial derivative with respect to $\pi_{\vec{\bnu}}^2$ gives
\begin{equation*}
\frac{\partial L}{\partial \pi_{\vec{\bnu}}^2} = n^{-1}\left[n^{-1}+\left(\sigma_0^{-2}+\sum_{j=1}^p\sigma_j^{-2}\nu_j^2\right)\pi_{\vec{\bnu}}^2\right]^{-2} - a(1+\lambda_{\vec{\bnu}}) = 0.
\end{equation*}
This implies
\begin{equation*}
\widehat{\pi}_{\vec{\bnu}}^2 = \left(\sigma_0^{-2}+\sum_{j=1}^p\sigma_j^{-2}\nu_j^2\right)^{-1}\left[b(1+\lambda_{\vec{\bnu}})^{-1/2}- n^{-1}\right]_+,
\end{equation*}
where $b=(na)^{-1/2}$. On one hand, plugging the above formula into the constraint $\sum_{\vec{\bnu}\in V}(1+\lambda_{\vec{\bnu}})\pi_{\vec{\bnu}}^2=1$ gives
\begin{equation*}
\sum_{\vec{\bnu}\in V} \prod_{k=1}^d\nu_k^{2m}\left(\sigma_0^{-2}+\sum_{j=1}^p\sigma_j^{-2}\nu_j^2\right)^{-1}\left[b\prod_{k=1}^d\nu_k^{-m}- n^{-1}\right]_+ \asymp 1.
\end{equation*}
By restricting $\prod_{k=1}^d\nu_k\leq (nb)^{1/m}$, this becomes
\begin{equation}
\label{eqn:sumbnuinVprodk1d}
\begin{aligned}
& \sum_{\vec{\bnu}\in V, \prod_{k=1}^d\nu_k\leq (nb)^{1/m}}\left(\sigma_0^{-2}+\sum_{j=1}^p\sigma_j^{-2}\nu_j^2\right)^{-1}\\
& \quad\quad\quad\quad\quad\quad\quad\quad\quad\quad\times \left(b\prod_{k=1}^d\nu_k^{m}- n^{-1}\prod_{k=1}^d\nu_k^{2m}\right) \asymp 1.
\end{aligned}
\end{equation}
On the other hand, the linear risk in (\ref{eqn:linearriskRLsumvecv}) can be written as
\begin{equation}
\label{eqn:explicitexpofRL}
\begin{aligned}
R^L \asymp n^{-1}& \sum_{\vec{\bnu}\in V,\prod_{k=1}^d\nu_k\leq (nb)^{1/m}}\left(1-\frac{1}{nb}\prod_{k=1}^d\nu_k^m\right)\\
& \quad\quad\quad\quad\quad\quad\quad\quad\times \left(\sigma_0^{-2}+\sum_{j=1}^p\sigma_j^{-2}\nu_j^2\right)^{-1}.
\end{aligned}
\end{equation}
We discuss for $R^L$ in the above (\ref{eqn:explicitexpofRL}) under the condition (\ref{eqn:sumbnuinVprodk1d}) for three cases with $0\leq p\leq d-r$, $d-r<p<d$ and $p=d$.

If $0\leq p\leq d-r$, since $\vec{\bnu}\in V$, there are at most $r$ of $\nu_1,\ldots,\nu_d$ not equal to 1, which implies that the number of combinations of non-1 indices being summed in (\ref{eqn:sumbnuinVprodk1d}) is no greater than $C_d^1+C_d^2+\cdots+C_d^r<\infty$. Due to the term $(\sigma_0^{-2}+\sum_{j=1}^p\sigma_j^{-2}\nu_j^2)^{-1}$, the largest terms of the summation (\ref{eqn:sumbnuinVprodk1d}) over ${\vec{\bnu}}\in V$ correspond to the combinations of indices where as fewer $\nu_1,\ldots,\nu_p$ being summed as possible, for example, $v_k\equiv 1$ for $k\leq p$ and $k> p+r$, and $(\nu_{p+1},\ldots,\nu_{p+r})\in\N^r$ are non-1. Thus, (\ref{eqn:sumbnuinVprodk1d}) is equivalent to 
\begin{equation*}
\sum_{\prod_{k=1}^r\nu_{p+k}\leq (nb)^{1/m}}\left(b\prod_{k=1}^r\nu_{p+k}^{m}- n^{-1}\prod_{k=1}^r\nu_{p+k}^{2m}\right) \asymp 1.
\end{equation*}
Using the integral approximation, we have
\begin{equation*}
\int_{\prod_{k=1}^rx_{p+k}\leq (nb)^{1/m},x_{p+k}\geq1} \left(b\prod_{k=1}^rx_{p+k}^m - \frac{1}{n}\prod_{k=1}^rx_{p+k}^{2m}\right)dx_{p+1}\cdots dx_{p+r}\asymp 1.
\end{equation*}
By letting $z_{j} = \prod_{1\leq k\leq j}x_{p+k}$, $j=1,2,\ldots,r$, we have
\begin{equation*}
\int_1^{(nb)^{1/m}}\left[\int_1^{z_r}\cdots\int_{1}^{z_2} \left(bz_r^m - \frac{1}{n}z_r^{2m}\right)z_1^{-1}\cdots z_{r-1}^{-1}dz_1\cdots dz_{r-1}\right]dz_r\asymp 1,
\end{equation*}
where the LHS term is the order of $n^{(m+1)/m}b^{(2m+1)/m}[\log(nb)]^{r-1}$ and hence
\begin{equation}
\label{eqn:p0d-rbn-m12m1log}
b\asymp n^{-(m+1)/(2m+1)}(\log n)^{-m(r-1)/(2m+1)}.
\end{equation}
The linear risk in (\ref{eqn:explicitexpofRL}) becomes
\begin{align}
R^L & \asymp n^{-1}\int_{\prod_{k=1}^rx_{p+k}\leq (nb)^{1/m},x_{p+k}\geq 1}\left(1-\frac{1}{nb}\prod_{k=1}^rx_{p+k}^m\right)\nonumber\\
& \asymp [\log(nb)]^{r-1}n^{-1+1/m}b^{1/m} \asymp  [n(\log n)^{1-r}]^{-2m/(2m+1)},\nonumber
\end{align}
where the last step is by (\ref{eqn:p0d-rbn-m12m1log}).

If $d-r< p< d$, as discussed in the previous case, 
the number of combinations of non-1 indices being summed is finite, and 
the largest terms of the summation (\ref{eqn:sumbnuinVprodk1d}) over ${\vec{\bnu}}\in V$ correspond to the combinations of indices where as fewer than $\nu_1,\ldots,\nu_p$ being summed as possible, for example, $v_k\equiv 1$ for $k\leq d-r$, and $(\nu_{d-r+1},\ldots, \nu_d)\in\N^r$ are non-1. Thus, (\ref{eqn:sumbnuinVprodk1d}) is equivalent to 
\begin{equation*}
\begin{aligned}
& \sum_{\prod_{k=1}^r\nu_{d-r+k}\leq (nb)^{1/m}}\left(b\prod_{k=1}^r\nu_{d-r+k}^{m}- n^{-1}\prod_{k=1}^r\nu_{d-r+k}^{2m}\right) \\
&\quad\quad\quad\quad\quad\quad\quad\quad\quad\quad\quad\quad\quad\quad\quad\quad\times \left(1+\sum_{j=d-r+1}^p\nu_j^2\right)^{-1} \asymp 1.
\end{aligned}
\end{equation*}
Using the integral approximation, we have
\begin{align}
& \int_{\prod_{k=1}^rx_{d-r+k}\leq (nb)^{1/m},x_{d-r+k}\geq1}  \left(b\prod_{k=1}^rx_{d-r+k}^m - n^{-1}\prod_{k=1}^rx_{d-r+k}^{2m}\right)\nonumber\\
& \quad\quad\quad\quad \quad\quad\quad\quad\quad \times \left(1+\sum_{j=d-r+1}^px_j^2\right)^{-1}dx_{d-r+1}\cdots dx_{d}\asymp 1.\nonumber
\end{align}
By letting $z_{j} =x_{p+1}x_{p+2}\cdots x_j$, $j=p+1,\ldots, d$, we get
\begin{align*}
1& \asymp \int_{x_{d-r+1}\cdots x_pz_d\leq (nb)^{1/m}} \left[\int_1^{z_d}\cdots\int_{1}^{z_{p+2}} \right.\\
& \quad\quad \left(bx_{d-r+1}^m\cdots x_p^mz_d^m - \frac{1}{n}x_{d-r+1}^{2m}\cdots x_p^{2m}z_{d}^{2m}\right)z_{p+1}^{-1}\cdots z_{d-1}^{-1}\nonumber\\
& \quad\quad\left.\times\left(1+x_{d-r+1}^2+\cdots +x_p^2\right)^{-1}dz_{p+1}\cdots dz_{d-1}\vphantom{\int_1^{z_d}}\right]dx_{d-r+1}\cdots dx_pdz_d\nonumber\\
& =\int_{x_{d-r+1}\cdots x_pz_d\leq (nb)^{1/m}}bx_{d-r+1}^m\cdots x_p^mz_d^m\left(1 - \frac{1}{nb}x_{d-r+1}^{m}\cdots x_p^{m}z_{d}^{m}\right)\nonumber\\
& \quad\quad\quad\times (\log z_d)^{d-p-1} \left(1+x_{d-r+1}^2+\cdots +x_p^2\right)^{-1}dx_{d-r+1}\cdots dx_pdz_d\nonumber\\
& \asymp [\log (nb)]^{d-p-1} n^{1+1/m}b^{2+1/m},\nonumber
\end{align*}
where the last step is by Lemma \ref{lemma:intx1xrzxk1z1} in Section \ref{subsec:auxiliarytechlem}.
Hence,
\begin{equation}
\label{eqn:p0d-rbn-m12m1log2}
b\asymp n^{-(m+1)/(2m+1)}(\log n)^{-m(d-p-1)/(2m+1)}.
\end{equation}
The linear risk in (\ref{eqn:explicitexpofRL}) becomes
\begin{align}
R^L & \asymp n^{-1}\int_{\prod_{k=d-r+1}^dx_{k}\leq (nb)^{1/m},x_{k}\geq 1}\left(1-\frac{1}{nb}x_{d-r+1}^m\cdots x_d^m\right)\nonumber\\
& \quad\quad\quad\quad\quad\quad\quad\quad\quad\cdot (1+x^2_{d-r+1}+\cdots+x_p^2)^{-1}dx_{d-r+1}\cdots dx_d \nonumber\\
& \asymp n^{-1} \int_{x_{d-r+1}\cdots x_pz_d\leq (nb)^{1/m}}\left(1-\frac{1}{nb}x_{d-r+1}^m\cdots x_p^mz_d^m\right)(\log z_d)^{d-p-1}\nonumber\\
&\quad\quad\quad\quad\quad\quad\quad \quad\cdot(1+x_{d-r+1}^2+\cdots+x_p^2)^{-1}dx_{d-r+1}\cdots dx_pdz_d\nonumber\\
& \asymp  [\log (nb)]^{d-p-1}n^{-1+1/m}b^{1/m},\nonumber
\end{align}
where the second step uses the same change of variables by letting $z_{j} =x_{p+1}x_{p+2}\cdots x_j$, $j=p+1,\ldots, d$, and the last step is by Lemma \ref{lemma:intx1xrzxk1z1} in Section \ref{subsec:auxiliarytechlem}.
By (\ref{eqn:p0d-rbn-m12m1log2}), we have
\begin{align}
R^L &  \asymp  [n(\log n)^{1+p-d}]^{-2m/(2m+1)}.\nonumber
\end{align}

If $p=d$, as discussed in the previous two cases, the number of combinations of non-1 indices being summed is finite, and the largest terms of the summation (\ref{eqn:sumbnuinVprodk1d}) over ${\vec{\bnu}}\in V$ correspond to any combinations of $r$ non-1 indices, for example, $\nu_k\equiv 1$ for $k\geq r+1$, and $(\nu_1,\ldots,\nu_r)\in\N^r$.  Thus, (\ref{eqn:sumbnuinVprodk1d}) is equivalent to 
\begin{equation*}
\sum_{\prod_{k=1}^r\nu_{k}\leq (nb)^{1/m}}\left(b\prod_{k=1}^r\nu_{k}^{m}- n^{-1}\prod_{k=1}^r\nu_{k}^{2m}\right)\left(1+\sum_{k=1}^r\nu_k^2\right)^{-1} \asymp 1.
\end{equation*}
Using the integral approximation, we have
\begin{align}
1& \asymp \int_{\prod_{k=1}^rx_{k}\leq (nb)^{1/m},x_{k}\geq1}  \left(b\prod_{k=1}^rx_{k}^m - n^{-1}\prod_{k=1}^rx_{k}^{2m}\right) \left(1+\sum_{k=1}^rx_k^2\right)^{-1}dx_{1}\cdots dx_{r}\nonumber\\
& \asymp \int_{\prod_{k=1}^rx_{k}\leq (nb)^{1/m},x_{k}\geq1}  b\prod_{k=1}^rx_{k}^m \left(1+\sum_{k=1}^rx_k^2\right)^{-1}dx_{1}\cdots dx_{r}\nonumber
\end{align}
By letting $\beta=m>1$ and $\alpha=2$ in Lemma \ref{lemma:intx1xrzxk1z2} in Section \ref{subsec:auxiliarytechlem}, we have for any $r\geq 1$, 
\begin{equation}
\label{eqn:p0d-rbn-m12m1log3}
b\asymp  n^{-(mr+r-2)/(2mr+r-2)}.
\end{equation}
The linear risk in (\ref{eqn:explicitexpofRL}) becomes
\begin{align}
R^L & \asymp n^{-1}\int_{\prod_{k=1}^rx_{k}\leq (nb)^{1/m},x_{k}\geq 1}\left(1-\frac{1}{nb}x_{1}^m\cdots x_r^m\right)\nonumber\\
& \quad\quad\quad\quad\quad\quad\quad\quad\quad\quad\quad\quad\quad\quad\quad\cdot (1+x^2_{1}+\cdots+x_r^2)^{-1}dx_{1}\cdots dx_r\nonumber\\
& \asymp n^{-1}\int_{\prod_{k=1}^rx_{k}\leq (nb)^{1/m},x_{k}\geq 1}(1+x^2_{1}+\cdots+x_r^2)^{-1}dx_{1}\cdots dx_r\nonumber\\
& \asymp \left[n^{-1}(nb)^{(r-2)/(mr)}\right] \mathbbm{1}_{r\geq 3} + \left[n^{-1}\log(nb)\right]\mathbbm{1}_{r=2}+\left(n^{-1}\right)\mathbbm{1}_{r=1},\nonumber
\end{align}
where the last step uses Lemma \ref{lemma:intx1xrzxk1z2} in Section \ref{subsec:auxiliarytechlem} by letting $\beta=0$ and $\alpha=2$. 
By (\ref{eqn:p0d-rbn-m12m1log3}), we have
\begin{equation*}
R^L \asymp\left[n^{-(2mr)/[(2m+1)r-2]}\right] \mathbbm{1}_{r\geq 3} + \left[n^{-1}\log(n)\right]\mathbbm{1}_{r=2}+n^{-1}\mathbbm{1}_{r=1},
\end{equation*}
where the constant factor only depends on $\sigma_0^2$, $\sigma_j^2$, $m$, $r$, $p$  and $d$. This completes the proof.

\subsubsection{Proof of minimax upper bound: Theorem \ref{thm;deterdesgnupperbdgeneralpdreg}}
\label{sec:proofupperbdreg}

We now prove the theorem for only $r=d$ and $p=d-1$. Other cases can be proved similarly with slight changes.

Using the discrete transformed data (\ref{eqn:distransdataz}), the regularized estimator $\widehat{f}_{n\lambda}$ by (\ref{scheme1}) can be obtained through 
\begin{align}
\widehat{\theta}_{\vec{\bnu}}=\underset{\tilde{\theta}_{\vec{\bnu}}\in\R}{\arg\min} &\left\{ \frac{1}{n(p+1)}\left[\frac{1}{\sigma_0^2}\sum_{\vec{\bnu}\in V, \nu_k\leq l}\left(z_{\vec{\bnu}}^{e_0} - \theta_{\vec{\bnu}}\right)^2\right.\right.\nonumber\\
&\quad\quad\quad\left.\left.+\sum_{j=1}^p\frac{1}{\sigma_j^2}\sum_{\vec{\bnu}\in V, \nu_k\leq l }\left(z^{e_j}_{\vec{\bnu}}- \nu_j\theta_{\vec{\bnu}}\right)^2\right]+\lambda\sum_{\vec{\bnu}\in V, \nu_k\leq l}\lambda_{\vec{\bnu}}\theta_{\vec{\bnu}}^2
\right\}\nonumber
\end{align}
and $\widehat{f}_{n\lambda}(\bt) = \underset{\vec{\bnu}\in V, \nu_k\leq l}{\sum}\widehat{\theta}_{\vec{\bnu}}\psi_{\vec{\bnu}}(\bt)$, where $V$ is defined in (\ref{def:Vbnu}).
Direct calculations give
\begin{equation*}
\widehat{\theta}_{\vec{\bnu}} =  \frac{\sigma_0^{-2}z^{e_0}_{\vec{\bnu}}+\sum_{j=1}^p\sigma_j^{-2}\nu_jz^{e_j}_{\vec{\bnu}}}{ \sigma_0^{-2}+\sum_{j=1}^p\sigma_j^{-2}\nu_j^2 + \lambda\lambda^{-1}_{\vec{\bnu}}}. 
\end{equation*}

The deterministic error of $\widehat{f}_{n\lambda}$ can be analyzed by two parts. On the one hand, since $f_0\in\HH$ and $\lambda_\nu\asymp\nu^{-2m}$, we know $\sum_{\vec{\bnu}\in V, \nu_k\geq l+1}(\theta_{\vec{\bnu}}^0)^2\asymp n^{-2m}$. This is the truncation error due to $\widehat{\theta}_{\vec{\bnu}} =0$ for $\nu_k\geq l+1$, $1\leq k\leq d$.
On the other hand, note that $\langle\vec{\psi}_{\vec{\bnu}},\vec{\psi}_{\vec{\bmu}}\rangle_n^2\leq 1$ and then 
\begin{equation*}
\left(\sum_{\vec{\bmu}\in V, \mu_k\geq l+1}\theta_{\vec{\bmu}}^0\langle\vec{\psi}_{\vec{\bnu}},\vec{\psi}_{\vec{\bmu}}\rangle_n\right)^2\leq \sum_{\vec{\bmu}\in V, \mu_k\geq l+1}(\theta^0_{\vec{\bmu}})^2 \asymp n^{-2m}.
\end{equation*}
Thus,
\begin{align}
& \sum_{\vec{\bnu}\in V, \nu_k\leq l}\left(\E\widehat{\theta}_{\vec{\bnu}} - \theta_{\vec{\bnu}}^0\right)^2 \nonumber\\
& \quad \lesssim  \sum_{\vec{\bnu}\in V, \nu_k\leq l}\left( \frac{\lambda\lambda^{-1}_{\vec{\bnu}}}{\sigma_0^{-2}+\sum_{j=1}^p\sigma_j^{-2}\nu_j^2+\lambda\lambda^{-1}_{{\vec{\bnu}}}}\right)^2(\theta_{\vec{\bnu}}^0)^2 + n^{-2m+1}\nonumber\\
&\quad \leq \lambda^2\sup_{{\vec{\bnu}}\in V}\frac{\lambda^{-1}_{\vec{\bnu}}}{\left(\sigma_0^{-2}+\sum_{j=1}^p\sigma_j^{-2}\nu_j^2+\lambda\lambda^{-1}_{{\vec{\bnu}}}\right)^2} \sum_{{\vec{\bnu}}\in V}\lambda_{\vec{\bnu}}^{-1}(\theta_{\vec{\bnu}}^0)^2+n^{-2m+1}\nonumber \\
& \quad \asymp \lambda^2 J(f_0)\sup_{{\vec{\bnu}}\in V} \frac{\nu_1^{2m}\cdots\nu_d^{2m}}{(1+\sum_{j=1}^p\nu_j^2+\lambda\nu_1^{2m}\cdots\nu_d^{2m})^2} + n^{-2m+1}.\nonumber
\end{align}
Define that
\begin{equation*}
B_\lambda({\vec{\bnu}}) =  \frac{\nu_1^{2m}\cdots\nu_d^{2m}}{(1+\sum_{j=1}^p\nu_j^2+\lambda\nu_1^{2m}\cdots\nu_d^{2m})^2}.
\end{equation*}
For the $\sup_{{\vec{\bnu}}\in V}B_\lambda({\vec{\bnu}})$ term above, suppose that $\prod_{j=1}^d\nu_j^{2m}>0$ is fixed and denoted by $x^{-1}$, then $B_\lambda({\vec{\bnu}})$ is maximized by letting $\sum_{j=1}^p\nu_j^2$ be as small as possible, where $p=d-1$. This suggests $\nu_1=\nu_2=\cdots=\nu_p=1$, and 
\begin{align}
\underset{\vec{\bnu}\in V}{\sup}B_{\lambda}({\vec{\bnu}}) 
& \asymp \sup_{x>0}\frac{x^{-1}}{(1+\lambda x^{-1})^2} \asymp \lambda^{-1}, \nonumber
\end{align}
where the last step is achieved when $x \asymp \lambda$. Combining all parts of bias gives
\begin{equation}
\label{eqn:detererrordeterdesign}
\sum_{{\vec{\bnu}}\in V}\left(\E\widehat{\theta}_{\vec{\bnu}} - \theta_{\vec{\bnu}}^0\right)^2=O\left\{\lambda J(f_0) + n^{-2m+1} + n^{-2m}\right\},
\end{equation}
where the constant factor only depends on $\sigma_0^2$, $\sigma_j^2$, $m$, $r, p$ and $d$.

The stochastic error is bounded as follows:
\begin{align}
\sum_{\vec{\bnu}\in V}\left(\widehat{\theta}_{\vec{\bnu}} - \E\widehat{\theta}_{\vec{\bnu}}\right)^2 & = \sum_{\vec{\bnu}\in V,\nu_k\leq l}\frac{n^{-1}(\sigma_0^{-2}+\sum_{j=1}^p\sigma_j^{-2}\nu_j^2)}{(\sigma_0^{-2}+\sum_{j=1}^p\sigma_j^{-2}\nu_j^2+\lambda\lambda_{\vec{\bnu}}^{-1})^2}\nonumber\\
& \lesssim\sum_{{\vec{\bnu}}\in V,\nu_k\leq l}\frac{1+\sum_{j=1}^p\nu_j^2}{n(1+\sum_{j=1}^p\nu_j^2+\lambda\nu_1^{2m}\cdots\nu_d^{2m})^2}.\nonumber
\end{align}
Using Lemma \ref{lemma:varthetamultivariatetensorrank} in Section \ref{subsubsecdefnalambda} with $a=0$ and $p=d-1$, we have 
\begin{equation}
\label{eqn:vardeterdesign}
\sum_{{\vec{\bnu}}\in V}\left(\widehat{\theta}_{\vec{\bnu}} - \E\widehat{\theta}_{\vec{\bnu}}\right)^2 = O\left\{n^{-1}\lambda^{-1/2m}\right\}.
\end{equation}
Combining (\ref{eqn:detererrordeterdesign}) and (\ref{eqn:vardeterdesign}) and letting $\lambda\asymp n^{-2m/(2m+1)}$ completes the proof.

\subsection{Proofs of results  in Section \ref{sec:minmaxriskpartder}: estimating partial derivatives}
\label{sec:proofthmupperlimdderiv}

We now turn to prove the results for estimating partial derivatives under the random design.

\subsubsection{Proof of minimax lower bound: Theorem \ref{theorem:lowerbndlimDderi}}
\label{subsubsec:lowerbdestpartder}

The minimax lower bound will be established by using Fano's lemma but the proof is different from  Section \ref{subsubsec:prooflowerrandom} in construction details. It suffices to consider a special case that noises $\epsilon^{e_0}$ and $\epsilon^{e_j}$s are Gaussian with $\sigma_0=1$ and $\sigma_j=1$, and $\Pi^{e_0}$ and $\Pi^{e_j}$s are uniform, and $\HH_1$ is generated by periodic kernels. For simplicity, we still use the notation introduced in Section \ref{subsubsec:prooflowerrandom}. In the rest of this section, without less of generality, we consider estimating $\partial f_0/\partial t_1(\cdot)$ with $p\geq 1$.

First, the number of multi-indices $\vec{\bnu}=(\nu_1,\ldots,\nu_r)\in\N^r$ satisfying 
\begin{equation*}
\nu_1^{(m-1)/m}\nu_2\cdots\nu_r\leq N
\end{equation*}
is $c'_0N^{m/(m-1)}$,
where $c'_0$ is some constant. Define a length-$\{c'_0N^{m/(m-1)}\}$ binary sequence as
\begin{equation*} 
b=\{b_{\vec{\bnu}}: \nu_1^{(m-1)/m}\nu_2\cdots\nu_r\leq N\}\in\{0,1\}^{c'_0N^{m/(m-1)}}.
\end{equation*} 
We write
\begin{align}
 & h_b(t_1,\ldots,t_r) = N^{-m/2(m-1)}\sum_{\nu_1^{(m-1)/m}\nu_2\cdots\nu_r\leq  N}b_{\vec{\bnu}}\left(1+\nu_1^2+\cdots+\nu_r^2\right)^{-1/2}\nonumber\\
 &\quad\quad\quad\quad\quad\quad\quad\times \left[\nu_1^{(m-1)/m}\nu_2\cdots\nu_r+N\right]^{-m}\psi_{\nu_1}(t_1)\psi_{\nu_2}(t_2)\cdots\psi_{\nu_r}(t_r).\nonumber
\end{align}
where $\psi_{\nu_k}(t_j)$s are the trigonometric basis in (\ref{eqn:trigbasisexp}).
Note that
\begin{align}
\|h_b\|_\HH^2  & \lesssim N^{-m/(m-1)}\sum_{\nu_1^{(m-1)/m}\nu_2\cdots\nu_r\leq  N}b_{\vec{\bnu}}^2\nu_1^2\left(1+\nu_1^2+\cdots+\nu_r^2\right)^{-1}\nonumber\\
& \leq N^{-m/(m-1)}\sum_{\nu_1^{(m-1)/m}\nu_2\cdots\nu_r\leq  N}\nu_1^2\left(1+\nu_1^2+\cdots+\nu_r^2\right)^{-1}\asymp1,\nonumber
\end{align}
where the last step is by Lemma \ref{applem:x1m1mx2x12} in Section \ref{subsec:auxiliarytechlem}. Hence, $h_b(\cdot)\in\HH$.

Then, using the Varshamov-Gilbert bound, there exists a collection of binary sequences $\{b^{(1)},\ldots,b^{(M)}\}\subset\{0,1\}^{c'_0N^{m/(m-1)}}$ such that 
\begin{equation*}
M\geq 2^{c'_0N^{m/(m-1)}/8}
\end{equation*} 
and 
\begin{equation*}
H(b^{(l)},b^{(q)})\geq c'_0N^{m/(m-1)}/8,\quad \forall 1\leq l<q\leq M.
\end{equation*}
For $b^{(l)},b^{(q)}\in\{0,1\}^{c'_0N^{m/(m-1)}}$, we have
\begin{align}
& \left\|\frac{\partial h_{b^{(l)}}}{\partial t_1}-\frac{\partial h_{b^{(q)}}}{\partial t_1}\right\|_{L_2}^2\nonumber\\
& \quad \geq c'N^{-m/(m-1)}(2N)^{-2m}\sum_{\nu_1^{(m-1)/m}\nu_2\cdots\nu_r\leq  N}\nu_1^2(1+\nu_1^2+\cdots+\nu_r^2)^{-1}\left[b^{(l)}_{\vec{\bnu}} - b^{(q)}_{\vec{\bnu}}\right]^2\nonumber\\
& \quad \geq c'N^{-m/(m-1)}(2N)^{-2m}\sum_{c'_17N/8\leq \nu_1^{(m-1)/m}\nu_2\cdots\nu_r\leq N}\nu_1^2(1+\nu_1^2+\cdots+\nu_r^2)^{-1}\nonumber\\
& \quad = c'_2 N^{-2m}\nonumber
\end{align}
for some constant $c'$, $c'_1$ and $c'_2$, where the last step is by Lemma \ref{applem:x1m1mx2x12} in Section \ref{subsec:auxiliarytechlem}.
On the other hand,  for any $b^{(l)}\in\{b^{(1)},\ldots,b^{(M)}\}$, 
\begin{align}
&  \|h_{b^{(l)}}\|_{L_2}^2+\sum_{j=1}^p\| \partial h_{b^{(l)}}/\partial t_j\|_{L_2}^2\nonumber\\
& \quad \leq N^{-m/(m-1)}N^{-2m}\sum_{\nu_1^{(m-1)/m}\nu_2\cdots\nu_r\leq  N}\left[b^{(l)}_{\vec{\bnu}} \right]^2\nonumber\\
& \quad \leq c'_3 N^{-2m}\nonumber
\end{align}
with some constant $c'_3$, where the last step is a corollary of Lemma \ref{applem:x1m1mx2x12}.

Last, by the same argument in (\ref{eqn:inftildeffoinhh1}), (\ref{eqn:fanolemhatthetap}), (\ref{eqn:inftildeffoinhh3}) and (\ref{eqn:inftildeffoinhh4}), we obtain 
\begin{align}
& \inf_{\tilde{f}}\sup_{f_0\in\HH}\P\left\{\left\|\tilde{f}(\bt) - \frac{\partial f_0(\bt)}{\partial t_1}\right\|_{L_2}^2\geq \frac{1}{4}c_2' N^{-2m}\right\}\nonumber\\
& \quad\quad\quad\quad\quad\quad \geq 1-\frac{2c_3'n(p+1)N^{-2m} + \log 2}{c'_0(\log 2)N^{m/(m-1)}/8}.\nonumber
\end{align}
Taking $N=c_4'n^{(m-1)/(2m^2-m)}$ with an appropriately chosen $c_4'$,  we have
\begin{equation*}
\underset{n\rightarrow \infty}{\lim\sup}\inf_{\tilde{f}} \sup_{f_0\in\HH}\P\left\{\left\|\tilde{f}(\bt) - \frac{\partial f_0(\bt)}{\partial t_1}\right\|_{L_2}^2\geq C_2n^{-2(m-1)/(2m-1)}\right\}>0,
\end{equation*}
where the constant factor $C_2$ only depends on  $\sigma_0^2$, $\sigma_j^2$, $m$, and bounded values $r$, $p$ and $d$. This completes the proof.
\subsubsection{Proof of minimax upper bound: Theorem \ref{theorem:upperbndlimDderi}}
\label{subsubsec:upperbdestpartder}

We continue to use the notation and definitions  such as the minimizer $\bar{f}$, the Fr\'echet derivatives $Dl_n(f)g$, $Dl_\infty(f)g$, $D^2l_n(f)gh$, $D^2l_\infty(f)gh$, the operator $G_\lambda^{-1}$ and most importantly $\tilde{f}^*$ in Section \ref{subsubsec:proofupperrrandom}. Unlike Section \ref{subsubsec:proofupperrrandom}, here we do not require $\pi^{e_j}$s are known nor $f_0$ has periodic boundaries on $\XX_1^d$ by some transformation. 

By the assumption that $\pi^{e_j}$s are bounded away from 0 and infinity, we have  for any $1\leq j\leq p$,
\begin{equation*}
\int_{\XX_1^d}\left[ \frac{\partial \widehat{f}_{n\lambda}(\bt)}{\partial t_j} -\frac{\partial f_0(\bt)}{\partial t_j}\right]^2d\bt \lesssim \|\widehat{f} - f_0\|_0^2.
\end{equation*}
Hence, the following lemma is sufficient for proving Theorem \ref{theorem:upperbndlimDderi}.
\begin{lemma} 
\label{lem:limd2ninftyn0}
Under the conditions of Theorem \ref{theorem:lowerbndlimDderi}, then $\widehat{f}_{n\lambda}$ given by (\ref{scheme1}) satisfies
\begin{equation*}
\lim_{D_2\rightarrow\infty}\underset{n\rightarrow\infty}{\lim\sup}\sup_{f_0\in \HH} \P\left\{\|\widehat{f} - f_0\|_{0}^2 > D_2n^{-2(m-1)/(2m-1)}\right\} = 0,
\end{equation*}
if the tuning parameter $\lambda$ is chosen by $\lambda \asymp n^{-2(m-1)/(2m-1)}$.
\end{lemma}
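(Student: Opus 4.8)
The plan is to run the deterministic/stochastic decomposition of Section~\ref{subsubsec:proofupperrrandom} but to track the norm $\|\cdot\|_0$ of~(\ref{eqn:|f|02def}) rather than an $L_2$-type norm, so the only genuinely new ingredient is the eigenvalue behaviour of the pair $(\langle\cdot,\cdot\rangle_0,J)$ for a general, non-uniform design density. Write $\widehat f-f_0=(\bar f-f_0)+(\tilde f^*-\bar f)+(\widehat f-\tilde f^*)$ with $\bar f,\tilde f^*,G_\lambda$ exactly as in Section~\ref{subsubsec:proofupperrrandom}; the identities defining these objects use only that $\langle\cdot,\cdot\rangle_R$ and $\langle\cdot,\cdot\rangle_0$ are inner products on $\HH$, so they carry over verbatim, without periodicity of $f_0$ or knowledge of $\Pi^{e_0},\Pi^{e_j}$. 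The first step is the counting estimate
\[
N(t):=\#\{\vec{\bnu}\in V:\rho_{\vec{\bnu}}\le t\}\ \asymp\ t^{1/(2m-2)},\qquad t\to\infty,
\]
for a system $\{\phi_{\vec{\bnu}}\}_{\vec{\bnu}\in V}$ with $\langle\phi_{\vec{\bnu}},\phi_{\vec{\bmu}}\rangle_0=\delta_{\vec{\bnu}\vec{\bmu}}$ and $\langle\phi_{\vec{\bnu}},\phi_{\vec{\bmu}}\rangle_R=(1+\rho_{\vec{\bnu}})\delta_{\vec{\bnu}\vec{\bmu}}$, whose existence is again guaranteed by the operator theory of~\cite{weinberger1974variational}. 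Since $\pi^{e_0},\pi^{e_j}$ are bounded above and below, $\|\cdot\|_0$ is equivalent to its uniform-density counterpart while $J$ is unchanged, so by the Courant--Fischer min--max characterization the ordered sequence $\rho_{(1)}\le\rho_{(2)}\le\cdots$ is comparable, up to fixed constants, to that of the uniform model, for which $\rho_{\vec{\bnu}}\asymp(1+\sum_{j\le p}\nu_j^2)^{-1}\prod_{k=1}^d\nu_k^{2m}$; the slowest-growing direction is a single derivative coordinate (along which $\rho_{\vec{\bnu}}\asymp\nu^{2m-2}$), while the remaining index families — a single non-derivative coordinate, or a multi-index interaction in up to $r$ coordinates — contribute either $O(t^{1/(2m)})$ (strictly smaller) or $O(t^{1/(2m-2)})$ with a constant factor governed by a convergent series, hence are absorbed into the $\asymp$. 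This both yields the display and explains why the final rate is free of $d$, $r$, $p$.

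Given $N(\cdot)$ one bounds the three pieces. \emph{Deterministic error:} from $\bar f_{\vec{\bnu}}=f^0_{\vec{\bnu}}/(1+\lambda\rho_{\vec{\bnu}})$,
\[
\|\bar f-f_0\|_0^2=\sum_{\vec{\bnu}\in V}\Big(\frac{\lambda\rho_{\vec{\bnu}}}{1+\lambda\rho_{\vec{\bnu}}}\Big)^2(f^0_{\vec{\bnu}})^2\ \le\ \lambda^2J(f_0)\sup_{x>0}\frac{x}{(1+\lambda x)^2}=\frac{\lambda}{4}J(f_0)=O(\lambda),
\]
uniformly over $f_0$ in a ball of $\HH$ (the same computation gives $\|\bar f-f_0\|_R=O(1)$, which is what~(\ref{eqn:dlnlambdang0}) needs). \emph{First stochastic term:} the bound $\E[\tfrac12Dl_{n\lambda}(\bar f)g]^2\lesssim n^{-1}\|g\|_0^2$ of~(\ref{eqn:dlnlambdang0}) holds here (it uses only Lemma~\ref{lemmanormrequiv}, the pointwise estimate of Lemma~\ref{lem:bounddejft}, and the deterministic bound just proved), so
\[
\E\|\tilde f^*-\bar f\|_0^2=\frac14\sum_{\vec{\bnu}\in V}(1+\lambda\rho_{\vec{\bnu}})^{-2}\,\E[Dl_{n\lambda}(\bar f)\phi_{\vec{\bnu}}]^2\ \lesssim\ n^{-1}\sum_{\vec{\bnu}\in V}(1+\lambda\rho_{\vec{\bnu}})^{-2}\ \asymp\ n^{-1}\lambda^{-1/(2m-2)},
\]
the last step by the counting estimate. \emph{Second stochastic term:} using $\widehat f-\tilde f^*=G_\lambda^{-1}[\tfrac12D^2l_\infty(\bar f)(\widehat f-\bar f)-\tfrac12D^2l_n(\bar f)(\widehat f-\bar f)]$ and repeating the Cauchy--Schwarz/empirical-fluctuation argument of Lemma~\ref{lem:boundhattildef}, but with the scale $\|g\|_{(a)}^2:=\sum_{\vec{\bnu}}(1+\rho_{\vec{\bnu}})^ag_{\vec{\bnu}}^2$ (so $\|\cdot\|_{(0)}=\|\cdot\|_0$ and $\|\cdot\|_{(1)}=\|\cdot\|_R$) in place of the $\|\cdot\|_{L_2(a)}$ scale, one obtains $\|\widehat f-\tilde f^*\|_0^2=o_\P\{\|\tilde f^*-\bar f\|_0^2\}$ provided a quantity of the form $n^{-1}\lambda^{-\gamma}\to0$; a triangle-inequality self-bounding step ($\|\widehat f-\bar f\|_{(a')}^2=O_\P\{\|\tilde f^*-\bar f\|_{(a')}^2\}$ at the working scale $a'$) makes this genuine.

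Combining the three pieces, $\|\widehat f-f_0\|_0^2=O_\P(\lambda+n^{-1}\lambda^{-1/(2m-2)})$, and with $\lambda\asymp n^{-2(m-1)/(2m-1)}$ — precisely the choice balancing the two terms — this is $O_\P(n^{-2(m-1)/(2m-1)})$; upgrading $O_\P$ to the ``$\lim_{D_2\to\infty}\limsup_n\sup_{f_0}$'' form by Markov's inequality, using that all constants are uniform over the $\HH$-ball, gives Lemma~\ref{lem:limd2ninftyn0}, hence Theorem~\ref{theorem:upperbndlimDderi} through the reduction $\int_{\XX_1^d}[\partial\widehat f_{n\lambda}/\partial t_j-\partial f_0/\partial t_j]^2\,d\bt\lesssim\|\widehat f-f_0\|_0^2$. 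I expect the main obstacle to be the counting estimate $N(t)\asymp t^{1/(2m-2)}$ for the general-density pair $(\langle\cdot,\cdot\rangle_0,J)$ — isolating the single dominant derivative direction and controlling the multi-index interactions so that only constants (not powers of $\log n$ or $n$) are paid — together with verifying that the ``$\widehat f-\tilde f^*$ is lower order'' step closes under the hypothesis $m>2$; here the larger choice $\lambda\asymp n^{-2(m-1)/(2m-1)}$ helps, since it shrinks $n^{-1}\lambda^{-\gamma}$ relative to the function-estimation setting of Lemma~\ref{lem:boundhattildef}.
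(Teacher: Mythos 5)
Your proposal is correct and takes essentially the same route as the paper: the same decomposition $\widehat f-f_0=(\bar f-f_0)+(\tilde f^*-\bar f)+(\widehat f-\tilde f^*)$, the same Courant--Fischer comparison showing the eigenvalues of the pair $(\langle\cdot,\cdot\rangle_0,\langle\cdot,\cdot\rangle_R)$ behave like $\bigl(1+\sum_{j\le p}\nu_j^2\bigr)^{-1}\prod_{k=1}^d\nu_k^{2m}$ (your counting bound $N(t)\asymp t^{1/(2m-2)}$, dominated by a single derivative coordinate with the other index families summing to constants, is exactly the paper's Lemma \ref{lem:eigendecayrhonu} plus $M_0(\lambda)=O\{\lambda^{-1/(2m-2)}\}$ from Lemma \ref{lem:defMalambdaandbounds}), and the same balancing of $\lambda+n^{-1}\lambda^{-1/(2m-2)}$ at $\lambda\asymp n^{-2(m-1)/(2m-1)}$. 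The only (minor) difference is in the term $\widehat f-\tilde f^*$, where the paper does not redo the fluctuation/self-bounding argument at the $\rho$-based scale as you sketch, but instead reuses the already-proved $\|\widehat f-\bar f\|_{L_2(a+1/m)}^2$ bound from Lemmas \ref{lem:tilfbarfl2a} and \ref{lem:boundhattildef} and multiplies by $M_a(\lambda)$.
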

\paragraph{\textit{A lemma for the proof.}}
In $\HH$, the quadratic form $\langle f,f\rangle_{0}$ is completely continuous with respect to $\langle f,f\rangle_{R}$. By the theory in Section 3.3 of Weinberger \cite{weinberger1974variational}, there exists an eigen-decomposition for the generalized Rayleigh quotient $\langle f,f\rangle_0/\langle f,f\rangle_R$ in $\HH$, where we denote the eigenvalues are $\{(1+\gamma_\nu)^{-1}\}_{\nu\geq 1}$ and the corresponding eigenfunctions are $\{(1+\gamma_\nu)^{-1/2}\xi_\nu\}_{\nu\geq 1}$.
Thus, $\langle \xi_{\nu},\xi_{\mu}\rangle_R = (1+\gamma_{\nu})\delta_{\nu\mu}$ and $\langle \xi_{\nu},\xi_{\mu}\rangle_0 = \delta_{\nu\mu}$, where $\delta_{\nu\mu}$ is Kronecker's delta. 
The following proposition gives the decay rate of  $\gamma_\nu$ and its proof is given in Section \ref{subsubsec:proofproeigengammanu}.
\begin{lemma}
\label{lem:eigendecayrhonu}
By the well-ordering principle, the elements in the set 
\begin{equation*}
\left\{\left(1 +\sum_{j=1}^p\nu_j^2\right)\prod_{k=1}^d\nu_k^{-2m}: {\vec{\bnu}}\in V\right\}
\end{equation*} 
can be ordered from large to small, where $V$ is defined in (\ref{def:Vbnu}). Denote by $\{\gamma'_\nu\}_{\nu\geq 1}$ the ordered sequence. Then $\gamma_\nu\asymp (\gamma'_\nu)^{-1}$.
\end{lemma}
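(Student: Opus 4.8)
The plan is to identify the spectrum $\{\gamma_\nu\}$ with the reciprocal of an explicitly computable one and then invoke a rearrangement comparison. Recall that $\gamma_\nu$ are the eigenvalues of $J(\cdot)$ relative to $\|\cdot\|_0^2$ (equivalently, $(1+\gamma_\nu)^{-1}$ are those of $\|\cdot\|_0^2$ relative to $\|\cdot\|_R^2=\|\cdot\|_0^2+J(\cdot)$). Set $\|f\|_{0,*}^2=\|f\|_{L_2(\XX_1^d)}^2+\sum_{j=1}^p\|\partial f/\partial t_j\|_{L_2(\XX_1^d)}^2$. Since the densities $\pi^{e_j}$ are bounded away from $0$ and $\infty$ and the $\sigma_j$ are fixed, $\|f\|_0^2\asymp\|f\|_{0,*}^2$ on $\HH$; by the Courant--Fischer min-max characterization of the eigenvalues of a completely continuous quadratic form, it therefore suffices to prove the claim with $\|\cdot\|_0$ replaced by $\|\cdot\|_{0,*}$. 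Concretely, writing $\omega_{\vec{\bnu}}=(1+\sum_{j=1}^p\nu_j^2)\prod_{k=1}^d\nu_k^{-2m}$, we must show that the eigenvalues of $J$ relative to $\|\cdot\|_{0,*}^2$ form, as a multiset, a sequence comparable to $\{\omega_{\vec{\bnu}}^{-1}:\vec{\bnu}\in V\}$, since $\{\omega_{\vec{\bnu}}\}$ is exactly the set whose decreasing rearrangement is $\{\gamma'_\nu\}$.

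First I would reduce to the ANOVA blocks. Decompose $\HH=\bigoplus_{A\subseteq\{1,\dots,d\},\,|A|\le r}\HH_A$, where $\HH_A=\bigotimes_{k\in A}\HH^{(k)}$ is the $|A|$-interaction subspace, and check that this decomposition is orthogonal simultaneously for $\langle\cdot,\cdot\rangle_\HH$, for $J$, for $\langle\cdot,\cdot\rangle_{L_2(\XX_1^d)}$, and for $\langle\cdot,\cdot\rangle_{0,*}$. The only nontrivial point is that the cross-terms $\langle\partial f/\partial t_j,\partial g/\partial t_j\rangle_{L_2}$ vanish for $f\in\HH_A$, $g\in\HH_B$ with $A\ne B$, which follows from the side conditions $\int_{\XX_1}f_k(t_k)\,dt_k=0$ together with $\partial f/\partial t_j\equiv 0$ whenever $j\notin A$. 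Hence the spectrum of $J$ relative to $\|\cdot\|_{0,*}^2$ on $\HH$ is the union (with multiplicities) of the spectra on the blocks $\HH_A$, and it is enough to analyze each block.

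On a block $\HH_A$ with $|A|=s\le r$ I would exploit the tensor structure together with the univariate kernel hypotheses. With respect to the $L_2(\XX_1)$-orthonormal eigenbasis $\{\psi_\nu\}$ of $K$ one has $\|g\|_{\HH_1}^2\asymp\sum_\nu\nu^{2m}c_\nu^2$; the heart of the step is the univariate spectral equivalence $\|g\|_{L_2}^2+\|g'\|_{L_2}^2\asymp\sum_\nu(1+\nu^2)c_\nu^2$, i.e.\ that the derivative form is equivalent to the $\WW_2^1$ form — the upper bound coming from an interpolation (Gagliardo--Nirenberg type) estimate valid in an RKHS with eigendecay $\nu^{-2m}$, and the matching lower bound from the differentiability condition \eqref{eqn:partial2tt'kcx1x1}, which makes the operator $L$ on $\HH_1$ defined by $\langle g,Lg\rangle_{\HH_1}=\|g'\|_{L_2}^2$ behave like the embedding $\HH_1\hookrightarrow\WW_2^1$ of smoothness order $m-1$. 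Tensoring these equivalences over $k\in A$ (the derivative forms are block-diagonal in the complementary coordinates, so the tensorization is legitimate) shows that on $\HH_A$ the eigenvalues of $J$ relative to $\|\cdot\|_{0,*}^2$ are comparable to $\bigl(1+\sum_{j\in A\cap\{1,\dots,p\}}\nu_j^2\bigr)^{-1}\prod_{k\in A}\nu_k^{2m}$ over the admissible multi-indices; and since $\nu_j=1$ for $j\in\{1,\dots,p\}\setminus A$, this is exactly $\asymp\omega_{\vec{\bnu}}^{-1}$ restricted to that block. Assembling over $A$ gives that $\{\gamma_\nu\}$ is comparable as a multiset to $\{\omega_{\vec{\bnu}}^{-1}:\vec{\bnu}\in V\}$, and the claim $\gamma_\nu\asymp(\gamma'_\nu)^{-1}$ follows from an elementary rearrangement fact: two nonincreasing positive sequences whose counting functions agree up to constant dilation of the argument are comparable termwise.

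The main obstacle is precisely the univariate equivalence $\|g\|_{L_2}^2+\|g'\|_{L_2}^2\asymp\sum_\nu(1+\nu^2)c_\nu^2$ for a general kernel, equipped only with the eigendecay $\lambda_\nu\asymp\nu^{-2m}$ and the $C(\XX_1\times\XX_1)$ differentiability hypothesis: in particular the lower bound on $\|g'\|_{L_2}$ for high-frequency $g$, which is what produces the factor $(1+\sum_j\nu_j^2)^{-1}$ and hence ultimately separates the $p=d$ regime from $p<d$ (the value of $p$ controlling how many active coordinates carry the $\nu_j^2$ boost). Norm-equivalence of $\HH_1$ with a reference $m$th-order Sobolev space does not by itself transfer the derivative form, so this comparison has to be carried out at the level of the operator $L$; the rest of the argument (orthogonality of the ANOVA blocks, tensorization, and the rearrangement step) is routine once that is in place.
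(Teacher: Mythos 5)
Your proposal follows the same skeleton as the paper's argument (replace the weighted quotient by the unweighted one via norm equivalence and the Courant--Fischer/mapping principle, exploit the tensor structure, read the spectrum off a Sobolev scale), but the step you yourself flag as ``the main obstacle'' is precisely the mathematical content of the lemma, and you do not prove it. The asserted univariate equivalence $\|g\|_{L_2}^2+\|g'\|_{L_2}^2\asymp\sum_\nu(1+\nu^2)c_\nu^2$, with $c_\nu$ the coefficients of $g$ in the eigenbasis of $K$, does not follow from the ingredients you cite: the decay $\lambda_\nu\asymp\nu^{-2m}$ constrains only the eigenvalues, not the eigenfunctions, and the condition (\ref{eqn:partial2tt'kcx1x1}) only yields $\sup_t|g'(t)|\lesssim\|g\|_{\HH_1}$, hence $\|\psi_\nu'\|_{L_2}=O(\nu^{m})$ rather than $\asymp\nu$, with no control of the off-diagonal terms $\int\psi_\nu'\psi_\mu'$ and no lower bound at all. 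Saying the upper bound is ``Gagliardo--Nirenberg type'' and the lower bound comes from the $C^1$ kernel condition is a gesture, not an argument, and your own closing admission that the comparison ``has to be carried out at the level of the operator $L$'' concedes that the proof is not there.

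The paper closes exactly this gap by refusing to work in the kernel's own eigenbasis. It uses the equivalence of $\HH_1$ with $\WW_2^m(\XX_1)$ and then computes in the explicit trigonometric basis, for which differentiation maps basis elements to basis elements with factor $2\pi\nu$, so the derivative form and the Sobolev-type form are simultaneously diagonal there; the eigenvalues of $\{\int f^2+\sum_{j\le p}\int(\partial f/\partial t_j)^2\}/\|f\|_\HH^2$ are then sandwiched by the Courant--Fischer--Weyl principle between a minimum over $\HH\cap\otimes_k\{\FF_{\nu_k}\cap\GG^\perp\}$ and a maximum over the complementary trig-spanned spaces, the index shift by $m$ absorbing the polynomial component $\GG$ and the fact that the norms are only equivalent, not equal. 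If you wanted to rescue your route, you would have to make the interpolation idea precise (e.g., identify $[L_2,\HH_1]_{1/m}$ with $\WW_2^1$ by quadratic interpolation of Hilbert couples, using the norm equivalence $\HH_1\cong\WW_2^m$), which is just another formalization of the same missing ingredient; incidentally, your claim that norm equivalence ``does not by itself transfer the derivative form'' is what such an interpolation argument would in fact accomplish. The block-orthogonality and rearrangement remarks are fine but peripheral; as written, the proposal has a genuine gap at its central step.
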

The proof of this lemma is delegated to Section \ref{subsubsec:proofproeigengammanu}.
The lemma bridges the gap between the proof needed for Lemma \ref{lem:limd2ninftyn0} and the proof for Theorem \ref{thm:mainupperrateestf0} shown in Section \ref{subsubsec:proofupperrrandom} since the eigenvalues $\rho_{\vec{\bnu}}$ in  Section \ref{subsubsec:proofupperrrandom} satisfies $\rho_{\vec{\bnu}}\asymp (1+\sum_{j=1}^p\nu_j^2)^{-1}\prod_{k=1}^d\nu_k^{2m}$. Hence in later analysis, we can exchange the use of $\{\gamma_\nu,\nu\in \N\}$ and $\{\rho_{\vec{\bnu}}: \vec{\bnu}\in V\}$ in some asymptotic calculation settings.

For any function $f\in \HH$, it can be decomposed as
\begin{equation*}
f(t_1,\ldots,t_d) = \sum_{\nu\in\N}f_\nu\xi_\nu(t_1,\ldots,t_d),\quad  \mbox{ where } f_\nu = \langle f(\bt),\xi_\nu(\bt)\rangle_0,
\end{equation*} 
and $J(f) = \langle f,f\rangle_R - \langle f,f\rangle_0 = \sum_{\nu\in\N}\gamma_\nu f_\nu^2$. 

First, we present an upper bound of the deterministic error $(\bar{f}-f_0)$. 
\begin{lemma} 
\label{lem:determerrorderiv}
The deterministic error satisfies
\begin{equation*}
\|\bar{f} - f_0\|_{0}^2 = O\left\{\lambda J(f_0)\right\}.
\end{equation*}
\end{lemma}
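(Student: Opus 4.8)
The plan is to diagonalize the penalized expected loss $l_{\infty\lambda}$ in the eigenbasis $\{\xi_\nu\}_{\nu\geq 1}$ introduced just before Lemma~\ref{lem:eigendecayrhonu}, exactly mirroring the argument used for the $\|\cdot\|_{L_2(a)}$-version in the proof of Lemma~\ref{lem:barff0l2ar}. Since $\langle\cdot,\cdot\rangle_0$ is a genuine (positive definite) inner product on $\HH$ — the function-observation term $\sigma_0^{-2}\int f^2\pi^{e_0}(\bt)$ alone already separates points of the RKHS $\HH$ — and is completely continuous with respect to $\langle\cdot,\cdot\rangle_R$, the system $\{\xi_\nu\}$ is a complete orthonormal basis of $(\HH,\langle\cdot,\cdot\rangle_0)$ that simultaneously diagonalizes $J$, with $\langle\xi_\nu,\xi_\mu\rangle_0=\delta_{\nu\mu}$ and $J(f)=\sum_\nu\gamma_\nu f_\nu^2$. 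Expanding $f_0=\sum_\nu f_\nu^0\xi_\nu$ (legitimate as $f_0\in\HH$) and an arbitrary $f=\sum_\nu f_\nu\xi_\nu$, one obtains
\[
l_{\infty\lambda}(f)=\|f-f_0\|_0^2+1+\lambda J(f)=\sum_{\nu}\bigl[(f_\nu-f_\nu^0)^2+\lambda\gamma_\nu f_\nu^2\bigr]+1 ,
\]
which decouples across coordinates.

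I would then read off the unique minimizer coordinatewise, namely $\bar f_\nu=f_\nu^0/(1+\lambda\gamma_\nu)$ — the direct analogue of (\ref{eqn:barthetavebnubias}) — and evaluate the error using orthonormality of $\{\xi_\nu\}$ in $\langle\cdot,\cdot\rangle_0$:
\[
\|\bar f-f_0\|_0^2=\sum_\nu(\bar f_\nu-f_\nu^0)^2=\sum_\nu\Bigl(\frac{\lambda\gamma_\nu}{1+\lambda\gamma_\nu}\Bigr)^2(f_\nu^0)^2=\lambda\sum_\nu\gamma_\nu(f_\nu^0)^2\cdot\frac{\lambda\gamma_\nu}{(1+\lambda\gamma_\nu)^2}.
\]
Applying the elementary bound $x/(1+x)^2\le 1/4$ for $x\ge 0$ with $x=\lambda\gamma_\nu$ controls each last factor, giving
\[
\|\bar f-f_0\|_0^2\le\tfrac14\,\lambda\sum_\nu\gamma_\nu(f_\nu^0)^2=\tfrac14\,\lambda J(f_0)=O\{\lambda J(f_0)\},
\]
which is the assertion.

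The computation itself is routine; the only step deserving care is justifying the basis expansion and the coordinatewise minimization, i.e. verifying that $\{\xi_\nu\}$ is truly complete in $\HH$ so that both $f_0$ and $\bar f$ admit the stated expansions and $J(f_0)=\sum_\nu\gamma_\nu(f_\nu^0)^2<\infty$. This is precisely what the spectral theory for the completely continuous form $\langle\cdot,\cdot\rangle_0$ relative to $\langle\cdot,\cdot\rangle_R$ (Weinberger, Section~3.3) together with Lemma~\ref{lem:eigendecayrhonu} supplies; no genuinely new obstacle arises beyond what is already handled in the proof of Lemma~\ref{lem:barff0l2ar}.
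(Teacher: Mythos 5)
Your proposal is correct and follows essentially the same route as the paper: expand $f_0$ and $\bar f$ in the simultaneously diagonalizing eigenbasis $\{\xi_\nu\}$ from Weinberger's theory, read off the coordinatewise shrinkage $\bar f_\nu = f_\nu^0/(1+\lambda\gamma_\nu)$, and bound $\bigl(\lambda\gamma_\nu/(1+\lambda\gamma_\nu)\bigr)^2 \le \tfrac14\lambda\gamma_\nu$ to obtain $\|\bar f - f_0\|_0^2 \lesssim \lambda J(f_0)$. The paper packages the last step as $\lambda^2\sup_{x>0} x^{-1}/(1+\lambda x^{-1})^2 \asymp \lambda$, which is the same elementary bound you apply pointwise, so there is no substantive difference.
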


\begin{proof} 
For any $0\leq a\leq 1$,
\begin{align}
\|\bar{f} - f_0\|_{0}^2 & = \sum_{\nu=1}^\infty \left(\frac{\lambda\gamma_\nu}{1+\lambda\gamma_\nu}\right)^2(f_\nu^0)^2\nonumber\\
& \leq \lambda^2\sup_{\nu\in \N}\frac{\gamma_\nu}{(1+\lambda\gamma_\nu)^2}\sum_{\nu=1}^\infty\gamma_\nu(f_\nu^0)^2\nonumber\\
&\leq \lambda^2J(f_0)\sup_{x>0}\frac{x^{-1}}{(1+\lambda x^{-1})^2}\nonumber\\
& \asymp \lambda^2 J(f_0)\lambda^{-1} = \lambda J(f_0),\nonumber
\end{align}
where the fourth step is achieved when $x\asymp\lambda$.
\end{proof}

Second, we show an upper bound of $(\tilde{f}^*-\bar{f})$, which accounts for a part of the stochastic error.
\begin{lemma}
\label{lem:stocherrortilde-barderiv}
For $1\leq p\leq d$, then if $m>5/4$, we have
\begin{equation*}
\|\tilde{f}^* - \bar{f}\|_0^2 = O_\P\left\{n^{-1}\lambda^{-1/(2m-2)}\right\}.
\end{equation*}
\end{lemma}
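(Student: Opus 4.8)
The plan is to mirror the argument for Lemma~\ref{lem:tilfbarfl2a}, but to work in the abstract $\langle\cdot,\cdot\rangle_0$-orthonormal basis $\{\xi_\nu\}_{\nu\ge1}$ introduced before Lemma~\ref{lem:eigendecayrhonu} (for which $\langle\xi_\nu,\xi_\mu\rangle_R=(1+\gamma_\nu)\delta_{\nu\mu}$ and $J(f)=\sum_\nu\gamma_\nu f_\nu^2$) instead of a trigonometric basis, and to track only the $\|\cdot\|_0$-norm. First I would use that $\bar f$ minimizes $l_{\infty\lambda}$, so $Dl_{n\lambda}(\bar f)=Dl_n(\bar f)-Dl_\infty(\bar f)$, and the $\xi$-analogue of (\ref{def:ginv}), namely $G_\lambda^{-1}\xi_\nu=(1+\gamma_\nu)(1+\lambda\gamma_\nu)^{-1}\xi_\nu$, which one checks from $D^2l_{\infty\lambda}(\bar f)gh=2[(1-\lambda)\langle g,h\rangle_0+\lambda\langle g,h\rangle_R]$. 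Combining these with the two Riesz identities $Dl_{n\lambda}(\bar f)\xi_\nu=\langle Dl_{n\lambda}(\bar f),\xi_\nu\rangle_R$ and $\langle h,\xi_\nu\rangle_0=(1+\gamma_\nu)^{-1}\langle h,\xi_\nu\rangle_R$, the factors $(1+\gamma_\nu)$ cancel and $\tilde f^*-\bar f=-\tfrac12 G_\lambda^{-1}Dl_{n\lambda}(\bar f)$ yields the exact identity
\begin{equation*}
\|\tilde f^*-\bar f\|_0^2=\frac14\sum_{\nu\ge1}(1+\lambda\gamma_\nu)^{-2}\big[Dl_{n\lambda}(\bar f)\xi_\nu\big]^2 .
\end{equation*}

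Next I would reuse the variance bound (\ref{eqn:dlnlambdang0}) essentially verbatim: its derivation only needs $\|\bar f-f_0\|_R=O(1)$, which here follows from Lemma~\ref{lem:determerrorderiv} together with $J(\bar f-f_0)\le J(f_0)$ and the norm equivalence of Lemma~\ref{lemmanormrequiv}. This gives $\E[\tfrac12 Dl_{n\lambda}(\bar f)\xi_\nu]^2\lesssim n^{-1}\|\xi_\nu\|_0^2=n^{-1}$, hence, taking expectations term by term,
\begin{equation*}
\E\|\tilde f^*-\bar f\|_0^2\lesssim n^{-1}\sum_{\nu\ge1}(1+\lambda\gamma_\nu)^{-2}.
\end{equation*}
It then suffices to prove the deterministic estimate $\sum_{\nu\ge1}(1+\lambda\gamma_\nu)^{-2}\lesssim\lambda^{-1/(2m-2)}$, whereupon Markov's inequality yields the stated $O_\P$ bound.

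For that estimate I would invoke Lemma~\ref{lem:eigendecayrhonu}: $\gamma_\nu\asymp(\gamma'_\nu)^{-1}$, where $\{\gamma'_\nu\}$ is the decreasing rearrangement of $\{(1+\sum_{j=1}^p\nu_j^2)\prod_{k=1}^d\nu_k^{-2m}:\vec{\bnu}\in V\}$, so that $\sum_{\nu\ge1}(1+\lambda\gamma_\nu)^{-2}\asymp\sum_{\vec{\bnu}\in V}\big(1+\lambda\prod_{k=1}^d\nu_k^{2m}/(1+\sum_{j=1}^p\nu_j^2)\big)^{-2}$. Since $d$ and $r$ are fixed, the multi-indices in $V$ (defined in (\ref{def:Vbnu})) fall into finitely many classes according to which coordinates exceed $1$ and whether they lie among the first $p$ (``derivative'') coordinates. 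The class with a single non-unit derivative coordinate $\nu_1$ has summand $\asymp(1+\lambda\nu_1^{2m-2})^{-2}$, whose sum over $\nu_1\ge1$ is $\asymp\lambda^{-1/(2m-2)}$ exactly when $4m-4>1$, i.e.\ $m>5/4$ (the tail $\sum_{\nu_1>\lambda^{-1/(2m-2)}}(\lambda\nu_1^{2m-2})^{-2}$ being then of the same order); this is the source of both the claimed rate and the hypothesis $m>5/4$, and it requires $p\ge1$. A single non-unit non-derivative coordinate contributes the smaller order $\lambda^{-1/(2m)}$, and any class with two or more non-unit coordinates is bounded by $\lambda^{-1/(2m-2)}$ as well: summing over the largest coordinate first, the residual one-dimensional sums carry exponent strictly greater than $1$ and contribute only constants (an integral approximation in the spirit of Lemma~\ref{lemma:intx1xrzxk1z2} makes this rigorous). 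Summing the finitely many classes gives $\sum_{\nu\ge1}(1+\lambda\gamma_\nu)^{-2}\asymp\lambda^{-1/(2m-2)}$, completing the proof.

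The main obstacle is this last multi-index sum: one must bound it uniformly over all coordinate configurations, isolate the dominant $\lambda^{-1/(2m-2)}$ contribution, and verify that $m>5/4$ is precisely what makes every relevant tail sum converge. Everything before it — the cancellation of the $(1+\gamma_\nu)$ factors and the reuse of (\ref{eqn:dlnlambdang0}) via Lemma~\ref{lem:determerrorderiv} — is routine bookkeeping.
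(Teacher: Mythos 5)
Your proposal follows essentially the same route as the paper's proof: the identical decomposition $\tilde f^*-\bar f=-\frac12G_\lambda^{-1}Dl_{n\lambda}(\bar f)$ expanded in the $\{\xi_\nu\}$ basis, the same reuse of (\ref{eqn:dlnlambdang0}) (justified, as you note, by $\|\bar f-f_0\|_R=O(1)$) to obtain $\E\|\tilde f^*-\bar f\|_0^2\lesssim n^{-1}\sum_{\nu\ge1}(1+\lambda\gamma_\nu)^{-2}$, and the same passage to a multi-index sum via Lemma~\ref{lem:eigendecayrhonu}. The only divergence is the final step: the paper identifies that sum with $M_0(\lambda)$ and invokes Lemma~\ref{lem:defMalambdaandbounds} (whose hypothesis $m>5/(4-2a)$ at $a=0$ is exactly your $m>5/4$), whereas you re-derive the bound $\lesssim\lambda^{-1/(2m-2)}$ by a direct case analysis over which coordinates of $\vec{\bnu}\in V$ are non-unit.

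One small repair is needed in that case analysis. Your rule ``sum over the largest coordinate first; the residual one-dimensional sums carry exponent strictly greater than $1$'' fails in mixed classes: if the largest non-unit coordinate is a non-derivative one $\kappa$ (exponent $2m$) while a derivative coordinate $\mu$ (effective exponent $2m-2$, since $1+\sum_{j\le p}\nu_j^2\asymp\mu^2$ only when $\mu$ is the largest derivative coordinate) is also present, summing $\kappa$ out first leaves a residual exponent $(2m-2)/(2m)<1$ in $\mu$; likewise a class with two or more non-unit non-derivative coordinates leaves residual exponents equal to $1$ and produces logarithms. The fix is to integrate out the largest \emph{derivative} coordinate first — its effective exponent $2m-2$ is where $m>5/4$ enters — after which all residual exponents are $2m/(2m-2)>1$ and contribute constants; classes containing no non-unit derivative coordinate are treated separately and give the strictly smaller order $\lambda^{-1/(2m)}$ up to logarithmic factors. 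With that reordering your estimate of the multi-index sum, and hence the lemma, goes through and coincides with the paper's Lemma~\ref{lem:defMalambdaandbounds} bound.
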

\begin{proof}
As shown in (\ref{eqn:dlnlambdang0}), $\E[\frac{1}{2}Dl_{n,\lambda}(\bar{f})g]^2= O\{n^{-1}\|g\|_0^2\}$. By the definition of $G_\lambda^{-1}$ in (\ref{def:ginv}),
\begin{equation*}
\|G_\lambda^{-1}g\|_0^2 = \sum_{\nu=1}^\infty \left(1+\lambda\gamma_\nu\right)^{-2}\langle g,\xi_\nu\rangle_R^2,\quad\forall g\in\HH.
\end{equation*}
Thus,
\begin{align}
\E\|\tilde{f}^* - \bar{f}\|_{0}^2 & = \frac{1}{4}\E\left[\sum_{\nu=1}^\infty(1+\lambda \gamma_\nu)^{-2}\langle Dl_{n\lambda}(\bar{f}), \xi_\nu\rangle_R^2\right]\nonumber\\
& \leq \sum_{\nu=1}^\infty(1+\lambda\gamma_\nu)^{-2}\E\left[\frac{1}{2}Dl_{n\lambda}(\bar{f})\xi_\nu\right]^2\nonumber\\
& \lesssim n^{-1}\sum_{\nu=1}^\infty\left(1+\lambda\gamma_\nu\right)^{-2}\nonumber\\
& \asymp n^{-1}M_0(\lambda),\nonumber
\end{align}
where the last step is because of Lemma \ref{lem:eigendecayrhonu}, and $M_a(\lambda)$ for $0\leq a\leq 1$ is defined in Lemma \ref{lem:defMalambdaandbounds} of Section \ref{subsubsec:defofmalambda}. Hence, we complete the proof by using Lemma \ref{lem:defMalambdaandbounds}.
\end{proof}

Then, we give an upper bound of $(\widehat{f} - \tilde{f}^*)$, which accounts for another part of the stochastic error.
\begin{lemma} 
\label{lem:boundhattildefM}
If $n^{-1}\lambda^{-[a+ma/(m-1)+3/2m]}\left[\log(1/\lambda)\right]^{r-1}\rightarrow 0$ and $1/2m<a<(2m-3)/2m$, we have
\begin{equation*}
\|\widehat{f} - \tilde{f}^*\|_0^2 = o_\P\left\{n^{-1}\lambda^{-1/(2m-2)}\right\}.
\end{equation*}
\end{lemma}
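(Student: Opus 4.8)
The plan is to rerun the proof of Lemma~\ref{lem:boundhattildef}, but in the eigensystem $\{\xi_\nu\}_{\nu\ge1}$ adapted to the pair $(\langle\cdot,\cdot\rangle_0,\langle\cdot,\cdot\rangle_R)$ rather than in the trigonometric tensor basis $\{\phi_{\vec{\bnu}}\}$, since here neither knowledge of the densities $\pi^{e_j}$ nor periodicity of $f_0$ is assumed. Write $\|f\|_{(c)}^2=\sum_{\nu\ge1}(1+\gamma_\nu)^c f_\nu^2$ for $0\le c\le 1$, with $f_\nu=\langle f,\xi_\nu\rangle_0$, so that $\|\cdot\|_{(0)}=\|\cdot\|_0$ and $\|\cdot\|_{(1)}$ is equivalent to $\|\cdot\|_R$. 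Starting from (\ref{eqn:hatfnlambdatildef}) and using $G_\lambda^{-1}\xi_\nu=(1+\gamma_\nu)(1+\lambda\gamma_\nu)^{-1}\xi_\nu$ together with $\frac12 D^2l_\infty(\bar f)gh=\langle g,h\rangle_0$ from (\ref{eqn:d2linftyfgh}), one obtains
\begin{equation*}
\|\widehat f-\tilde f^*\|_{(c)}^2=\sum_{\nu\ge1}(1+\gamma_\nu)^{c}(1+\lambda\gamma_\nu)^{-2}\Big[\langle\widehat f-\bar f,\xi_\nu\rangle_0-\tfrac12 D^2l_n(\bar f)(\widehat f-\bar f)\xi_\nu\Big]^2 .
\end{equation*}
For each $\nu$ and each $j\in\{0,\dots,p\}$ I would set $g_j=\sigma_j^{-2}\,\partial_j(\widehat f-\bar f)\,\partial_j\xi_\nu$ (with $\partial_0=\mathrm{id}$), expand $g_j$ in a convenient orthogonal system for $L_2(\XX_1^d)$ (indexed by the full set $\N^d$, since $g_j$ is a product function), and apply Cauchy--Schwarz exactly as in (\ref{eqn:cgammnua1mf35}), splitting the bracket into a deterministic factor bounded via Lemma~\ref{lemma:inequanund1rhonua} by $\|\widehat f-\bar f\|$ in a norm one derivative-order stronger, and a stochastic factor of expected size $O(n^{-1})$ as in (\ref{eqn:enuganmmacnf}).

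Summing these per-$\nu$ estimates against the weights $(1+\gamma_\nu)^{c}(1+\lambda\gamma_\nu)^{-2}$, transferring the decay rate of $\{\gamma_\nu\}$ to that of $\{\rho_{\vec{\bnu}}\}$ via Lemma~\ref{lem:eigendecayrhonu}, and invoking Lemma~\ref{lem:defMalambdaandbounds} for the resulting series $M_\bullet(\lambda)$, should produce the analogue of (\ref{eqn:anoformhatftildefc}): for $1/2m<a<(2m-3)/2m$ and a suitable stronger index $a'$ (an ``$a+1/m$''-type shift),
\begin{equation*}
\|\widehat f-\tilde f^*\|_{(c)}^2=O_\P\!\Big\{n^{-1}\lambda^{-g(a,c)}[\log(1/\lambda)]^{r-1}\Big\}\,\|\widehat f-\bar f\|_{(a')}^2,\qquad 0\le c\le a'.
\end{equation*}
Taking $c=a'$, the hypothesis $n^{-1}\lambda^{-[a+ma/(m-1)+3/2m]}[\log(1/\lambda)]^{r-1}\to 0$ is precisely what forces the prefactor to be $o_\P(1)$, so $\|\widehat f-\tilde f^*\|_{(a')}^2=o_\P(1)\,\|\widehat f-\bar f\|_{(a')}^2$; the triangle inequality against $\tilde f^*-\bar f$ then gives $\|\widehat f-\bar f\|_{(a')}^2=O_\P(\|\tilde f^*-\bar f\|_{(a')}^2)$. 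Finally, substituting back at $c=0$ and bounding $\|\tilde f^*-\bar f\|_{(a')}^2$ by the strong-norm version of Lemma~\ref{lem:stocherrortilde-barderiv} (the computation behind (\ref{eqn:dlnlambdang0}) carried through with the $(1+\gamma_\nu)^{a'}$-weights, giving $O_\P\{n^{-1}M_{a'}(\lambda)\}$), one more appeal to Lemma~\ref{lem:defMalambdaandbounds} yields $\|\widehat f-\tilde f^*\|_0^2=o_\P\{n^{-1}\lambda^{-1/(2m-2)}\}$, which is the claim.

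I expect the exponent bookkeeping to be the main obstacle. One has to pin down the stronger index $a'$ — through Lemma~\ref{lem:eigendecayrhonu} and the $(1+\sum_{j\le p}\nu_j^2)\prod_k\nu_k^{-2m}$ form of the generalized eigenvalues, the ``$+1/m$'' derivative shift is exactly what turns into the $ma/(m-1)$ term appearing in the hypothesis — confirm that $1/2m<a<(2m-3)/2m$ is the range in which Lemma~\ref{lemma:inequanund1rhonua} applies and in which $\sum_{\vec{\bmu}\in\N^d}\mu_1^{-2ma}\cdots\mu_d^{-2ma}<\infty$, and verify that combining $g(a,0)$ with the strong-norm size of $\tilde f^*-\bar f$ and the target exponent $1/(2m-2)$ reproduces the stated condition. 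A secondary point is that $\{\gamma_\nu\}$ and $\{\rho_{\vec{\bnu}}\}$ may be interchanged freely only inside order-of-magnitude statements, as licensed by Lemma~\ref{lem:eigendecayrhonu}.
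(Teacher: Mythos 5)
Your skeleton is the paper's: start from (\ref{eqn:hatfnlambdatildef}), expand in the eigensystem of $\langle\cdot,\cdot\rangle_0/\langle\cdot,\cdot\rangle_R$, interchange $\{\gamma_\nu\}$ with $\{\rho_{\vec{\bnu}}\}$ via Lemma \ref{lem:eigendecayrhonu}, reuse the Cauchy--Schwarz/product-expansion bound from the proof of Lemma \ref{lem:boundhattildef} (i.e.\ (\ref{eqn:e1p1n-1hatfbarfl2a1m})), and sum against $(1+\lambda\gamma_\nu)^{-2}$ to get $n^{-1}M_a(\lambda)$ times a strong-norm factor; also your identity for $\|\widehat f-\tilde f^*\|_{(c)}^2$ and the remark that the interchange is only an order-of-magnitude device are fine, and the range $a<(2m-3)/2m$ is indeed exactly what makes the index $a+1/m$ admissible in Lemma \ref{lem:tilfbarfl2a}.

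The gap is in the final (and bootstrap) bookkeeping. The product bound (\ref{eqn:e1p1n-1hatfbarfl2a1m}) delivers the strong factor in the $L_2(a+1/m)$-norm, whose weights carry the extra damping $(1+\sum_{j\leq p}\nu_j^2)^{-1}$; the paper bounds it by Lemmas \ref{lem:boundhattildef} and \ref{lem:tilfbarfl2a}, i.e.\ by $O_\P\{n^{-1}\lambda^{-(a+1/m+1/2m)}[\log(1/\lambda)]^{r-1}\}$, so that $n^{-1}M_a(\lambda)\cdot n^{-1}\lambda^{-(a+3/2m)}[\log(1/\lambda)]^{r-1}=\{n^{-1}\lambda^{-[a+ma/(m-1)+3/2m]}[\log(1/\lambda)]^{r-1}\}\,n^{-1}\lambda^{-1/(2m-2)}$, which is exactly what the hypothesis kills (note $M_a(\lambda)\asymp\lambda^{-(2ma+1)/(2m-2)}$ and $(2ma+1)/(2m-2)=ma/(m-1)+1/(2m-2)$; so $ma/(m-1)$ comes from $M_a$, not from the ``$+1/m$ shift'', which sits inside $a+3/2m=(a+1/m)+1/2m$). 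You instead recast the strong factor in the $\xi$-based $(a')$-scale and bound $\|\tilde f^*-\bar f\|_{(a')}^2$ by $O_\P\{n^{-1}M_{a'}(\lambda)\}\asymp O_\P\{n^{-1}\lambda^{-(2ma+3)/(2m-2)}\}$. That is polynomially larger than the paper's factor: your final product is $o\{n^{-1}\lambda^{-1/(2m-2)}\}$ only if $n^{-1}\lambda^{-(4ma+3)/(2m-2)}\to 0$, whereas the stated hypothesis controls $n^{-1}\lambda^{-(4ma-2a)/(2m-2)-3/(2m)}[\log(1/\lambda)]^{r-1}$, which falls short by the factor $\lambda^{-(2ma+3)/[m(2m-2)]}$; so under the lemma's hypothesis your bound does not give the claim (and the same overshoot invalidates your assertion that the hypothesis makes the prefactor at $c=a'$ vanish --- that prefactor is of order $n^{-1}M_{2a+1/m}(\lambda)$-type and again needs a strictly stronger condition, while the paper handles the bootstrap simply by citing Lemma \ref{lem:boundhattildef}, whose condition $n^{-1}\lambda^{-(2a+3/2m)}[\log(1/\lambda)]^{r-1}\to0$ \emph{is} implied by the present hypothesis). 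The fix is to keep the strong factor in the $L_2(a+1/m)$-norm that Lemma \ref{lemma:inequanund1rhonua} actually produces and bound it via Lemmas \ref{lem:boundhattildef} and \ref{lem:tilfbarfl2a}; replacing it by an $M_{a'}$-type bound in the $(a')$-scale throws away precisely the $(1+\sum_{j\leq p}\nu_j^2)$ discount that the hypothesis is calibrated to.
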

\begin{proof}
Observe that
\begin{equation}
\begin{aligned}
& \E\|\widehat{f} - \tilde{f}\|_{0}^2 \\
& \asymp \E\sum_{\vec{\bnu}\in V}(1+\lambda\gamma_{\vec{\bnu}})^{-2}\left[\frac{1}{2}D^2l_\infty(\bar{f})(\widehat{f} - \bar{f})\phi_{\vec{\bnu}} - \frac{1}{2}D^2l_{n}(\bar{f})(\widehat{f} - \bar{f})\phi_{\vec{\bnu}}\right]^2\nonumber\\
& \leq \E\sum_{\vec{\bnu}\in V}(1+\lambda\gamma_{\vec{\bnu}})^{-2}\nonumber\\
& \times\frac{1}{p+1}\left\{\left[\frac{1}{n\sigma_0^2}\sum_{i=1}^n(\widehat{f}-\bar{f})(\bt_i^{e_0})\phi_{\vec{\bnu}}(\bt_i^{e_0})-\frac{1}{\sigma_0^2} \int (\widehat{f} - \bar{f})(\bt)\phi_{\vec{\bnu}}(\bt)\pi^{e_0}(\bt)\right]^2\right.\nonumber\\
&  \left. +\sum_{j=1}^p\left[\frac{1}{n\sigma_j^2}\sum_{i=1}^n\frac{\partial (\widehat{f}-\bar{f})}{\partial t_j}(\bt_i^{e_0})\frac{\partial\phi_{\vec{\bnu}}}{\partial t_j}(\bt_i^{e_0}) - \frac{1}{\sigma_j^2}\int \frac{\partial(\widehat{f} - \bar{f})(\bt)}{\partial t_j}\frac{\partial\phi_{\vec{\bnu}}(\bt)}{\partial t_j}\pi^{e_0}(\bt)\right]^2\right\}\nonumber\\
& \lesssim n^{-1}\|\widehat{f} - \bar{f}\|_{L_2(a+1/m)}^2 \sum_{{\vec{\bnu}}\in V}\left(1+\frac{\rho_{\vec{\bnu}}}{\|\phi_{\vec{\bnu}}\|_{L_2}^2}\right)^a(1+\lambda\rho_{\vec{\bnu}})^{-2}\nonumber\\
& = n^{-1}\|\widehat{f} - \bar{f}\|^2_{L_2(a+1/m)}M_a(\lambda)\nonumber\\
& \leq \left\{n^{-1}\lambda^{-[a+3/2m+ma/(m-1)]}[\log(1/\lambda)]^{r-1}\right\}n^{-1}\lambda^{-1/(2m-2)},\nonumber
\end{aligned}
\end{equation}
where the first step exchange the use of $\{\gamma_\nu,\nu\in \N\}$ and $\{\rho_{\vec{\bnu}}: \vec{\bnu}\in V\}$, the third step is by  (\ref{eqn:e1p1n-1hatfbarfl2a1m}), and the last step is Lemma \ref{lem:tilfbarfl2a}, Lemma \ref{lem:boundhattildef} and Lemma \ref{lem:defMalambdaandbounds} in Section \ref{subsubsec:defofmalambda}. The above inequality holds for any $1/2m<a<(2m-3)/2m$. This completes the proof.
\end{proof}

Last, we combine Lemma \ref{lem:determerrorderiv}, Lemma \ref{lem:stocherrortilde-barderiv} and Lemma \ref{lem:boundhattildefM}. By letting $\lambda\asymp n^{-2(m-1)/(2m-1)}$ and $a=1/2m + \epsilon$ for some $\epsilon>0$, then 
\begin{equation*}
n^{-1}\lambda^{-(a+3/2m+ma/(m-1))}[\log(1/\lambda)]^{r-1}\rightarrow 0
\end{equation*} 
holds as long as $m>2$. Therefore, we conclude that for any $1\leq p\leq d$ and $m>2$,
\begin{align}
\|\widehat{f} - f_0\|_0^2 & = O\left\{\lambda J(f_0)\right\} + O_\P\left\{ n^{-1}\lambda^{-1/(2m-2)}\right\} + o_\P\left\{ n^{-1}\lambda^{-1/(2m-2)}\right\} \nonumber\\
& = O_\P\left\{n^{-2(m-1)/(2m-1)}\right\}.\nonumber
\end{align}
This completes the proof for Lemma \ref{lem:limd2ninftyn0} and the proof for Theorem \ref{theorem:upperbndlimDderi} .

\subsection{Key lemmas}
\label{subsec:keylemmaforproofs}

Now we prove and show some keys lemmas used for the proofs in Section \ref{sec:proofsofall}, Section \ref{subsec:reglattproof} and Section \ref{sec:proofthmupperlimdderiv}.
We remind the reader that the proofs in this section rely on some lemmas to be stated later in Section \ref{subsec:auxiliarytechlem}.

\subsubsection{Proof of Lemma \ref{lemmanormrequiv}}
\label{subsec:proofoflemmanormequiv}
\begin{equation*}
\text{\textit{The norm $\|\cdot\|_R$ is equivalent to $\|\cdot\|_{\HH}$ in $\HH$.}}
\end{equation*}
\begin{proof}
Observe that for any $g\in \HH$, by the assumption that $\pi^{e_0}$ and $\pi^{e_j}$s are bounded away from  0 and infinity, we have
\begin{align}
& \frac{1}{p+1}\left[\frac{1}{\sigma_0^2}\int g^2(\bt)\pi^{e_0}(\bt) + \sum_{j=1}^p \frac{1}{\sigma_j^2}\int\left\{\frac{\partial g(\bt)}{\partial t_j}\right\}^2\pi^{e_j}(\bt) \right]  \nonumber\\
&\leq c_1\left[\int g^2(\bt) + \sum_{j=1}^p \int\left\{\frac{\partial g(\bt)}{\partial t_j}\right\}^2\right] \leq c_2\cdot c_{K}^{2d}\|g\|_{\HH}^2,\nonumber
\end{align}
for some constant $c_1$ and $c_2$, where the last step is by Lemma \ref{lem:bounddejft}. Hence 
\begin{equation}
\label{eqn:grnormlessc2ckghhnorm}
\|g\|_R^2 \leq (c_2c_{K}^{2d}+1)\|g\|_\HH^2.
\end{equation}

One the other hand, for any $g\in\HH$ we can do the orthogonal decomposition $g=g^0+g^1$ where $\langle g^0,g^1\rangle_\HH=0$, $g^0$ is in the null space of $J(\cdot)$ and $g^1$ is in the orthogonal space of the null space of $J(\cdot)$ in $\HH$.  Since the null space of $J(\cdot)$ has a finite basis which forms a positive definite kernel matrix, we assume the minimal eigenvalue of the kernel matrix is $\mu_{\min}'>0$. Then there exists a constant $c_3>0$ such that
\begin{equation}
\label{eqn:f0r2c2f0l2}
\|g^0\|_R^2\geq c_3\|g^0\|_{L_2}^2 \geq c_3 \mu_{\min}' \|g^0\|_\HH^2.
\end{equation}
For $g^1$, we have $\|g^1\|_R^2\geq J(g^1) = \|g^1\|_\HH^2$. Thus, for any $g\in\HH$,
\begin{align}
\|g\|_R^2 & \geq c_3\int \left(g^0+g^1\right)^2 + \|g^1\|_\HH^2\nonumber\\
& \geq c_3 \left\{\|g^0\|_{L_2}^2 + \frac{1+c_3}{c_3}\|g^1\|_{L_2}^2 - 2\|g^0\|_{L_2}\|g^1\|_{L_2}\right\}\nonumber\\
& \geq \frac{c_3}{1+c_3}\|g^0\|^2_{L_2},\nonumber
\end{align}
where the second inequality is by $\|g^1\|_\HH^2\geq \|g^1\|_{L_2}^2$. Then by (\ref{eqn:f0r2c2f0l2}), we obtain $\|g\|_R^2 \geq (1+c_3)^{-1}c_3\mu_{\min}'\|g^0\|_\HH^2$. Together with $\|g\|_R^2\geq J(g^1) = \|g^1\|_\HH^2$, we have
\begin{equation}
\label{eqn:grnormgtrc2ckghhnorm}
\|g\|_R^2 \geq \left(1+ \frac{1+c_3}{c_3\mu_{\min}'}\right)^{-1}\|g\|_\HH^2.
\end{equation}
Combining (\ref{eqn:grnormlessc2ckghhnorm}) and (\ref{eqn:grnormgtrc2ckghhnorm}) completes the proof.
\end{proof}

\subsubsection{Proof of Lemma \ref{lem:eigendecayrhonu}}
\label{subsubsec:proofproeigengammanu}

\begin{proof}
When $d=1$, this problem is solved in Cox \cite{cox1988approximation}. Their method is finding an orthonormal basis in $L_2(\XX_1)$ to simultaneously diagonalize $\langle f,f\rangle_0$ and $\langle f,f\rangle_R$, and then obtain the decay rate of $\gamma_\nu$. However, their method cannot be applied to our case when $2\leq p\leq d$.
Alternatively, we use the Courant-Fischer-Weyl min-max principle to prove the lemma.

Note that for any $f\in \HH$, the norm $\|f\|_0^2$ is equivalent to
\begin{equation*}
\int f^2 + \sum_{j=1}^p\int\left(\frac{\partial f(\bt)}{\partial t_j}\right)^2.
\end{equation*} 
From Lemma \ref{lemmanormrequiv}, the norm $\|\cdot\|^2_R$ is equivalent to $\|\cdot\|^2_\HH$. Now by applying the mapping principle [see, e.g., Theorem 3.8.1 in Weinberger \cite{weinberger1974variational}], we may replace $\langle f,f\rangle_0$ by $\int f^2 + \sum_{j=1}^p\int(\partial f/\partial t_j)^2$ and $\langle f,f\rangle_R$ by $\|f\|^2_\HH$, and the resulting eigenvalues $\{\gamma''_\nu\}_{\nu\geq 1}$ of 
$\{\int f^2 + \sum_{j=1}^p\int(\partial f/\partial t_j)^2\}/\|f\|^2_\HH$
satisfy
 \begin{equation}
 \label{eqn:gamma''vasymp}
 \gamma''_\nu \asymp(1+\gamma_\nu)^{-1}.
 \end{equation} 
Thus,  we only need to study  $\{\gamma_\nu''\}_{\nu\geq 1}$. Since $f\in\HH$ has the tensor product structure, we denote by $\lambda_{\vec{\bnu}}[\{\int f^2 + \sum_{j=1}^p\int(\partial f/\partial t_j)^2\}/\langle f,f\rangle_\HH]$ the $\vec{\bnu}$th eigenvalue of the generalized Rayleigh quotient, where $\vec{\bnu}\in V$ and $V$ is defined in (\ref{def:Vbnu}). 

Second, by the assumption that $\lambda_\nu\asymp \nu^{-2m}$ in (\ref{eqn:mercerdecaykttgamma}),  $\HH_1$ is equivalent to a Sobolev space $\WW_2^m(\XX_1)$ and
the trigonometric functions $\{\psi_{\nu}\}_{\nu\geq 1}$ in (\ref{eqn:trigbasisexp}) form an eigenfunction basis of $\HH_1$ up to a $m$-dimensional linear space of polynomials of order less than $m$. See, for example, Wahba \cite{wahba1990}. Denote the latter linear space of polynomials by $\GG$. 
Denote by $\FF_\mu$ and $\FF_\mu^\perp$ the linear spaces spanned by $\{\psi_\nu:1\leq \nu\leq \mu\}$ and $\{\psi_\nu:\nu\geq \mu+1\}$, respectively. For any ${\vec{\bnu}}=(\nu_1,\nu_2,\ldots,\nu_d)\in V$,  by the Courant-Fischer-Weyl min-max principle,
\begin{align}
& \lambda_{(\nu_1-m)\vee 0, (\nu_2-m)\vee 0,\ldots,(\nu_d-m)\vee 0}\left.\left[\left\{\int f^2 + \sum_{j=1}^p\int\left(\frac{\partial f}{\partial t_j}\right)^2\right\}\right/\langle f,f\rangle_\HH\right] \nonumber\\
& \quad \geq \underset{f\in\HH\cap\otimes_{k=1}^d\{\FF_{\nu_k}\cap \GG^\perp\}}{\min}\left.\left[\left\{\int f^2 + \sum_{j=1}^p\int\left(\frac{\partial f}{\partial t_j}\right)^2\right\}\right/\langle f,f\rangle_\HH\right] \nonumber\\
& \quad \geq  c_1 \left(1+\sum_{j=1}^p\nu_j^2\right)\prod_{k=1}^d\nu_k^{-2m} \nonumber
\end{align}
for some constant $c_1>0$, where the last inequality is by the fact that $d\psi_{2\nu-1}(t)/dt = 2\pi\nu\psi_{2\nu}(t)$ and $d\psi_{2\nu}(t)/dt = -2\pi\nu\psi_{2\nu-1}(t)$.
On the other hand,
\begin{align}
& \lambda_{\nu_1+m, \nu_2+m,\ldots,\nu_d+m}\left.\left[\left\{\int f^2 + \sum_{j=1}^p\int\left(\frac{\partial f}{\partial t_j}\right)^2\right\}\right/\langle f,f\rangle_\HH\right]\nonumber\\
& \quad \leq \underset{f\in\HH\cap\otimes^d\{\FF_{k-1}^\perp\cap \GG^\perp\}}{\max}\left.\left[\left\{\int f^2 + \sum_{j=1}^p\int\left(\frac{\partial f}{\partial t_j}\right)^2\right\}\right/\langle f,f\rangle_\HH\right]\nonumber\\
& \quad \leq  c_2 \left(1+\sum_{j=1}^p\nu_j^2\right)\prod_{k=1}^d\nu_k^{-2m}\nonumber
\end{align}
for some constant $c_2>0$. Thus, for any ${\vec{\bnu}}\in V$,
\begin{equation*}
 \lambda_{\vec{\bnu}}\left.\left[\left\{\int f^2 + \sum_{j=1}^p\int\left(\frac{\partial f}{\partial t_j}\right)^2\right\}\right/\langle f,f\rangle_\HH\right] \asymp \left(1+\sum_{j=1}^p\nu_j^2\right)\prod_{k=1}^d\nu_k^{-2m}.
\end{equation*}
This implies $\gamma_\nu' = \gamma_\nu''$, where $\gamma_\nu'$ is defined in Lemma \ref{lem:eigendecayrhonu}. Together with (\ref{eqn:gamma''vasymp}), we complete the proof. 
\end{proof}

\subsubsection{Definition of $N_a(\lambda)$ and its upper bound}
\label{subsubsecdefnalambda}

\begin{lemma}
\label{lemma:varthetamultivariatetensorrank}
Recall that $V$ as a family of multi-index $\vec{\bnu}$ is defined in (\ref{def:Vbnu}). We let
\begin{equation}
\label{eqn:defNalambdanuV}
N_a(\lambda) = \sum_{{\vec{\bnu}}\in V}\frac{\left(\prod_{k=1}^d\nu_k^{2m}\right)^a\left(1+\sum_{j=1}^p\nu_j^2\right)}{\left(1+\sum_{j=1}^p\nu_j^2+\lambda\prod_{k=1}^d\nu_k^{2m}\right)^2}.
\end{equation}
Then, when $0\leq p<d$, we have for any $0\leq a<1-1/2m$,
\begin{equation*}
N_a(\lambda) = O\left\{\lambda^{-a-1/2m}\left[\log (1/\lambda)\right]^{(d-p)\wedge r-1}\right\},
\end{equation*}
and when $p=d$, we have for any $0\leq a\leq 1$,
\begin{align*}
N_a(\lambda)=
\begin{cases}
O\left\{\lambda^{\frac{mr}{1-mr}\left(a+\frac{r-2}{2mr}\right)}\right\},   \mbox{ if } r\geq 3;\\
O\left\{\log(1/\lambda)\right\},  \mbox{ if } r=2, a=0;\quad O\left\{1\right\}, \mbox{ if } r=2, 0<a\leq 1;\\
O\left\{1\right\},  \mbox{ if } r=1,a<\frac{1}{2m};  \quad O\left\{\log(1/\lambda)\right\},  \mbox{ if } r=1,a=\frac{1}{2m}; \\
 O\left\{\lambda^{\frac{1-2ma}{2m-2}}\right\},  \mbox{ if } r=1, a>\frac{1}{2m}.
\end{cases}
\end{align*}
\end{lemma}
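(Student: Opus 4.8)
The plan is to reduce the multi-index sum defining $N_a(\lambda)$ to a bounded number of sums over ``active'' coordinates, replace each by an integral, and invoke the auxiliary estimates. Since every $\vec{\bnu}\in V$ has at most $r$ coordinates exceeding $1$, partition $V$ according to the set $S\subseteq\{1,\dots,d\}$, $|S|\le r$, of indices $k$ with $\nu_k\ge 2$. On the block indexed by $S$ we have $\nu_k=1$ for $k\notin S$, so both $\prod_{k}\nu_k^{2m}$ and $1+\sum_{j=1}^p\nu_j^2$ involve only the coordinates in $S$ (resp.\ $S\cap\{1,\dots,p\}$). The number of such $S$ is $\le\sum_{i=1}^r\binom di$, a constant depending only on $r$ and $d$, so it suffices to bound the sum over each fixed $S$ and take the maximum over $S$.

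Next, identify the dominant blocks. Fix $|S|=s\le r$ and set $s_0=|S\cap\{1,\dots,p\}|$; since the denominator penalizes only the indices $j\le p$ through $1+\sum_{j\le p}\nu_j^2$, for a fixed value of $\prod_{k\in S}\nu_k^{2m}$ the summand is largest when $s_0$ is as small as possible, i.e.\ the active indices are pushed into $\{p+1,\dots,d\}$. When $0\le p<d$ this means the extremal block has $s_0=0$ and $s=(d-p)\wedge r$ active indices in $\{p+1,\dots,d\}$, whence $1+\sum_{j\le p}\nu_j^2\asymp 1$; when $p=d$ every active index is penalized and, by symmetry of the coordinates, all blocks with $|S|=r$ are comparable. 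Configurations with smaller $s$, or with $s_0>0$, will be shown to contribute a strictly smaller power of $\lambda^{-1}$ or fewer factors of $\log(1/\lambda)$.

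For a dominant block with $q$ active coordinates, monotonicity of the summand in each coordinate makes the sum comparable to the integral of the same expression over $\{x_k\ge 1\}$. Introducing $z_j=\prod_{k=1}^j x_k$ and integrating out the ratios produces a factor $(\log z_q)^{q-1}$, exactly as in the computation $(\ref{eqn:tau12mlogr1})$ and Section~\ref{sec:prooflowbdreg}. For $0\le p<d$ this leaves $\int_1^\infty u^{2ma}(1+\lambda u^{2m})^{-2}(\log u)^{q-1}\,du$ with $q=(d-p)\wedge r$; the substitution $v=\lambda u^{2m}$ converts it to $\lambda^{-a-1/2m}[\log(1/\lambda)]^{q-1}$ times a $v$-integral that is convergent precisely because $a<1-1/2m$. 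For $p=d$ the remaining integral is of the form treated by Lemma~\ref{lemma:intx1xrzxk1z2} (with $\alpha=2$, $\beta=0$, and the appropriate exponent $a$), and its value is $\lambda^{\frac{mr}{1-mr}(a+\frac{r-2}{2mr})}$ when $r\ge 3$, is $\log(1/\lambda)$ or $O(1)$ when $r=2$, and is $O(1)$, $\log(1/\lambda)$, or $\lambda^{(1-2ma)/(2m-2)}$ when $r=1$, according to the sign of $a-1/2m$; this reproduces the claimed case analysis.

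Finally, the subdominant blocks: for a block with $s$ smaller than the dominant value, or with $s_0\ge 1$, one bounds $1+\sum_{j\le p}\nu_j^2\ge\nu_{j_0}^2$ for an active $j_0\le p$ and runs the same integral estimate; in each instance either the exponent of $\lambda^{-1}$ strictly decreases or the power of $\log(1/\lambda)$ drops, so that term is absorbed into the dominant bound, and summing the finitely many blocks finishes the proof. The main obstacle is precisely this last bookkeeping: keeping the change-of-variables accounting consistent so that the logarithmic powers emerge with the right exponents, and checking in the borderline cases --- $p=d$ with $r\in\{1,2,3\}$ and $a$ near $1/2m$, and $0\le p<d$ at the threshold $a=1-1/2m$ --- that no subdominant active set contributes an extra logarithm or an equal power of $\lambda^{-1}$.
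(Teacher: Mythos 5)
Your outline follows the same route as the paper's proof (decompose the sum over the finitely many active index sets, approximate by integrals, change variables to produce the logarithmic factors, and invoke Lemma \ref{lemma:intx1xrzxk1z2} when $p=d$), and the dominant-block computation for $0\le p<d$ is exactly the paper's case $0\le p\le d-r$. But the two steps that carry the real technical weight are asserted rather than proved. First, for $d-r<p<d$ your claim that any block containing an active penalized coordinate contributes ``a strictly smaller power of $\lambda^{-1}$ or fewer factors of $\log(1/\lambda)$'' is not correct: the paper's own treatment of this case (integrating out each active coordinate $x_j$, $j\le p$, via the Young-type inequality of Lemma \ref{lem:aversioofyoungsineq}, so that it only costs a convergent $\int_1^\infty x^{-1-\tau}(\log x)^k dx$) shows such blocks contribute the \emph{same} order $\lambda^{-a-1/2m}[\log(1/\lambda)]^{d-p-1}$ as your unpenalized block. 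Same-order contributions are of course still compatible with the stated $O(\cdot)$ bound, but showing they are at most this order is precisely the nontrivial estimate: e.g.\ with one active penalized coordinate $\nu_{j_0}=t$ one must check that the extra sum behaves like $\sum_{t\ge 2} t^{\,2a+1/m-3}<\infty$, which is exactly where the hypothesis $a<1-1/2m$ enters and where an extra logarithm could otherwise appear. You explicitly defer this bookkeeping as ``the main obstacle,'' so the proposal leaves the case $d-r<p<d$ (and the borderline values of $a$) unproved.

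Second, for $p=d$ your appeal to Lemma \ref{lemma:intx1xrzxk1z2} only covers the truncated region $\{x_1\cdots x_r\le \Xi\}$ on which $\lambda\prod_k x_k^{2m}\lesssim \sum_k x_k^{2}$; the complementary region, where the $\lambda$-term dominates the denominator, is not addressed in your proposal and is not negligible by inspection --- the paper needs a separate argument, the change of variables leading to (\ref{eqn:intx1xrgtrmrlmabdalog}) together with Lemma \ref{lem:x1xrgtrxi}, to show that this tail is $o$ of the main term. Relatedly, your parameter choice ``$\alpha=2$, $\beta=0$'' in Lemma \ref{lemma:intx1xrzxk1z2} only reproduces the $a=0$ case; for general $a$ one needs $\beta=2ma$ (or the paper's rescaled $\alpha=b/m$ with $b=2m/(2ma+1)$) to obtain the exponent $\lambda^{\frac{mr}{1-mr}(a+\frac{r-2}{2mr})}$. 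Supplying these two estimates would complete a proof along essentially the paper's lines.
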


\begin{proof}
We will discuss three separate cases for $0\leq p\leq d-r$, $d-r<p<d$ and $p=d$.

First, consider  $0\leq p\leq d-r$. Since ${\vec{\bnu}}\in V$, there are at most $r$ of $\nu_1,\ldots,\nu_d$ not equal to 1, which implies that the number of combinations of non-1 indices being summed in (\ref{eqn:defNalambdanuV}) is no greater than $C_d^1+C_d^2+\cdots+C_d^r<\infty$. Due to the appearance of $(1+\sum_{j=1}^p\nu_j^2)$ in the denominator of (\ref{eqn:defNalambdanuV}), the largest terms of the summation (\ref{eqn:defNalambdanuV}) over ${\vec{\bnu}}\in V$ correspond to the combinations of $r$ indices where as few $\nu_1,\ldots,\nu_p$ being summed as possible, which is the indices ${\vec{\bnu}}=(\nu_{k_1},\nu_{k_2},\ldots,\nu_{k_r})^\top\in \N^r$ with $k_1,k_2,\ldots,k_r>p$. Thus, by the integral approximation,
\begin{align}
& N_a(\lambda) \nonumber\\
& \asymp \sum_{\nu_{p+1}=1}^{\infty}\cdots\sum_{\nu_{p+r-1}=1}^{\infty} \sum_{\nu_{p+r}=1}^{\infty} \frac{\prod_{k=p+1}^{p+r}\nu_k^{2ma}}{\left(1+ \lambda \prod_{k=p+1}^{p+r}\nu_k^{2m}\right)^{2}}\nonumber\\
& \asymp \int_1^\infty\int_1^\infty\cdots\int_1^\infty \left(1+\lambda x_{p+1}^{b}\cdots x_{p+r-1}^b x_{p+r}^{b}\right)^{-2}dx_{p+1}\cdots dx_{p+r-1}dx_{p+r}, \nonumber
\end{align}
where $b= 2m/(2ma+1)$.
Let $z_k =x_{p+1}x_{p+2}\cdots x_{k}$ for $k=p+1,\ldots,p+r$. By using the change of variables to replace $(x_{p+1},\ldots,x_{p+r})$ by $(z_{p+1},\ldots,z_{p+r})$ and $z_{p+r}$ by $x=\lambda^{1/b}z_{p+r}$,
\begin{align}
& N_a(\lambda) \nonumber\\
&\asymp \int_1^{\infty}\int_1^{z_{p+r}}\cdots\int_1^{z_{p+2}}\left(1+\lambda z_{p+r}^{b}\right)^{-2}z_{p+1}^{-1}\cdots z_{p+r-1}^{-1}dz_{p+1}\cdots dz_{p+r-1}dz_{p+r}\nonumber\\
& \asymp \int_1^{\infty}(1+\lambda z_{p+r}^{b})^{-2}(\log z_{p+r})^{r-1}dz_{p+r}\nonumber\\
& \asymp \lambda^{-1/b} \int^\infty_{\lambda^{1/b}} (1+ x^{b})^{-2}\left(\log x - b^{-1}\log \lambda\right)^{r-1}dx\nonumber\\
& \asymp  \lambda^{-a-1/2m}\left[\log (1/\lambda)\right]^{r-1},\nonumber
\end{align}
where the last step follows from the fact that $2b>1$ for any $0\leq a <(2m-1)/(2m)$.

Second, we consider $d-r< p<d$. As discussed in the previous case, the number of combinations of non-1 indices being summed is finite, and the largest terms of the summation (\ref{eqn:defNalambdanuV}) over ${\vec{\bnu}}\in V$ correspond to the indices ${\vec{\bnu}} = (\nu_{k_1},\ldots,\nu_{k_{r+p-d}},\nu_{p+1},\ldots,\nu_d)^\top\in \N^r$, where the indices $k_1,\ldots,k_{r+p-d}\leq p$. Thus, by the integral approximation,
\begin{align}
& N_a(\lambda)\nonumber\\
 & \asymp \sum_{v_{d-r+1}=1}^{\infty}\cdots\sum_{v_d=1}^{\infty}\frac{\prod_{k=d-r+1}^d\nu_k^{2ma}\left(1+\sum_{k=d-r+1}^p\nu_k^2\right)}{\left(1+\sum_{k=d-r+1}^p\nu_k^2+  \lambda \prod_{k=d-r+1}^d\nu_k^{2m}\right)^2}\nonumber\\
&  \asymp\int_1^{\infty}\cdots\int_1^{\infty} \frac{1+x_{d-r+1}^{b/m}+\cdots+x_p^{b/m}}{\left(1+x^{b/m}_{d-r+1}+\cdots+x^{b/m}_p+\lambda x_{d-r+1}^{b}\cdots x_d^{b}\right)^{2}}dx_{d-r+1}\cdots dx_d,\nonumber
\end{align}
where $b=2m/(2ma+1)$. Set $z_k=x_{p+1}x_{p+2}\cdots x_k$ for $k=p+1,\ldots,d$. By using the change the variables to replace $(x_{p+1},\ldots,x_d)$ by $(z_{p+1},\ldots,z_d)$, and $z_d$ by $x=\lambda^{1/b}z_d$, and   $x$ by $u= x_{d-r+1}\cdots x_p \cdot x$.
We have
\begin{align}
& N_a(\lambda) \asymp \int_1^{\infty}\cdots\int_1^{\infty}\left[\int_1^{\infty}\int_1^{z_d}\cdots\int_1^{z_{p+2}}\right.\nonumber\\
&\quad\quad\quad\quad\quad\quad\quad\quad\quad x_{d-r+1}^{b/m}\left(1+x_{d-r+1}^{b/m}+\cdots x_p^{b/m}+\lambda x_{d-r+1}^{b}\cdots x_p^{b}z_d^{b}\right)^{-2}\nonumber\\
&\quad\quad\quad\quad\quad\quad\quad\quad\quad\quad\quad \left.\cdot z_{p+1}^{-1}\cdots z_{d-1}^{-1}dz_{p+1}\cdots dz_{d-1}dz_d\vphantom{\int_1^{\infty}}\right]dx_{d-r+1}\cdots dx_p\nonumber\\
& \asymp \lambda^{-1/b} \int_1^{\infty}\cdots\int_1^{\infty}\left[\int^\infty_{\lambda^{1/b}} \right.\nonumber\\
&\quad\quad\quad\quad\quad\quad\quad\quad\quad x_{d-r+1}^{b/m}(1+x^{b/m}_{d-r+1}+\cdots x_p^{b/m}+x_{d-r+1}^{b}\cdots x_p^{b} x^{b})^{-2}\nonumber\\
&\quad\quad\quad\quad\quad\quad\quad\quad\quad\quad\quad\quad 
\left.\cdot\left(\log x -b^{-1}\log \lambda\right)^{d-p-1}dx\vphantom{\int_1^{\infty}}\right]dx_{d-r+1}\cdots dx_p  \nonumber\\
& \lesssim\lambda^{-1/b} \int^\infty_{\lambda^{1/b}}\left[ \int_1^{\infty}\cdots\int_1^{\infty}\right. \nonumber\\
&\quad\quad\quad\quad\quad\quad\quad
x_{d-r+1}^{b/m}\left(1+x_{d-r+1}^{b/m}+\cdots+x_p^{b/m}+ u^{b}\right)^{-2} x_{d-r+1}^{-1}\cdots x_p^{-1}\nonumber\\
& \quad \cdot\left.\left(\log u - \log x_{d-r+1}-\cdots -\log x_p  -b^{-1}\log\lambda\right)^{d-p-1}dx_{d-r+1}\cdots dx_p\vphantom{\int_1^{\infty}}\right]du.\nonumber
\end{align}
By  Lemma \ref{lem:aversioofyoungsineq}, then for any $0<\tau<1$,
\begin{equation*}
\begin{aligned}
& \left(1+x_{d-r+1}^{b/m}+x_{d-r+2}^{b/m}+\cdots+x_p^{b/m}+ u^{b}\right)^{-2} \\
& \quad\quad \lesssim\left(1+x_{d-r+2}^{b/m}+\cdots+x_p^{b/m}+ u^{b}\right)^{-1+\tau} \cdot \left(x_{d-r+1}^{b/m}\right)^{-(1+\tau)}.
\end{aligned}
\end{equation*}
Together with the fact $\int_1^\infty t^{-1-\tau}(\log t)^k dt<\infty$ for any $k<\infty$, we have
\begin{align}
& N_a(\lambda) \lesssim \lambda^{-1/b} \int^\infty_{\lambda^{1/b}} \left[ \int_1^\infty\cdots\int_1^\infty\right.\nonumber\\
&\quad
 \left(1+x_{d-r+2}^{b/m}+\cdots+x_p^{b/m}+ u^{b}\right)^{-1+\tau}x_{d-r+2}^{-1}\cdots x_p^{-1} \nonumber\\
&\quad\cdot\left.\left(\log u - \log x_{d-r+2}-\cdots -\log x_p  -b^{-1}\log\lambda\right)^{d-p-1}dx_{d-r+2}\cdots dx_p\vphantom{\int_1^{\infty}}\vphantom{\int_1^\infty}\right]du.\nonumber
\end{align}
Continuing this procedure gives
\begin{align*}
N_a(\lambda)\lesssim\lambda^{-1/b} \int^\infty_{\lambda^{1/b}} \left(1+ u^{b}\right)^{-(1-\tau)^{p-d+r}} \left(\log u - b^{-1}\log\lambda\right)^{d-p-1}du.
\end{align*}
Since for any $\epsilon>0$ and $d-r<p<d$, we know if $\tau<\epsilon/d$,
\begin{equation*}
(1-\tau)^{p-d+r}\geq 1- \tau(p-d+r)\geq 1-\tau(d-1)>1-\epsilon.
\end{equation*} 
 Hence, for any $0\leq a< (2m-1)/(2m)$, there exists $\tau$ such that $(1-\tau)^{p-d+r}>a+1/(2m)=1/b$.
Therefore, 
\begin{align*}
N_a(\lambda)\lesssim\lambda^{-1/b}\left[\log (1/\lambda)\right]^{d-p-1} = \lambda^{-a-1/2m}\left[\log (1/{\lambda})\right]^{d-p-1}.
\end{align*}

Finally, we consider $p=d$. As argued in the previous two cases, the number of combinations of non-1 indices being summed is finite. Now since $p=d$, by the symmetry of indices, the largest terms of the summation (\ref{eqn:defNalambdanuV}) over ${\vec{\bnu}}\in V$ correspond to any combinations of $r$ non-1 indices, for example, the first $r$ indices.
Thus, by the integral approximation,
\begin{align}
& N_a(\lambda) \nonumber\\
& \asymp   \sum_{\nu_{1}=1}^{\infty}\cdots\sum_{\nu_{r-1}=1}^{\infty}\sum_{\nu_r=1}^{\infty}
\frac{\prod_{k=1}^r\nu_k^{2ma}\left(1+\sum_{k=1}^r\nu_k^2\right)}{\left(1+\sum_{k=1}^r\nu_k^2+\lambda\prod_{k=1}^r\nu_k^{2m}\right)^2}\nonumber\\
& \asymp \int_1^{\infty}\int_1^\infty\cdots\int_1^{\infty} \frac{1+x_1^{b/m}+\cdots+x_{r-1}^{b/m}+x_r^{b/m}}{\left(1 +x_1^{b/m}+\cdots + x_r^{b/m}+\lambda x_1^{b}\cdots x_{r-1}^bx_r^{b}\right)^{2}}\nonumber\\
& \quad\quad\quad\quad \quad\quad\quad\quad \quad\quad\quad\quad \quad\quad\quad\quad \quad\quad\quad\quad \quad\quad dx_1\cdots dx_{r-1}dx_r\nonumber
\end{align}
where $b=2m/(2ma+1)$.
Observe that if $x_1\cdots x_{r-1}x_r\lesssim \lambda^{mr/[b(1-mr)]}$, then
\begin{equation*}
\lambda x_1^{b}\cdots x_{r-1}^{b}x_r^b \lesssim x_1^{b/m}+\cdots + x_{r-1}^{b/m}+x_r^{b/m}.
\end{equation*}
By  Lemma \ref{lemma:intx1xrzxk1z2} with $\beta=0$ and $\alpha=b/m\leq 2$, we have
\begin{equation}
\label{eqn:intx1xrlesslambdamrb-1}
\begin{aligned}
& N_a(\lambda)\asymp\int_{x_1\cdots x_{r-1}x_r\lesssim\lambda^{mr/[b(1-mr)]}} \\
& \quad\quad\quad\quad\quad\quad  \left(1+x_1^{b/m}+\cdots+x_{r-1}^{b/m}+x_r^{b/m}\right)^{-1} dx_1\cdots dx_{r-1}dx_r\\
& \asymp
\begin{cases}
\lambda^{\frac{mr}{1-mr}\left(a+\frac{r-2}{2mr}\right)},   \mbox{ if } r\geq 3;\\
\log(1/\lambda),  \mbox{ if } r=2, a=0;\quad \lambda^{\frac{2ma}{1-2m}}, \mbox{ if } r=2, 0<a\leq 1;\\
1,  \mbox{ if } r=1,a<\frac{1}{2m};  \quad \log(1/\lambda),  \mbox{ if } r=1,a=\frac{1}{2m};\\\lambda^{\frac{1-2ma}{2m-2}},  \mbox{ if } r=1, a>\frac{1}{2m}.
\end{cases}
\end{aligned}
\end{equation}
On the other hand, if $\lambda^{mr/[b(1-mr)]}(x_1\cdots x_{r-1}x_r)^{-1}=o(1)$, without less of generality,
 we assume $x_r=\min\{x_1,\cdots, x_r\}$. Let $z=\lambda^{1/b}x_1\cdots x_{r-1}x_r$.  By changing $x_r$ to $z$, we have
 \begin{equation}
 \label{eqn:intx1xrgtrmrlmabdalog}
\begin{aligned}
& N_a(\lambda) \asymp\int_{\lambda^{mr/[b(1-mr)]}(x_1\cdots x_{r-1}x_r)^{-1}=o(1)}\\
& \quad\quad\left(1 +x_1^{b/m}+\cdots + x_r^{b/m}+\lambda x_1^{b}\cdots x_{r-1}^bx_r^{b}\right)^{-1}dx_1\cdots dx_{r-1}dx_r\\
&\lesssim \lambda^{-1/b}\int_{\lambda^{1/[b(1-mr)]}z^{-1}=o(1), \lambda^{-(r-1)/(br)}z^{(r-1)/r}\leq x_1\cdots x_{r-1}\leq \lambda^{-1/b}z}\\
& \quad\quad\left(1 +x_1^{b/m}+\cdots + x_{r-1}^{b/m}+z^{b}\right)^{-1} x_1^{-1}\cdots x_{r-1}^{-1}dx_{1}\cdots dx_{r-1}dz \\
&\lesssim \lambda^{-1/b}\int_{\lambda^{1/[b(1-mr)]}z^{-1}=o(1)}\left[\int_{ \lambda^{-(r-1)/(br)}z^{(r-1)/r}\leq x_1\cdots x_{r-1}\leq \lambda^{-1/b}z} \right.\\
& \quad\quad\left.\left(x_1^{b/m}+\cdots + x_{r-1}^{b/m}\right)^{-\tau}x_1^{-1}\cdots x_{r-1}^{-1}dx_{1}\cdots dx_{r-1} \vphantom{\int_1^\infty}
\right] z^{b(-1+\tau)}dz\\
&\lesssim \lambda^{-1/b}\int_{\lambda^{1/[b(1-mr)]}z^{-1}=o(1)} \lambda^{\tau/(mr)}z^{-\tau b/(mr)}\cdot z^{b(-1+\tau)}dz\\
& = o\left[\lambda^{\frac{mr}{1-mr}\left(a+\frac{r-2}{2mr}\right)}\right],
\end{aligned}
\end{equation}
where the third step follows from the Lemma \ref{lem:x1xrgtrxi} in Section \ref{subsec:auxiliarytechlem} for $\beta=-1$ and $\alpha=\tau b/m$.
Combining (\ref{eqn:intx1xrlesslambdamrb-1}) and (\ref{eqn:intx1xrgtrmrlmabdalog}), we complete the proof for $p=d$ and this lemma.
\end{proof}

\subsubsection{Definition of $M_a(\lambda)$ and its upper bound}
\label{subsubsec:defofmalambda}
\begin{lemma}
\label{lem:defMalambdaandbounds}
Recall that $V$ as a family of multi-index $\vec{\bnu}$ is defined in (\ref{def:Vbnu}). We let
\begin{equation*}
M_a(\lambda) = \sum_{{\vec{\bnu}}\in V} \frac{\left(\prod_{k=1}^d\nu_k^{2m}\right)^a}{\left[1+ \lambda\prod_{k=1}^d\nu_k^{2m}(1+\sum_{j=1}^p\nu_j^2)^{-1}\right]^{2}}.
\end{equation*}
When $m>5/(4-2a)$, we have for any $1\leq p\leq d$ and $0\leq a\leq 1$,
\begin{equation*}
M_a(\lambda) = O\left\{\lambda^{-(2ma+1)/(2m-2)}\right\}.
\end{equation*}
\end{lemma}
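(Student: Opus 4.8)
The plan is to follow the treatment of $N_a(\lambda)$ in Lemma~\ref{lemma:varthetamultivariatetensorrank}; the only structural change is that the ``effective eigenvalue'' driving the denominator is now $\prod_{k=1}^d\nu_k^{2m}\big/\bigl(1+\sum_{j=1}^p\nu_j^2\bigr)$, which for a multi-index with a single non-unit coordinate $\nu_1$ lying among the first $p$ behaves like $\nu_1^{2m-2}$ rather than $\nu_1^{2m}$ — this is the source of the exponent $2m-2$ in the claimed rate. First I would split the sum over $V$ according to which subset $S\subseteq\{1,\dots,d\}$ of coordinates carries the non-unit entries; since $|S|\le r$ there are only $\sum_{s=0}^r\binom ds$ such pieces, a number independent of $\lambda$ and $n$, so it suffices to bound each piece. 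On a fixed piece the summand depends on the $\nu_k$, $k\in S$, only through $\prod_{k\in S}\nu_k^{2m}$ and through $\sum_{j\in S\cap\{1,\dots,p\}}\nu_j^2$, and it is monotone increasing in the latter; hence, after relabelling coordinates, I may assume the non-unit coordinates occupy positions $1,2,\dots$ and that as many of them as possible lie in $\{1,\dots,p\}$. Because $p\ge1$, the pieces with $S=\{1\}\subseteq\{1,\dots,p\}$ are present, and I expect these to be dominant.

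For that dominant piece, a short computation via the integral approximation gives
\[
\sum_{\nu\ge2}\frac{\nu^{2ma}}{\bigl(1+\lambda\nu^{2m}/(1+\nu^2)\bigr)^{2}}
\;\asymp\;\int_1^{\infty}\frac{x^{2ma}}{\bigl(1+\lambda x^{2m-2}\bigr)^{2}}\,dx
\;=\;\lambda^{-\frac{2ma+1}{2m-2}}\int_{\lambda^{1/(2m-2)}}^{\infty}\frac{u^{2ma}}{(1+u^{2m-2})^{2}}\,du ,
\]
where the last equality is the substitution $u=\lambda^{1/(2m-2)}x$. As $\lambda\downarrow0$ the remaining integral increases to $\int_0^{\infty}u^{2ma}(1+u^{2m-2})^{-2}\,du$, which is finite iff its exponent at infinity, $2ma-2(2m-2)$, is $<-1$, i.e.\ iff $m>5/(4-2a)$. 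This yields the bound $O\bigl(\lambda^{-(2ma+1)/(2m-2)}\bigr)$ for the dominant piece and explains the hypothesis on $m$.

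It then remains to show that every other piece is of the same order or smaller. If $S\cap\{1,\dots,p\}=\varnothing$, then $1+\sum_{j=1}^p\nu_j^2$ is bounded and, setting $t=\prod_{k\in S}\nu_k$, the piece is $\asymp\int t^{2ma}(\log t)^{|S|-1}(1+\lambda t^{2m})^{-2}\,dt\asymp\lambda^{-(2ma+1)/(2m)}[\log(1/\lambda)]^{|S|-1}=o\bigl(\lambda^{-(2ma+1)/(2m-2)}\bigr)$, since $\tfrac{2ma+1}{2m}<\tfrac{2ma+1}{2m-2}$. For the remaining pieces, with $|S|\ge2$ and at least one non-unit coordinate in $\{1,\dots,p\}$, I would order the non-unit coordinates so that the largest of those in $\{1,\dots,p\}$ comes first (so $1+\sum_{j=1}^p\nu_j^2\asymp1+\nu_1^2$), pass to an iterated integral, and peel the coordinates off one at a time via the changes of variables $z_\ell=x_1\cdots x_\ell$, splitting each inner integral at the value where $\lambda\prod_k\nu_k^{2m}/(1+\nu_1^2)$ crosses $1$ and invoking Lemma~\ref{lemma:intx1xrzxk1z2} together with the auxiliary estimates of Section~\ref{subsec:auxiliarytechlem}, exactly as in the proof of Lemma~\ref{lemma:varthetamultivariatetensorrank}. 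The point to check is that each additional non-unit coordinate contributes an extra factor $\nu^{2m}$ to the product in the denominator, so the sums over those coordinates converge — their exponents fall below $-1$ once $m>5/(4-2a)$ — and produce geometric, not logarithmic, gains; consequently no $\log(1/\lambda)$ accumulates and each such piece is again $O\bigl(\lambda^{-(2ma+1)/(2m-2)}\bigr)$. Summing the finitely many pieces gives the lemma. The main obstacle is precisely this last bookkeeping for the pieces with two or more non-unit coordinates: establishing, through the region splitting and the auxiliary integral lemmas, that the iterated integration returns the clean power $\lambda^{-(2ma+1)/(2m-2)}$ with no surviving logarithmic factors.
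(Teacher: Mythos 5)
Your proposal is correct in substance, and it isolates the same two mechanisms that drive the paper's proof, but it organizes them differently. The paper does not split $V$ by the support of the non-unit entries: it first proves the equivalence (\ref{eqn:sumnumalambdlem}), which by a symmetry argument replaces the full correction $1+\sum_{j\leq p}\nu_j^2$ with a single-coordinate correction $1+\nu_s^2$ (this plays the role of your ``keep only the largest derivative coordinate'' step), and then makes one global change of variables $z=\lambda^{m/[b(m-1)]}x_1^{m/(m-1)}\cdots x_{r-1}^{m/(m-1)}x_r$ in the resulting $r$-fold integral; after that substitution the non-distinguished coordinates appear with exponent $-m/(m-1)<-1$ and integrate to constants, while the single remaining integral converges exactly when $m>5/(4-2a)$ and yields $\lambda^{-(2ma+1)/(2m-2)}$. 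That one substitution is precisely the log-free bookkeeping you flag as your remaining obstacle, and your iterative scheme does close it, provided you avoid one trap: bounding $1+\sum_j\nu_j^2$ by $\prod_j(1+\nu_j^2)$ would hand each extra derivative coordinate the exponent exactly $-1$ and produce a spurious $\log(1/\lambda)$, so the reduction must retain only the largest derivative coordinate, as you propose. Concretely, on a mixed piece sum first over that coordinate: uniformly in $\mu>0$ and under $m>5/(4-2a)$ one has $\sum_{\nu\geq 1}\nu^{2ma}\bigl(1+\mu\nu^{2m-2}\bigr)^{-2}\lesssim \mu^{-(2ma+1)/(2m-2)}$; taking $\mu=\lambda\prod_{k\neq 1}\nu_k^{2m}$, every remaining coordinate then carries the exponent $2ma-2m(2ma+1)/(2m-2)=-1-2(2ma+1)/(2m-2)<-1$ for all $a\in[0,1]$, so those sums are $O(1)$ and no logarithm accumulates. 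With that line added, your dominant-piece computation (which correctly locates both the rate and the role of $m>5/(4-2a)$) and your estimate for the derivative-free pieces (smaller power, harmless logs) complete the argument; the paper's version buys compactness by treating all $r$ coordinates in a single substitution rather than piecewise peeling.
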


\begin{proof}
We first show for any $1\leq s\leq r$, 
\begin{equation}
\label{eqn:sumnumalambdlem}
\begin{aligned}
& \sum_{\nu_1=1}^\infty\cdots\sum_{\nu_r=1}^\infty \frac{\prod_{k=1}^r\nu_k^{2ma}}{\left[1+ \lambda\prod_{k=1}^r\nu_k^{2m}(1+\sum_{j=1}^s\nu_j^2)^{-1}\right]^{2}}\\
& \quad\quad\quad\quad\quad\quad\quad \asymp \sum_{\nu_1=1}^\infty\cdots\sum_{\nu_r=1}^\infty \frac{\prod_{k=1}^r\nu_k^{2ma}}{\left[1+ \lambda\prod_{k=1}^r\nu_k^{2m}(1+\nu_s^2)^{-1}\right]^{2}}.
\end{aligned}
\end{equation}
Note that in (\ref{eqn:sumnumalambdlem}), the LHS is greater than the RHS up to some constant. On the contrary, observe that
\begin{align}
&  \sum_{\nu_1=1}^\infty\cdots\sum_{\nu_r=1}^\infty \frac{\prod_{k=1}^r\nu_k^{2ma}}{\left[1+ \lambda\prod_{k=1}^r\nu_k^{2m}(1+\sum_{j=1}^s\nu_j^2)^{-1}\right]^{2}}\nonumber\\
 & \quad\quad\quad\quad\asymp \sum_{\nu_1=1}^\infty\cdots\sum_{\nu_r=1}^\infty \sum_{i=1}^s \frac{(1+\nu_i^2)^2\prod_{k=1}^r\nu_k^{2ma}}{\left(1+\sum_{j=1}^s\nu_j^2+ \lambda\prod_{k=1}^r\nu_k^{2m}\right)^{2}}\nonumber\\
 & \quad\quad\quad\quad\asymp  \sum_{\nu_1=1}^\infty\cdots\sum_{\nu_r=1}^\infty \frac{(1+\nu_s^2)^2\prod_{k=1}^r\nu_k^{2ma}}{\left(1+\sum_{j=1}^s\nu_j^2+ \lambda\prod_{k=1}^r\nu_k^{2m}\right)^{2}}\nonumber\\
 & \quad\quad\quad\quad\leq \sum_{\nu_1=1}^\infty\cdots\sum_{\nu_r=1}^\infty \frac{\prod_{k=1}^r\nu_k^{2ma}}{\left[1+ \lambda\prod_{k=1}^r\nu_k^{2m}(1+\nu_s^2)^{-1}\right]^{2}}.\nonumber
\end{align}
This proves (\ref{eqn:sumnumalambdlem}).
Moreover,  note that
\begin{equation}
\label{eqn:sum1nuj2m1j1equivprod=}
\begin{aligned}
& \sum_{\nu_1=1}^\infty\cdots\sum_{\nu_r=1}^\infty \frac{\prod_{k=1}^r\nu_k^{2ma}}{\left[1+ \lambda\prod_{k=1}^r\nu_k^{2m}(1+\nu_s^2)^{-1}\right]^{2}} \\
& \quad\quad\quad\quad\quad\quad\quad\quad \geq \sum_{\nu_1=1}^\infty\cdots\sum_{\nu_r=1}^\infty \frac{\prod_{k=1}^r\nu_k^{2ma}}{\left(1+ \lambda\prod_{k=1}^r\nu_k^{2m}\right)^{2}}.
\end{aligned}
\end{equation}
Now return to the proof of the lemma.
Since ${\vec{\bnu}}\in V$ and $1\leq p\leq d$, by  (\ref{eqn:sumnumalambdlem}),  (\ref{eqn:sum1nuj2m1j1equivprod=}) and the integral approximation, we have
\begin{align}
& M_a(\lambda)  \asymp \sum_{\nu_1=1}^\infty\cdots\sum_{\nu_r=1}^\infty  \frac{\prod_{k=1}^r\nu_k^{2ma}}{\left[1+ \lambda\prod_{k=1}^r\nu_k^{2m}(1+\nu_r^2)^{-1}\right]^{2}}\nonumber\\
& \asymp \int_1^\infty\int_1^\infty\cdots\int_1^\infty\left[1+\lambda x_1^{b}\cdots x_{r-1}^{b}x_r^{b(m-1)/m}\right]^{-2}dx_1\cdots dx_{r-1}dx_r, \nonumber
\end{align}
where  $b=2m/(2ma+1)$. Let $z= \lambda^{m/[b(m-1)]}x_1^{m/(m-1)}\cdots x_{r-1}^{m/(m-1)}x_r$ and change $x_r$ to $z$. Then,
\begin{align*}
& M_a(\lambda)\\
&\asymp \lambda^{-m/[b(m-1)]}\int_{\lambda^{-m/[b(m-1)]}}^\infty\int_1^\infty\cdots\int_1^\infty \\
& \quad\quad\quad\quad\quad\left[1+z^{b(m-1)/m}\right]^{-2}x_1^{-m/(m-1)}\cdots x_{d-1}^{-m/(m-1)}dx_1\cdots dx_{d-1}dz\nonumber\\
& \asymp \lambda^{-m/[b(m-1)]}\int_{\lambda^{-m/[b(m-1)]}}^\infty \left[1+z^{b(m-1)/m}\right]^{-2}dz,\\
& \leq \lambda^{-m/[b(m-1)]}\int_0^\infty\left[1+z^{b(m-1)/m}\right]^{-2}dz \nonumber\\
& = O\left\{\lambda^{-(2ma+1)/(2m-2)}\right\},\nonumber
\end{align*}
where the second step is because $m/(m-1)>1$ and the last step holds for any $m>5/(4-2a)$.
\end{proof}

\subsubsection{Boundedness of functions in the RKHS $\HH$}
\begin{lemma} 
\label{lem:bounddejft}
For any $g\in\HH$, there exists a constant $c_K$ which is independent of $g$ such that
\begin{equation*}
\sup_{\bt\in\XX_1^d}|g(\bt)|\leq c^d_K\|g\|_{\HH},
\end{equation*}
and
\begin{equation*}
\sup_{\bt\in\XX_1^d}|\partial g/\partial t_j(\bt)|\leq c^d_K\|g\|_{\HH}, \quad \forall 1\leq j\leq d.
\end{equation*} 
\end{lemma}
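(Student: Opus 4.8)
The plan is to prove this by a direct reproducing‑kernel computation, carried out one coordinate at a time via the tensor product structure $K_d(\bt,\bt')=\prod_{k=1}^d K(t_k,t_k')$ and the two representers already identified in the text: $K_d(\bt,\cdot)$ for point evaluation and $\eta_{\bt,j}:=\partial K_d(\bt,\cdot)/\partial t_j\in\HH$ for the partial‑derivative functional $g\mapsto \partial g/\partial t_j(\bt)$, cf.\ (\ref{eqn:partialgttpartialtjhh}).

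First I would treat the sup bound on $g$ itself. For $g\in\HH$ and $\bt\in\XX_1^d$, the reproducing property gives $g(\bt)=\langle g,K_d(\bt,\cdot)\rangle_\HH$, so Cauchy--Schwarz yields $|g(\bt)|\le\|g\|_\HH\sqrt{K_d(\bt,\bt)}=\|g\|_\HH\big(\prod_{k=1}^d K(t_k,t_k)\big)^{1/2}$. Since $K$ is a continuous Mercer kernel on the compact square $\XX_1\times\XX_1$, the quantity $M_0:=\sup_{t\in\XX_1}K(t,t)$ is finite, and as $K$ is positive semidefinite each factor lies in $[0,M_0]$; hence $|g(\bt)|\le M_0^{d/2}\|g\|_\HH$. (If $\HH$ is a proper subspace of $\otimes_{j=1}^d\HH_1$, the representer of evaluation on $\HH$ is the orthogonal projection of $K_d(\bt,\cdot)$, whose norm is no larger, so the same bound persists.) Next, for the derivative functional I would apply (\ref{eqn:partialgttpartialtjhh}) twice: $\partial g/\partial t_j(\bt)=\langle g,\eta_{\bt,j}\rangle_\HH$, and applying the derivative reproducing identity once more with $g=\eta_{\bt',j}$ gives $\|\eta_{\bt,j}\|_\HH^2=\langle\eta_{\bt,j},\eta_{\bt,j}\rangle_\HH=\frac{\partial^2}{\partial t_j\,\partial t_j'}K_d(\bt,\bt')\big|_{\bt'=\bt}$. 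By the product rule this equals $\frac{\partial^2}{\partial t_j\,\partial t_j'}K(t_j,t_j')\big|_{t_j'=t_j}\prod_{k\ne j}K(t_k,t_k)$, which by condition (\ref{eqn:partial2tt'kcx1x1}) (continuity, hence boundedness, of $\partial^2 K/\partial t\partial t'$ on the compact square) and the bound $M_0$ above is at most $M_1 M_0^{d-1}$, where $M_1:=\sup_{t,t'\in\XX_1}\big|\frac{\partial^2}{\partial t\,\partial t'}K(t,t')\big|<\infty$. Cauchy--Schwarz then gives $|\partial g/\partial t_j(\bt)|\le\|g\|_\HH\,(M_1 M_0^{d-1})^{1/2}$.

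Finally I would set $c_K:=\big(\max\{1,M_0,M_1\}\big)^{1/2}$, a constant independent of $g$ (and of $d$); then $M_0^{d/2}\le c_K^d$ and $(M_1 M_0^{d-1})^{1/2}\le(c_K^2\cdot c_K^{2(d-1)})^{1/2}=c_K^d$, which is exactly the two claimed inequalities. I do not expect a genuine obstacle here; the only points requiring care are the justification of differentiating under the inner product (already granted in the text through the continuity of $\langle\cdot,\cdot\rangle_\HH$ invoked for (\ref{eqn:partialgttpartialtjhh})) and the passage from the full tensor space to the subspace $\HH$, which is immediate since orthogonal projection does not increase norms.
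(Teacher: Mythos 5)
Your proposal is correct and follows essentially the same route as the paper's proof: bound the representers $K_d(\bt,\cdot)$ and $\partial K_d(\bt,\cdot)/\partial t_j$ in $\HH$-norm using the tensor product structure together with the continuity of $K$ and of $\partial^2 K/\partial t\,\partial t'$ on the compact square, then apply Cauchy--Schwarz. Your extra care in choosing $c_K=\bigl(\max\{1,M_0,M_1\}\bigr)^{1/2}$ and the remark that restriction to the subspace $\HH$ only projects the representer are minor refinements of the same argument.
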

\begin{proof}
Since we assume that $K$ is continuous in the compact domain $\XX_1$ and satisfies (\ref{eqn:partial2tt'kcx1x1}), there exists some constant $c_K$ such that 
\begin{equation*}
\sup_{t\in\XX_1} |K(t,t)|\leq c_K \quad \mbox{ and }\quad\sup_{t\in\XX_1} \left\vert\frac{\partial^2K(t,t)}{\partial t\partial t'}\right\vert\leq c_K.
\end{equation*}
This implies for any $\bt\in\XX_1^d$,
\begin{equation*}
\left\|\frac{\partial K_d(\bt,\cdot)}{\partial t_j}\right\|_\HH^2 = \left\vert\frac{\partial^2 K(t_j,t_j)}{\partial t_j\partial t_j'}\right\vert \prod_{l\neq j}|K(t_l,t_l)| \leq c_K^d.
\end{equation*}
Thus, for any $g\in\HH$, by the Cauchy-Schwarz inequality,
\begin{equation*}
\sup_{\bt\in\XX_1^d} \left\vert\frac{\partial g(\bt)}{\partial t_j}\right\vert  \leq \sup_{\bt\in\XX_1^d}\left\|\frac{\partial K_d(\bt,\cdot)}{\partial t_j}\right\|_\HH\|g\|_\HH \leq c_K^d\|g\|_\HH,\quad\forall 1\leq j\leq d.
\end{equation*}
Similarly, we can show that $\sup_{\bt}|g(\bt)|\leq c_K^d\|g\|_\HH$.

\end{proof}
\subsection{Auxiliary technical lemmas}
\label{subsec:auxiliarytechlem}

\begin{lemma}[A variant of Young's inequality] 
\label{lem:aversioofyoungsineq}
For any $a,b\geq 0$ and $0<\tau<1$, we have
\begin{equation}
\label{eqn:ineqab-2leqtau}
(a+b)^{-2}\leq \frac{(1-\tau)^{1-\tau}(1+\tau)^{1+\tau}}{4}a^{-(1+\tau)} b^{-(1-\tau)}.
\end{equation}
When $\tau$ is small, the coefficient $(1-\tau)^{1-\tau}(1+\tau)^{1+\tau}/4$ is close to $1/4$.
\end{lemma}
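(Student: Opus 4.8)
The plan is to derive the bound directly from the weighted arithmetic--geometric mean inequality, with the exponents $1+\tau$ and $1-\tau$ dictating the choice of weights. First I would dispose of the degenerate cases: if $a=0$ or $b=0$, then, since $1+\tau>0$ and $1-\tau>0$, the right-hand side of (\ref{eqn:ineqab-2leqtau}) is $+\infty$ and the inequality holds trivially. So from now on assume $a,b>0$.

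The key step is to write $a+b$ as a convex combination matched to the exponents:
\[
a+b \;=\; \frac{1+\tau}{2}\cdot\frac{2a}{1+\tau} \;+\; \frac{1-\tau}{2}\cdot\frac{2b}{1-\tau},
\]
where the weights $\tfrac{1+\tau}{2}$ and $\tfrac{1-\tau}{2}$ are nonnegative and sum to $1$. The weighted AM--GM inequality then gives
\[
a+b \;\geq\; \left(\frac{2a}{1+\tau}\right)^{(1+\tau)/2}\left(\frac{2b}{1-\tau}\right)^{(1-\tau)/2}.
\]
Squaring both sides (both are positive) and using $2^{1+\tau}2^{1-\tau}=4$ yields
\[
(a+b)^2 \;\geq\; \frac{4}{(1+\tau)^{1+\tau}(1-\tau)^{1-\tau}}\,a^{1+\tau}b^{1-\tau},
\]
and taking reciprocals produces exactly (\ref{eqn:ineqab-2leqtau}).

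For the closing remark, I would observe that the map $\tau\mapsto (1-\tau)\log(1-\tau)+(1+\tau)\log(1+\tau)$ is continuous on $[0,1)$ and vanishes at $\tau=0$, so $(1-\tau)^{1-\tau}(1+\tau)^{1+\tau}\to 1$ as $\tau\to 0^{+}$, whence the coefficient $(1-\tau)^{1-\tau}(1+\tau)^{1+\tau}/4$ tends to $1/4$. I do not expect any real obstacle in this lemma; the only point needing a word of care is the vacuous case $ab=0$, and otherwise it is a one-line application of weighted AM--GM.
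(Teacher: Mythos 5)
Your proof is correct and is essentially the same argument as the paper's: both reduce the claim to the inequality $a+b\geq 2(1-\tau)^{-(1-\tau)/2}(1+\tau)^{-(1+\tau)/2}a^{(1+\tau)/2}b^{(1-\tau)/2}$, which the paper obtains from the standard two-term Young inequality after a substitution ($p=2/(1+\tau)$, $a'=a^{1/p}$, $b'=[b/(p-1)]^{(p-1)/p}$), while you obtain it by applying the weighted AM--GM with weights $\tfrac{1+\tau}{2}$, $\tfrac{1-\tau}{2}$ — the same inequality in a different guise, and your direct convex-combination formulation avoids the substitution bookkeeping.
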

\begin{proof}
To prove (\ref{eqn:ineqab-2leqtau}), it is sufficient to show
\begin{equation*}
a+b\geq 2(1-\tau)^{-(1-\tau)/2}(1+\tau)^{-(1+\tau)/2}a^{(1+\tau)/2}b^{(1-\tau)/2}.
\end{equation*}
Letting $p=2/(1+\tau)$, $a'=a^{1/p}$, $b'=[b/(p-1)]^{(p-1)/p}$, the above formula is equivalent to 
\begin{equation*}
\frac{a'}{p}+\frac{\left(b'\right)^{p/(p-1)}}{p/(p-1)}\geq a'b',
\end{equation*}
which holds by Young's inequality. This completes the proof.
\end{proof}

\begin{lemma}[Bounding the norm of product of functions]
\label{lemma:inequanund1rhonua}
For any $f,g\in\otimes^d\HH_1$, $ a>1/2m$, and $1\leq p\leq d$, we have that
\begin{align}
&\sum_{{\vec{\bnu}}\in\N^d}\left(1+\frac{\rho_{\vec{\bnu}}}{\|\phi_{\vec{\bnu}}\|_{L_2}^2}\right)^a\|\phi_{\vec{\bnu}}\|_{L_2}^2\left\langle \frac{\partial f(\bt)}{\partial t_j}\frac{\partial g(\bt)}{\partial t_j}, \phi_{\vec{\bnu}}(\bt)\right\rangle_0^2\nonumber\\
& \quad\quad\lesssim \|f\|^2_{L_2(a+1/m)} \left[\sum_{{\vec{\bnu}}\in\N^d}\left(1+\frac{\rho_{\vec{\bnu}}}{\|\phi_{\vec{\bnu}}\|_{L_2}^2}\right)^a\|\phi_{\vec{\bnu}}\|_{L_2}^2\left\langle\frac{\partial g(\bt)}{\partial t_j}, \phi_{\vec{\bnu}}(\bt)\right\rangle_0^2\right].\nonumber
\end{align}
\end{lemma}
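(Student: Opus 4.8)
\emph{The plan is to} transfer everything to the trigonometric Fourier side, where each of the three sums becomes an anisotropic (``dominating mixed smoothness'') Sobolev norm, and then reduce the inequality to the fact that such a Sobolev space is a multiplication algebra precisely when its smoothness exceeds $1/2$ --- which here is exactly the hypothesis $a>1/2m$.

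\textbf{Step 1 (Fourier reduction).} We work in the setup of the proof of Theorem~\ref{thm:mainupperrateestf0}: $\phi_{\vec{\bnu}}=\psi_{\nu_1}\cdots\psi_{\nu_d}/\|\psi_{\nu_1}\cdots\psi_{\nu_d}\|_0$ with $\psi_\nu$ the trigonometric system (\ref{eqn:trigbasisexp}), and $\|\phi_{\vec{\bnu}}\|_{L_2}^2\asymp(1+\sum_{j\le p}\nu_j^2)^{-1}$, $\rho_{\vec{\bnu}}/\|\phi_{\vec{\bnu}}\|_{L_2}^2\asymp\prod_{k=1}^d\nu_k^{2m}$. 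Since $\psi_{\nu_1}\cdots\psi_{\nu_d}$ is a joint eigenfunction of the quadratic form defining $\langle\cdot,\cdot\rangle_0$ (integrate $\int(\partial h/\partial t_j)(\partial\psi/\partial t_j)$ by parts), one checks that for every periodic product function $h$ and every $\vec{\bnu}\in\N^d$,
\begin{equation*}
\Bigl(1+\tfrac{\rho_{\vec{\bnu}}}{\|\phi_{\vec{\bnu}}\|_{L_2}^2}\Bigr)^{a}\|\phi_{\vec{\bnu}}\|_{L_2}^2\,\langle h,\phi_{\vec{\bnu}}\rangle_0^2\ \asymp\ \Bigl(\prod_{k=1}^d(1+\nu_k)^{2ma}\Bigr)\,\widehat{h}_{\vec{\bnu}}^2,\qquad \widehat{h}_{\vec{\bnu}}:=\langle h,\psi_{\nu_1}\cdots\psi_{\nu_d}\rangle_{L_2},
\end{equation*}
the $(1+\sum_{j\le p}\nu_j^{2})$ arising in $\langle h,\phi_{\vec{\bnu}}\rangle_0^{2}$ cancelling its reciprocal in $\|\phi_{\vec{\bnu}}\|_{L_2}^{2}$; similarly, from (\ref{eqn:fa2v1rho1}), $\|f\|_{L_2(a+1/m)}^2\asymp\sum_{\vec{\bmu}}\prod_{k}(1+\mu_k)^{2ma+2}\,\widehat{f}_{\vec{\bmu}}^2$. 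Writing $\|u\|_{s}^2:=\sum_{\vec{\bnu}\in\N^d}\prod_k(1+\nu_k)^{2s}\widehat{u}_{\vec{\bnu}}^2$ for the mixed Sobolev norm of order $s$ on $[0,1]^d$ with periodic boundary, the asserted inequality is then equivalent to $\bigl\|(\partial f/\partial t_j)(\partial g/\partial t_j)\bigr\|_{ma}^2\lesssim\|f\|_{ma+1}^2\,\|\partial g/\partial t_j\|_{ma}^2$.

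\textbf{Step 2 (reduction to a Sobolev algebra bound, and its proof).} Because $\partial/\partial t_j$ maps $\psi_{\nu_1}\cdots\psi_{\nu_d}$ to a multiple of modulus $O(\nu_j)$ of the trigonometric product with the $j$th factor replaced by its partner, $\|\partial f/\partial t_j\|_{ma}^2\asymp\sum_{\vec{\bmu}}\prod_k(1+\mu_k)^{2ma}\mu_j^2\,\widehat{f}_{\vec{\bmu}}^2\le\|f\|_{ma+1}^2$. Hence it suffices to prove the multiplication estimate $\|uv\|_{s}\lesssim\|u\|_{s}\|v\|_{s}$ with $s=ma$, i.e.\ that the mixed Sobolev space of order $ma$ is a Banach algebra; this is classical for $ma>1/2$, and $ma>1/2$ is exactly $a>1/2m$. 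To prove it, write $\widehat{uv}(\vec{\bnu})=\sum_{\vec{\bmu}}\widehat{u}_{\vec{\bmu}}\widehat{v}_{\vec{\bnu}-\vec{\bmu}}$; for fixed $\vec{\bnu}$ split the $\vec{\bmu}$-sum into the $2^d$ disjoint regions $E_T$, $T\subseteq\{1,\dots,d\}$, on which $1+|\nu_k-\mu_k|\le 1+|\mu_k|$ for $k\in T$ and the reverse for $k\notin T$; on $E_T$ one has $1+|\nu_k|\asymp 1+|\mu_k|$ for $k\in T$ and $\asymp 1+|\nu_k-\mu_k|$ otherwise, so the $\vec{\bnu}$-weight factors as $\prod_{k\in T}(1+|\mu_k|)^{ma}\prod_{k\notin T}(1+|\nu_k-\mu_k|)^{ma}$. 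Attaching the $T$-part of the weight to $\widehat{u}$ and the $T^c$-part to $\widehat{v}$ and applying Cauchy--Schwarz followed by a Schur-type estimate in which the ``recessive'' index in each coordinate is summed out, each of the $2^d$ pieces is controlled by $\|u\|_{ma}^2\|v\|_{ma}^2$ times $\sum_{\vec{\bmu}}\prod_k(1+\mu_k)^{-2ma}$, which converges precisely because $2ma>1$ --- the very summation used in (\ref{eqn:enuganmmacnf}). (Equivalently one may run a Leibniz-rule argument using the anisotropic embedding $H^{ma}_{\mathrm{mix}}\hookrightarrow C([0,1]^d)$, valid for $ma>1/2$.)

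\textbf{Expected main obstacle.} The only genuine difficulty is Step~2 in dimension $d\ge 2$: the crude bound $\prod_k(1+|\nu_k|)^{ma}\le\prod_k(1+|\mu_k|)^{ma}\prod_k(1+|\nu_k-\mu_k|)^{ma}$ followed by Young's inequality fails, because once the weight is spread over all coordinates neither resulting sequence lies in $\ell^1$; one must use the region decomposition $E_T$ so that, coordinate by coordinate, the weight sits on the dominant factor while the recessive factor is summed against the \emph{unweighted} kernel, and it is exactly in these $2^d$ Schur-type summations that the hypothesis $a>1/2m$ is consumed. The remaining burden is pure anisotropic-weight bookkeeping --- tracking the $(1+\sum_{j\le p}\nu_j^{2})$ factors through Step~1 and verifying that nothing stronger than $\|f\|_{L_2(a+1/m)}$ and $\|\partial g/\partial t_j\|_{ma}$ is needed on the right.
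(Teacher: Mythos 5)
Your proposal is correct and follows the same skeleton as the paper's proof: both transfer the three weighted sums to trigonometric Fourier coefficients (using that, in the periodic, uniform-design setting, $\psi_{\vec{\bnu}}$ diagonalizes $\langle\cdot,\cdot\rangle_0$ so that the $1+\sum_{j\le p}\nu_j^2$ factors cancel exactly as you say), both reduce the claim to a multiplication inequality for the weighted tensor-product norm at level $a$, and both absorb the $\nu_j^2$ produced by $\partial f/\partial t_j$ via $\nu_j^2\le\bigl(\prod_{k=1}^d\nu_k^{2m}\bigr)^{1/m}$, which is precisely how $\|f\|_{L_2(a+1/m)}$ enters while the $\partial g/\partial t_j$ factor keeps its level-$a$ norm. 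The only real difference is that the paper imports the multiplication inequality from Theorem A.2.2 and Corollary A.2.1 of Lin's thesis \cite{lin1998tensor}, whereas you prove it from scratch: your $2^d$-region decomposition attaching the weight to the dominant coordinate, followed by Cauchy--Schwarz against the kernel $\prod_k(1+\mu_k)^{-2ma}$, is a correct and standard proof that the mixed-smoothness space of order $ma>1/2$ is closed under multiplication, and it consumes the hypothesis $a>1/2m$ at exactly the point where the paper's citation does; your remark that the naive sub-multiplicativity-plus-Young route fails in $d\ge 2$ is also accurate. What your route buys is self-containedness (no appeal to the thesis) at the cost of the paraproduct bookkeeping; otherwise the two arguments are interchangeable.
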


\begin{proof}
Recall that $\{\psi_\nu(t)\}_{\nu\geq 1}$ is the  trigonometrical basis on $L_2(\XX_1)$ and $\phi_{\vec{\bnu}}(\cdot)$ is defined in 
(\ref{eqn:phivecbnudef}). Write $\psi_{\vec{\bnu}}(\bt)=\psi_{\nu_1}(t_1)\psi_{\nu_2}(t_2)\cdots\psi_{\nu_d}(t_d)$. Note that
\begin{equation*}
\sum_{{\vec{\bnu}}\in\N^d}\left(1+\frac{\rho_{\vec{\bnu}}}{\|\phi_{\vec{\bnu}}\|_{L_2}^2}\right)^a\|\phi_{\vec{\bnu}}\|_{L_2}^2\langle f, \phi_{\vec{\bnu}}\rangle_0^2 = \sum_{{\vec{\bnu}}\in\N^d}\left(1+\frac{\rho_{\vec{\bnu}}}{\|\phi_{\vec{\bnu}}\|_{L_2}^2}\right)^a\left(\int_{\XX_1^d}f\psi_{\vec{\bnu}}\right)^2.
\end{equation*}
By Theorem A.2.2 and Corollary A.2.1 in Lin \cite{lin1998tensor}, if $a>1/2m$, then for any $f,g\in\otimes^d\HH_1$,  
\begin{align}
&  \sum_{{\vec{\bnu}}\in\N^d}(1+\rho_{\vec{\bnu}})^a\left(\int_{\XX_1^d}fg \psi_{\vec{\bnu}}\right)^2 \nonumber\\
& \lesssim  \left[\sum_{{\vec{\bnu}}\in\N^d}\left(1+\frac{\rho_{\vec{\bnu}}}{\|\phi_{\vec{\bnu}}\|_{L_2}^2}\right)^a\left(\int_{\XX_1^d}f \psi_{\vec{\bnu}}\right)^2\right]\left[\sum_{{\vec{\bnu}}\in\N^d}\left(1+\frac{\rho_{\vec{\bnu}}}{\|\phi_{\vec{\bnu}}\|_{L_2}^2}\right)^a\left(\int_{\XX_1^d}g \psi_{\vec{\bnu}}\right)^2\right].\nonumber
\end{align}
Thus,
\begin{align}
&\sum_{{\vec{\bnu}}\in\N^d}\left(1+\frac{\rho_{\vec{\bnu}}}{\|\phi_{\vec{\bnu}}\|_{L_2}^2}\right)^a\|\phi_{\vec{\bnu}}\|_{L_2}^2\left\langle \frac{\partial f(\bt)}{\partial t_j}\frac{\partial g(\bt)}{\partial t_j}, \phi_{\vec{\bnu}}(\bt)\right\rangle_0^2\nonumber\\
&  = \sum_{{\vec{\bnu}}\in\N^d}\left(1+\frac{\rho_{\vec{\bnu}}}{\|\phi_{\vec{\bnu}}\|_{L_2}^2}\right)^a\left(\int_{\XX_1^d}\frac{\partial f(\bt)}{\partial t_j}\frac{\partial g(\bt)}{\partial t_j} \psi_{\vec{\bnu}}(\bt)\right)^2 \nonumber\\
& \lesssim  \left[\sum_{{\vec{\bnu}}\in\N^d}\nu_j^2\left(1+\prod_{k=1}^d\nu_k^{2m}\right)^{a}\left(\int_{\XX_1^d}f(\bt)\psi_{\vec{\bnu}}(\bt)\right)^{2}\right]\nonumber\\
&\quad\quad\quad\quad\quad\quad\quad\quad\quad\quad \times\left[\sum_{{\vec{\bnu}}\in\N^d}\left(1+\frac{\rho_{\vec{\bnu}}}{\|\phi_{\vec{\bnu}}\|_{L_2}^2}\right)^a\left(\int_{\XX_1^d}\frac{\partial g(\bt)}{\partial t_j} \psi_{\vec{\bnu}}(\bt)\right)^2\right]\nonumber\\
& \leq \left\{\sum_{{\vec{\bnu}}\in\N^d}\left[1+\prod_{k=1}^d\nu_k^{2m}\right]^{a+\frac{1}{m}}\left(\int_{\XX_1^d}f(\bt)\psi_{\vec{\bnu}}(\bt)\right)^{2}\right\}\nonumber\\
&\quad\quad\quad\quad\quad\quad\quad\quad\quad\quad \times \left[\sum_{{\vec{\bnu}}\in\N^d}\left(1+\frac{\rho_{\vec{\bnu}}}{\|\phi_{\vec{\bnu}}\|_{L_2}^2}\right)^a\left(\int_{\XX_1^d}\frac{\partial g(\bt)}{\partial t_j} \psi_{\vec{\bnu}}(\bt)\right)^2\right]\nonumber\\
& \asymp \|f\|^2_{L_2(a+1/m)} \left[\sum_{{\vec{\bnu}}\in\N^d}\left(1+\frac{\rho_{\vec{\bnu}}}{\|\phi_{\vec{\bnu}}\|_{L_2}^2}\right)^a\left(\int_{\XX_1^d}\frac{\partial g(\bt)}{\partial t_j} \psi_{\vec{\bnu}}(\bt)\right)^2\right].\nonumber
\end{align}
This completes the proof.
\end{proof}

\begin{lemma}[Inverse transformation]
\label{lem:lineartransformuniform}
Assume that design points $\bt^{e_j}$s have known density $\pi^{e_j}$s which are supported on $\XX_1^d$. Then, there exists a linear transformation to data $(\bt^{e_j},Y^{e_j})$ such that transformed design points $\bx^{e_j}$s are independently uniformly distributed on $\XX_1^d$ and the transformed responses $Z^{e_j}$s are the $j$th first-order partial derivative data of some function.
\end{lemma}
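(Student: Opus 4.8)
The plan is to build the transformation coordinatewise out of the probability integral transform and then to pass the derivative observations through the chain rule, the crucial point being that a \emph{coordinatewise} change of variables has a diagonal Jacobian and hence carries $j$th partial‑derivative data to $j$th partial‑derivative data. Fix a data type $j\in\{0,1,\dots,p\}$. In the tensor‑product setting of (\ref{eqn:anovadechi}) it is natural, and I shall assume, that $\Pi^{e_j}$ is a product measure on $\XX_1^d$ whose coordinate densities are bounded away from $0$ and $\infty$. Let $G_{j,k}$ be the CDF of the $k$th coordinate under $\Pi^{e_j}$ and set $\phi_j(\bt)=(G_{j,1}(t_1),\dots,G_{j,d}(t_d))$; then $\phi_j:\XX_1^d\to\XX_1^d$ is an increasing coordinatewise bijection pushing $\Pi^{e_j}$ to the uniform law, and each $G_{j,k}$ and $G_{j,k}^{-1}$ has derivative bounded away from $0$ and $\infty$. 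Putting $\bx^{e_j}=\phi_j(\bt^{e_j})$, the $\bx^{e_j}$ are then uniform on $\XX_1^d$ and still independent across $i$ and across $j$.

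For the function stream $j=0$, writing $g_0:=f_0\circ\phi_0^{-1}$ gives immediately $Y^{e_0}=g_0(\bx^{e_0})+\epsilon^{e_0}$, so $(\bx^{e_0},Y^{e_0})$ are function observations of $g_0$. For a derivative stream $j\ge1$, diagonality of $D\phi_j$ yields, with $g_j:=f_0\circ\phi_j^{-1}$ and $w_j:=(G_{j,j}^{-1})'$,
\[
\frac{\partial g_j}{\partial x_j}(\bx)=w_j(x_j)\,\frac{\partial f_0}{\partial t_j}\big(\phi_j^{-1}(\bx)\big),
\]
where $w_j$ is positive and bounded away from $0$ and $\infty$ because $\pi^{e_j}$ is. Hence, defining
\[
Z^{e_j}:=w_j\!\big(x_j^{e_j}\big)\,Y^{e_j}=\frac{\partial g_j}{\partial x_j}\big(\bx^{e_j}\big)+w_j\!\big(x_j^{e_j}\big)\epsilon^{e_j},
\]
exhibits $(\bx^{e_j},Z^{e_j})$ as $j$th first‑order partial‑derivative observations of $g_j$, with noise $\tilde\epsilon^{e_j}:=w_j(x_j^{e_j})\epsilon^{e_j}$ that is again independent, conditionally centered, and of variance at most a constant multiple of $\sigma_j^2$. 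The map $(\bt^{e_j},Y^{e_j})\mapsto(\bx^{e_j},Z^{e_j})$ is invertible and is linear in the response coordinate, which is the asserted transformation; when the design laws coincide, $\Pi^{e_0}=\dots=\Pi^{e_p}$, a single $\phi$ serves all streams and the transformed data are function and partial‑derivative observations of the one function $g=f_0\circ\phi^{-1}$, so that (\ref{scheme1}) and Theorems \ref{theorem:lowerbdfNlambdaregrandom} and \ref{thm:mainupperrateestf0} apply to the transformed problem with only constants in the noise levels changed.

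The remaining point to pin down — and what I expect to be the main obstacle — is that the transformed functions still live in a space norm‑equivalent to $\HH$ with the same eigenvalue decay $\lambda_\nu\asymp\nu^{-2m}$. Since $\phi_j$ acts coordinatewise, the SS‑ANOVA decomposition (\ref{eqn:anovadechi}) is preserved componentwise, and composition with the one‑dimensional diffeomorphism $G_{j,k}^{-1}$ maps $\HH_1=\WW_2^m(\XX_1)$ into itself with equivalent norm provided each $G_{j,k}$ is $C^m$ with derivatives controlled away from $0$ and $\infty$; I would record this smoothness of the known densities as a standing regularity hypothesis (automatic for smooth densities bounded away from $0$ and $\infty$). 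The genuine tension is that diagonality of the Jacobian is \emph{forced}: it is exactly what keeps $\partial f_0/\partial t_j$‑data honest partial‑derivative data, whereas a general Rosenblatt‑type transform for a non‑product design law has only triangular Jacobian and would turn $\partial f_0/\partial t_j$ into a linear combination of several partials of $g_j$ observed at mutually incompatible design points; at the same time the transform must be regular enough not to degrade the Sobolev smoothness. These two requirements together essentially pin down the coordinatewise construction, so the lemma is cleanest under a product (tensor) structure on the design laws together with mild smoothness of their densities.
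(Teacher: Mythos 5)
Your coordinatewise probability--integral--transform argument reproduces, essentially, the \emph{second} case of the paper's proof (streams with different design laws, each with independent coordinates), but it does not prove the lemma as used, and the reason is exactly the point you dismiss at the end. The paper's proof splits into two cases according to the grouping structure discussed after (\ref{eqn:minaxhatfttprob}). In the first case all streams share a common design, $\bt_i^{e_0}=\bt_i^{e_1}=\cdots=\bt_i^{e_p}$, the coordinates of the design law are allowed to be \emph{dependent}, and the paper uses precisely the Rosenblatt-type triangular transform you rule out: conditioning in the order $t_d,t_{d-1},\ldots,t_1$ one sets $x_k=\Pi_k^{e_j}(t_k\mid t_d,\ldots,t_{k+1})$, and the chain rule gives $\partial h/\partial x_j=\sum_{k=1}^{j-1}(\partial f/\partial t_k)(\partial t_k/\partial x_j)+(\partial f/\partial t_j)\big/\pi_j^{e_j}(t_j\mid t_d,\ldots,t_{j+1})$; because of the chosen conditioning order only partials with indices $k\le j\le p$ appear, and because the design is common every one of these partials is \emph{observed at the very same design point}. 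The paper therefore defines $Z^{e_j}=\sum_{k=1}^{j-1}Y^{e_k}\,\partial t_k^{e_j}/\partial x_j+Y^{e_j}\big/\pi_j^{e_j}(\cdot)$, a linear combination of the observed responses with known coefficients --- this is the ``linear transformation'' of the statement. Your claim that a triangular Jacobian ``would turn $\partial f_0/\partial t_j$ into a linear combination of several partials observed at mutually incompatible design points'' is false in the common-design case, and your proof, which imposes a product law on every $\Pi^{e_j}$, simply does not cover non-product designs. That is the missing idea: use linearity in the whole response vector $(Y^{e_0},\ldots,Y^{e_p})$, exploiting that the lower-indexed partials are themselves data, with the conditioning order arranged so that only observed partials enter.

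Two smaller remarks. Your insistence on $C^m$ densities so that composition preserves $\WW_2^m(\XX_1)$ with equivalent norm is a legitimate concern, but it is an assumption beyond the paper's (which assumes only densities bounded away from zero and infinity and does not address norm preservation inside this lemma); flagging it is fine, but it is orthogonal to what the lemma actually establishes. Also, in the different-designs case your transformed function depends on the stream $j$ unless the laws coincide; this looseness is shared by the paper's own second case, so it is not a point of divergence, but it is worth being aware that the clean single-function reduction really lives in the common-design or common-law settings.
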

\begin{proof}
As remarked after (\ref{eqn:minaxhatfttprob}), the design under our consideration has the following structure:  different types design points can be grouped to some sets, where within the sets different types design points are drawn identically and across the sets the design points are drawn independently. We give the proof for two cases as follows for the illustration. 

First, we consider that function observations and partial derivatives data share a common design, i.e., $\bt_i^{e_{j}}=\bt_i^{e_{k}}$, $\forall 1\leq i\leq n, 0\leq j<k\leq p$. Write $\bt^{e_j} = (t^{e_j}_1,\ldots,t^{e_j}_d)\in\XX_1^d$. We allow covariates of $\bt^{e_j}$ can be correlated, that is the density of $\bt^{e_j}$ is decomposed as:
\begin{equation*}
\pi^{e_j}(t_1,\ldots,t_d)  = \pi^{e_j}_d(t_d)\pi^{e_j}_{d-1}(t_{d-1}|t_d)\cdots \pi^{e_j}_1(t_1|t_d,t_{d-1}, \ldots,t_{2}).
\end{equation*}
Denote by $\Pi_q^{e_j}$  the CDF corresponding to $\pi_q^{e_j}$, $1\leq q\leq d$. Let
\begin{equation*}
x^{e_j}_d=\Pi^{e_j}_d(t^{e_j}_d),  x^{e_j}_{d-1}=\Pi^{e_j}_{d-1}(t^{e_j}_{d-1}|t^{e_j}_d), \ldots, x^{e_j}_1=\Pi^{e_j}_1(t^{e_j}_1|t^{e_j}_d,t^{e_j}_{d-1}\ldots,t^{e_j}_{2}).
\end{equation*} 
Then, $\bx^{e_j} = (x_1^{e_j},x_2^{e_j},\ldots,x_d^{e_j})$ is uniformly distributed on $\XX_1^d$. 
Define that
\begin{equation*}
\begin{aligned}
& h(x_1, x_2,\ldots,x_d) \\
& \quad = f\left(\{\Pi_1^{e_j}\}^{-1}(x_1|x_d,\ldots,x_2),\{\Pi_2^{e_j}\}^{-1}(x_2|x_d,\ldots,x_3),\ldots, \{\Pi_d^{e_j}\}^{-1}(x_d)\right).
\end{aligned}
\end{equation*} 
Thus,
\begin{align}
\frac{\partial h(\bx)}{\partial x_j}  = \sum_{k=1}^j\frac{\partial f(\bt)}{\partial t_k}\cdot\frac{\partial t_k}{\partial x_j}  = \sum_{k=1}^{j-1}\frac{\partial f}{\partial t_k}\cdot\frac{\partial t_k}{\partial x_j} + \frac{\partial f}{\partial t_j}\cdot\frac{1}{\pi^{e_j}_j(t_j|t_d,\ldots,t_{j+1})}.\nonumber
\end{align}
With the design $\bx^{e_j}$ defined, we transform the responses $Y^{e_j}$s to $Z^{e_j}$s  by letting $Z^{e_0} = Y^{e_0}$ and  for any $j=1,\ldots,p$,
\begin{equation*}
Z^{e_j} = \sum_{k=1}^{j-1} Y^{e_k}\frac{\partial t_k^{e_j}(x_d^{e_j}, x_{d-1}^{e_j}\ldots,x_k^{e_j})}{\partial x_j}+ \frac{Y^{e_j}}{\pi^{e_j}_j(t_j^{e_j}|t_d^{e_j},\ldots,t_{j+1}^{e_j})}.
\end{equation*}
Write
\begin{equation*}
\tilde{\sigma}_{j}^2 = \sum_{k=1}^{j-1}\sigma_k^2\left[\frac{\partial t_k^{e_j}}{\partial x_j}(x_{d}^{e_j},x_{d-1}^{e_j},\ldots,x_{k}^{e_j})\right]^2+\frac{\sigma_j^2}{\left[\pi_j^{e_j}(t^{e_j}_{j}|(t^{e_j}_{d}, \ldots,t^{e_j}_{j+1})\right]^2}.
\end{equation*}
Then, it is clear that  $Z^{e_j} = \partial h/\partial x_j(\bx^{e_j}) + \widetilde{\epsilon^{e_j}}$, where the errors $\widetilde{\epsilon^{e_j}}$s are independent centered noises with variance $\tilde{\sigma}_j^2$s.

Second, we consider that not all types of function observations and partial derivatives data  share a common design, i.e., $\exists 0\leq j\neq k\leq p$ and $1\leq i\leq n$ such that $\bt^{e_{j}}_i\neq \bt^{e_{k}}_i$. We require the covariates of each $\bt^{e_j}$ are independent, that is the density of $\bt^{e_j}$ can be decomposed as:
\begin{equation*}
\pi^{e_j}(t_1,\ldots,t_d) = \pi_1^{e_j}(t_1)\pi_2^{e_j}(t_2)\cdots \pi_d^{e_j}(t_d)
\end{equation*} 
Now let 
\begin{equation*}
x_1^{e_j}=\Pi_1^{e_j}(t_1^{e_j}), \quad x_2^{e_j} = \Pi_2^{e_j}(t_2^{e_j}),\quad\ldots,\quad x_d^{e_j} = \Pi_d^{e_j}(t_d^{e_j}).
\end{equation*} 
Then $\bx^{e_j} = (x_1^{e_j}, x_2^{e_j},\ldots, x_d^{e_j})$ is uniformly distributed on $\XX_1^d$. Define the function
\begin{equation*}
h(x_1,\ldots,x_d) = f\left(\{\Pi_1^{e_j}\}^{-1}(x_1),\{\Pi_2^{e_j}\}^{-1}(x_2),\ldots,\{\Pi_d^{e_j}\}^{-1}(x_d)\right).
\end{equation*} 
Thus, we have 
\begin{equation*}
\frac{\partial h(\bx)}{\partial x_j} = \frac{\partial f(\bt)}{\partial t_j}\cdot\frac{\partial t_j(x_j)}{\partial x_j} =  \frac{\partial f(\bt)}{\partial t_j}\cdot\frac{1}{\pi_j^{e_j}(t_j)}.
\end{equation*} 
Correspondingly, the responses $Y^{e_j}$ is transformed to $Z^{e_j}$, $0\leq j\leq p$, by letting $Z^{e_0} =Y^{e_0}$ and $Z^{e_j} = Y^{e_j}/\pi_j^{e_j}(t_j^{e_j})$ for $1\leq j\leq d$, and write the transformed variance $\tilde{\sigma}_{j}^2  = \sigma_j^2/[\pi_j^{e_j}(t_{j}^{e_j})]^2$.
\end{proof}

\begin{lemma} 
\label{lemma:intx1xrzxk1z1}
Suppose that $s\geq 1$, $\beta\geq 0$ and $\beta\neq 1$, and $r\geq 1$. Then
\begin{align}
& \int_{x_1\cdots x_r\cdot z\leq \Xi, x_k\geq 1, z\geq 1} x_1^\beta\cdots x_r^\beta z^\beta(\log z)^s(x_1^2+\cdots +x_r^2)^{-1}dx_1\cdots dx_rdz\nonumber\\
& \quad\quad\quad\quad\quad\quad\quad\quad\quad\quad\quad\quad\quad\asymp \Xi^{\beta+1}(\log \Xi)^s, \quad \mbox{ as } \Xi\rightarrow\infty.\nonumber
\end{align}
\end{lemma}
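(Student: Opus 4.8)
The plan is to flatten both the domain and the integrand by the logarithmic change of variables $w_k=\log x_k$ for $k=1,\dots,r$ and $v=\log z$, writing $L=\log\Xi$, $\sigma=\sum_{k=1}^r w_k$, and $M=\max_{1\le k\le r}w_k$. The Jacobian is $\exp(\sigma+v)$, and since $e^{2M}\le x_1^2+\cdots+x_r^2=\sum_k e^{2w_k}\le r\,e^{2M}$, the integral in the lemma is comparable, with constants depending only on $r$, to
\begin{equation*}
J(\Xi):=\int_{\substack{w_k\ge0,\ v\ge0\\ \sigma+v\le L}} e^{(\beta+1)(\sigma+v)}\,v^s\,e^{-2M}\,dw_1\cdots dw_r\,dv .
\end{equation*}
The target $\Xi^{\beta+1}(\log\Xi)^s=e^{(\beta+1)L}L^s$ is precisely the contribution of the corner $w\approx 0$, $v\approx L$ of this region.

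For the upper bound I would integrate $v$ first. For any $A\ge0$, the substitution $v=A-t$ together with $\beta+1>0$ gives
\begin{equation*}
\int_0^A e^{(\beta+1)v}v^s\,dv=e^{(\beta+1)A}\!\int_0^A e^{-(\beta+1)t}(A-t)^s\,dt\ \le\ \frac{A^s}{\beta+1}\,e^{(\beta+1)A}.
\end{equation*}
Applying this with $A=L-\sigma\le L$, the resulting factor $e^{(\beta+1)(L-\sigma)}$ cancels $e^{(\beta+1)\sigma}$, so that
\begin{equation*}
J(\Xi)\ \lesssim\ L^s e^{(\beta+1)L}\int_{w_k\ge0}e^{-2M}\,dw_1\cdots dw_r = L^s e^{(\beta+1)L}\cdot r\!\int_0^\infty e^{-2t}t^{r-1}\,dt\ \lesssim\ \Xi^{\beta+1}(\log\Xi)^s ,
\end{equation*}
the remaining integral being finite.

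For the lower bound I would just restrict the region of integration. Assuming $\Xi$ large enough that $L\ge 2(r+1)$, integrate only over the box $w_k\in[0,1]$ for all $k$ and $v\in[L-r-1,\,L-r]$. There $\sigma+v\le L$ (so the box lies in the domain), $\sigma+v\ge L-r-1$, $M\le 1$, and $v\ge L/2$, whence the integrand is at least $e^{-2}r^{-1}e^{(\beta+1)(L-r-1)}(L/2)^s$; since the box has volume $1$, this yields $J(\Xi)\gtrsim e^{(\beta+1)L}L^s$. Matching this with the upper bound proves the lemma.

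The one place needing genuine care — and the only obstacle to a naive estimate — is the displayed bound for $\int_0^A e^{(\beta+1)v}v^s\,dv$: estimating $e^{(\beta+1)(\sigma+v)}\le e^{(\beta+1)L}$ and $v^s\le L^s$ directly and then integrating $v$ over $[0,L]$ would cost an extra factor $L$, i.e.\ yield $(\log\Xi)^{s+1}$; retaining the exponential weight during the $v$-integration is exactly what removes it. The hypotheses $\beta\ge0$ and $s\ge1$ serve only to ensure $\beta+1>0$ and the monotonicity of $v\mapsto v^s$; the exclusion $\beta\ne1$ is not needed in this argument.
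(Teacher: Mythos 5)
Your proof is correct. The lower bound is essentially the paper's: both arguments restrict to a corner box where the $x_k$ stay bounded and $z$ is within a constant factor of $\Xi$ (the paper takes $[1,\tau]^r\times[1,\Xi\tau^{-r}]$ and uses $(x_1^2+\cdots+x_r^2)^{-1}\geq r^{-1}x_1^{-2}\cdots x_r^{-2}$), so the two differ only in parametrization. The upper bound is where you genuinely diverge: the paper substitutes $u=x_1\cdots x_r z$, bounds $(\log z)^s\leq(\log u)^s$, and invokes its Young-type inequality (Lemma \ref{lem:aversioofyoungsineq}) to get $(x_1^2+\cdots+x_r^2)^{-1}\lesssim x_1^{-2/r}\cdots x_r^{-2/r}$, so that the $x$-integrals of $x_k^{-1-2/r}$ converge and everything reduces to $\int_1^\Xi u^\beta(\log u)^s\,du\asymp\Xi^{\beta+1}(\log\Xi)^s$; you instead pass to logarithmic coordinates, integrate out $v=\log z$ exactly against the exponential weight (your bound $\int_0^A e^{(\beta+1)v}v^s\,dv\leq(\beta+1)^{-1}A^s e^{(\beta+1)A}$), and control the remaining $w$-integral through the integrability of $e^{-2\max_k w_k}$ over $[0,\infty)^r$, which plays the role of the paper's symmetrization of the denominator. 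Both devices address the same pitfall you identify — a crude estimate of the $z$-direction loses a factor $\log\Xi$ — but yours is more self-contained (no auxiliary inequality lemma, no multiplicative change of variables), while the paper's has the minor advantage of reducing at once to a one-dimensional integral in the total-product variable. Your closing remarks are also accurate: the hypothesis $\beta\neq 1$ is never really used (it is likewise dispensable in the paper's own argument), and $s\geq 1$ enters only through $s\geq 0$.
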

\begin{proof}
For any $\tau\geq 1$, we have $\{1\leq z\leq \Xi\tau^{-r},1\leq x_k\leq \tau, k=1,\ldots,r\}\subset \{x_1\cdots x_r\cdot z\leq \Xi, z\geq 1, x_k\geq 1, k=1,\ldots,r \}$. Thus, if $\Xi\rightarrow \infty$,
\begin{align}
& \int_{x_1\cdots x_r\cdot z\leq \Xi, x_k\geq 1, z\geq 1} x_1^\beta\cdots x_r^\beta z^\beta(\log z)^s(x_1^2+\cdots +x_r^2)^{-1}dx_1\cdots dx_rdz\nonumber \\
& \geq \int_1^{\Xi\tau^{-r}}\int_1^\tau\cdots\int_1^\tau z^\beta(\log z)^sx_1^{\beta-2}\cdots x_r^{\beta-2}dx_1\cdots dx_rdz\nonumber\\
& \asymp \Xi^{\beta+1}\tau^{-r(\beta+1)}(\log \Xi-r\log \tau)^s\tau^{r(\beta-1)}.\nonumber
\end{align}
Let $\tau\rightarrow 1$, we have $\int_{x_1\cdots x_r\cdot z\leq \Xi, x_k\geq 1, z\geq 1} (\log z)^s(x_1^2+\cdots +x_r^2)^{-1}dx_1\cdots dx_rdz\gtrsim \Xi^{\beta+1}(\log \Xi)^s$.

On the other hand, define $u=x_1\cdots x_r\cdot z$ and change the variable $z$ to $u$. We have that as $\Xi\rightarrow \infty$,
\begin{align}
& \int_{x_1\cdots x_r\cdot z\leq \Xi, x_k\geq 1, z\geq 1} x_1^\beta\cdots x_r^\beta z^\beta(\log z)^s(x_1^2+\cdots +x_r^2)^{-1}dx_1\cdots dx_rdz\nonumber \\
&= \int_{1}^\Xi\int_1^{u}\int_1^{u/x_r}\cdots \int_1^{u/(x_rx_{r-1}\cdots x_{2})}u^\beta(\log u-\log x_r-\cdots -\log x_1)^s\nonumber\\
& \quad\quad\quad\quad \cdot\left(x_1^2+\cdots + x_{r-1}^2 +x_r^2\right)^{-1}x_1^{-1}\cdots x_{r-1}^{-1}x_{r}^{-1}dx_1\cdots dx_{r-1}dx_{r}du\nonumber\\
&\lesssim \int_{1}^\Xi\int_1^{u}\int_1^{u/x_r}\cdots \int_1^{u/(x_rx_{r-1}\cdots x_{2})}u^\beta(\log u-\log x_r-\cdots -\log x_1)^s\nonumber\\
& \quad\quad\quad\quad \cdot x_1^{-1-2/r}\cdots x_{r-1}^{-1-2/r}x_{r}^{-1-2/r}dx_1\cdots dx_{r-1}dx_{r}du\nonumber\\
& \lesssim \int_{1}^\Xi u^\beta(\log u)^s du \asymp \Xi^{\beta+1}(\log \Xi)^s,\nonumber
\end{align}
where the second step is by Lemma \ref{lem:aversioofyoungsineq}. This completes the proof.
\end{proof}

\begin{lemma} 
\label{lemma:intx1xrzxk1z2}
Suppose that $\beta\geq 0$ and $0<\alpha\leq 2$. Then, as $\Xi\rightarrow \infty$,
\begin{align*}
& \int_{x_1\cdots x_r\leq \Xi, x_k\geq 1}\prod_{k=1}^rx_k^\beta (x_1^\alpha+x_2^{\alpha}+\cdots +x_r^\alpha)^{-1}dx_1\cdots dx_r\\
& \asymp 
\begin{cases}
\Xi^{\beta+1-\alpha/r},   \mbox{ if } r\geq 3;\\
\log(\Xi),  \mbox{ if } r=2, \beta=\alpha/2-1;\quad \Xi^{\beta+1-\alpha/2} \mbox{ if } r=2, \beta>\alpha/2-1;\\
1,  \mbox{ if } r=1,\beta<\alpha-1;  \quad \log(\Xi)  \mbox{ if } r=1, \beta=\alpha-1; \\
\Xi^{\beta-\alpha+1}  \mbox{ if } r=1, \beta>\alpha-1.
\end{cases}
\end{align*}
\end{lemma}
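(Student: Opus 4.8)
The plan is to eliminate both the product constraint and the symmetric denominator by a single logarithmic change of variables, and then collapse everything to a one–dimensional integral. First I would substitute $x_k = e^{u_k}$, $k=1,\dots,r$, which maps the domain $\{x_k\ge 1,\ x_1\cdots x_r\le\Xi\}$ onto the solid simplex $\{u_k\ge 0,\ \sum_k u_k\le L\}$ with $L:=\log\Xi$; collecting the Jacobian $\prod_k dx_k = e^{\sum_k u_k}\prod_k du_k$ with $\prod_k x_k^{\beta}=e^{\beta\sum_k u_k}$ turns the integral into
\begin{equation*}
I(\Xi)=\int_{\{u_k\ge 0,\ \sum_k u_k\le L\}}\frac{e^{(\beta+1)\sum_{k=1}^r u_k}}{\sum_{k=1}^r e^{\alpha u_k}}\,du_1\cdots du_r .
\end{equation*}
Since $e^{\alpha\max_k u_k}\le\sum_{k=1}^r e^{\alpha u_k}\le r\,e^{\alpha\max_k u_k}$, the integrand is $\asymp e^{(\beta+1)\sum_k u_k-\alpha\max_k u_k}$, so it suffices to estimate $\int e^{(\beta+1)\sum_k u_k-\alpha\max_k u_k}\,du$ over the solid simplex; here $\beta+1\ge 1>0$ is exactly what makes the integrand favour the outer boundary $\sum_k u_k=L$.

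Next I would foliate the solid simplex by the slices $\Delta_t=\{u_k\ge 0,\ \sum_k u_k=t\}$, $0\le t\le L$ (the coarea factor $|\nabla\sum_k u_k|=\sqrt r$ is constant), which gives
\begin{equation*}
I(\Xi)\asymp\int_0^L e^{(\beta+1)t}\Big(\int_{\Delta_t}e^{-\alpha\max_k u_k}\,d\mathcal{H}^{r-1}\Big)\,dt .
\end{equation*}
The crux is the claim that $\int_{\Delta_t}e^{-\alpha\max_k u_k}\,d\mathcal{H}^{r-1}\asymp e^{-\alpha t/r}$ as $t\to\infty$ for every fixed $r\ge 1$. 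I would prove it by an elementary computation: rescaling $u=t\tilde u$ reduces the slice integral to $t^{r-1}\int_{\Delta_1}e^{-\alpha t\max_k\tilde u_k}\,d\tilde u$, where $\max_k\tilde u_k$ attains its minimum $1/r$ at the centre of $\Delta_1$ and grows (piecewise) linearly away from it, so a Laplace-type estimate yields $\int_{\Delta_1}e^{-\alpha t\max_k\tilde u_k}\,d\tilde u\asymp t^{-(r-1)}e^{-\alpha t/r}$ and the $t^{r-1}$ cancels. Equivalently, for the upper bound split $\Delta_t$ into the $r$ regions on which a fixed coordinate is maximal; on the region $\{u_r=\max_k u_k\}$, with $u_r=t/r+v$ held fixed, the $(r-2)$–measure of the admissible remaining coordinates is $\lesssim v^{r-2}$ near $v=0$ (and $\lesssim (t-u_r)^{r-2}$ near $u_r=t$), and integrating $e^{-\alpha(t/r+v)}$ against it gives $\asymp e^{-\alpha t/r}$; the lower bound comes from the sub-region $\{u_r=\max_k u_k,\ ru_r-t\le 1\}$ alone. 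Substituting this back,
\begin{equation*}
I(\Xi)\asymp\int_0^L e^{(\beta+1-\alpha/r)t}\,dt .
\end{equation*}

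Finally I would read off the three regimes from the sign of $\beta+1-\alpha/r$: $I(\Xi)\asymp\Xi^{\beta+1-\alpha/r}$, $\asymp\log\Xi$, or $\asymp 1$ according as $\beta+1-\alpha/r>0$, $=0$, or $<0$, and then invoke the standing hypotheses $\beta\ge 0$, $0<\alpha\le 2$ to match the stated cases. For $r\ge 3$ one has $\alpha/r\le 2/3<1\le\beta+1$, so only the first regime occurs; for $r=2$ the exponent vanishes exactly when $\beta=\alpha/2-1$ (which forces $\alpha=2$, $\beta=0$) and is positive otherwise, the negative case being impossible since $\beta\ge 0\ge\alpha/2-1$; and for $r=1$ the denominator is just $x_1^{\alpha}$, no substitution is needed, and $\int_1^\Xi x_1^{\beta-\alpha}\,dx_1$ directly produces the three cases according to whether $\beta$ exceeds, equals, or is less than $\alpha-1$. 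The main obstacle is the simplex-slice estimate $\int_{\Delta_t}e^{-\alpha\max_k u_k}\,d\mathcal{H}^{r-1}\asymp e^{-\alpha t/r}$ — in particular pinning down the exact $t$-power so that no stray logarithm appears — since the remaining steps are routine.
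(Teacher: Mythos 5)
Your proposal is correct, and it takes a genuinely different route from the paper. You pass to logarithmic coordinates $x_k=e^{u_k}$, so the domain becomes the solid simplex $\{u_k\ge 0,\ \sum_k u_k\le \log\Xi\}$, replace the denominator by $e^{\alpha\max_k u_k}$ up to constants, foliate by the slices $\{\sum_k u_k=t\}$, and reduce everything to the single estimate $\int_{\Delta_t}e^{-\alpha\max_k u_k}\,d\mathcal{H}^{r-1}\asymp e^{-\alpha t/r}$, which your Laplace-type argument (sublevel sets of $\max_k u_k-t/r$ on the slice have measure $\asymp s^{r-1}$, so no logarithm is produced) does establish; the final answer then falls out of $\int_0^{\log\Xi}e^{(\beta+1-\alpha/r)t}\,dt$ and the sign of the exponent. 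The paper instead symmetrizes to the ordered cone $x_1\ge\cdots\ge x_r\ge 1$ and runs an iterated-integration argument: it bounds $(x_1^\alpha+\cdots+x_r^\alpha)^{-1}$ by products of negative powers via a variant of Young's inequality (its Lemma \ref{lem:aversioofyoungsineq}), with a case analysis (the set $\KK$, index $k^*$) over which of the constraints $x_{r-k}\le x_{r-k-1}$ versus the product constraint bind, and handles $r=2$ and $r=1$ by separate direct computations. Your approach buys uniformity in $r\ge 2$, a transparent identification of the exponent $\beta+1-\alpha/r$, and in fact the full trichotomy without really using $\beta\ge 0$ or $\alpha\le 2$ (those hypotheses only serve to select which regimes occur in each case of the statement); the paper's approach stays entirely within elementary iterated integrals and reuses its Young-type lemma, which it needs elsewhere anyway, at the cost of a more intricate case analysis. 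If you write yours up, the only step deserving full detail is the slice estimate (including its validity as a uniform upper bound for all $t\ge 0$ and as a lower bound for $t$ bounded away from $0$), since the endpoint behavior of $\int_0^{\log\Xi}$ is what separates the power, logarithmic, and bounded regimes.
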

\begin{proof}
By the symmetry of covariates, 
\begin{align}
& \int_{x_1\cdots x_r\leq \Xi, x_k\geq 1} \prod_{k=1}^rx_k^\beta(x_1^\alpha+x_2^\alpha+\cdots +x_r^\alpha)^{-1}dx_1\cdots dx_r\nonumber\\
& \asymp \int_{x_1\cdots x_r\leq \Xi, x_1\geq x_2\geq\cdots\geq x_r\geq 1}\prod_{k=1}^rx_k^\beta (x_1^\alpha+x_2^\alpha+\cdots +x_r^\alpha)^{-1}dx_r\cdots dx_1\nonumber\\
& :=\EE.\nonumber
\end{align}

First we prove when $r\geq 3$,  as $\Xi\rightarrow \infty$, we have
\begin{align}
\label{eqn:r3intx1xraa12r}
\EE\lesssim \Xi^{\beta+1-\alpha/r}.
\end{align}
For this, define the set
$\KK=\left\{0\leq k\leq r-2: \left(\frac{\Xi}{x_1\cdots x_{r-k-1}}\right)^{1/(k+1)}\leq x_{r-k-1}\right\}.$
If $\KK$ is not empty, we denote the smallest element in $\KK$ by $k^*$. Then $0\leq k^*\leq r-2$. For any $(x_1,\ldots, x_r)\in\{(x_1,\ldots,x_r):x_1\cdots x_r\leq \Xi, x_1\geq x_2\geq\cdots\geq x_r\geq 1,x_r\leq x_{r-1}\leq \frac{\Xi}{x_1\cdots x_{r-1}}\}$, we have
\begin{equation}
\label{eqn:condxrk1xkstar}
\begin{cases}
1\leq x_{r-k}\leq x_{r-k-1}& \quad\mbox{ for } 0\leq k\leq k^*-1,\\
1\leq x_{r-k^*}\leq \left(\frac{\Xi}{x_1\cdots x_{r-k^*-1}}\right)^{1/(k^*+1)}& \quad \mbox{ for }  k=k^*,\\
x_{r-k}\geq \left(\frac{\Xi}{x_1\cdots x_{r-k-1}}\right)^{1/(k+1)} & \quad\mbox{ for } k^*+1\leq k\leq r-2,\\
x_{1}\geq \Xi^{1/r}& \quad\mbox{ for }  k= r-1.
\end{cases}
\end{equation}
Thus, as $\Xi\rightarrow \infty$, 
\begin{equation}
\label{eqn:a1-2rkknotempty}
\begin{aligned}
\EE& \lesssim \int_{x_1\cdots x_r\leq \Xi, x_1\geq x_2\geq\cdots\geq x_r\geq 1}\\
& \quad \quad\quad\quad \left\{(x_1)^{\beta-\alpha/(r-1)}\cdots (x_{r-k^*-1})^{\beta-\alpha/(r-1)}\right\} x_{r-k^*}^\beta\\
& \quad\quad\quad\quad\quad\quad\cdot \left\{(x_{r-k^*+1})^{\beta-\alpha/(r-1)}\cdots (x_{r})^{\beta-\alpha/(r-1)}\right\}d\bx\\
& \asymp \int_{x_1\cdots x_r\leq \Xi, x_1\geq x_2\geq\cdots\geq x_r\geq 1}\\
& \quad \quad\quad\quad  \left\{(x_1)^{\beta-\alpha/(r-1)}\cdots (x_{r-k^*-1})^{\beta-\alpha/(r-1)}\right\} \\
& \quad\quad\quad \quad\quad\quad
\cdot (x_{r-k^*})^{[\beta+1-\alpha/(r-1)]k^*+\beta} dx_{r-k^*}dx_{r-k^*-1}\cdots dx_1\\
& \asymp \int_{x_1\cdots x_r\leq \Xi, x_1\geq x_2\geq\cdots\geq x_r\geq 1}\\
& \quad \quad\quad\quad\left\{(x_1)^{-1-\alpha/[(r-1)(k^*+1)]}\cdots (x_{r-k^*-1})^{-1-\alpha/[(r-1)(k^*+1)]}\right\}\\
& \quad\quad\quad \quad\quad\quad
\cdot \Xi^{\beta+1-\alpha k^*/[(r-1)(k^*+1)]}  dx_{r-k^*-1}\cdots dx_1\\
& = \Xi^{\beta+1-\alpha/r},
\end{aligned}
\end{equation}
where the first step uses $x_{r-k^*}\geq 1$ and Lemma \ref{lem:aversioofyoungsineq},  the second step uses $x_{r-k}\leq x_{r-k-1}$ for all $k\leq k^*-1$ in (\ref{eqn:condxrk1xkstar}), the third step uses the upper bound on $x_{r-k^*}$ in (\ref{eqn:condxrk1xkstar}), the fourth step uses the lowers bounds on $x_{r-k}$ for all $k^*+1\leq k\leq r-2$ in  (\ref{eqn:condxrk1xkstar}).
If $\KK$ is empty, then for any $(x_1,\ldots, x_r)\in\{(x_1,\ldots,x_r):x_1\cdots x_r\leq \Xi, x_1\geq x_2\geq\cdots\geq x_r\geq 1,x_r\leq x_{r-1}\leq \Xi/(x_1\cdots x_{r-1})\}$, it satisfies 
\begin{align*}
1\leq x_k\leq x_{k-1} \mbox{ for any } 2\leq k\leq r, \quad \mbox{ and }\quad 1\leq x_1\leq \Xi^{1/r}.
\end{align*}
Thus,  as $\Xi\rightarrow \infty$,
\begin{equation}
\label{eqn:a1-2rkkempty}
\begin{aligned}
\EE& = \int_1^{\Xi^{1/r}}\cdots\int_1^{x_{r-2}}\int_1^{x_{r-1}}\\
& \quad\quad\quad\quad\quad\prod_{k=1}^rx_k^\beta(x_1^\alpha+x_2^\alpha+\cdots+x_{r-1}^\alpha+x_r^\alpha)^{-1}dx_rdx_{r-1}\cdots dx_1\\
& \lesssim  \int_1^{\Xi^{1/r}}\cdots\int_1^{x_{r-2}}\int_1^{x_{r-1}}\\
& \quad\quad\quad\quad\quad
x_1^{\beta-\alpha/r}\cdots x_{r-1}^{\beta-\alpha/r}x_r^{\beta-\alpha/r}dx_rdx_{r-1}\cdots  dx_1\asymp \Xi^{\beta+1-\alpha/r}.
\end{aligned}
\end{equation}
Combining (\ref{eqn:a1-2rkknotempty}) and (\ref{eqn:a1-2rkkempty}) completes the proof for (\ref{eqn:r3intx1xraa12r}).

On the other hand, when $r\geq 3$ and as $\Xi\rightarrow \infty$,
\begin{equation}
\label{eqn:r3intx1xraa12rlower}
\begin{aligned}
\EE& \geq \int_1^{\Xi^{1/r}}\cdots\int_1^{x_{r-2}}\int_1^{x_{r-1}}\\
& \quad\quad\quad\quad\quad\prod_{k=1}^rx_k^\beta(x_1^\alpha+\cdots+x_{r-1}^\alpha+x_r^\alpha)^{-1}dx_rdx_{r-1}\cdots dx_1\\
& \geq \int_{1}^{\Xi^{1/r}}\cdots\int_1^{x_{r-2}}\int_{1}^{x_{r-1}}\\
& \quad\quad\quad\quad\quad\prod_{k=1}^{r}x_k^\beta\cdot r^{-1}x_{1}^{-\alpha}dx_rdx_{r-1}\cdots dx_1 \asymp \Xi^{\beta+1-\alpha/r}.
\end{aligned}
\end{equation}
Therefore, combining (\ref{eqn:r3intx1xraa12r}) and (\ref{eqn:r3intx1xraa12rlower}) completes the proof of the lemma for $r\geq 3$. 

Then we consider for $r=2$. For  $0<\alpha\leq 2$,
\begin{align}
\EE& \leq  2\int_{1}^{\sqrt{\Xi}}\int_1^{x_1}x_1^{\beta-\alpha}x_2^\beta dx_2dx_1 + 2\int_{\sqrt{\Xi}}^{\Xi}\int_1^{\Xi/x_1}x_1^{\beta-\alpha}x_2^\beta dx_2dx_1\nonumber\\
& \asymp 
\begin{cases}
\log(\Xi)  \quad\mbox{ when }2\beta+2-\alpha=0\\
\Xi^{\beta+1-\alpha/2} \quad \mbox{ when } 2\beta+2-\alpha>0
\end{cases}\quad \mbox{ as } \Xi\rightarrow \infty.
\label{eqn:r=2equivx1xrleqxix1gt}
\end{align}
On the other hand, we have
\begin{equation}
\label{eqn:r=2r3intx1xraa12}
\begin{aligned}
\EE& \geq \int_1^{\sqrt{\Xi}}\int_1^{x_1}x_1^\beta x_2^\beta(x_1^\alpha+x_2^\alpha)^{-1}dx_2dx_1\\
 & \geq 2^{-1}\int_1^{\sqrt{\Xi}}\int_1^{x_1}x_1^{\beta-2}x_2^\beta dx_2dx_1 \\
 & \asymp 
\begin{cases}
\log(\Xi)  \quad\mbox{ when }2\beta+2-\alpha=0\\
\Xi^m \quad \mbox{ when } 2\beta+2-\alpha>0
\end{cases} \mbox{ as } \Xi\rightarrow \infty.
\end{aligned}
\end{equation}
Combining (\ref{eqn:r=2equivx1xrleqxix1gt}) and (\ref{eqn:r=2r3intx1xraa12}) completes the proof of the lemma for $r=2$. 

Finally, we consider for $r=1$. Note that $\int_{1}^\Xi x_1^\beta x_1^{-\alpha}dx_1 \asymp 1$ when $0\leq \beta<\alpha-1$, and $\int_{1}^\Xi x_1^\beta x_1^{-\alpha}dx_1 \asymp \log(\Xi)$ when $\beta=\alpha-1$, and $\int_{1}^\Xi x_1^\beta x_1^{-\alpha}dx_1 \asymp \Xi^{\beta-\alpha+1}$ when $\beta>\alpha-1$. This complete the proof.
\end{proof}

\begin{lemma}
\label{lem:x1xrgtrxi}
Suppose that $\beta\leq -1$ and $\alpha>0$. Then, as $\Xi\rightarrow \infty$,
\begin{align*}
& \int_{x_1\cdots x_r\geq \Xi, x_k\geq 1}\prod_{k=1}^rx_k^\beta (x_1^\alpha+x_2^\alpha+\cdots +x_r^\alpha)^{-1}dx_1\cdots dx_r\asymp \Xi^{\beta+1-\alpha/r}. 
\end{align*}
\end{lemma}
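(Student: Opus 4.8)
The plan is to reduce this $r$-dimensional integral to an elementary one-dimensional exponential estimate via symmetrization followed by a logarithmic change of variables. First I would exploit the symmetry of the integrand in $x_1,\dots,x_r$ to restrict to the ordered cone $x_1\ge x_2\ge\cdots\ge x_r\ge 1$ (at the cost of a factor $r!$), on which $x_1^\alpha\le x_1^\alpha+\cdots+x_r^\alpha\le r\,x_1^\alpha$, so that $(x_1^\alpha+\cdots+x_r^\alpha)^{-1}\asymp x_1^{-\alpha}$ with constants depending only on $r$. Thus the integral is comparable to $\int_{x_1\ge\cdots\ge x_r\ge 1,\ x_1\cdots x_r\ge\Xi}\prod_{k}x_k^{\beta}\,x_1^{-\alpha}\,dx_1\cdots dx_r$. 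Substituting $s_k=\log x_k$ turns $\prod_k x_k^{\beta}\,dx_1\cdots dx_r$ into $e^{(\beta+1)(s_1+\cdots+s_r)}\,ds$, turns $x_1^{-\alpha}$ into $e^{-\alpha s_1}$, and turns the region into $\Delta_L:=\{s_1\ge\cdots\ge s_r\ge 0,\ s_1+\cdots+s_r\ge L\}$ with $L:=\log\Xi$. The goal becomes
$$I_r(L):=\int_{\Delta_L} e^{(\beta+1)(s_1+\cdots+s_r)-\alpha s_1}\,ds_1\cdots ds_r\ \asymp\ e^{(\beta+1-\alpha/r)L},\qquad L\to\infty,$$
which is exactly $\Xi^{\beta+1-\alpha/r}$. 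The case $r=1$ is the one-line computation $I_1(L)=\int_L^\infty e^{(\beta+1-\alpha)s_1}\,ds_1=\frac{1}{\alpha-\beta-1}e^{(\beta+1-\alpha)L}$, valid since $\beta\le-1$ and $\alpha>0$.

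For the lower bound I would simply restrict $\Delta_L$ to the box $\{L/r\le s_r\le\cdots\le s_1\le L/r+1\}\subset\Delta_L$, which has volume $1/r!$; since $\beta+1\le 0$, on this box $e^{(\beta+1)\sum_k s_k-\alpha s_1}\ge e^{(\beta+1)(L+r)}e^{-\alpha(L/r+1)}$, so $I_r(L)\gtrsim e^{(\beta+1-\alpha/r)L}$. For the upper bound I would parametrize a point of $\Delta_L$ by $w:=s_1+\cdots+s_r\in[L,\infty)$, the largest coordinate $s_1\in[w/r,w]$, and the remaining coordinates $(s_2,\dots,s_r)$, which lie on the slice $S(s_1,w):=\{s_1\ge s_2\ge\cdots\ge s_r\ge 0,\ s_2+\cdots+s_r=w-s_1\}$. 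The key point is that, after the reflection $s_k\mapsto s_1-s_k$, the $(r-2)$-dimensional measure of $S(s_1,w)$ is bounded (dropping the cube constraint $s_k\le s_1$, which only enlarges the set) by the measure of the full simplex $\{u\in\R_{\ge 0}^{r-1}:\ u_2+\cdots+u_r=rs_1-w\}$, namely $c_r(rs_1-w)^{r-2}$; for $r=2$ the slice is a single point and the bound is a constant. Writing $v:=rs_1-w$,
$$I_r(L)\ \le\ c_r\int_L^\infty e^{(\beta+1)w}\Big(\int_{w/r}^{w} e^{-\alpha s_1}(rs_1-w)^{r-2}\,ds_1\Big)dw
= \frac{c_r}{r}\int_L^\infty e^{(\beta+1-\alpha/r)w}\Big(\int_0^{(r-1)w} e^{-\alpha v/r}v^{r-2}\,dv\Big)dw.$$
Since $\int_0^\infty e^{-\alpha v/r}v^{r-2}\,dv<\infty$, the inner integral is bounded uniformly in $w$, and since $\beta+1-\alpha/r<0$ (because $\beta+1\le 0<\alpha/r$) we obtain $I_r(L)\lesssim e^{(\beta+1-\alpha/r)L}$, matching the lower bound.

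The only genuinely delicate step is this upper-bound volume estimate: a naive bound on $S(s_1,w)$ by the cube volume $s_1^{r-1}$ (equivalently, replacing $e^{-\alpha\max_k s_k}$ by $e^{-\alpha\sum_k s_k/r}$) would leave a spurious factor $(\log\Xi)^{r-1}$. What removes it is recognizing that the relevant simplex parameter is $rs_1-w=(r-1)s_1-(w-s_1)$, which is small precisely when all coordinates are balanced near the ``corner'' $s_1\approx w/r$ — the only region contributing to the leading order — together with the integrability of $e^{-cv}v^{r-2}$ on $[0,\infty)$. Everything else (the symmetrization, the comparison $(\sum_k x_k^\alpha)^{-1}\asymp x_1^{-\alpha}$ on the ordered cone, the change of variables, and the final one-dimensional exponential integral) is routine bookkeeping.
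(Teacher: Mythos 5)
Your argument is correct, and it takes a genuinely different route from the one the paper intends (the paper simply refers the reader to the proof of Lemma \ref{lemma:intx1xrzxk1z2} and omits details). The paper's template is: symmetrize to the ordered cone, then run a case analysis over which coordinates are ``large'' (the set $\KK$ in that proof) and distribute the factor $(x_1^\alpha+\cdots+x_r^\alpha)^{-1}$ among the coordinates via the Young-type inequality of Lemma \ref{lem:aversioofyoungsineq}, choosing the exponents carefully so that no $(\log\Xi)^{r-1}$ factor survives. You instead keep the comparison $(x_1^\alpha+\cdots+x_r^\alpha)^{-1}\asymp x_1^{-\alpha}$ on the ordered cone, pass to logarithmic variables $s_k=\log x_k$, and slice by the level sets of $w=s_1+\cdots+s_r$; the crucial observation that the slice $\{s_1\ge\cdots\ge s_r\ge0,\ s_2+\cdots+s_r=w-s_1\}$ has $(r-2)$-dimensional measure $O\bigl((rs_1-w)^{r-2}\bigr)$, combined with the convergent Gamma-type integral $\int_0^\infty e^{-\alpha v/r}v^{r-2}\,dv$, plays exactly the role that the $\KK$-decomposition and Lemma \ref{lem:aversioofyoungsineq} play in the paper, and your box construction gives the matching lower bound just as the paper's restriction to a subregion does. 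I checked the details: the Jacobian of the change of variables $(s_1,\dots,s_r)\mapsto(s_1,\dots,s_{r-1},w)$ is $1$, the range $s_1\in[w/r,w]$ and the constraint $rs_1-w\ge0$ are correct on the ordered cone, convergence of the outer integral uses only $\beta+1-\alpha/r<0$ (true since $\beta\le-1<\alpha/r+(-1)+1$... i.e.\ $\beta+1\le 0<\alpha/r$), and the degenerate cases $r=1,2$ are handled correctly. What each approach buys: yours is self-contained, treats all $r\ge2$ uniformly, and makes transparent why no logarithmic factor appears (only the balanced corner $s_1\approx w/r$ contributes at leading order); the paper's approach reuses machinery already developed for Lemma \ref{lemma:intx1xrzxk1z2}, which is why it can be dismissed there with ``similar, details omitted,'' at the cost of a more intricate case analysis if one actually writes it out.
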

\begin{proof}
The proof is similar to the proof for Lemma \ref{lemma:intx1xrzxk1z2}. We omit the details here.
\end{proof}

\begin{lemma}
\label{applem:x1m1mx2x12}
Suppose that $m>1$. Then, as $\Xi\rightarrow\infty$,
\begin{equation*}
\int_{x_1^{(m-1)/m}x_2\cdots x_r\leq \Xi,x_k\geq 1}(x_1^2+x_2^2+\cdots+x_r^2)^{-1}x^2_1dx_1\cdots dx_r\asymp \Xi^{m/(m-1)}.
\end{equation*}
\end{lemma}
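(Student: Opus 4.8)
The plan is to sandwich the integral $I(\Xi)$ between two constant multiples of $\Xi^{m/(m-1)}$, with constants depending only on $m$ and $r$. Throughout I write $w(x_1,\dots,x_r)=x_1^2/(x_1^2+\cdots+x_r^2)$, so that $0\le w\le1$ on the whole first quadrant.

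For the upper bound I would first discard the weight, $I(\Xi)\le V(\Xi):=\int_{x_1^{(m-1)/m}x_2\cdots x_r\le\Xi,\,x_k\ge1}dx_1\cdots dx_r$, and then reduce $V(\Xi)$ to a product-type region by the substitution $u=x_1^{(m-1)/m}$, i.e.\ $x_1=u^{m/(m-1)}$, $dx_1=\tfrac{m}{m-1}u^{1/(m-1)}du$, which turns the constraint into $u\,x_2\cdots x_r\le\Xi$ with $u\ge1$, $x_k\ge1$. For fixed $x_2,\dots,x_r$ with $P:=x_2\cdots x_r\le\Xi$, integrating $u$ over $[1,\Xi/P]$ and using $\int_1^A u^{1/(m-1)}du=\tfrac{m-1}{m}\bigl(A^{m/(m-1)}-1\bigr)\le\tfrac{m-1}{m}A^{m/(m-1)}$ for every $A\ge1$ (here $m>1$ makes the exponent $m/(m-1)>1$, so the upper endpoint dominates uniformly in $A$) gives $V(\Xi)\lesssim\Xi^{m/(m-1)}\int_{[1,\infty)^{r-1}}(x_2\cdots x_r)^{-m/(m-1)}dx_2\cdots dx_r=\Xi^{m/(m-1)}\bigl(\int_1^\infty x^{-m/(m-1)}dx\bigr)^{r-1}$, and the last factor is finite precisely because $m/(m-1)>1$. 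Hence $I(\Xi)\le V(\Xi)\lesssim\Xi^{m/(m-1)}$ (for $r=1$ this degenerates to the trivial identity $I(\Xi)=\Xi^{m/(m-1)}-1$).

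For the matching lower bound I would restrict the domain to the slab $R=\{\,2\le x_1\le(2^{-(r-1)}\Xi)^{m/(m-1)},\ 1\le x_k\le2\ \text{for }2\le k\le r\,\}$, which is nonempty for large $\Xi$. On $R$ one has $x_1^{(m-1)/m}x_2\cdots x_r\le(2^{-(r-1)}\Xi)\cdot2^{r-1}=\Xi$, so $R$ lies inside the integration region, while $x_2^2+\cdots+x_r^2\le4(r-1)\le(r-1)x_1^2$ (using $x_1\ge2$) forces $w\ge1/r$ on $R$. Therefore $I(\Xi)\ge\tfrac1r|R|=\tfrac1r\bigl((2^{-(r-1)}\Xi)^{m/(m-1)}-2\bigr)\asymp\Xi^{m/(m-1)}$ as $\Xi\to\infty$, and combining with the upper bound finishes the proof. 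I do not expect a genuine obstacle here; the only point needing a little care is that the estimate $\int_1^A u^{1/(m-1)}du\lesssim A^{m/(m-1)}$ must be uniform in $A\ge1$ rather than merely asymptotic, which is immediate from the explicit antiderivative above.
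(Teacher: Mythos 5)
Your proof is correct, and the overall strategy is the paper's: sandwich the integral by bounding the weight $x_1^2/(x_1^2+\cdots+x_r^2)$ above by $1$ and below by $1/r$ on a suitable subregion, then control the volume of $\{x_1^{(m-1)/m}x_2\cdots x_r\le\Xi,\ x_k\ge1\}$. The execution, however, differs in both halves. For the upper bound the paper passes to an ordered region and evaluates a cascading iterated integral in the variables $u_1=x_1^{(m-1)/m}$, $u_j=u_1x_2\cdots x_j$, whereas you integrate out $x_1$ first after the single substitution $u=x_1^{(m-1)/m}$ and invoke the convergence of $\bigl(\int_1^\infty x^{-m/(m-1)}dx\bigr)^{r-1}$; this is cleaner and avoids the paper's symmetrization step (its factor $r^{-1}$ is delicate, since $x_1$ enters the constraint with a different exponent and is not interchangeable with the other coordinates). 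For the lower bound the paper restricts to the cone $x_1^{(m-1)/m}\ge x_2\ge\cdots\ge x_r\ge1$, where the weight is at least $1/r$, and computes its volume by the same cascading change of variables; you instead use the thin box $x_1\in[2,(2^{-(r-1)}\Xi)^{m/(m-1)}]$, $x_k\in[1,2]$, whose measure is immediate. Both give matching constants depending only on $m$ and $r$; the paper's route reuses the change-of-variables template shared with its other volume lemmas, while yours is more elementary and its uniformity in $\Xi$ is transparent, and your exact identity $\Xi^{m/(m-1)}-1$ for $r=1$ agrees with the paper's remark that this case follows by direct calculation.
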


\begin{proof}
When $r= 1$, the lemma can be verified by direct calculations. In what follows, assume $r\geq 2$. First, we show that LHS of the formula above is larger than the RHS up to some constant. It suffices to consider a subset of $(x_1,x_2,\ldots, x_r)$ which satisfy $x_1^{(m-1)/m}\geq x_2\geq \cdots\geq x_r\geq 1$. 
Let $u_1 = x_1^{(m-1)/m}$, and $u_j = u_1x_2\cdots x_j$ for $2\leq j\leq r$. By changing variables $(x_1,x_2,\ldots, x_r)$ to $(u_1,u_2,\ldots,u_r)$, the LHS in the lemma satisfies
\begin{align}
& \int_{x_1^{(m-1)/m}x_2\cdots x_r\leq \Xi,x_k\geq 1}(x_1^2+x_2^2+\cdots+x_r^2)^{-1}x^2_1dx_1\cdots dx_r\nonumber\\
& \geq \int_{x_1^{(m-1)/m}x_2\cdots x_r\leq \Xi,x_k\geq 1} (rx_1^2)^{-1}x_1^2dx_1\cdots dx_r\nonumber\\
& = r^{-1} \int_1^{\Xi} \int_{u_r^{(r-1)/r}}^{u_r}\cdots \int_{u_2^{1/2}}^{u_2}u_1^{1/(m-1)}u_1^{-1}\cdots u_{r-1}^{-1}du_1\cdots du_{r-1}du_r\nonumber\\
& \asymp \Xi^{m/(m-1)}.\nonumber
\end{align}
Second, we show that RHS of the formula above is larger than the LHS up to some constant. Note  that  $(x_1^2+x_2^2+\cdots +x_r^2)^{-1}x_1^2\leq 1$, so the LHS satisfies
\begin{align}
& \int_{x_1^{(m-1)/m}x_2\cdots x_r\leq \Xi,x_k\geq 1}(x_1^2+x_2^2+\cdots+x_r^2)^{-1}x^2_1dx_1\cdots dx_r\nonumber\\
& \leq \int_{x_1^{(m-1)/m}x_2\cdots x_r\leq \Xi,x_k\geq 1} 1 dx_1\cdots dx_r\nonumber\\
& = r^{-1} \int_1^{\Xi} \int_{u_r^{(r-1)/r}}^{u_r}\cdots \int_{u_2^{1/2}}^{u_2}u_1^{1/(m-1)}u_1^{-1}\cdots u_{r-1}^{-1}du_1\cdots du_{r-1}du_r\nonumber\\
& \asymp \Xi^{m/(m-1)}.\nonumber
\end{align}
This completes the proof.
\end{proof}



\begin{thebibliography}{9}


\bibitem{aronszajn1950theory}
\textsc{Aronszajn, N.} (1950). 
Theory of reproducing kernels. 
\textit{Trans. Amer. Math. Soc}.
\textbf{68} 337--404.
\MR{0051437}


\bibitem{bates1996experimental}
\textsc{Bates, R. A.}, \textsc{Buck, R. J.}, \textsc{Riccomagno, E.} and \textsc{Wynn, H. P.} (1996). 
Experimental design and observation for large systems. 
\textit{J. Roy. Statist. Soc. Ser. B}.
\textbf{58} 77--94.
\MR{1379235}

\bibitem{breckling2012analysis}
\textsc{Breckling, J.} (1989). 
\textit{The Analysis of Directional Time Series: Applications to Wind Speed and
  Direction}  \textbf{61}.
Springer-Verlag, Berlin.
\MR{1027836}

\bibitem{buja1989linear}
\textsc{Buja, A.}, \textsc{Hastie, T.} and \textsc{Tibshirani, R.} (1989). 
Linear smoothers and additive models. 
\textit{Ann. Statist}.
\textbf{17} 453--555.
\MR{0994249}

\bibitem{carr2001reconstruction}
\textsc{Carr, J. C.}, \textsc{Beatson, R. K.}, \textsc{Cherrie, J. B.}, \textsc{Mitchell, T. J.}, \textsc{Fright, W. R.}, \textsc{McCallum, B. C.} and \textsc{Evans, T. R.} (2001). 
Reconstruction and representation of 3D objects with radial basis functions. 
In \textit{Proceedings of the 28th Annual Conference on Computer Graphics
  and Interactive Techniques} 67--76. ACM, New York.


\bibitem{cover2012elements}
\textsc{Cover, T. M.} and \textsc{Thomas, J. A.} (2006). \textit{Elements of Information Theory}, 2nd ed.
John Wiley \& Son, New York.
\MR{2239987}


\bibitem{cox1988approximation}
\textsc{Cox, D. D.} (1988). 
Approximation of method of regularization estimators. 
\textit{Ann. Statist}.
\textbf{16} 694--712.
\MR{0947571}

\bibitem{cox1990asymptotic}
\textsc{Cox, D. D.} and \textsc{O'Sullivan, F.} (1990). 
Asymptotic analysis of penalized likelihood and related estimators.
\textit{Ann. Statist}.
\textbf{18} 1676--1695.
\MR{1074429}



\bibitem{forrester2008engineering}
\textsc{Forreste, A.}, \textsc{Keane, A.} and \textsc{Sobester, A.} (2008). 
\textit{Engineering Design via Surrogate Modeling: A Practical Guide}.
John Wiley \& Son, New York.


\bibitem{frees1998understanding}
\textsc{Frees, E. W.} and \textsc{Valdez, E. A.} (1998). 
Understanding relationships using copulas.
\textit{N. Am. Actuar. J}.
\textbf{2} 1--25.
\MR{1988432}

\bibitem{golub2014scientific}
\textsc{Golub, G. H.} and \textsc{Ortega, J. M.} (2014). 
\textit{Scientific Computing and Differential Equations: an Introduction to
  Numerical Methods}.
Academic Press, Cambridge.
\MR{1133393}

\bibitem{griewank2008evaluating}
\textsc{Griewank, A.} and \textsc{Walther, A.} (2008). 
\textit{Evaluating Derivatives: Principles and Techniques of Algorithmic
  Differentiation}, 2nd ed.
SIAM, Philadelphia.
\MR{2454953}

\bibitem{gu2013smoothing}
\textsc{Gu, C.}  (2013). 
\textit{Smoothing Spline ANOVA Models}, 2nd ed.
Springer, New York.
\MR{3025869}

\bibitem{hall2007nonparametric}
\textsc{Hall, P.} and \textsc{Yatchew, A.} (2007). 
Nonparametric estimation when data on derivatives are available.
\textit{Ann. Statist}.
\textbf{35} 300--323.
\MR{2332277}

\bibitem{hall2010nonparametric}
\textsc{Hall, P.} and \textsc{Yatchew, A.} (2010). 
Nonparametric least squares estimation in derivative families.
\textit{J. Econometrics}.
\textbf{157} 362--374.
\MR{2661608}

\bibitem{hansen2003global}
\textsc{Hansen, E.} and \textsc{Walster, G. W.}  (2003). 
\textit{Global Optimization Using Interval Analysis: Revised and Expanded} \textbf{264}.
CRC Press, Boca Raton.
\MR{2025041}


\bibitem{hastie1990generalized}
\textsc{Hastie, T. J.} and \textsc{Tibshirani, R. J.} (1990). 
\textit{Generalized Additive Models}.
Chapman and Hall, London.
\MR{1082147}


\bibitem{jorgenson1984econometric}
\textsc{Jorgenson, D. W.}  (1986). 
Econometric methods for modeling producer behavior.
\textit{Handbooks in Econom}.
\textbf{3} 1841--1915.
\MR{0858551}


\bibitem{kiefer1959optimum}
\textsc{Kiefer, J.} and \textsc{Wolfowitz, J.} (1959). 
Optimum designs in regression problems.
\textit{Ann. Math. Statist}.
\textbf{30} 271--294.
\MR{0104324}

\bibitem{lin2000tensor}
\textsc{Lin, Y.} (2000). 
Tensor product space ANOVA models.
\textit{Ann. Statist}.
\textbf{28} 734--755.
\MR{1792785}

\bibitem{mitchell1994asymptotically}
\textsc{Mitchell, T. J.}, \textsc{Morris, M. D.} and \textsc{Ylvisaker, D.} (1994). 
Asymptotically optimum experimental designs for prediction of
  deterministic functions given derivative information.
\textit{J. Statist. Plann. Inference}.
\textbf{41} 377--389.
\MR{1309620}

\bibitem{morris1993}
\textsc{Morris, M. D.}, \textsc{Mitchell, T. J.}and \textsc{Ylvisaker, D.} (1993). 
Bayesian design and analysis of computer experiments: use of
  derivatives in surface prediction.
\textit{Technometrics}.
\textbf{35} 243--255.
\MR{1234641}


\bibitem{murray2002nonlinear}
\textsc{Murray-Smith, R.} and \textsc{Sbarbaro, D.} (2002). 
Nonlinear adaptive control using non-parametric Gaussian process prior
  models.
\textit{IFAC Proceedings Volumes}.
\textbf{35} 325--330.

\bibitem{oden2012introduction}
\textsc{Oden, J. T.} and \textsc{Reddy, J. N.}  (2012). 
\textit{An Introduction to the Mathematical Theory of Finite Elements}.
John Wiley \& Sons, New York.
\MR{0461950}

\bibitem{plessix2006review}
\textsc{Plessix, R-E.} (2006). 
A review of the adjoint-state method for computing the gradient of a
  functional with geophysical applications.
\textit{Geophys. J. Int}.
\textbf{167} 495--503.

\bibitem{ramsay2007parameter}
\textsc{Ramsay, J. O.}, \textsc{Hooker, G.}, \textsc{Campbell, D.} and \textsc{Cao, J.} (2007). 
Parameter estimation for differential equations: a generalized
  smoothing approach.
\textit{J. Roy. Statist. Soc. Ser. B}.
\textbf{69} 741--796.
\MR{2368570}

\bibitem{rasmussen2006gaussian}
\textsc{Rasmussen, C. E.} and \textsc{Williams, C. K. I.} (2006). 
\textit{Gaussian Processes for Machine Learning}.
MIT Press, Cambridge.
\MR{2514435}


\bibitem{riccomagno1997lattice}
\textsc{Riccomagno, E.}, \textsc{Schwabe, R.} and \textsc{Wynn, H. P.} (1997). 
Lattice-based D-optimum design for Fourier regression.
\textit{Ann. Statist}.
\textbf{25} 2313--2327.
\MR{1604453}

\bibitem{riesz1955}
\textsc{Riesz, F.} and \textsc{Sz.-Nagy, B.}  (1955). 
\textit{Functional Analysis}.
Dover Publications,  New York.
\MR{1068530}

\bibitem{schwarz1979geodetic}
\textsc{Schwarz, K. P.} (1979). 
Geodetic improperly posed problems and their regularization.
\textit{Bolletino di Geodesia e Scienze Affini}.
\textbf{38} 389--416.




\bibitem{shepherd2015theory}
\textsc{Shepherd, R. W.}  (2015). 
\textit{Theory of Cost and Production Functions}.
Princeton University Press,  Princeton.
\MR{0414052}


\bibitem{solak2003derivative}
\textsc{Solak, E.}, \textsc{Murray-Smith, R.}, \textsc{Leithead, W. E.}, \textsc{Leith, D. J.} and \textsc{Rasmussen, C. E.} (2003). 
Derivative observations in Gaussian process models of dynamic systems.
In \textit{Advances in neural information processing systems}.
1057--1064.

\bibitem{stone1980optimal}
\textsc{Stone, C. J.} (1980). 
Optimal rates of convergence for nonparametric estimators.
\textit{Ann. Statist}.
\textbf{8} 1348--1360.
\MR{0594650}

\bibitem{stone1982optimal}
\textsc{Stone, C. J.} (1982). 
Optimal global rates of convergence for nonparametric regression.
\textit{Ann. Statist}.
\textbf{10} 1040--1053.
\MR{0673642}

\bibitem{stone1985additive}
\textsc{Stone, C. J.} (1985). 
Additive regression and other nonparametric models.
\textit{Ann. Statist}.
\textbf{13} 689--705.
\MR{0790566}

\bibitem{tsybakovintroduction}
\textsc{Tsybakov, A. B.}  (2009). 
\textit{Introduction to Nonparametric Estimation}.
Springer, New York.
\MR{724359}

\bibitem{wahba1990}
\textsc{Wahba, G.} (1990). 
\textit{Spline Models for Observational Data}.
SIAM, Philadelphia.
\MR{1045442}

\bibitem{wahba1995smoothing}
\textsc{Wahba, G.}, \textsc{Wang, Y.}, \textsc{Gu, C.}, \textsc{Klein, R.}  and \textsc{Klein, B.} (1995). 
Smoothing spline ANOVA for exponential families, with application to
  the Wisconsin Epidemiological Study of Diabetic Retinopathy.
\textit{Ann. Statist}.
\textbf{23} 1865--1895.
\MR{1389856}


\bibitem{weinberger1974variational}
\textsc{Weinberger, H. F.} (1974). 
\textit{Variational Methods for Eigenvalue Approximation}.
SIAM, Philadelphia.
\MR{0400004}

\bibitem{yuan2010reproducing}
\textsc{Yuan, M.} and \textsc{Cai, T. T.} (2010). 
A reproducing kernel Hilbert space approach to functional linear
  regression.
\textit{Ann. Statist}.
\textbf{38} 3412--3444.
\MR{2766857}

\bibitem{cartan1971differential}
\textsc{Cartan, H. P.} (1971). 
\textit{Differential Calculus} \textbf{1}.
Hermann, Paris.
\MR{0344032}


\bibitem{cox1988approximation}
\textsc{Cox, D. D.} (1988). 
Approximation of method of regularization estimators. 
\textit{Ann. Statist}.
\textbf{16} 694--712.
\MR{0947571}

\bibitem{donoho1990minimax}
\textsc{Donoho, D. L.}, \textsc{Liu, R. C.} and \textsc{MacGibbon, B.} (1990). 
Minimax risk over hyperrectangles, and implications. 
\textit{Ann. Statist}.
\textbf{18} 1416--1437.
\MR{1062717}

\bibitem{lin1998tensor}
\textsc{Lin, Y.} (1998). 
\textit{Tensor product space ANOVA models in multivariate function estimation}. 
PhD thesis.
University of Pennsylvania.
\MR{2697355}

\bibitem{wahba1990}
\textsc{Wahba, G.} (1990). 
\textit{Spline Models for Observational Data}.
SIAM, Philadelphia.
\MR{1045442}

\bibitem{weinberger1974variational}
\textsc{Weinberger, H. F.} (1974). 
\textit{Variational Methods for Eigenvalue Approximation}.
SIAM, Philadelphia.
\MR{0400004}


\end{thebibliography}
\end{document}